\pgfplotsset{compat=newest}
\definecolor{ringblue}{HTML}{0088FF}
\definecolor{diskblue}{HTML}{005588}
\definecolor{ringorange}{HTML}{FF8800}
\definecolor{diskorange}{HTML}{AA4400}
\definecolor{connector}{HTML}{BB00BB}
\tikzset{
    dot diameter/.store in=\dot@diameter,
    dot diameter=3pt,
    dot spacing/.store in=\dot@spacing,
    dot spacing=10pt,
    dots/.style={
        line width=\dot@diameter,
        line cap=rect,
        dash pattern=on 4.2pt off \dot@spacing
    }
}
\theoremstyle{plain}
\newtheorem{theorem}{Theorem}[section]
\newtheorem{lemma}[theorem]{Lemma}
\newtheorem{proposition}[theorem]{Proposition}
\newtheorem{corollary}[theorem]{Corollary}
\theoremstyle{definition}
\newtheorem{definition}[theorem]{Definition}
\newtheorem{example}[theorem]{Example}
\newtheorem{convention}[theorem]{Convention}
\newtheorem{question}[theorem]{Question}
\theoremstyle{remark}
\newtheorem{remark}[theorem]{Remark}
\newcommand{\Z}{\mathbb{Z}}
\newcommand{\C}{\mathbb{C}}
\newcommand{\R}{\mathbb{R}}
\newcommand{\Q}{\mathbb{Q}}
\newcommand{\E}{\mathbb{E}}
\renewcommand{\L}{\mathbb{L}}   
\newcommand{\Hom}{\text{Hom}}
\newcommand{\id}{\mathrm{id}}
\newcommand{\ad}{\mathrm{ad}}
\newcommand{\tensor}{\otimes}
\newcommand{\br}{\mathrm{BrSym}}
\newcommand{\into}{\hookrightarrow}
\newcommand{\onto}{\twoheadrightarrow}
\renewcommand{\H}{\mathcal{H}} 
\newcommand{\B}{\mathcal{B}}
\newcommand{\desus}{s^{-1}}
\newcommand{\eil}{\mathcal{E}\!{\mathit il\/}}  
\newcommand{\dual}[1]{({#1})^{\#}}
\newcommand{\contdual}[1]{({#1})^{\#^{c}}}
\newcommand{\compgring}[1]{\widehat{\Q}[{#1}]}
\newcommand{\Free}[1]{\mathbb{F}_{#1}}
\newcommand{\BCH}{\mathrm{BCH}}
\newcommand{\Mal}[2][]{\mathrm{Mal}#1({#2}#1)}  
\newcommand{\mE}{\mathcal{E}}
\newcommand{\mB}{\mathcal{B}}
\newcommand{\HHarr}[3][]{H^{#2}_{\mE} #1( {#3} #1)} 
\newcommand{\holiebar}[2]{\HHarr{#1}{#2}}
\newcommand{\liebar}[2]{\mE^{#2}({#1})}
\newcommand{\dHar}{d_\mE}
\newcommand{\lcs}[1]{\mathrm{lcs}({#1})}
\newcommand{\apl}{\operatorname{A}_{PL}}
\newcommand{\gr}{\mathrm{gr}~}
\newcommand{\Gr}{\mathcal{G}\!{\mathit r\/}}  
\newcommand{\Tr}{\mathcal{T}\!{\mathit r\/}}  
\newcommand{\Eil}{\eil}  
\newcommand{\Lie}{\mathcal{L}\!{\mathit ie\/}}    
\newcommand{\cofreeE}{{{\mathbb E}}}
\newcommand{\vscofreeE}{\cofreeE}
\newcommand{\inv}{ {{\text -}1}} 
\newcommand{\sym}{\mathcal S}
\DeclarePairedDelimiter\tp{\lfloor}{\rfloor\,} 
\DeclarePairedDelimiter\tpc{\lfloor}{\rfloor}  
\DeclarePairedDelimiterXPP\lpw[1]{}{\lfloor}{\rfloor}{{_{\!_w}}}%
{\hspace{-.27em}\delimsize\lfloor\mathopen{}#1\delimsize\rfloor\hspace{-.27em}}
\DeclarePairedDelimiterX\lp[1]{\lfloor}{\rfloor}%
{\hspace{-.27em}\delimsize\lfloor\mathopen{}#1\delimsize\rfloor\hspace{-.27em}}
\DeclarePairedDelimiter\cobr{]}{[}%
\newcommand{\linep}[2]{ \ensuremath{  %
 \tpc*{#1}{#2}
} }
\newcommand{\longlinep}[3]{ \ensuremath{  %
 \tp*{#1}{#2}
} }
\newcommand{\graphpp}[3]{ \ensuremath{
  \tp[\big]{\tpc{#1}{#2}}{#3}
} }
\newcommand{\maps}{\colon}
\newcommand{\st}{~\big|~}
\newcommand{\lew}{\mathbin{
 \raisebox{1pt}{$<$}\hspace{-7pt}\raisebox{-2.5pt}{$\scriptscriptstyle w$}\,
}}
\newcommand{\lewsm}{\mathbin{
  \,\raisebox{.5pt}{$\scriptstyle <$}\hspace{-6pt}\raisebox{-2.5pt}{$\scriptscriptstyle w$} \hspace{2pt}
}}
\def\@tocline#1#2#3#4#5#6#7{\relax  
  \ifnum #1>\c@tocdepth             
  \else
    \par \addpenalty\@secpenalty\addvspace{#2} 
    \begingroup \hyphenpenalty\@M
    \@ifempty{#4}{%
      \@tempdima\csname r@tocindent\number#1\endcsname\relax
    }{%
      \@tempdima#4\relax
    }%
    \parindent\z@ \leftskip#3\relax \advance\leftskip\@tempdima\relax 
    \rightskip\@pnumwidth plus4em \parfillskip-\@pnumwidth   
    #5\leavevmode\hskip-\@tempdima
      \ifcase #1
       \or\or \hskip 1em \or \hskip 2em \else \hskip 3em \fi%
      #6\nobreak\relax
    \hfill\hbox to\@pnumwidth{\@tocpagenum{#7}}\par
    \nobreak
    \endgroup
  \fi}
\begin{document}

\author[N. Gadish]{Nir Gadish}
\address{Mathematics Department, 
University of Pennsylvania}
\email{ngadish@math.upenn.edu}
\author[A. Ozbek]{Aydin Ozbek}
\address{Mathematics Department, 
University of Oregon}
\email{ozbekriordan@gmail.com}
\author[D. Sinha]{Dev Sinha}
\address{Mathematics Department, 
University of Oregon}
\email{dps@uoregon.edu}
\author[B. Walter]{Ben Walter}
\address{Mathematics Department, 
University of the Virgin Islands}
\email{benjamin.walter@uvi.edu}

\keywords{Rational homotopy theory, fundamental groups, Malcev Lie algebras, Lie coalgebraic duals of groups, Harrison and bar cohomology, letter linking and braiding, Hopf invariants of loops}
\subjclass[2020]{
20F40,
55P62,
57M07,
57M05,
55Q25,
55S30}

\noindent
\title{
  Infinitesimal calculations in fundamental groups
  }

  \begin{abstract}
      We show that Hopf invariants, defined by evaluation
      in Harrison cohomology of the commutative
      cochains of a space, calculate the logarithm
      map from a fundamental group to its Malcev Lie algebra. They thus present the zeroth Harrison cohomology as a universal dual object to the Malcev Lie algebra.
      This structural theorem supports  explicit calculations
      in algebraic topology, geometric topology, and combinatorial group theory.  In particular, we give the first algorithm to determine whether a power of a word is a \(k\)-fold nested commutator while encoding commutator structure in any group presented by generators and relations.
  \end{abstract}

\maketitle

  \begin{center}
      {\em To the memory and loved ones of the second author.}
  \end{center}


\section{Introduction}\label{intro}

This work revisits a classical question: what is the capacity of rational or real cochains to ``measure'' elements of a fundamental group?
Rational homotopy theory and Chen's iterated integrals are two classical approaches to this question, but they have not been utilized for example to develop  concrete algorithms, in combinatorial group theory. 

While previous general computational approaches to rational homotopy theory afford abstract calculation \emph{of} the rational completion of a group, we build a toolkit to calculate \emph{in} this completed group, a distinction akin to that between the Isomorphism Problem \emph{of} groups and the Word Problem \emph{in} groups. In this work we revisit and recast classical tools into a form that is well tailored for algebraically presented groups and further interlaces the algebra of groups with intersection theory in manifolds.

Measurement is made through the logarithm map from a group to its Malcev Lie algebra, which occurs classically in the context of the rationalization of a space \cite{Quillen-rational} (see \cite{Ivanov-nonsimplyConnRationalization} and \cite{LieEncyclopedia}). Our resulting theory is effective: we give algorithms for calculations in general groups, which for example produce all Vassiliev invariants of pure braids.

Realizing a group $\Gamma$ as the fundamental group of a pointed topological space $X$,
we produce homotopy invariants of loops, which yield well-defined functions on $\Gamma$. Recall that (co)homology of loop spaces has been studied using the \emph{bar and cobar constructions} since the 1950's, starting with work of Adams, Hilton, Eilenberg and Moore.  These are complexes
built from tensor powers of cochains (or differential forms), computing cohomology of loop spaces in many cases. Chen's iterated integrals provide a concrete formula for these invariants of loops using differential forms on manifolds: given $1$-forms $\omega_1,\ldots,\omega_n$ on a smooth manifold $X$, and a path $s\maps [0,1]\to X$, set
\[
\int_s (\omega_1|\omega_2|\ldots| \omega_n) := \int_{0<t_1<\ldots<t_n<1} f_1(t_1)f_2(t_2)\ldots f_n(t_n) dt_1\ldots dt_n
\]
where $f_i(t)dt = s^*(\omega_i)$.
Chen \cite{chen1971pi1} shows that these integrals detect the image of a loop $[s]\in\pi_1(X,x)$ in the rational completion of the fundamental group.

Following ideas of Hain \cite{Hain-LinkGroups}, 
consider the case where each form $\omega_i$ is supported near a hypersurface $M_i\subseteq X$ and counts intersection with it, which are known as \emph{Thom forms}. In this case,
the Chen integral  counts the number of times the path $s$ meets the hypersurfaces $(M_1,\ldots,M_n)$ \emph{in this order}. 
Specializing $X$ to the complement of a link in $S^3$, Hain used this approach to give a geometric proof for Milnor's presentation of link groups $\pi_1(X,x)$ after unipotent completion. 

We build upon and combinatorialize this idea in two ways. {First}, we work with algebraic presentations of the group $\Gamma$ via generators and relations. This algebra connects with the above topological story through an explicit manifold equipped with $1$-forms that count appearances of particular group generators in a word. We thus reduce Chen integrals into what is effectively an entirely algebraic and combinatorial theory, but which still retains the tight relationship with intersection theory.

Our algebraic perspective has one consider ordered counts of generators $a_1\ldots,a_n\in \Gamma$ as they appear in words $w(a_1,\ldots,a_n)\in \Gamma$, which we call \emph{letter braiding functions}: e.g. $(A_1\to A_2)$ counts the (signed) number occurrences of $(\ldots a_1^{\pm} \ldots a_2^{\pm}\ldots )$ in words such as 
$$ \bigl[a_1^{-1}a_2a_1,\;a_2a_1a_2^{-1}\bigr] = a_1^{-1}a_2a_1a_2a_1a_2^{-1}a_1^{-1}a_2^{-1}a_1a_2a_1^{-1}a_2^{-1}$$
on which $(A_1\to A_2)=1$, see the exposition in Subsection \ref{combalg} below.
In the free group setting, this could have been worked out fifty years ago, if not earlier, but to our knowledge it was not. Our functions generalize the Magnus expansion\footnote{In fact, we naturally arrive at a rational variant of the Magnus expansion: the \textit{exponential expansion} which is defined on generators by $a_i^{\pm}\mapsto exp(\pm A_i) = 1\pm A_i+\frac{1}{2}A_i^2\pm \frac{1}{3!}A_i^3+\ldots$ and extended multiplicatively.} and Fox calculus to all groups, though they only
``see'' a group through
the commutator filtration
of its Malcev Lie algebra. 
These counts serve as obstructions to some power of a word lying in a given $k$-fold
commutator subgroup, as stated in Theorem~\ref{thm:into-combinatorial-alg} and proven at the end of Section~\ref{structural}. 
These functions have been implemented in Python and SageMath by the second and fourth authors. 
A fundamental question we then answer is the following.
\begin{question}
    Which of these ordered letter counts are well-defined in the presence of relations?
\end{question}
One can set up linear equations in search of linear combinations of counts 
that vanish on all products of conjugates of relations, but that would not lead to a terminating algorithm, even when word-length is restricted. Instead, we give a recursive, finite procedure in Section \ref{Algorithms}, 
which is best understood in the
context of rational homotopy theory.

To make computations manageable, we employ the Lie coalgebraic approach to rational homotopy theory \cite{Sinha-Walter1,sinha-walter2}.  From the perspective of Chen integrals, 
this eliminates tautological relations such as $\int (w_1|w_2)+\int (w_2|w_1) = (\int w_1)(\int w_2)$.
It has long been known as a basic tenet of Koszul--Moore duality that the indecomposable elements in the bar complex of a commutative algebra form a Lie coalgebra. In a relatively recent development, the last two authors used Lie coalgebras to model rational homotopy theory in \cite{Sinha-Walter1}, facilitating calculations as well as a geometric characterization of Hopf invariants
for higher homotopy groups. Restricting to indecomposables retains all information about the space of study, while also substantially reducing the dimensions one needs to consider in calculations.

Centering this coalgebraic model in our group theoretic pursuit, we describe a Lie cobracket structure on the collection of letter braiding functions. Our Theorem~\ref{LiftingCriterion} then characterizes these functions as follows (stated imprecisely here -- see \ref{sec:lifting} for details).
\begin{theorem}
    The letter braiding functions that
are well-defined on a presented group $\Gamma = \bigl\langle S\,\big|\,R\bigr\rangle$ form the \emph{maximal Lie coalgebra of functions on words in $S$ that vanish on $R$}, and they pair perfectly with the Malcev Lie algebra of $\Gamma$ when finitely generated.
\end{theorem}
Thus, we exchange the need to check vanishing on all products of conjugates of relations for a constraint on the Lie cobracket, which is a finite-dimensional linear algebraic condition and thus readily computable.

While the application to combinatorial group theory is our broadest and most-refined, in this paper we also circle back to geometric topology and show that the ideas developed here can yield further insight through ad-hoc analysis.  For an illustrative example, we characterize classes of ``linear and quadratic'' functions on pure braid groups:
\begin{itemize}
    \item linear functions count crossings of pairs of strands, while 
    \item quadratic functions are sensitive not only to the internal ordering within pairs of crossings but also to crossings that occur ``to the right'' of a third strand. See Example \ref{purebraid} below.
\end{itemize}

One can abstractly formalize the approach of our work in the following simple and very general terms, generalizing the process of evaluating a cohomology class on the fundamental class of a manifold.
\begin{definition}\label{HopfFormalism}
Let $C^*$ be a type of cochains (a functor from topological spaces to chain complexes with some additional structure), such as singular or de\,Rham, suppose $F$ is a homotopy invariant functor on such cochain objects, and let $H_F^*(X)$ denote the cohomology of $F(C^*(X))$.
Suppose one fixes a map $\int_{S^1}$ from 
$H_F^*(S^1)$ to some abelian group.  
Now, for any $\tau \in H_F^*(X)$ we define the {\bf $C$-$F$-valued
Hopf invariant} associated
to $\tau$ to be the function on
$\pi_1(X)$
sending $f: S^1 \to X$ to $\int_{S^1} f^* \tau$.  
\end{definition}
Our main theorems below imply that the
geometric intersection counts and algebraic letter linking counts mentioned above are
de\,Rham-Harrison-valued Hopf invariants -- the cochains $C^*$ are the de\,Rham complex and $F$ is the Harrison complex of a commutative dg-algebra (see Subsection \ref{categorical}).
Moreover, de\,Rham-Harrison-valued Hopf invariants pair universally, and thus perfectly, with the Malcev Lie algebra of a group.

To help weave together perspectives from algebraic topology, geometric topology, and combinatorial algebra for readers with different backgrounds,  
we have written further introductory sections, which can be read independently and in any order, with
readers encouraged to explore their favorite ``side of the elephant''\footnote{\url{https://tinyurl.com/t7xdssnu}}. In Section~\ref{categorical}, we give the formal framework for results, which comes from algebraic topology. In Section~\ref{combalg}, we elaborate our first applications in combinatorial group theory. In Section~\ref{geomtop}, we share basic motivating examples which come from geometric topology.

To serve readers of different backgrounds, we offer the following guide for a first pass through the paper.  We hope this is in particular helpful for readers not experienced with topics such as 
the bar complex, Thom forms, or the Malcev Lie algebra, the first two of which are
elaborated in the Appendix. 
We suggest that such a reader first take their time with Sections~\ref{combalg}~and~\ref{geomtop}. After that, they could go to the first part of  appendix~\ref{basicbar} to learn the basics of the
bar construction. From there, a reader who is familiar with de\,Rham theory \cite{BottTu} could review properties of Thom forms in Theorem~\ref{Thom}. (If a reader does not have experience with de\,Rham cohomology, they could still follow if they are willing to believe that submanifolds can define cochains with `pullback' being preimage, `product' being intersection, and `coboundary' being boundary.)
With the bar complex and Thom forms on hand, a reader
is prepared for the critical central example of Section~\ref{FreeCase}, which shows explicitly how the formal framework of ``pull-back, 
weight-reduce, evaluate'' leads to
counting intersections of a loop with submanifolds,
accounting for order and orientation.  
For handlebodies, this in turn leads to letter braiding
as introduced in Section~\ref{combalg}, establishing our theory for free groups. From there, a reader could go to the Examples of Section~\ref{examples}.
Finally, a first pass could end with
a view of the Lifting Criterion in 
Section~\ref{sec:lifting}, as
motivation for the Lie coalgebraic formulation we employ.

\medskip

    {We would like to thank Greg Friedman, who provided valuable feedback, as well as the family of the second
author, who shared some of his private work with us
after his untimely passing. We also thank Richard Hain for offering his perspective and helping us understand the context in which our work resides.}

\tableofcontents

\subsection{Infinitesimal group theory and rational homotopy}\label{categorical}

We first recall basic facts about  Malcev Lie algebras.
The {\bf Baker-Campbell-Hausdorff expansion} 
formally substitutes the product of power
series for $e^x$ and $e^y$ into the series for $\log(1+x)$ to 
express the logarithm of a product of exponentials in a noncommutative algebra as
\begin{align*}
\BCH(x,y) &:= \log(e^x e^y) = \log(1 + (e^x e^y - 1)) \\ 
          &= x+y+\frac{1}{2}[x,y]+\frac{1}{12}\bigl([x,[x,y]] + [y, [y,x]]\bigr) -\frac{1}{24}[x,[y,[x,y]]] + \cdots,
\end{align*}
where $[a,b] = ab - ba$.

The Malcev Lie algebra of a group $G$ is a complete Lie algebra $\Mal G$ equipped with a function $\log\maps G\to \Mal G$ satisfying the {\bf BCH equality}
\begin{align*}
\log(\gamma \cdot \gamma') &= \BCH\bigl(\log(\gamma),\log(\gamma')\bigr) \\ 
&= \log(\gamma)+\log(\gamma') + \frac{1}{2}[\log(\gamma),\log(\gamma')]+\cdots,
\end{align*}
and having the universal property that any function $G\to L$ into a {nilpotent} Lie algebra $L$ satisfying the BCH equality factors uniquely through a Lie algebra homomorphism $\Mal G\to L$. The Malcev Lie algebra
is thus the completed Lie algebra topologically generated by the elements $\log(\gamma)$, subject only to the relations coming from the BCH equality.
Quillen in \cite{Quillen-rational} presented $\Mal G$ as the set of primitive elements in the rational group ring 
completed at the ideal generated by $(\gamma-1)$ for all $\gamma\in G$, with $\log(\gamma) = \sum \frac{(-1)^{n+1}}{n}(\gamma-1)^n$.

In this paper we carefully develop and apply a new
dual notion to $\Mal{G}$.
In the finitely generated setting, functions on a Lie algebra inherit the dual structure of a Lie coalgebra, which can be briefly understood as a vector space $E$ equipped with a cobracket $\cobr{\cdot} :  E\to E\otimes E$ such that the dual space $E^\#$ naturally forms a Lie algebra. We develop cofree Lie coalgebras more fully in Section \ref{sec:prelims}.

\begin{definition}\label{LiePairing}
A $\Q$-valued pairing $\langle -, - \rangle_E$ of a Lie coalgebra $(E,\,\cobr{\cdot}\;)$ 
with the Malcev Lie algebra $\Mal G$ is
a {\bf Lie pairing} if
\begin{equation*}
\bigl\langle \,\cobr{\tau}\,, a\otimes b \bigr\rangle_E = \bigl\langle \tau, [a,b] \bigr\rangle_E,
\end{equation*}
where on the left  $E^{\otimes 2}$ pairs with 
$\Mal{G}^{\otimes 2}$ by $\langle \alpha\otimes \beta , a\otimes b\rangle_E = \langle \alpha,a\rangle_E \cdot \langle \beta,b\rangle_E$.

Such a pairing is {\bf universal} if, for any other Lie pairing $\langle -, - \rangle_{E'}$, there exists a unique homomorphism $\varphi : E' \to E$ of Lie coalgebras 
such that  $$\bigl\langle \,{\tau}, a \,\bigr\rangle_{
E'} = \bigl\langle \, \varphi(\tau),  a\, \bigr\rangle_E.$$

A Lie coalgebra equipped with a universal Lie pairing with $\Mal G$ is called the {\bf Lie dual} of $G$. It is unique up to unique isomorphism.
\end{definition}

\color{black}



Lie pairings extend the notion of 
homomorphisms from $G$ to the rational numbers. Indeed, if $\tau\in E$ satisfies $\cobr{\tau}=0$ then the pairing with $\tau$ automatically vanishes on all commutators in $\Mal{G}$. Thus 
\begin{equation}\label{wtzeropairing}
\bigl\langle \,{\tau}, \log(ab) \,\bigr\rangle_{E}
 = \bigl\langle \,{\tau}, \log(a) + \log(b) \,\bigr\rangle_{E}
 = \bigl\langle \,{\tau}, \log(a) \,\bigr\rangle_{E}
 + \bigl\langle \,{\tau}, \log(b) \,\bigr\rangle_{E},
\end{equation}
so the Lie pairing with $\tau$ defines a rational functional on the abelianization of $G$. The kernel of the cobracket of a Lie coalgebra is called the {weight one subspace} 
(see Proposition~\ref{cofree conilpotent}).
By Equation~(\ref{wtzeropairing}), a Lie pairing restricts to a bilinear pairing between the weight one subspace of $E$ and the abelianization
of $G$. In other words, a
Lie pairing naturally extends homomorphisms $G\to \Q$, 
and as we show below, the Lie dual of $G$ has weight one subspace canonically isomorphic to $G^\# := \Hom_{Grp}(G, \Q)$.

When $G^\#$ is finite dimensional, a universal Lie pairing on $E$ defines an isomorphism of Lie algebras $E^\# \cong \Mal G$. But when $G^\#$ is infinite dimensional, 
the Lie dual is distinct from the standard linear dual of $\Mal{G}$, which is no longer in general a Lie coalgebra, since the putative
cobracket could have image involving 
infinite sums.

Lie coalgebras arise  in studying commutative differential graded algebras, and in particular
commutative cochain complexes.   Given a commutative augmented differential graded algebra $A$, Harrison constructed a cochain complex $\liebar{A}{*}$ computing its derived indecomposables (see Definition~\ref{D:E} and Theorem~\ref{HarrisonDerivedIndec}). 
These derived indecomposables
naturally form a Lie coalgebra, the central example for operadic Koszul-Moore duality \cite{Koszul-Moore-survey}. 

Rational homotopy theory relates the Harrison cohomology of a commutative cochain model for a topological space $X$ with the linear dual of rational homotopy groups of $X$, matching the Lie coalgebraic structures on both sides. 
The third and fourth authors made this isomorphism explicit
and geometric for higher homotopy groups in the simply connected setting in \cite{sinha-walter2}. 
The present work extends this to fundamental groups.  

Let $X$ be a pointed simplicial set or  differentiable manifold  and $A^*(X)$ the de\,Rham (PL or smooth, respectively) cochains  on $X$.
Following Definition~\ref{HopfFormalism}, we first show in Proposition~\ref{Ground} that there exists a canonical isomorphism $\int_{S^1}\maps \HHarr[\big]{0}{A^*(S^1)} \xrightarrow{\sim} \Q$. 
This canonical isomorphism gives rise to the Hopf-invariant pairing between $\HHarr[\big]{0}{A^*(X)}$ and $\pi_1(X)$ via pull-back and evaluation: 
with the value of a Harrison cocycle $\tau$ on some $f: S^1 \to X$  given by $\int_{S^1} f^* \tau$.

Recall that because $\log$-series of group elements topologically generate the Malcev Lie algebra, a pairing defined on those extends uniquely to the entire algebra.  Our main structural result is the following.   
    
\begin{theorem}\label{thm-intro:universal property}
    The Hopf invariant pairing defined by
    \[
    \bigl\langle \tau,\, \log(\gamma) \bigr\rangle = \int_{S^1} \gamma^*\tau
    \]
    determines a universal Lie pairing of  $\HHarr[\big]{0}{A^*(X)}$ with $\pi_1(X)$.
\end{theorem}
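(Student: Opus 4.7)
The plan is to establish the theorem in three stages: well-definedness of the pairing on $\Mal{\pi_1(X,*)}$, verification of the Lie pairing axiom, and finally the universal property.

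First I would check that the assignment $\gamma \mapsto \int_{S^1}\gamma^*\tau$ depends only on the based homotopy class of $\gamma$ and extends to a function on the Malcev Lie algebra. Homotopy invariance follows because a degree zero class in $\liebar{A^*(X)}{*}$ lifts to a cocycle in the bar complex $\Bar{A^*(X)}$ whose evaluation on loops is a Chen-type iterated integral, hence invariant under based homotopy. To extend to $\Mal{\pi_1(X,*)}$, I would verify the BCH identity $\bigl\langle \tau, \log(\gamma\gamma')\bigr\rangle = \bigl\langle \tau, \BCH(\log\gamma, \log\gamma')\bigr\rangle$ by combining the shuffle-compatibility of the bar comultiplication with loop concatenation with the defining property that Harrison cocycles are annihilated by shuffle products. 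Since the pairing then satisfies the BCH equality on the image of $\log$, the universal property of $\Mal{G}$ produces a unique extension to a continuous functional on the whole Malcev Lie algebra.

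Second, I would verify the Lie pairing identity $\bigl\langle \,\cobr{\tau}\,, a\otimes b \bigr\rangle = \bigl\langle \tau, [a,b]\bigr\rangle$. The cobracket on $\HHarr{0}{A^*(X)}$ is induced from the deconcatenation coproduct on the bar complex under the identification with Harrison cochains. Evaluating on $\log(\gamma)\otimes\log(\gamma')$ and unwinding the definitions reduces the identity to extracting the quadratic term in the BCH expansion for the commutator $\gamma\gamma'\gamma^{-1}\gamma'^{-1}$, which is $[\log\gamma,\log\gamma']$. This is essentially the dual of the classical identity relating the bar differential on length-two tensors to the Lie cobracket, and it is the fundamental-group analogue of the computation carried out for higher homotopy in \cite{sinha-walter2}.

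Third, and this is the main obstacle, I would establish universality. Given any Lie coalgebra $E'$ with a Lie pairing $\langle -,-\rangle_{E'}$ to $\pi_1(X,*)$, the task is to construct a unique Lie coalgebra morphism $\varphi\maps E' \to \HHarr{0}{A^*(X)}$ compatible with both pairings. My approach is to compare both sides via the completed linear dual of $\Mal{\pi_1(X,*)}$. The pairing on $E'$ gives a continuous Lie algebra map $\Mal{\pi_1(X,*)} \to (E')^{\#}$, dual to a map of Lie coalgebras from $E'$ into the Lie coalgebraic dual of $\Mal{\pi_1(X,*)}$. The Koszul--Moore duality identifying the derived indecomposables of the commutative cochain model with the Lie coalgebraic dual of $\pi_1$ then realizes this target as precisely $\HHarr{0}{A^*(X)}$, producing $\varphi$. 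Uniqueness follows because $E'$ is cogenerated by the iterated cobracket together with its weight-one part, both of which are pinned down by their pairings with topological generators $\log(\gamma)$ of $\Mal{\pi_1(X,*)}$.

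The hard part is the last step: the paper works with possibly infinite-dimensional $G^{\#}$, so $\Mal{G}$ is pro-nilpotent and its ``dual'' must be interpreted as a Lie coalgebra built from weight-finite pieces rather than a naive linear dual. I would therefore need to verify that the weight filtration on $\HHarr{0}{A^*(X)}$ matches the lower central series filtration on $\pi_1(X,*)\otimes\Q$ term by term, which is where both the Quillen-model comparison and the convergence of the iterated integral pairing enter crucially.
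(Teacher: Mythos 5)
Your Stages 1 and 2 track the paper's own route: the paper also establishes homotopy invariance, the extension to $\Mal{\pi_1(X)}$, and bracket--cobracket compatibility by passing through Chen's iterated integrals on the bar complex and descending to the Harrison quotient by shuffles (together with Gugenheim's comparison to identify the weight-reduced value on $S^1$ with the coefficient of $\log(\gamma)$). So far, so good.

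Stage 3, however, has a genuine gap: it is circular. You propose to produce $\varphi\maps E'\to H^0_{\mE}(A^*(X))$ by dualizing the map $\Mal{\pi_1(X)}\to (E')^{\#}$ and then invoking ``Koszul--Moore duality identifying the derived indecomposables of the commutative cochain model with the Lie coalgebraic dual of $\pi_1$.'' But that identification \emph{is} the statement of the theorem: there is no off-the-shelf duality theorem that identifies $H^0_{\mE}(A^*(X))$ with a universal recipient of Lie pairings for an arbitrary (possibly non-nilpotent, possibly infinitely generated) fundamental group, and in the infinite-dimensional case even the existence of a ``Lie coalgebraic dual'' of $\Mal{G}$ cannot be presupposed, since the naive linear dual of $\Mal{G}$ carries no cobracket (the putative cobracket lands in infinite sums). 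Your closing remark that one ``would need to verify that the weight filtration on $H^0_{\mE}(A^*(X))$ matches the lower central series filtration term by term'' is not a loose end of the argument -- it is essentially the whole content, and your plan gives no mechanism for it. The paper supplies that mechanism in two steps your proposal has no substitute for: (i) for $X=\bigvee_S S^1$ it proves universality directly, extending the weight-one evaluation $\varphi_0\maps C\to\Q^S$ to $\cofreeE(\Q^S)$ by cofreeness and using the vanishing of all higher-weight Hopf invariants on the generator circles, then propagating agreement through brackets via BCH; and (ii) for a general two-complex it shows via the Lifting Criterion (proved by the Lifting Lemma and the Circles-and-Disks Lemma, a cochain-level section construction that is the technical heart of the paper) that $i^*\maps H^0_{\mE}(A^*(X))\to\cofreeE(\Q^S)$ is an isomorphism onto the subcoalgebra $E_R$ of classes whose Hopf invariants vanish on all relations, through which any candidate $\varphi$ must factor. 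Without an argument replacing (i) and (ii), your universality step does not go through.
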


Thus the zeroth Harrison cohomology of $X$ is the natural
Lie extension of the evaluation of cohomology on homotopy,
where     $\pi_1(X)^\# \cong H^1(X;\Q)$.
While conceptually satisfying, we see the true value of this theorem in that the right hand side can be explicitly analyzed.  Whenever a fundamental group is tied to a space of interest, or such a space can otherwise be constructed,  Harrison cohomology provides  a means for calculations in its Malcev Lie algebra.  We provide 
 applications in algebra and in geometric topology, as explained next.

\begin{remark}
    The Harrison complex is central in the theory of $C_\infty$-algebras,
and our theory extends there.  
In Remark~\ref{C-infty} we  outline  progress the second author
made along these lines before his untimely passing.  But we leave it to experts to deduce the straightforward generalization to the $C_\infty$ 
setting.  
\end{remark}
 
\subsection{Combinatorial group theory}\label{combalg}

Our framework leads to a remarkably elementary combinatorial approach to understanding the radical of the lower central series of presented groups in terms of combinatorial  ``letter braiding'' counts 
on words.  
We encourage readers
to regularly refer ahead to Example~\ref{BraidingProductExample} to see 
definitions illustrated as we make them.

\begin{definition}
    An {\bf $[n]$-tuple} is a function from $[n] = \{1, 2, \ldots, n\}$ to an abelian group, here exclusively the additive group of the rational numbers.
If $f$ is an $[n]$-tuple,  define $\int f$ to be the sum $\Sigma_{i=1}^n \ f(i)$. 
\end{definition}

Our slight embellishment of notation is a reminder that
these tuples should be viewed
as functions on $[n]$, in
which case integration is a natural operation.  Informally, these will be functions on the letters
of a word.

\begin{definition}
Let $F_S=\langle S\rangle$ be the free group generated by a set $S$,
and consider a length \(n\) word $w = s_1^{\epsilon_1} \ldots s_n^{\epsilon_n}$ where $s_i \in S$ 
and $\epsilon_i = \pm 1$.
  Given $h\maps F_S \to \Q$ a homomorphism, 
  define its {\bf induced $[n]$-tuple} $w^*h \maps [n]\to \Q$ 
  to be the function $w^*h(i) =  h(s_i^{\epsilon_i}) = \epsilon_i\,h(s_i)$.
\end{definition}

For induced $[n]$-tuples, $\int w^*h \;  = h(w)$, recovering the value of the homomorphism on the word.
In this case, the integral is a linear combination of 
signed counts of occurrences of generators.



\begin{definition}\label{BraidingProduct}
Given a word $w$ of length $n$ and \(i,j\) with $1 \leq i, j \leq n$, define 
    $i \lew j$ to mean  $i<j$ if $\epsilon_i=1$ or $i\le j$ if $\epsilon_i=-1$
    (i.e. \(i \lew i\) only if the \(i^\mathrm{th}\) letter of \(w\) is an inverse).

    Given $w$ and $f$, a word of length $n$ and an $[n]$-tuple  as above, 
    define the \textbf{cobounding} of $f$ in $w$ to be the 
    $[n]$-tuple  given by incremental sums 
    $\lpw f(j) = \sum_{i \lewsm j} f(i)$.

    If $g$ is another $[n]$-tuple, define the 
    \textbf{braiding product} of $f$ and $g$ over $w$
    to be the point-wise product 
    $\lpw f g$. As this is again an $[n]$-tuple, the process can be iterated further.
\end{definition}


Note that \(\lpw f\) is a ``discrete anti-derivative'' of \(f\) 
``relative to \(w\).''
Integrating braiding products of induced 
$[n]$-tuples formalizes a process of counting
ordered configurations of letters.

\begin{example}\label{ex:first braiding example}
In $F_S = \langle a,b,c\rangle$
consider the {\bf indicator homomorphism} $A$, sending $a \mapsto 1$
and the other generators to zero, with $B$ and $C$ defined
similarly. In this case $\int w^*A$ counts signed occurrences of the letter $a$ in $w$.
A short exercise shows that integrating the braiding product  
 $\int \lpw{w^* A}{w^* B}$ counts  configurations of letters 
 ``$a$ then $b$''  
 in $w$, with a sign which is switched if either letter occurs as an inverse. 
 
For example if $w = a^{\inv} b b a b$, then
 $\int\lpw{w^* A}{w^* B} = -2$.  
 This also illustrates the general fact that 
 if $\int w^* A$ vanishes, then 
 $\int \lpw{w^* A}{w^* B}$ may be  computed by counting 
 configurations of ``$b$ between canceling $a$-$a^\inv$ pairs'', using {any} 
 choice 
 pairing off $a$ and $a^\inv$ occurrences in $w$.  
 This is because any $b$'s coming \emph{after} such pairs contribute cancelling values.
 We call counting  configurations of one letter between 
 cancelling pairs of another (and its generalizations) ``letter linking,'' discussed further below.

 This behavior continues. For example
 $\int \lpw[\big] {\lpw{w^* A}{w^* B}} w^*C$ counts 
 signed configurations 
 of the form ``$(\ldots a^{\pm} \ldots b^{\pm} \ldots c^{\pm}\ldots)$''
 in $w$.
 See Section~\ref{sec:configuration braiding} for 
a precise formulation of such letter counting in general.
\end{example}

 The  subtlety is in the definition of $\lew$. 
Such a convention is necessary for words like $w = a a^{-1}$ to give
$\int \lpw{w^* A} w^*A = -1 + 1 = 0$, rather than
being $-1$ if one had naively counted only the configuration ``$a$-then-$a$".
Instead of being an ad-hoc solution making our counts well-defined on the free group, this convention is dictated by topology, as we will show in Section~\ref{topcombo}.

Extending these ideas, 
we use iterated braiding products of induced $[n]$-tuples to define 
functions on the free group.
We first introduce more compact notation.

\begin{definition}\label{def:symbol}
    A {\bf formal braiding symbol}  in a set of formal variables, say $X_1, X_2, \cdots$ is, inductively,
    a formal linear combination of expressions, each of
    which is either a variable $X_i$ or has the form $\lp \alpha \beta$ or $\beta \lp \alpha$, where $\alpha$ and
    $\beta$ are formal braiding symbols.
    The {\bf weight} of a symbol is 
    the largest number of variables involved 
    in a homogeneous term (applying multilinearity).

    A {\bf braiding symbol} is a pair $\sigma_{\vec h}$ 
    where $\sigma$ is a formal braiding symbol and $\vec{h} = h_1, h_2, \cdots$
    are homomorphisms $F_S\to \Q$.
\end{definition}

In examples, we shorten notation by incorporating  homomorphisms in the symbol --
for example $\lp {h_1} {h_2}$ stands for the braiding symbol $(\lp {X_1} {X_2})_{h_1,h_2}$.
Braiding symbols give recipes for specifying $[n]$-tuples associated to any word.

\begin{definition}\label{letterlinkingdef}
    If $\sigma_{\vec h}$ is a braiding symbol and $w$ is a word of length $n$, the  {\bf associated $[n]$-tuple}, denoted $w^* \sigma_{\vec{h}}$, is obtained by performing the prescribed braiding products recursively.  That is,
    \[
    w^* (\lp {\alpha_{\vec{h}}} \beta_{\vec{h}}) = \lpw {w^* \alpha_{\vec{h}}} w^*\beta_{\vec{h}},
    \]
    starting with $w^* (X_i)_{\vec{h}} =w^*h_i$ as defined above, and extending linearly for linear combinations.  The {\bf value}
    of the symbol on $w$ is $\sigma_{\vec{h}} (w) = \int w^*\sigma_{\vec{h}}$.  We call the  assignment
    $w \mapsto \sigma_{\vec{h}}(w)$ the {\bf letter braiding function} associated to the symbol $\sigma_{\vec{h}}$.
\end{definition}

Thus what we previously denoted $\int \lpw{w^* A} w^*B$ becomes $\lp{A}B (w)$.  

To recap, letter braiding functions are functions on the
set of words, and they are defined through construction and integration of functions on the sequence of letters of a word.

\begin{example}\label{BraidingProductExample}
Let $w=bca b^\inv c^\inv bb$ in $F_S = \langle a, b, c \rangle$ and 
use indicator functions $A$, $B$, $C$ as in Example~\ref{ex:first braiding example}.
The table below gives a step-by-step computation of the braiding product
$\lp{B-2A} C$.  The caret marks in the row for $\lp{B-2A}$ clarify
the partial sums of the cobounding by indicating the point up to which the summation has been performed, as dictated by $\lew j$ to include $j$
or not.
\begin{center}
\begin{tabular}{c|ccccccc}
 $w$ & $b$ & $c$ & $\ a$ & $\ b^\inv$ & $\,c^\inv$ & $b$ & $b$ \\ \hline\hline \\[-2ex]
 $B-2A$ & $1$ & $0$  & $-2$ & $-1$ & $0$ & $\;1$ & $\;1$ \\ \hline \\[-2ex]
 $\lp{B-2A}\!$ & ${\hat{}}\,\,0\,\,$ 
                 & ${\hat{}}\,\,1\,\,$
                 & $\,{\hat{}}\!\phantom{-}1\,$
                 & $\,{-2}\,\,{\hat{}}$
                 & ${-2}\,\,{\hat{}}\,$
                 & ${\hat{}}\,{-2}\ $
                 & ${\hat{}}{-1}$ \\
 $C$ & $0$ & $1$ & $\phantom{-}0$ & $\phantom{-}0$ & $-1\ $  & $\;0$  & $\;0$ \\ \hline \\[-2ex]
 $\lp{B-2A} C$ & $0$ & $\,1$ & $\phantom{-}0$ & $\phantom{-}0$ & $\ 2$ & $\ 0$ & $\ 0$\\ 
\end{tabular}
\end{center}
We deduce that $\lp{B-2A} C\,(w) = 1+2=3$.
Note that this could also be computed by combining 
``$b$ then $c$'' and ``$a$ then $c$'' configuration counts as remarked in
Example~\ref{ex:first braiding example}.  Or, because $(B-2A)(w)=0$, by configurations of 
``$c$ between $b$ or $a$ pairs'' following a pairing scheme matching $b$ with either
$b^{-1}$ or $a$ (in which each $a$ counts twice).

\end{example}

We omit the elementary proof of the following fact, which critically depends on the definition of $\lew$.

\begin{proposition}
    Letter braiding functions on words descend to well-defined functions on free 
    groups.
\end{proposition}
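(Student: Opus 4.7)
The plan is to verify invariance under the defining relations of the free group, namely under insertion of a cancelling pair $ss^{-1}$ or $s^{-1}s$ in a word. By linearity in $\sigma_{\vec h}$ it suffices to consider a single symbol, and by symmetry we may restrict to the insertion case $w' = u \cdot ss^{-1} \cdot v$, $w = uv$, with $|u| = k$, so that the inserted letters occupy positions $k+1$ and $k+2$ of $w'$ with signs $\epsilon_{k+1}=+1$ and $\epsilon_{k+2}=-1$.

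I would prove by induction on the weight of $\sigma$ the following strengthened statement, writing $\tilde{f} := w^* \sigma_{\vec h}$ and $f := w'^* \sigma_{\vec h}$: the d-function $f$ agrees with $\tilde{f}$ on positions $i \le k$, satisfies $f(i+2) = \tilde{f}(i)$ for $i > k$, and cancels on the inserted pair, $f(k+1) + f(k+2) = 0$. Summation then yields $\int f = \int \tilde{f}$, giving the desired equality $\sigma_{\vec h}(w') = \sigma_{\vec h}(w)$. The base case is immediate: for a homomorphism $h \maps F_S \to \Q$, one has $w'^* h(k+1) = h(s)$, $w'^* h(k+2) = -h(s)$, and agreement on the other positions is obvious.

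The inductive step for $\sigma = \lp{\alpha}{\beta}$ rests on the following consequence of the convention defining $\lew$: if a d-function $g$ on $w'$ satisfies $g(k+1) + g(k+2) = 0$, then
\[
\lpw g (k+2) - \lpw g (k+1) = \sum_{i \le k+2} g(i) - \sum_{i < k+1} g(i) = g(k+1) + g(k+2) = 0,
\]
where the expansion uses $\epsilon_{k+1}=+1$ and $\epsilon_{k+2}=-1$. Writing $f = w'^*\alpha_{\vec h}$, $g = w'^*\beta_{\vec h}$ and applying the inductive hypothesis to both $\alpha$ and $\beta$, we deduce $\lpw{f}(k+1) = \lpw{f}(k+2)$ from the cancellation of $f$, and $g(k+1) + g(k+2) = 0$ from that of $g$, whence
\[
(\lpw f g)(k+1) + (\lpw f g)(k+2) = \lpw f (k+1)\bigl(g(k+1) + g(k+2)\bigr) = 0,
\]
the desired cancellation at the inserted pair. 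Agreement at unaffected positions follows from the same observation: for any $j \ne k+1, k+2$, the inserted indices either both or neither lie in the range of the partial sum $\lpw f (j)$, so their net contribution is $f(k+1) + f(k+2) = 0$, while the shifted identification for $j > k+2$ is immediate since the letters and signs at positions $j$ in $w'$ and $j-2$ in $w$ match.

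The symmetric case $w' = u \cdot s^{-1} s \cdot v$ is handled identically, with $\epsilon_{k+1} = -1$ and $\epsilon_{k+2} = +1$; a direct computation shows $\lpw g (k+1) = \lpw g (k+2)$ in this case as well, reflecting that the asymmetry in the definition of $\lew$ was engineered precisely to accommodate both forms of the cancelling relation. The main technical point is careful bookkeeping around the index shift $i \mapsto i+2$ and the sign-dependent conventions at the inserted boundary, but no conceptual obstacle arises once the correct invariant to carry through the induction has been identified.
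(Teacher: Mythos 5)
Your proof is correct and is essentially the argument the paper has in mind for its omitted ``short, elementary proof'': reduce to insertion of a cancelling pair, then induct on the weight of the symbol, showing that the associated d-function is unchanged away from the inserted positions and has cancelling values at them, with the asymmetric convention in $\lew$ doing exactly the work you identify. The strengthened induction hypothesis you carry (agreement off the pair plus cancellation at the pair) is precisely the right invariant, so no further changes are needed.
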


While letter braiding functions are elementary combinatorial objects, 
we will connect them with larger frameworks from geometry and rational
homotopy theory under conditions we call the ``eigen-setting.'' In this setting, 
 multiple approaches to invariant functions on groups coincide and all have integer values. One such theory consists of the subset of letter braiding functions with all proper subsymbols 
vanishing. These are called ``letter linking functions'', as given in Example~\ref{ex:first braiding example}, and  further discussed in Definition~\ref{def:lp}
and Theorem~\ref{freecase}.

Letter linking functions are the subject of \cite{monroe-sinha}, whose main result is that letter linking functions associated to symbols decorated by indicator homomorphisms 
are sharp obstructions for the lower central series
of free groups.  
For example $\lp A B (a b a^{\inv} b^{\inv}) = 1$,
as there is one $b$ in between  an
$a$-$a^{\inv}$ pair.  Since the $a$'s and $b$'s have
a non-zero ``linking number'', the word $a b a^{\inv} b^{\inv}$ is not
a two-fold commutator.

One of our main present results is to generalize
this to the radical of the lower central series for all groups.
Let $q: F_S \to G$ present a group $G$ as a quotient of a free group $F_S$, and say a 
letter linking function of $F_S$ {descends to} $G$
if the function is constant on cosets of $q$.
Our Lifting Theorem in  Section~\ref{sec:lifting} characterizes the letter
linking functions that descend to a quotient group. Clearly, such functions must vanish on relations in $G$, but this is by itself insufficient.  Our theorem gives a complete criterion using the language of Lie coalgebras, developed in Section~\ref{sec:prelims}. Functions on $G$ in hand, the following theorem quantifies the information they capture.

\begin{theorem}\label{thm:into-combinatorial-alg}
Let $w\in F_S$ be a word
representing an element of $G$.  Then some power
of $w$ is a product of $k$-fold nested commutators in $G$ if and only if $w$ vanishes on all linking functions of weight $\leq k$ that descend to $G$.
\end{theorem}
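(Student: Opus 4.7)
The plan is to reduce the claim to a linear-algebraic statement via the universal Lie pairing of Theorem~\ref{thm-intro:universal property}, and then to translate that statement into vanishing of descending linking functions using the combinatorial identification in Section~\ref{FreeCase} and the Lifting Criterion of Section~\ref{sec:lifting}.

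First, I translate the commutator condition into the Malcev Lie algebra. By classical Malcev theory, an element $w\in G$ has some power lying in $\gamma_{k+1}(G)$ — the subgroup of products of $k$-fold nested commutators, consistent with the paper's convention where $[a,b]=aba^{-1}b^{-1}$ is one-fold and not two-fold — if and only if $\log(w)$ lies in the $(k+1)$-st term of the lower central series filtration of $\Mal{G}$. Rationalization in the Malcev construction is precisely what permits the passage to a power, absorbing any torsion ambiguity.

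Second, I pass to the Lie coalgebraic dual via Theorem~\ref{thm-intro:universal property}. The Lie dual $E=\HHarr{0}{A^*(X)}$ of $\pi_1(X,*)=G$ carries a weight filtration defined by iterated cobracket, and standard Lie coalgebra duality developed in Section~\ref{sec:prelims} matches its weight $\le k$ subspace with the linear dual of the quotient of $\Mal{G}$ by the $(k+1)$-st term of its lower central series. Consequently, $\log(w)$ lies in that filtration term if and only if $\langle \tau,\log(w)\rangle = 0$ for every $\tau\in E$ of weight at most $k$.

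Third, I interpret these pairings combinatorially. Taking $X$ to be a two-complex presenting $G=F_S/N$, Section~\ref{FreeCase} identifies the Hopf invariant pairing on the bouquet of circles realizing $F_S$ with a letter braiding function in the sense of Definition~\ref{letterlinkingdef}, and matches the weight of a Harrison cocycle with the weight of the corresponding braiding symbol. The Lifting Criterion of Section~\ref{sec:lifting} then characterizes exactly which braiding symbols extend over the two-cells of $X$ as precisely the letter braiding functions descending to $G$. Finally, the eigen-setting reduction supplied by Theorem~\ref{freecase} allows me, on any fixed $w$, to strip off subsymbols that vanish on $w$ and replace a non-vanishing braiding function with a non-vanishing linking function of no larger weight. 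Thus vanishing on all weight $\le k$ linking functions descending to $G$ is equivalent to vanishing on all weight $\le k$ Harrison cocycles, which by the previous step characterizes $\log(w)\in \Mal{G}^{\ge k+1}$.

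I expect the main obstacle to be the third step, particularly the careful alignment of two weight filtrations (the one on Harrison cohomology and the one on braiding symbols) and the execution of the eigen-setting reduction. The Lifting Criterion supplies the bookkeeping of which cocycles give functions descending to $G$, but verifying that the descending \emph{linking} functions of weight $\le k$ are rich enough to detect every nonzero class in the weight $\le k$ part of $E$ requires an inductive weight-by-weight argument combining Theorem~\ref{thm-intro:universal property} with the explicit free-group combinatorics of Section~\ref{FreeCase}.
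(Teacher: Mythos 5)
Your proposal is correct and follows essentially the same route as the paper: Quillen's comparison of the lower central series with the Malcev filtration, detection of $\gamma_{k+1}\Mal{G}$ by the weight $\le k$ part of the Lie dual, and the translation between linking functions and Hopf invariants via Section~\ref{FreeCase} together with the Lifting Criterion of Theorem~\ref{LiftingCriterion}. The only compressed spot is your second step: the claim that the weight $\le k$ subspace of $E$ pairs as the full dual of $\Mal{G}/\gamma_{k+1}\Mal{G}$ is not mere ``standard Lie coalgebra duality'' but is precisely where the paper uses universality, applying the universal property to the conilpotent coalgebra $\bigl(\Mal{G}/\gamma_{k+1}\Mal{G}\bigr)^{\#}$ to produce a weight $\le k$ element detecting $\log(w)$ — the argument you would need to spell out.
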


Proving this theorem at the end of 
Section~\ref{ProofFundamental} will be
the culmination of our technical work.
When $k=1$ this restates the classical fact the subgroup of $G$ on which all homomorphisms to $\Q$ vanish is the radical of the commutator subgroup $[G,G]$.  
The $k>1$ cases are new.  Moreover, this along with the Lifting Theorem gives rise to an algorithm to determine whether some power of a word is a $k$-fold commutator, an algorithm which has been implemented -- see Section~\ref{Algorithms}.  
Previous algorithms, both
 theoretical \cite{Chen-Fox-Lyndon} and implemented \cite{Nickel-Algorithm}, employ
 inductive calculation of the subquotients
 of the rationalized lower central series.  
 Our invariants
 are defined on the entire group and have
 a cobracket structure, leading to the
 first approach to these questions which organically retains information about commutators.

\bigskip

\subsection{Geometric topology}\label{geomtop} 

The geometric genesis of our work is illustrated in an elementary analysis of the Borromean rings,
which are depicted in Figure~\ref{borromean3}, going back to works of Massey \cite{Massey_1980} and further developed by Hain \cite{Hain-LinkGroups}.

\begin{figure}
\begin{subfigure}[b]{0.40\textwidth}
\centering
\begin{tikzpicture}[scale=.45]

\coordinate (1) at (-.7,.2);
\coordinate (2) at (5.0,.1);
\coordinate (3) at (-.7,-1.0);
\coordinate (4) at (6.1,.3);
\coordinate (1x) at (.3,1.2);
\coordinate (2x) at (.94,.35);
\coordinate (3x) at (.4,-1.6);
\coordinate (4x) at (-.3,-2.2);
\coordinate (1b) at (4.75,1.5);
\coordinate (2b) at (3.98,.80);
\coordinate (3b) at (4.2,-1.3);
\coordinate (4b) at (4.8,-1.8);


\draw[draw=none,fill=red!40] (-1,-.2) circle (2);
\draw[draw=none,fill=blue!40] (5.7,-.2) circle (2);

\begin{scope}[ultra thick,decoration={markings,
    mark=at position 0.6 with {\arrow[scale=.8,yshift=0mm]{ang 90}}}]
\draw [postaction={decorate},shorten >=2mm] (1) to[out=315,in=205] (2x) to[out=25, in=160] (2b);
\draw [postaction={decorate},shorten >=1.2mm,ultra thick,-{angle 90[red]}] (2) to[out=-50, in=-40] (3x) to[out=140, in=0,] (3);
\path [ultra thick, out=-140, in=115, shorten >=2mm] (3) edge (4x);
\path [ultra thick, out=110, in=-20,shorten >=2mm] (4) edge (1b);
\path [out=220,in=90,shorten <=0.2cm,shorten >=1.2mm,ultra thick,-{angle 90[red]}] (1x) edge (1);
\path [out=-25, in=140,ultra thick,shorten <= 2mm,shorten >=1.2mm,ultra thick,-{angle 90[blue]}] (2b) edge (2);
\end{scope}

\begin{scope}[ultra thick,decoration={markings,
    mark=at position 0.35 with {\arrow[scale=.8,yshift=-.5mm]{ang 90}}}]
\path [shorten <= 1mm, out=-40, in=-70,shorten >=1.2mm,ultra thick,-{angle 90[blue]}] (4x) edge[postaction={decorate}] (4);
\end{scope}

\begin{scope}
[decoration={markings, mark=at position 0.8 with {\arrow[scale=.8,yshift=-.2mm]{ang 90}}}]
\path [shorten <=2mm,shorten >= 2mm,ultra thick,out=160,in=30] (1b) edge[postaction={decorate}] (1x);
\end{scope}

\draw[ultra thick] (.867,.517) to[out=111,in=0] (-1,1.8) to[out=180,in=90] (-3,-.2) to[out=-90,in=180] (-1,-2.2) to[out=0,in=-140] (.286,-1.732);
\draw[ultra thick] (.554,-1.459) to[out=50,in=-79] (.963,.182);
\draw[ultra thick]  (5.353,-2.170)  
 to[out=-10,in=180] (5.7,-2.2) 
 to[out=0,in=-90]   (7.7,- .2) 
 to[out=90,in=0]    (5.7, 1.8) 
 to[out=180,in=90]  (3.7,- .2) 
 to[out=270,in=125] (4.062,-1.347); 
\draw[ultra thick]  (4.414,-1.732)  
 to[out=-30,in=155] (4.855,-2.012); 

\draw  node[fill,circle,inner sep=0pt,minimum size=7pt, red] at (1) {\pgfuseplotmark{*}};
\draw  node[fill,circle,inner sep=0pt,minimum size=7pt, red] at (3) {\pgfuseplotmark{*}};
\draw  node[fill,circle,inner sep=0pt,minimum size=7pt, blue] at (2) {\pgfuseplotmark{*}};
\draw  node[fill,circle,inner sep=0pt,minimum size=7pt, blue] at (4) {\pgfuseplotmark{*}};
\node [fill=black,star,star points=5,star point height =2mm,scale=.5,draw] at (2.5,1.95){};

\end{tikzpicture}
\vspace{-.5cm} 
\caption{A presentation of the Borromean rings with Seifert surfaces for two components. }\label{borromean3}
\end{subfigure} \hspace{.5cm}    
\begin{subfigure}[b]{0.40\textwidth}
\centering
\begin{tikzpicture}[scale=.25]

\draw[black, ultra thick] (0,0) rectangle (10,10);

\draw[red, very thick] (0,8).. controls (4.5,8) and (5.5,9.3)  .. (10,9.3);
\draw  node[fill,circle,inner sep=0pt,minimum size=7pt, red] at (0,8) {\pgfuseplotmark{*}};
\draw  node[fill,circle,inner sep=0pt,minimum size=7pt, red] at (10cm,9.3) {\pgfuseplotmark{*}};

\draw[blue, very thick] (0,6).. controls (2,6) .. (10,7.8);
\draw  node[fill,circle,inner sep=0pt,minimum size=7pt, blue] at (0,6) {\pgfuseplotmark{*}};
\draw  node[fill,circle,inner sep=0pt,minimum size=7pt, blue] at (10cm,7.8) {\pgfuseplotmark{*}};

\draw[red, very thick] (0,4).. controls (3,3)  .. (10,2.2);
\draw  node[fill,circle,inner sep=0pt,minimum size=7pt, red] at (0,4) {\pgfuseplotmark{*}};
\draw  node[fill,circle,inner sep=0pt,minimum size=7pt, red] at (10cm,2.2) {\pgfuseplotmark{*}};

\draw[blue, very thick] (0,2).. controls (2,2)  .. (10,1);
\draw  node[fill,circle,inner sep=0pt,minimum size=7pt, blue] at (0,2) {\pgfuseplotmark{*}};
\draw  node[fill,circle,inner sep=0pt,minimum size=7pt, blue] at (10cm,1) {\pgfuseplotmark{*}};

\draw[red, very thick] (10,6.6).. controls (8,6.6) and (8,4.8) .. (10,4.3);
\draw  node[fill,circle,inner sep=0pt,minimum size=7pt, red] at (10,4.3) {\pgfuseplotmark{*}};
\draw  node[fill,circle,inner sep=0pt,minimum size=7pt, red] at (10,6.6) {\pgfuseplotmark{*}};

\draw[blue, very thick](5.7,4.8) circle (.7);

\draw node[red,scale=1.2,ultra thick] at (-1.2,8) {$+$};
\draw node[blue,scale=1.2] at (-1.2,6) {$+$};
\draw node[red,scale=1.2] at (-1.2,4) {$-$};
\draw node[blue,scale=1.2] at (-1.2,2) {$-$};
\node [fill=black,star,star points=5,star point height =2mm,scale=.5,draw] at (0,10){};

\end{tikzpicture}
\caption{A cobordism between intersection points given by a homotopy}\label{cobordism}
\end{subfigure}\caption{}
\end{figure}

In the figure, two link components have Seifert surfaces chosen, drawn in red and blue.  
Beginning at the basepoint marked by a star, we record intersections of the third component with those surfaces down the left edge of the square
in Figure~\ref{cobordism}, which represents the domain
of that component.  These four intersections alternate
between red and blue, with signs determined by whether the third component is directed out of or into the page.  These signs alternate, and in  fact
 with any choice of basepoint, the third component represents a simple commutator in
the fundamental group of the first two components.

The square in Figure~\ref{cobordism} represents the possible behavior of such intersections under an isotopy of the third
component.  Assuming transversality of the homotopy, the preimage
of the two Seifert surfaces would be curves, pictured 
in red and blue, which are disjoint.  Because the colors of the left ends of these curves 
alternate, and the curves are disjoint, all four curves must have 
endpoints on the right edge.
So a homotopy of this component resulting in the unlink is impossible. The Isotopy Extension Theorem can now be invoked to complete a proof that the Borromean rings are linked. 

In general, the Hopf invariants discussed in Section~\ref{categorical} measure elements of fundamental groups by tracking intersections with codimension one submanifolds such as the Seifert surfaces in the example above. Cohomology is the most basic form of this: counting signed intersection points of a curve and a hypersurface. The example illustrates the greater discernibility of counting more intricate combinatorial configurations -- the interleaving of intersection times -- in a case where the simple cohomological counts vanish.

Such counts are at the heart of letter braiding, as we introduced in Section~\ref{combalg} and will connect to topology in Section~\ref{topcombo}.  
There are two directions of generalization.  One is that the idea can
be iterated.  Imagine for example
a more complicated analog of the Borromean rings of Figure \ref{borromean3}, with its
middle black component winding more times through the other two components. Then counts of 
some red point in between canceling
pairs of blue points, which themselves are between canceling
pairs of red points will also define a homotopy invariant.  Such a count  is a higher weight Hopf invariant.

The second direction of generalization is to allow the Seifert
surfaces to intersect, which undermines the argument we used to show homotopy invariance.  Consider a Whitehead link, as pictured in Figure~\ref{whitehead}, and imagine a third component linking with it.  Since the red and blue surfaces
intersect, the order in which the third component intersects them can change.  

\begin{figure}
\begin{subfigure}[b]{0.40\textwidth}
\centering
\begin{tikzpicture}[line cap=round,line join=round,scale=2.5]

\def\bluepath{%
  (0,0.6) to[out=0, in=90]
  (0.5,0) to[out=-90, in=0]
  (0,-0.6) to[out=180, in=-90]
  (-0.5,0) to[out=90, in=180]
  cycle
}
\def\redpathA{(-1,0) to[out=120, in=-60,looseness=1.2] ( 1,0)}
\def\redpathB{( 1,0) to[out=120, in=-5,looseness=0.9] 
              ( .4, .25) to[out=175, in=90,looseness=1.5]
              (  0,  0) to[out=270, in=0, looseness=1.5]
              (-.15,-.25) to[out=180, in=-60,looseness=0.9]
              (-1,0)}
\def\redpath{%
  (-1,  0) to[out=120, in=-60,looseness=1.2] ( 1,0)
  ( 1,  0) to[out=120, in=-5,looseness=0.9] 
  (.4,.25) to[out=175, in=90,looseness=1.5]
  ( 0,  0) to[out=270, in=0, looseness=1.5]
  (-.15,-.25) to[out=180, in=-60,looseness=0.9] 
  (-1,  0)}

\coordinate (La) at (-0.545,-0.01); 
\coordinate (Lb) at ( 0.00,-0.13); 
\coordinate (Ra) at ( 0.55, 0.02); 
\coordinate (Rb) at ( 0.02, 0.16); 
\def\intarcL{(La) to (Lb)}
\def\intarcR{(Ra) to (Rb)}
\def\bigrect{(-2,-1) rectangle (2,1)}

\begin{knot}[clip width=3.0, clip radius=15pt, looseness=1.3]
  \strand[line width=1.0mm,red]  \redpathA;
  \strand[line width=1.0mm,red]  \redpathB;
  \strand[line width=1.0mm,blue,rotate=-15,xslant=0.2] \bluepath;
  \flipcrossings{2,3}
\end{knot}

\begin{scope}[looseness=1.3, fill opacity=0.5]
  \fill[blue!50,draw=none,rotate=-15,xslant=0.2] \bluepath;
  \fill[red!50, draw=none] \redpath;
\end{scope}

\begin{scope}[looseness=1.3, fill opacity=0.3]
  \clip[rotate=-15,xslant=0.2] \bluepath;
  \clip \redpath;
  \fill[red!20!blue!80] \bigrect;
\end{scope}
\begin{scope}[looseness=1.3, fill opacity=0.3]
  \clip[rotate=-15,xslant=0.2] \bluepath;
  \clip \redpath;
  \clip (La) -- (-2,-2) -- (0,-2) -- (Lb) -- (La);
  \fill[blue!20!red!80] \bigrect;
\end{scope}
\begin{scope}[looseness=1.3, fill opacity=0.3]
  \clip[rotate=-15,xslant=0.2] \bluepath;
  \clip \redpath;
  \clip (Ra) -- ( 2, 2) -- (0, 2) -- (Rb) -- (Ra);
  \fill[blue!20!red!80] \bigrect;
\end{scope}

\draw[line width=.8mm,violet] \intarcL;
\draw[line width=.8mm,violet] \intarcR;

\fill[violet] (La) circle (0.03);
\fill[violet] (Lb) circle (0.03);
\fill[violet] (Ra) circle (0.03);
\fill[violet] (Rb) circle (0.03);

\end{tikzpicture}
\vspace{-1.5cm}   
\caption{A presentation of the Whitead link, a 
Seifert surface for each component, and their
intersection. }\label{whitehead}
\end{subfigure} \hspace{.5cm} 
\begin{subfigure}[b]{0.40\textwidth}

\centering
\begin{tikzpicture}[scale=0.35]

\draw[black, ultra thick] (0,0) rectangle (10,8);

\draw[name path=line,red,ultra thick] (0,7) to[out=0,in=180] (10,1);
\draw[name path=curve,blue,ultra thick] (0,4) to[out=0,in=180] (10,4);
\draw[fill,violet,name intersections={of= line and curve}] (intersection-1) circle[radius=.2];
\draw[ultra thick, violet] (intersection-1) ..controls (6,7) and (8,6.5).. (10,6.7);

\draw  node[fill,circle,inner sep=0pt,minimum size=7pt, blue] at (0,4) {\pgfuseplotmark{*}};
\draw  node[fill,circle,inner sep=0pt,minimum size=7pt, red] at (0,7) {\pgfuseplotmark{*}};
\draw  node[fill,circle,inner sep=0pt,minimum size=7pt, violet] at (10,6.7) {\pgfuseplotmark{*}};
\draw  node[fill,circle,inner sep=0pt,minimum size=7pt, red] at (10,1) {\pgfuseplotmark{*}};
\draw  node[fill,circle,inner sep=0pt,minimum size=7pt, blue] at (10,4) {\pgfuseplotmark{*}};

\draw node[blue,scale=1.2,ultra thick] at (-1.2,4) {$+$};
\draw node[red,scale=1.2,ultra thick] at (-1.2,7) {$+$};

\end{tikzpicture}
\vspace{-.8cm} 
\caption{Tracking what happens through a homotopy to intersections with Seifert surfaces and a ``correction'' surface.}\label{whitecobordism}
\end{subfigure}\caption{}
\end{figure}

But we can consider an auxiliary surface
which bounds their intersection, drawn as purple segments in Figure~\ref{whitehead}, along with the parts of the link components.  
If we track intersections of a third link component both with the two Seifert surfaces and this auxiliary surface, say in purple,
we would see phenomena such as in Figure~\ref{whitecobordism}.  Here  red and blue preimages of intersections cross, which  happens where
the Seifert surfaces intersect, but their crossing
point is a boundary point for the preimage curve for the purple surface.  
So as the order of intersections with red and blue Seifert 
surfaces changes -- reading left to right, the ordering changes (red,\,blue) to 
(blue,\,red) -- an intersection with the purple surface
is created or destroyed.  Thus intersections with the
purple surface can be used as ``correction terms'' for 
finer counts of blue and red intersections which depend on order.

Here it is also worth considering  the Hopf link, for which there can be no such correction term. Indeed, we can recognize two Seifert 
surfaces as generators of first cohomology of
the complement of the Hopf link, and 
the non-triviality of their cup product implies there
is no surface in the complement which bounds their intersection (along with sub-curves of the components).

Making contact with Hain's work in \cite{Hain-LinkGroups}, we formalize the role of submanifolds and their intersections 
using the language of Thom forms, as reviewed in
Appendix~\ref{Thomforms}.
Suppose $M$ is a three-manifold, such as a
link complement, so surfaces have associated one-forms.  The wedge product corresponds
to intersection, and the behavior of the product $\Omega_{dR}^1(M) \otimes \Omega_{dR}^1(M) \to \Omega_{dR}^2(M)$ determines whether two surfaces
can be used for ``counting alternating preimages''.   
If the product of one-forms is zero at the cochain level, there
is immediately such an invariant.  If it is zero in cohomology, then
there is an invariant involving a ``correction term.''  And if the product
of one-forms is non-zero in cohomology, then there is no such invariant.

More generally, given a collection of
codimension one submanifolds and some loop, one can count preimages of the submanifolds in prescribed orders.    Such  counts
are generally not invariant, as the codimension one submanifolds
can intersect.  But one can find linear combinations which
are invariant through analysis of  intersections and their
relations, which
 encode cup products and Massey products. 
Remarkably, this story is  governed by  classical constructions
from  algebra and rational homotopy theory: the bar complex and the Harrison complex.  The Harrison
theory distinguishes fundamental group elements up to their images in the Malcev Lie algebra, as codified in Theorem~\ref{thm-intro:universal property}.

\subsection{Relation to other work  and future directions}\label{Introwrap-up}


The gold standard for understanding fundamental groups explicitly through cochains has been the theory
of Chen integrals \cite{chen1971pi1}. 
Recasting Chen's integrals as time-ordered intersections with hypersurfaces features prominently in Hain's analysis of link groups \cite{Hain-LinkGroups}, where he cites these ideas as `folklore', having appeared in Massey's and Porter's work from 1960s-1980s. Indeed, our  combinatorial theory of letter linking invariants could have been developed 
through analysis of Chen integrals 
in the setting of handlebodies using Thom
forms for belt disks, as we use in Section~\ref{topcombo}.

On the other hand, rational homotopy theorists have known since the 1970's that rational cochains compute the Malcev Lie algebra of fundamental groups. We found it most effective, both conceptually and technically, to restate the construction in our language of Lie pairings and the Lie coalgebraic dual of a group (see Definition \ref{LiePairing}), leading to our theory of the Harrison-valued Hopf invariants. The coalgebraic dual perspective has the benefit of avoiding difficulties of duality in infinite dimensional vector spaces, allowing us to uniformly treat non-finitely generated groups.

 In the process of working on this project, the first author realized that a distinct theory could be developed using the classical Bar complex on the ring $C^*$ of simplicial cochains, with coefficients in any PID. The theory of simplicial-Bar-valued Hopf invariants is called letter-braiding, and it agrees in the algebraic setting with the one given in Section~\ref{combalg} (when restricted to ``nested'' braiding products, which in fact span all such products).
 
 As represented in Figure~\ref{bigpicture}, the various theories produce the same set of integer-valued invariants in the so-called eigen-setting, as formulated for Hopf invariants in Definition~\ref{def:lp}. 
The resulting three theories are distinct. Harrison Hopf invariants give a perfect duality with the Malcev Lie algebra, while Chen theory more naturally pairs the real group ring filtered by powers of its augmentation ideal, and similarly for letter braiding, which is defined over any PID. Calculations outside of the eigen-setting differ as already seen for the circle, with $\pi_1 = \langle a \rangle$ and first cohomology generated by the dual class $A$: the three theories all have an invariant we call $(A|A)$, which agree in the eigen-setting, but on the word $a^2$ the Harrison Hopf invariant returns $0$ as $(A|A) = -(A|A)=0$ by definition in the Harrison complex; the letter braiding invariant is $1$, as checked from Definition~\ref{letterlinkingdef}; and 
finally, the Chen integral will be $\frac{1}{2} + 1 + \frac{1}{2}  = 2$ because of two ``self-interaction terms'' added to the ``interaction'' between the two occurrences of $a$.

\begin{figure} \label{fig:eigen-setting}
\includegraphics[scale=.35]{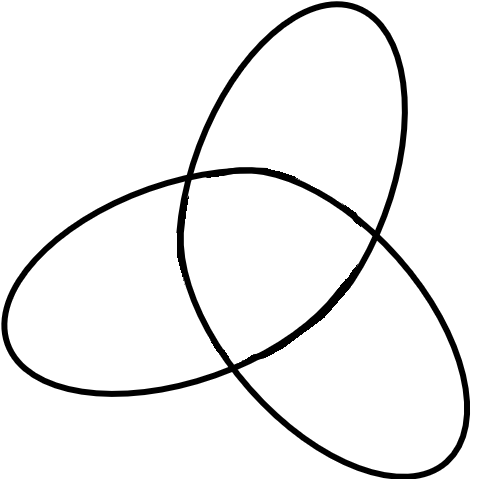}
\vspace{-4.00cm} 

\hspace{1.2cm}Letter\\
\hspace{1.0cm}Braiding

\vspace{.40cm}

\hspace{.3cm}Letter\\
\hspace{.25cm}Linking

\vspace{-.35cm}

\hspace{-2.7cm}Harrison\\
\hspace{-2.7cm}Invariants

\vspace{-.20cm}

\hspace{2cm}Chen-\\
\hspace{2.4cm}Hain

\vspace{.2cm}
\caption{Topological measures of the fundamental group. All produce letter linking invariants in the eigen-setting} \label{bigpicture}
\end{figure}

We bridge between the manifold theoretic tools and the combinatorial applications using our construction of a smooth thickening of a presentation two-complex of a group. Such thickenings are commonplace in geometric topology, but our construction here is tailored specifically to enable planar-graphical calculations on words in groups, akin to Fenn's spider diagrams \cite{Fenn-geometric-topology}. We hope to develop such diagrammatic tools further in subsequent work.

Another primary connection to highlight is work
of the last two authors.  In \cite{sinha-walter2}, they show that the same simple formalism we use here, namely pull-back and evaluation in the Harrison complex, gives exactly the rational functionals on the homotopy groups of simply connected spaces.
Moreover, they show that the resulting invariants are represented by ``higher linking numbers, with correction,'' generalizing the  classical Hopf invariant which measures a map $S^3 \to S^2$ through
the linking number of preimages of two distinct points
in $S^2$. Then in \cite{monroe-sinha}, the third author developed letter linking invariants for free groups purely combinatorially, establishing ``predictions'' of \cite{sinha-walter2} for maps $S^1 \to \bigvee S^1$.  In the current work, we return to topology for our development of letter linking for all groups.

Ultimately, by providing ``coordinates'' for the Malcev Lie algebra of any group, 
de Rham-Harrison Hopf invariants generalize the classical tools of Magnus
expansions and Fox derivatives.  Moreover, the close relationship to standard constructions
such as bar complexes and Massey products means plenty of connection to previous topological
studies of groups.  We defer to the paper
of the first author on letter braiding invariants \cite{LetterBraiding} for a thorough, though certainly incomplete, discussion in its Introduction.

In relatively recent work on a complementary approach,  Rivera--Zeinalian \cite{rivera-cobar} showed that an almost dual theory, 
the cobar construction on singular chains of a space $X$, yields a chain model for the loop space $\Omega X$, without  restrictive hypotheses. 
In particular, they deduce that the $0$-th cobar homology of $X$ is  exactly the group ring $\Z[\pi_1(X)]$ -- no completion necessary.
While sharper, the difficulty of their theory in comparison to ours is that theirs is not based on a quasi-isomorphism invariant functor, and for example one cannot use a small simplicial chain model for $X$.
Relatedly, the cobar spectral sequence does not converge, for example when $\pi_1(X)$ is a nonabelian simple group, and the natural filtrations are not exhaustive.  Thus, the ability to capture the fundamental group precisely stands in conflict with calculations, as one should expect in light of fundamental undecidability of the Word Problem. 

Our theory makes contact with Rivera--Zeinalian through an infinitary Bar complex: in Section \ref{beyond} we indicate that an unbounded generalization of our invariants can and does detect words that vanish in every nilpotent quotient of the group and thus represent the zero element in the Malcev group. This suggests an ad-hoc approach to attacking the Word Problem in specific cases. If one is able to construct a single infinitary letter linking function taking a nonzero value on a word, then the word must be nontrivial. Of course, there is no algorithm that can produce such a function in general, but geometric considerations might offer a guide in favorable situations.

\bigskip

We share both our plans and possible areas of application more broadly.

\begin{enumerate}[I.]
    \item\label{milnorfuture}
 Our first planned application is to Milnor invariants for links in three-manifolds, for which we give a 
taste in Sections~\ref{geomtop}~and~\ref
{milnor}. Stees \cite{stees2024milnor} recently initiated the theory of Milnor invariants in 
arbitrary three-manifolds from the completely different
perspective of concordance with a fixed representative
``unlink.''   We will proceed from a basic idea which is central to our present work: measuring homotopy classes of curves through 
patterns of intersections with Seifert surfaces and
coboundings of their intersections.  Our current framework allows for broadening the notion of Seifert surfaces to include possibly singular homologies between 
multiple link components.  Preliminary evidence show that the framework developed in this paper,
using Seifert surfaces as input for cochains, yields a robust theory.\\

\item The same techniques also lead to a geometric approach to the Johnson filtration of mapping class groups.  Consider a surface
with one boundary component for simplicity (but not for necessity - see Section~\ref{surfaces}), and fix 
a collection of disjoint curves $\alpha_i$  which represent
first cohomology -- for example ``belt $1$-disks'' in a handlebody presentation. 
Let $f$ be a diffeomorphism and for any based loop $c$ consider the following list of conditions
on the based loop $c^{-1} * f(c)$:
\begin{itemize}
    \item In the first weight, we count intersections with the $\alpha_i$.
    \item When those vanish, weight
    two counts are $\alpha_j$-intersections 
    occurring between cancelling pairs of $\alpha_i$-intersections. 
    \item General weight $n$ counts, defined when all lower counts
    vanish, are that of $\alpha_k$-intersections occurring between cancelling pairs of weight $(n-1)$ counts. 
\end{itemize}
A diffeomorphism $f$ has Johnson filtration degree exactly $n$ if and only
if the first $n-1$ counts vanish
for all curves, and some curve has
a non-zero $n$th count.  
Thus, for $f$ to be deep in the Johnson filtration forces complexity of some
curve $c^{-1} * f(c)$.  It would be interesting to tie this complexity to other notions of complexity 
of mapping classes.\\

\item A geometric reading of Theorem~\ref{thm-intro:universal property} using Thom forms, and analysis in the
Harrison spectral sequence (see Remark~\ref{HarrisonSS})
shows that the rational nilpotency class of a group is
governed by  Massey products of classes
represented by appropriate codimension one submanifolds.  That is, intersections of hypersurfaces within certain homology classes of fixed
complexity are forced.
It would be interesting to connect with the substantial 
literature on nilpotency class and geometry.\\

\item Our theory is in a sense dual to that of surface-towers, or gropes \cite{TeichnerICM}, which realize iterated commutators through surfaces.  Instead, we measure iterated commutators through systems of codimension one submanifolds.  A more direct comparison should be possible: realizing intersections between levels of a surface-tower with hypersurfaces arising by repeatedly cobounding intersections of lower-depth hypersurfaces.\\

\item\label{geomcochains} An important technical development for geometric application is the use of geometric cochains 
as developed by Friedman, Medina and the third author \cite{GeometricCochains}.  These are defined over the 
integers, and have a commutative product given by intersection, but  (then, of necessity) only have a partially defined product.  In that setting, the fundamental isomorphism  $ \int_{S^1} : H_\mE^0(S^1) \to
\Q$ might no longer hold.  We plan to explore the resulting differing theory, which conjecturally is closer to that
of letter braiding, in the course of our development 
of Milnor invariants.\\

\item In algebraic topology, our approach has an advantage over Chen integrals and others in that it is fully integrated into a Quillen model for rational homotopy
theory.  It would be helpful to more fully connect
with recent developments of Lie models based on
the Lawrence-Sullivan interval \cite{LieEncyclopedia}, which
indeed use the Harrison complex at some key points.  At a purely 
calculational level, there is a rich pairing between 
Lie algebras and Lie coalgebras at play (see \cite[Section 2]{Sinha-Walter1}).  And with our more 
explicit connections to group theory, there could
be an opportunity to transport questions between domains, such as understanding analogues in (geometric) group theory 
of dichotomy in rational homotopy theory. See \cite{HessSurvey} for a lovely survey
of rational dichotomy, and rational homotopy
theory more broadly.\\

\item A new tool we introduce to explore the interface of group theory and homotopy
theory is that of graphical models, used throughout
Section~\ref{examples} after being defined  in
Section~\ref{CurveForms}.  Briefly, take a presentation of a group by generators and relations, and use each relation to label the boundary of a disk, so there will be one marked point on the boundary of the disk for each letter in the relation, occurring in order.  
A graphical model is an immersed graph in each such disk in which edges intersect transversely and have endpoints either at marked points on the boundary or at intersections of other edges.  To such data we associate de\,Rham cochains of a manifold  with that fundamental group, which can be used to calculate Massey products and other structures related to our theory.

To our knowledge, these graphical partial models are new, and there are plenty
of open questions about them. For which groups can one find a graphical model which, while being partially defined,  suffices globally for all calculations of
the Lie dual?  Can de\,Rham theory be applied to
the disk to make these models have a fully defined product, for example making choices to understand products associated to curves which share a vertex?
Can group theoretic properties, such as nilpotency class, be read off from a (partial) graphical model?  For which groups can finite models be found?\\

\item 
A main focus in the present work is  computational efficacy. 
It is thus natural to 
implement on computer, as the second
author did for evaluating braiding invariants and finding bases 
of letter linking invariants. 
This work is available as Python and SageMath jupyter notebooks
\cite{Aydin-Github}.  
But there is still considerable room for optimizing algorithms, and in particular 
working with Lie coalgebras.  Steps in this direction are taken by the fourth 
author in Python libraries and Jupyter notebooks available on GitHub 
\cite{Walter-Github}.
Previous work of the fourth author \cite{Walter-Shiri} suggests that using
specific bases for Lie coalgebras such as the star basis may lead to quicker 
evaluation of cobrackets and computation of pairings.  
\\

\item
As we are not experts in algebra and group theory, we have less detailed broader speculation to share in that area.  
But one obvious line is to develop the theory beyond residual nilpotence, 
using the completed Harrison complex as in Section~\ref{beyondnilpotence}.  
Another is to more fully apply the theory over the integers or finite fields, using either geometric cochains and the Harrison complex, as proposed in item \ref{geomcochains} above, or simplicial cochains which lead to letter braiding, as already developed in \cite{LetterBraiding}.  
These theories shed light on group rings with their augmentation ideal filtration, rather than the Malcev Lie algebra.  The letter braiding setting, especially 
over finite fields, is ripe for computer implementation 
as initiated for letter linking in
\cite{Aydin-Github} and \cite{Walter-Github}.\\

\end{enumerate}

\color{black}      
\tableofcontents

\section{Definitions}\label{sec:prelims}

As discussed in Appendix \ref{BarDerivedIndec}, the Harrison complex is an explicit presentation of the derived indecomposibles
of  an augmented differential graded commutative algebra. 
Our preferred model for the Harrison complex is based on 
an operadic approach to Lie coalgebras, employing a model for the
Lie cooperad arising from cohomology of configuration
spaces originating in \cite{Sinha-Walter1}.  
We use
the name ``Eil'' for this operadic model for Lie coalgebras.

\subsection{Symbols, Eil trees, and cofree Lie coalgebras}\label{Eil definition}
We depart slightly from \cite{Sinha-Walter1} and use a new
model via rooted trees. 
This approach is more compatible with group theory,
ties directly to pre-Lie coalgebras~\cite{ChaLiv}, is more in line with the modern perspective of
$\infty$-operads~\cite{Dendroidal}, and seamlessly connects with our letter braiding formalism
from Section~\ref{combalg}.  
Tree symbols give a ``coalgebra-native'' presentation of Lie coalgebras, with its own 
distinct and interesting combinatorial structure.

\begin{definition}\label{Eil}
Write $\Tr_*$ for the vector space generated by rooted trees
with vertices labeled by elements from finite sets, and
write $\Tr_*(n)$ for the vector space generated by
rooted trees with vertices labeled uniquely by $\{1,\dots,n\}$.  We call $n$ the {\bf weight}.

Let $\Eil{\Tr_*}$, the space of rooted Eil trees, 
be the quotient of $\Tr_*$ 
by the following local relations, 
which involve changing the root-designated vertex and (for Arnold) moving
an edge to the new root.
\begin{align*}
\text{(root change)}\qquad & 
0 \quad = \quad 
\begin{xy}                           
  (-2,-5)*+[o][F-]{\color{blue}\scriptstyle \bm{a}}="a",    
  (2,0)*+UR{\color{red}\scriptstyle \bm{b}}="b",     
  "a";"b"**[thicker]\dir{-}, 
      "a"!<.3pt,0pt>;"b"!<.3pt,0pt> **\dir{-},  "a"!<-.3pt,0pt>;"b"!<-.3pt,0pt> **\dir{-},
  (-9,-4),{\ar@{. }@(u,u)(9,-4)},
  ?!{"a";"a"+/d:a(10)/}="a1",
  ?!{"a";"a"+/d:a(30)/}="a2",
  ?!{"a";"a"+/d:a(50)/}="a3",
  ?!{"b";"b"+/va(90)/}="b1",
  ?!{"b";"b"+/va(30)/}="b2",
  ?!{"b";"b"+/va(60)/}="b3",
  "b";"b1"**[red]\dir{-},  "b";"b2"**[red]\dir{-},  "b";"b3"**[red]\dir{-},
  "a";"a1"**[blue]\dir{-},  "a";"a2"**[blue]\dir{-},  "a";"a3"**[blue]\dir{-},
\end{xy}\ \ + \ \ 
\begin{xy}                           
  (-2,0)*+UR{\color{blue}\scriptstyle \bm{a}}="a",    
  (2,-5)*+[o][F-]{\color{red}\scriptstyle \bm{b}}="b",     
  "a";"b"**[thicker]\dir{-},  
      "a"!<.3pt,0pt>;"b"!<.3pt,0pt> **\dir{-},  "a"!<-.3pt,0pt>;"b"!<-.3pt,0pt> **\dir{-},
  (-9,-4),{\ar@{. }@(u,u)(9,-4)},
  ?!{"a";"a"+/d:a(00)/}="a1",
  ?!{"a";"a"+/d:a(30)/}="a2",
  ?!{"a";"a"+/d:a(60)/}="a3",
  ?!{"b";"b"+/d:a(-10)/}="b1",
  ?!{"b";"b"+/d:a(-30)/}="b2",
  ?!{"b";"b"+/d:a(-50)/}="b3",
  "b";"b1"**[red]\dir{-},  "b";"b2"**[red]\dir{-},  "b";"b3"**[red]\dir{-},
  "a";"a1"**[blue]\dir{-},  "a";"a2"**[blue]\dir{-},  "a";"a3"**[blue]\dir{-},
\end{xy} \\
\text{(root Arnold)}\qquad & 
0 \quad = \quad
\begin{xy}                           
  (-4.5,-5)*+[o][F-]{\color{blue}\scriptstyle \bm{a}}="a",    
  (0, 0)*+UR{\color{red}\scriptstyle \bm{b}}="b",   
  (4.5,0)*+UR{\color{OliveGreen}\scriptstyle \bm{c}}="c",   
  "a";"b"**[thicker]\dir{-}, 
      "a"!<.3pt,0pt>;"b"!<.3pt,0pt> **\dir{-},  "a"!<-.3pt,0pt>;"b"!<-.3pt,0pt> **\dir{-},
  "a";"c"**[thicker]\dir{-}, 
      "a"!<.4pt,0pt>;"c"!<.4pt,0pt> **\dir{-},  "a"!<-.4pt,0pt>;"c"!<-.4pt,0pt> **\dir{-},
  (-9,-4),{\ar@{. }@(u,u)(9,-4)},
  ?!{"a";"a"+/d:a(0)/}="a1",
  ?!{"a";"a"+/d:a(15)/}="a2",
  ?!{"a";"a"+/d:a(30)/}="a3",
  ?!{"b";"b"+/d:a(0)/}="b1",
  ?!{"b";"b"+/d:a(-20)/}="b2",
  ?!{"b";"b"+/d:a(20)/}="b3",
  ?!{"c";"c"+/d:a(00)/}="c1",
  ?!{"c";"c"+/d:a(-30)/}="c2",
  ?!{"c";"c"+/d:a(-60)/}="c3",
  "a";"a1"**[blue]\dir{-}, "a";"a2"**[blue]\dir{-}, "a";"a3"**[blue]\dir{-},
  "b";"b1"**[red]\dir{-}, "b";"b2"**[red]\dir{-}, "b";"b3"**[red]\dir{-},
  "c";"c1"**[OliveGreen]\dir{-}, "c";"c2"**[OliveGreen]\dir{-}, "c";"c3"**[OliveGreen]\dir{-},
\end{xy}\ \ + \ \                         
\begin{xy}                           
  (-4.5,0)*+UR{\color{blue}\scriptstyle \bm{a}}="a",    
  (0,-5)*+[o][F-]{\color{red}\scriptstyle \bm{b}}="b",   
  (4.5,0)*+UR{\color{OliveGreen}\scriptstyle \bm{c}}="c",   
  "b";"a"**[thicker]\dir{-},  
      "b"!<.3pt,0pt>;"a"!<.3pt,0pt> **\dir{-},  "b"!<-.3pt,0pt>;"a"!<-.3pt,0pt> **\dir{-},
  "b";"c"**[thicker]\dir{-}, 
      "b"!<.3pt,0pt>;"c"!<.3pt,0pt> **\dir{-},  "b"!<-.3pt,0pt>;"c"!<-.3pt,0pt> **\dir{-},
  (-9,-4),{\ar@{. }@(u,u)(9,-4)},
  ?!{"a";"a"+/d:a(0)/}="a1",
  ?!{"a";"a"+/d:a(30)/}="a2",
  ?!{"a";"a"+/d:a(60)/}="a3",
  ?!{"b";"b"+/d:a(0)/}="b1",
  ?!{"b";"b"+/d:a(-10)/}="b2",
  ?!{"b";"b"+/d:a(10)/}="b3",
  ?!{"c";"c"+/d:a(00)/}="c1",
  ?!{"c";"c"+/d:a(-30)/}="c2",
  ?!{"c";"c"+/d:a(-60)/}="c3",
  "a";"a1"**[blue]\dir{-}, "a";"a2"**[blue]\dir{-}, "a";"a3"**[blue]\dir{-},
  "b";"b1"**[red]\dir{-}, "b";"b2"**[red]\dir{-}, "b";"b3"**[red]\dir{-},
  "c";"c1"**[OliveGreen]\dir{-}, "c";"c2"**[OliveGreen]\dir{-}, "c";"c3"**[OliveGreen]\dir{-},
\end{xy}\ \ + \ \                            
\begin{xy}                           
  (-4.5,0)*+UR{\color{blue}\scriptstyle \bm{a}}="a",    
  (0, 0)*+UR{\color{red}\scriptstyle \bm{b}}="b",   
  (4.5,-5)*+[o][F-]{\color{OliveGreen}\scriptstyle \bm{c}}="c",   
  "c";"a"**[thicker]\dir{-},         
      "c"!<.4pt,0pt>;"a"!<.3pt,0pt> **\dir{-},  "c"!<-.4pt,0pt>;"a"!<-.4pt,0pt> **\dir{-},
  "c";"b"**[thicker]\dir{-},         
      "c"!<.3pt,0pt>;"b"!<.3pt,0pt> **\dir{-},  "c"!<-.3pt,0pt>;"b"!<-.3pt,0pt> **\dir{-},
  (-9,-4),{\ar@{. }@(u,u)(9,-4)},
  ?!{"a";"a"+/d:a(0)/}="a1",
  ?!{"a";"a"+/d:a(30)/}="a2",
  ?!{"a";"a"+/d:a(60)/}="a3",
  ?!{"b";"b"+/d:a(0)/}="b1",
  ?!{"b";"b"+/d:a(-20)/}="b2",
  ?!{"b";"b"+/d:a(20)/}="b3",
  ?!{"c";"c"+/d:a(00)/}="c1",
  ?!{"c";"c"+/d:a(-15)/}="c2",
  ?!{"c";"c"+/d:a(-30)/}="c3",
  "a";"a1"**[blue]\dir{-}, "a";"a2"**[blue]\dir{-}, "a";"a3"**[blue]\dir{-},
  "b";"b1"**[red]\dir{-}, "b";"b2"**[red]\dir{-}, "b";"b3"**[red]\dir{-},
  "c";"c1"**[OliveGreen]\dir{-}, "c";"c2"**[OliveGreen]\dir{-}, "c";"c3"**[OliveGreen]\dir{-},
\end{xy}                          
\end{align*}  
In the diagrams above the root is circled 
and $a$, $b$, $c$ stand for vertices of a tree which could possibly have
further branches (indicated by the colored``whiskers'') not modified by these relations.
\end{definition}  

In \cite{Sinha-Walter1} we use weight to refer to the number of edges rather than vertices, 
which differs by one.   
The current definition of weight matches operad/cooperad ``-arity'' grading and is more convenient
for later spectral sequence computations.

While root change and root Arnold relations only apply at the root
of a tree, the root may be moved arbitrarily, with sign change,
via the root change relation. 
Moving the root, applying the Arnold identity, then
moving it back generates a version of the Arnold relation
deeper within rooted trees.



Rooted trees can be written as nested partitions of leaf sets.  Including 
internal vertices into the nested partition structure results in a ``tree symbol''
for a vertex-labeled rooted tree.


%
%
%

\begin{definition}\label{tree symbol}
The {\bf tree symbol} for a vertex-labeled rooted tree
in $\Tr_*$ 
is defined as follows.

\begin{itemize}
 \item 
 A tree with only one vertex has symbol given by the vertex's label.
 
 \item 
 A corolla 
  $\begin{xy}                      
  (-7,3)*+UR{\scriptstyle a_1}="a", 
  (-3,3)*+UR{\scriptstyle a_2}="b", 
  (1,3)*+UR{\scriptstyle \cdots}, 
  (5,3)*+UR{\scriptstyle a_n}="c", 
  (0,-2)*+[o][F-]{\scriptstyle b}="d",
  "a";"d"**\dir{-},     
  "b";"d"**\dir{-},     
  "c";"d"**\dir{-},     
 \end{xy}$    
 where $a_i$ and $b$ are vertex labels with $b$ at the root, has symbol 
 $\tpc{a_1} \tpc{a_2}\! \cdots\! \tp{a_n} b$.  \\

 \item 
 More generally given a corolla
  $\begin{xy}                      
  (-7,3)*+UR{\scriptstyle A_1}="a", 
  (-3,3)*+UR{\scriptstyle A_2}="b", 
  (1,3)*+UR{\scriptstyle \cdots}, 
  (5,3)*+UR{\scriptstyle A_n}="c", 
  (0,-2)*+[o][F-]{\scriptstyle b}="d",
  "a";"d"**\dir{-},     
  "b";"d"**\dir{-},     
  "c";"d"**\dir{-}     
 \end{xy}$    
 where $b$ is the root vertex and $A_1,\dots,A_n$ are \textbf{subtrees}
 with symbols $\alpha_1,\dots,\alpha_n$
 respectively, the entire tree
 has symbol 
 $\tpc{\alpha_1} \tpc{\alpha_2}\!\cdots\!\tp{\alpha_n} b$. 
\end{itemize}
\smallskip
\end{definition}

\begin{example}\label{ex:trees}
The tree 
$\begin{xy}                      
  (-6,5)*+UR{\scriptstyle a}="a", 
  (-3,0)*+UR{\scriptstyle b}="b", 
  (0,-5)*+[o][F-]{\scriptstyle c}="c",
  "a";"b"**\dir{-},     
  "b";"c"**\dir{-},     
 \end{xy}$
has symbol
$\tp[\big]{\tp{a}b}c$.
The tree
$\begin{xy}                      
  (0,5)*+UR{\scriptstyle a}="a", 
  (6,5)*+UR{\scriptstyle b}="b", 
  (-3,0)*+UR{\scriptstyle c}="c", 
  (3,0)*+UR{\scriptstyle d}="d", 
  (0,-5)*+[o][F-]{\scriptstyle e}="e",
  "e";"c"**\dir{-},     
  "e";"d"**\dir{-},     
  "d";"a"**\dir{-},     
  "d";"b"**\dir{-}     
 \end{xy}$
has symbol $\tpc[\big] c \tp[\big]{\tpc a \tp b d} e$.
\end{example}

The definition above gives valid symbols for both planar and non-planar trees, that is with or without 
commutativity at corollas.  We employ  non-planar trees, and 
thus impose {\bf commutativity for tree symbols}; so for example
\[\tpc[\big] c \tp[\big] {\tpc a \tp b d} e = \tpc[\big] {\tpc b \tp a d} \tp[\big] c e.\]
We  also allow the unbracketed root vertex
to be written at arbitrary location, for example
\( \tpc a  \tp b c = \tp  a c\, \tp  b = c \,\tpc a \tpc b.\)

Tree symbols are essentially the formal braiding symbols of Definition~\ref{def:symbol}.  Eil tree symbols have root change and 
root Arnold identities imposed, as we discuss further momentarily. 
Letter braiding
invariants do not satisfy these identities universally but
do satisfy them in special cases.  See Theorem~\ref{freecase}.

\begin{remark}
Eil tree
symbols have their origin in \cite[Section 3]{monroe-sinha}, where they are called ``pre-symbols'' 
using parenthesis delimiters.
In \cite[Definition 3.1]{monroe-sinha} the non-parenthesized element was called the
``free letter''. Here, it serves as the root
of its subtree.  
One major difference with \cite{monroe-sinha} is  
that the current theory allows repeated letters.

In \cite[Definition 3.2]{monroe-sinha} a correspondence between symbols and trees is 
made via the containment partial ordering induced by nested parenthesizations in a  
symbol.  This coincides with
the canonical root maximal partial order on the vertices of a rooted tree. 
This  equivalence between rooted trees
and partial orders also appears in \cite[Lemma 3.2]{Dendroidal} in the context of 
dendroidal objects and $\infty$-operads.
The generating set of {long graphs} of \cite[Fig 1]{Sinha-Walter1}
correspond to {\bf linear trees} consisting of a single long branch extending from the   
root, as in the first case of Example \ref{ex:trees} above. 
 In the notation of 
\cite{Dendroidal} these are the total orders -- the 
image of simplicial sets inside of dendroidal sets. 
The basic roles of rooted and other similar trees
in \cite{Dendroidal} as well as in our  
approach to Lie coalgebras and Harrison homology
is more than coincidental, and we hope to explore it in 
future work.
\end{remark}

Extend the notation introduced in Definition~\ref{tree symbol} as follows.
Let $\tpc \alpha \beta$ denote the tree composed of disjoint 
subtrees $A$ and $B$ with symbols $\alpha$ and $\beta$,
where $B$ is the subtree containing the root
and there is an edge connecting the root of $A$ to the root of $B$.
We say $A$ is the {\bf branch subtree} and $B$ is the 
{\bf root subtree}.
For example $\tpc[\big] c \tp[\big] {\tpc a \tp b d} e$ may be written as
$\tp \alpha \beta$ where
$\alpha = \tpc a \tp b d$ and $\beta = \tp c e$.
With this notation, the root change and root Arnold
relations become the following.
\begin{align*}
\text{(root change)}\qquad & \qquad
0 \ = \ 
\tpc\alpha \beta\  +\  \alpha\, \tpc\beta, \\  
\text{(root Arnold)}\qquad & \qquad
0 \ = \
\tpc\alpha \tp\beta \gamma\ +\  
\tpc\alpha \,\beta\, \tpc\gamma\ +\  
\alpha\, \tpc\beta \tpc\gamma    
\end{align*}
where $\alpha$, $\beta$, $\gamma$ are expressions for subtrees.
Contained in the span of the root change and root Arnold relations 
we also have
\(0 = \sum_i 
   \tpc{\alpha_1}\cdots {\alpha_i} \cdots \tpc{\alpha_n}\),
which are called the Leibniz relations in \cite[Prop 3.10]{monroe-sinha}.

\begin{remark}
Applying root changes, we may rewrite the Arnold relation fixing the root vertex. 
\begin{equation}\label{eqn:nesting Arnold}
0 \ = \ \tpc \alpha \tpc \beta \gamma
      \ - \ \tpc[\big]{\tpc \alpha \beta} \gamma \ - \ \tpc[\big]{\alpha \tpc \beta} \gamma
\end{equation}
The resulting relation ``nests'' as a local relation on subsymbols.
\begin{equation}\label{eqn:nested Arnold}
\begin{aligned}[t]
\phantom{\tau}
  0 \ &= \ \tpc[\Big]{ \tpc \alpha \tpc \beta \gamma } \tau
       \ - \ \tpc[\Big]{ \tpc[\big] {\tpc \alpha \beta} \gamma } \tau
       \ - \ \tpc[\Big]{ \tpc[\big] {\alpha \tpc \beta} \gamma } \tau \\
  \ ``&=" \ \tpc[\Big]{ \tpc \alpha \tpc \beta \gamma 
       \ - \          \tpc[\big] {\tpc \alpha \beta} \gamma 
       \ - \          \tpc[\big] {\alpha \tpc \beta} \gamma } \tau.
\end{aligned}
\end{equation}
Rearranging Equation~(\ref{eqn:nesting Arnold}), it measures the 
failure of a similar local ``nested root change'' relation.
\begin{align*} 
\tpc \alpha \tpc \beta \gamma \ &= \ 
\tpc[\Big]{\tpc \alpha \beta} \gamma
  \ +\ \tpc[\Big]{\alpha \tpc \beta} \gamma
    \\
 \ ``&=" \  \tpc[\Big]{\tpc \alpha \beta
      \ +\     \alpha \tpc \beta} \gamma.
\end{align*}
Quotes around the final equal sign are due to the fact that, unlike braiding symbols, 
tree symbols are \emph{not} defined to be multilinear.  So the final statement is not 
defined, precisely because root change doesn't nest.
\end{remark}

Removing an edge from a tree yields a pair of rooted trees.
Unlike above, where we only attached at the root, we consider 
cutting (or ``pruning'')
a tree at any edge.
We abusively refer the result of pruning at an edge $e$ also as 
{\bf{branch}} and {\bf{root}} trees,
written $T^{\hat e}_b$ and $T^{\hat e}_r$.
The
edge cutting coproduct of \cite[Definition 2.6]{Sinha-Walter1}
translates to a tree pruning coproduct on $\Tr_*$, given by
\[ \Delta T = 
   \sum_{e\in E(T)} T_b^{\hat e} \otimes T_r^{\hat e}\]
where the sum removes edges from $T$ one at a time, 
separating branch and root subtrees $T^{\hat e}_b$ and $T^{\hat e}_r$.
Note that this is dual to the rooted tree pre-Lie product of \cite{ChaLiv}.
In terms of symbols, the tree pruning coproduct is given by excision of
subsymbols.

\begin{definition}\label{tree coprod}
Let  
\[ \Delta \psi = 
  \sum_{\psi = \cdots \tpc \alpha \cdots}  
     {\alpha} \otimes 
     {\psi^{\widehat{\alpha}}}, \]
where the sum ranges over all ways of picking a (nested) subsymbol $\alpha$ of $\psi$, and
the symbol $\psi^{\widehat{\alpha}}$ is the result of excising the (nested)
subsymbol $\tp \alpha $ from $\psi$.
 
  The anti-commutative Lie cobracket 
  is $\cobr \psi = \Delta \psi \, - \, \tau \Delta \psi$, 
  where $\tau$ is the twist map $\tau(a\otimes b) = b\otimes a$.
\end{definition}  

\begin{example}\label{Ex for Greg}
Compare the following coproducts of symbols.
 \begin{itemize}
   \item $\Delta \tpc a \tpc b c \ = \ a \otimes \tpc b c \ + \ \phantom{\tpc a}b \otimes \tpc a c$
   \item $\Delta \tpc[\big] {\tpc a b} c \ = \ a \otimes \tpc b c \ + \ \tpc a b \otimes c$.
 \end{itemize} 
\end{example}


In $\Tr_*(n)$ the tree pruning operation and the pruning coproduct should
also shift indices of resulting trees downward to be consecutive integers.  
For example pruning the subtree $\alpha$ from the tree $\tpc \alpha \beta$
in $\Tr_*(5)$ as indicated below should yield the given result in
$\Tr_*(3)\otimes\Tr_*(2)$.
\[ \tpc[\big] {\; \underbrace{\tpc 5 \tp 1 3}_\alpha\; }\!\underbrace{\tpc[\big] 4 2\,}_\beta 
\longmapsto \tpc 3 \tpc 1 2 \, \otimes \, \tpc 2 1. \]

Recall that \cite{Sinha-Walter1} uses oriented graphs instead of rooted trees -- see Appendix \ref{BarHarrison}. In the appendix we explain how these two models are naturally isomorphic, allowing
us to use results from \cite{Sinha-Walter1}.  There  we show that the 
kernel of the cobracket in weight greater than one is precisely the expressions generated by 
arrow reversing (root change) and Arnold identities.
Cobracket thus descends from $\Tr_*(n)$ to the quotient $\Eil{\Tr_*}(n)$, 
yielding a model for the Lie cooperad.
The perfect pairing between $\Eil{\Tr_*}(n)$ and $\Lie(n)$ is combinatorially rich - 
see
\cite[Definition 2.11]{Sinha-Walter1} for its description using graphs.  
We use our Eil symbols model of the Lie cooperad 
to construct (conilpotent) cofree Lie coalgebras in the usual manner.

\begin{definition}\label{cofreeE}
Let $W$ be a vector space. Define the  {\bf conilpotent cofree Lie coalgebra} on $W$ 
to be the vector space

\begin{equation*}
\cofreeE(W)\ \cong \ 
  \bigoplus_{n\ge 1} 
  \raisebox{3pt}{$\bigl(\Eil{\Tr_*}(n)\,\otimes\,W^{\otimes n}\bigr)$}
                \!\raisebox{-2pt}{$\diagup$}\!
                \raisebox{-4pt}{$\sym_{n}$},
\end{equation*}
with cobracket operation induced by the cobracket on $\Tr_*(n)$ and deconcatenation on $W^{\otimes n}$. Here, 
the symmetric group $\sym_{n}$ acts on $\Eil{\Tr_*}(n)$  by permuting labels $\{1,\dots,n\}$ 
and on $W^{\otimes n}$ by reordering, with Koszul signs if $W$ is graded.

The index of the summand in the decomposition of $\cofreeE(W)$ above is the {\bf weight}.  Let  $\cofreeE^{\le k}(W)$ denote the subcoalgebra given by 
restricting to weights less than or equal to $k$.
\end{definition}

As we did for braiding symbols, we may write elements in the $n$-th summand of $\cofreeE(W)$ using the elements in 
$W^{\otimes n}$ as labels of vertices within a tree symbol. 
For example 
$$\tpc[\big] c \tp[\big] {\tpc a \tp b d} e \ = \ 
 \tpc[\big] 1 \tp[\big] {\tpc 2 \tp 3 4} 5 \; \bigotimes \;
 \bigl(c\otimes a\otimes b\otimes d\otimes e\bigr).$$ 
Modulo the $S_{n}$-action we may always arrange for the indices of 
a tree symbol to be increasing from left to right.

We give detailed treatment, with examples, of the Koszul signs for cobracket as well as for the differentials in the Harrison complex in Appendix~\ref{BarHarrison}.

From \cite[Proposition 3.18]{Sinha-Walter1}, we have the following, which dualizes our understanding of free 
$k$-step nilpotent
Lie algebras as either ``$k$ or less nested brackets of generators'' or ``Lie
algebras modulo $(k+1)$ iterated brackets''.

\begin{proposition}\label{cofree conilpotent}
  $\cofreeE^{\le k}(W) = \mathrm{ker}\bigl(
    \,\cobr{\cdot}^{\circ k}:\cofreeE W \to (\cofreeE W)^{\otimes (k+1)}
   \bigr)$
  is the cofree $k$-conilpotent Lie coalgebra on $W$.
\end{proposition}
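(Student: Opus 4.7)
The proof has two essentially independent components: (i) identifying $\cofreeE^{\le k}(W)$ with the kernel of $\cobr{\cdot}^{\circ k}$, and (ii) verifying that this subcoalgebra satisfies the universal property for the cofree $k$-conilpotent Lie coalgebra on $W$. The organizing principle for both is the weight grading: by Definition~\ref{cofreeE} the cobracket of a weight $n$ element is a sum of terms in $\cofreeE(W)_a \otimes \cofreeE(W)_b$ with $a+b = n$ and $a,b\ge 1$, since excising a proper subsymbol of a tree symbol leaves a nonempty root subtree and uses a nonempty branch subtree.

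For the kernel identification, the forward inclusion is a direct weight count. Iterating the cobracket $k$ times sends a weight $n$ element into a $(k+1)$-fold tensor product whose weights are all $\ge 1$ and sum to $n$, so $\cobr{\cdot}^{\circ k}$ vanishes on weight $\le k$. For the reverse inclusion, my plan is to show that any nonzero homogeneous element of weight $n$ has nonzero $(n-1)$-fold cobracket, landing in $W^{\otimes n}$. Each tree symbol of weight $n$ admits a sequence of $n-1$ edge prunings reducing it to its leaf decorations, so the $(n-1)$-fold iterate is essentially the ``total decoration'' map to $W^{\otimes n}$ (modulo the symmetric action). By \cite[Prop.\ 2.11]{Sinha-Walter1}, the perfect pairing between $\Eil{\Tr_*}(n)$ and $\Lie(n)$ implies that this map is injective on $\cofreeE(W)_n$ after the appropriate quotient (the Arnold and root-change relations are precisely what one kills). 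Consequently the full $\cobr{\cdot}^{\circ(n-1)}$ is injective on weight $n$, so any nonzero weight $n > k$ element has nonzero $k$-fold cobracket, completing the identification.

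For the universal property, given a $k$-conilpotent Lie coalgebra $L$ with a linear map $\pi\maps L\to W$, the universal property of $\cofreeE(W)$ as the cofree Lie coalgebra on $W$ (proved in \cite[Proposition 3.18]{Sinha-Walter1}) yields a unique Lie coalgebra map $\widehat{\pi}\maps L\to\cofreeE(W)$ extending $\pi$. It suffices to verify that $\widehat{\pi}$ lands in $\cofreeE^{\le k}(W)$. If not, some $x\in L$ has $\widehat\pi(x)$ with a nontrivial weight-$(k+1)$ component; then $\cobr{\cdot}^{\circ k}\widehat\pi(x) \ne 0$ by the kernel identification. But Lie coalgebra maps intertwine iterated cobrackets, so this equals $\widehat{\pi}^{\otimes(k+1)}(\cobr{\cdot}^{\circ k} x)$, which vanishes by $k$-conilpotency of $L$, a contradiction. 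Uniqueness is inherited from the cofree case.

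The main obstacle is the injectivity claim underlying the reverse inclusion in (i): one must be certain that the root-change and root-Arnold relations from Definition~\ref{Eil} account for \emph{all} the kernel of the total-decoration map, so that no genuine cancellation among distinct tree symbols survives in $\cofreeE^{\le k+1}(W)$. This is precisely the cooperadic Koszul-duality statement between $\Eil{\Tr_*}$ and $\Lie$ recorded in \cite{Sinha-Walter1}, so the plan is to invoke that rather than reprove it. With this in hand, both parts of the proposition reduce to bookkeeping with the weight grading.
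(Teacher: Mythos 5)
Your argument is correct, but note how the paper itself handles this statement: it is not proved there at all, being imported with the phrase ``From \cite[Proposition 3.18]{Sinha-Walter1}, we have the following,'' so the paper's own proof is a citation to the earlier graph-coalgebra paper. Your proposal is a self-contained reconstruction in the same spirit as that source: the weight count gives $\cofreeE^{\le k}(W)\subseteq \ker\bigl(\cobr{\cdot}^{\circ k}\bigr)$, the perfect pairing of $\Eil{\Tr_*}(n)$ with $\Lie(n)$ gives the reverse inclusion, and the universal property follows by corestricting the cofree extension. Two points would tighten it. First, in the reverse inclusion the statement you actually need is injectivity of the specific left-iterated cobracket $\cobr{\cdot}^{\circ(n-1)}$ on homogeneous weight-$n$ elements; this is cleanest phrased via bracket/cobracket compatibility of the configuration pairing together with the fact that left-normed brackets span $\Lie(n)$, and since the ground field is $\Q$ the passage to $\sym_n$-coinvariants in Definition~\ref{cofreeE} loses nothing (for infinite-dimensional $W$ one detects a nonzero element with finitely many functionals). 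That replaces the somewhat vague ``total decoration map'' language, and it is exactly the duality you propose to import rather than reprove, which is legitimate. Second, $\cofreeE(W)$ is cofree only among \emph{conilpotent} Lie coalgebras, so when extending $\pi\maps L\to W$ to $\widehat{\pi}$ you should record that a $k$-conilpotent $L$ is in particular conilpotent; with that in place, your contradiction argument is exactly right, since coalgebra maps intertwine iterated cobrackets and $\cobr{\cdot}^{\circ k}$ preserves weight, so distinct weight components of $\widehat{\pi}(x)$ cannot cancel.
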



We use this model for the 
Lie coalgebraic bar construction, which underlies the Harrison complex.

\subsection{Harrison complex and Hopf invariants}\label{HopfInvar}

With cochain algebras in mind, we focus on differential graded-commutative, augmented, unital algebras  $(A,\ \mu_A,\ d_A)$, with augmentation ideal $\bar A$ so that $A = k \cdot 1 \oplus \bar A$ with $k = \Q$~or~$\R$.  
Key examples are commutative algebras of differential forms on simplicial sets and manifolds, augmented by restriction to a basepoint.

Recall that the desuspension of a cochain complex $(C^\bullet,d)$ is the shifted complex 
$s^{\inv}C^\bullet := C^{(\bullet-1)}$ with $d s^\inv = - s^\inv d$.

\begin{definition}\label{D:E}
The {\bf Harrison complex} of $A$ is the complex 
with underlying graded vector space $\cofreeE(s^{\inv}\! \bar A)$, the cofree Lie coalgebra on
the desuspension of the augmentation ideal, with its natural cohomological grading coming from tensor powers of $s^\inv\!\bar A$.

Its differential is the sum of two terms $d_\mE = d_{\cofreeE(s^{\inv}\!\bar A)} + d_\mu$.  The first is 
extended from that of $s^{\inv}\! \bar A$ to its tensor powers by the Leibniz rule, and 
the second is built from
the operation of ``contracting edges and multiplying labels'' as follows.
For rooted trees with a single edge define
$d_\mu\bigl( \tp {s^{\inv}a} s^{\inv} b \bigr) = (-1)^{|a|}\, s^{\inv}(ab)$.
As shown in \cite[Proposition 4.7]{Sinha-Walter1}, 
this (co)freely extends to a differential $d_\mu$ on all of $\cofreeE(s^{\inv} \bar A)$.

\end{definition}


Equivalently, the Harrison complex is the totalization of a second-quadrant bicomplex whose horizontal grading is by tensor degree in $\cofreeE(s^\inv\bar A)$, which is equal to the number of $s^{\inv}$ symbols in an expression, and vertical grading by cohomological degree inherited from $A$ (ignoring the desuspensions). 
The differential $d_{\cofreeE s^\inv\!\bar A}$ has bidegree $(0,1)$ and will be called the {\bf vertical differential}, while $d_\mu$, which is defined through edge contraction and multiplication, 
has bidegree $(1,0)$ and will be called the {\bf horizontal differential}. That is
$$\mE(A) =
    {\rm Tot}_\oplus \bigl(\cofreeE(s^\inv\bar A),\ 
  d_{\cofreeE s^\inv\!\bar A},\ d_\mu\bigr),$$
from which a natural spectral sequence can be constructed, as discussed in Remark~\ref{HarrisonSS}.

 In general, the maps above should involve Koszul signs tracking the interaction of differentials with different gradings.  We give details in Appendix~\ref{signs}, as these will be needed in particular to integrate our theory of de-Rham-Harrison Hopf invariants 
for the fundamental group with the higher homotopy group version, capturing the action of the fundamental group on higher homotopy groups.  In the present setting of interest, desuspended one-cochains are in degree zero,  in which case these Koszul signs all vanish. 

\medskip

Recall  the bar complex $\mB(A)$, defined similarly and more readily 
as the total complex of the coassociative cofree tensor
coalgebra $T(s^\inv \bar A)$ with first differential 
$d_{s^\inv\!\bar A}$ and second differential given by summing over the ways to
multiply pairs of consecutive terms of a tensor -- see Definition~\ref{StandardBar}.  
 Harrison's original definition of $\mE$ for commutative algebras in \cite{Harrison} was as a quotient of the bar complex modulo shuffles, in particular quotienting by
cycles of the form 
$s^\inv a | s^\inv b \,\mp\,  s^\inv b | s^\inv a$, 
which lie in the kernel of $\mu_A$ tautologically by commutativity of $A$.  
Our presentation of Harrison's complex as part of an explicit operadic framework is more flexible
and pairs naturally with Lie algebraic constructions, as described in \cite[Sec 2]{Sinha-Walter1}.  
There is a surjection from the classical bar complex mapping tensors to linear trees 
(the ``long graphs'' of \cite{Sinha-Walter1})
$$ s^\inv a_1|s^\inv a_2|\cdots | s^\inv a_n  \longmapsto
 \tpc[\Big] {\!\cdots\!\tpc[\big] {\tpc {s^\inv a_1} s^\inv a_2} \cdots } s^\inv a_n.$$
Because these span $\Eil\Tr_*(n)$, the bar classes above indeed span
$\cofreeE(s^\inv \bar A)$.  
Furthermore, mapping bar expressions to long trees preserves coproduct -- sending the 
cut coproduct of bar expressions to the excision coproduct on linear trees, as shown in 
the second part of Example~\ref{Ex for Greg}.

In the setting of $1$-connected
differential graded commutative algebras we show in \cite[Section 4.1]{Sinha-Walter1} that the functor $\mE$ is part of a Quillen adjoint pair.  
In Proposition~2.10 of \cite{LieEncyclopedia}, the authors show that even without any
connectivity hypotheses, $\mE$ is quasi-isomorphism invariant,
which leads to the following.

\begin{definition}
Let $X$ be a connected topological space. Define the {\bf  Harrison cohomology of $X$}, denoted 
$H_\mE^*(X)$, to be the cohomology of $\mE(C^*(X))$ for
any augmented commutative cochain algebra $C^*(X)$ 
quasi-isomorphic as differential graded associative algebras to the singular cochains
of $X$.  

\end{definition}

\begin{example}\label{WedgeCircles}
    Because a wedge of circles is formal, we may replace cochains by cohomology, which in addition to having zero differential has trivial product.  Thus the zeroth 
    Harrison cohomology of a wedge of circles is canonically isomorphic to the cofree Lie coalgebra on the first cohomology:
    \[
    H_\mE^0\bigl(\bigvee_{s\in S} S^1\bigr) \cong \cofreeE\bigl(\Q^S\bigr).\]
\end{example}


 Harrison cohomology provides a {dual} to $\pi_1(X)$ because the two admit a natural
pairing, which we now define.  The definition works equally well
for general homotopy groups, building on the following.

\begin{proposition}\label{Ground}
    For every $n\geq 1$ there is a canonical isomorphism 
    \[H^{n-1}_\mE(S^n) \cong  H^n(S^n) \xrightarrow[\int_{S^n}]{\cong} \Q.\]
\end{proposition}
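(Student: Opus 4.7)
The plan is to exploit the formality of $S^n$ to replace cochains by a tiny model and then read off the cohomology directly. Since $S^n$ is formal, we may take $A=H^*(S^n)=\Q\oplus \Q\alpha$ with $|\alpha|=n$, $\alpha^2=0$, and $d_A=0$, as our commutative cochain algebra. By quasi-isomorphism invariance of the Harrison functor (the result of \cite{LieEncyclopedia} cited just before Definition~\ref{D:E}), this computes $H^*_\mE(S^n)$.

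In this minimal model the augmentation ideal is $\bar A=\Q\alpha$, the internal differential is zero, and the product on $\bar A\otimes\bar A$ vanishes identically. Both pieces of the Harrison differential therefore vanish on $\cofreeE(s^{-1}\bar A)$: the vertical piece $d_{\cofreeE s^{-1}\bar A}$ because $d_A=0$, and the horizontal piece $d_\mu$ because it multiplies labels in $\bar A\otimes\bar A$. Hence $\mE(A)$ carries the zero differential and
\[
H^*_\mE(S^n)\ \cong\ \cofreeE(s^{-1}\bar A)
\]
as a graded vector space.

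It remains to isolate the degree-$(n-1)$ component. The single generator $s^{-1}\alpha$ sits in degree $n-1$, so by Definition~\ref{cofreeE} a weight-$k$ expression lies in total degree $k(n-1)$. For $n\ge 2$ and $k\ge 2$ this strictly exceeds $n-1$, so only the weight-$1$ summand contributes, giving $s^{-1}\bar A\cong H^n(S^n)$, which is $\Q$ via $\int_{S^n}$. The sole delicate case is $n=1$: there $s^{-1}\alpha$ has even degree $0$, so all weights live in degree $0$ and one must argue vanishing of higher weights directly. Taking $\Sigma_k$-coinvariants of $\Eil\Tr_*(k)\otimes (s^{-1}\bar A)^{\otimes k}$, where $\Sigma_k$ acts trivially on the one-dimensional even factor $(s^{-1}\bar A)^{\otimes k}$, reduces the claim to the cooperadic fact that $\bigl(\Eil\Tr_*(k)\bigr)_{\Sigma_k}=0$ for $k\ge 2$. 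This in turn is dual, via the perfect $\Sigma_k$-pairing of $\Eil\Tr_*(k)$ with $\Lie(k)$, to the statement that the free Lie algebra on a single even-degree generator is abelian, hence vanishes in weights $\ge 2$. This cooperadic verification is the main, though elementary, obstacle in the proof.

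Finally, the canonical desuspension $s^{-1}\bar A\xrightarrow{\sim}H^n(S^n)$ composed with the integration isomorphism $\int_{S^n}\colon H^n(S^n)\xrightarrow{\sim}\Q$ produces the required canonical map. Independence of the chosen quasi-isomorphic cochain model is automatic, since any two such models induce the same identification on $H^n(S^n)$ and integration against the fundamental class is a cohomological invariant.
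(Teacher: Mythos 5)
Your proof is correct, and it reaches the result by a closely parallel but not identical route to the paper's. The paper never invokes formality: it filters the Harrison complex of the actual cochain algebra by weight and uses the resulting Harrison spectral sequence, whose $E_1$ page is the cofree Lie coalgebra on $s^{\inv}\bar{H}^*(S^n)$; since that is cogenerated by a single class, the sequence collapses, leaving one class in bidegree $(-1,n)$ for $n$ odd and an additional one in bidegree $(-2,2n)$ for $n$ even, and evaluation on the fundamental class finishes the argument. You instead use formality of $S^n$ together with quasi-isomorphism invariance of $\mE$ to replace cochains by the cohomology ring outright, observe both differentials vanish, and read off degrees. Both arguments rest on the same underlying computation, namely the structure of $\cofreeE$ of a one-dimensional graded vector space, and your explicit verification that $\bigl(\Eil{\Tr_*}(k)\bigr)_{\Sigma_k}=0$ for $k\ge 2$ (dual, over $\Q$, to the free Lie algebra on one even generator vanishing above weight one) is precisely what the paper compresses into the word ``collapses''; it is also the crux for $n=1$, the case the rest of the paper actually uses, so spelling it out is a genuine improvement in transparency. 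What the spectral-sequence formulation buys is that canonicity is automatic: the isomorphism is the natural weight-one edge projection, so no discussion of independence from the chosen formality zig-zag is needed. That independence is the thinnest point of your write-up; it is fixable in one line by noting that the weight-one projection is natural, so any rescaling automorphism of the formal model acts compatibly on $H^{n-1}_\mE$ and on $H^n(S^n)$ and cancels in the composite. What your route buys is an explicit model with identically zero differential and a cleaner handling of $n$ even, which you dispose of by pure degree counting instead of exhibiting the extra class in bidegree $(-2,2n)$.
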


\begin{proof}
    Since spheres are formal, we may pick $H^*(S^n)= \Q[\alpha]/(\alpha^2)$ with $d(\alpha) = 0$ as our cochain model. Then,
    \[
    \mE(s^\inv \bar{C}^*(S^n)) = \mE(\Q\langle s^\inv \alpha \rangle)
    \]
    is the cofree Lie coalgebra on a single element of degree $n-1$, with zero differential. It follows that $H^{n-1}_\mE(S^n) \cong  \Q\langle s^\inv \alpha \rangle$. Evaluation on the fundamental class of $S^n$ defines the second claimed isomorphism.
    
\end{proof}

Specializing to the case $n=1$, we introduce an invariant of loops in a space.
\begin{definition}\label{Hopf}
    Let $[\gamma] \in H^{0}_{ \mE}(X)$ and 
    $f: S^1 \to X$ a pointed map representing $[f]\in \pi_1(X,*)$. 
    The {\bf Hopf invariant} associated
    to $\gamma$ is the function
    $h_\gamma \maps [f] \mapsto \int_{S^1} f^* \gamma $.

    Equivalently, the Hopf invariant defines a pairing $H^{0}_{ \mE}(X)\times \pi_1(X) \to \Q$
    by
    \[
    \bigl\langle [\gamma], [f] \bigr\rangle \; = \; \int_{S^1}f^*(\gamma).
    \]
\end{definition}

In practical terms,
to perform this evaluation we pull back $\gamma$ to the Harrison complex
of $S^1$ and then find a homologous Harrison cocycle in weight one, which must
exist by Proposition~\ref{Ground}.   Then evaluate the corresponding cocycle as an ordinary cochain
on the fundamental class of $S^1$, typically by counting points or by integration.  The process of finding a homologous class in weight one is called {\bf weight reduction}, often performed inductively.
For weight one Harrison cocycles, which are ordinary 1-cocycles of $X$, Hopf invariants simply
evaluate $H^1(X)$ on the fundamental group.
Evaluation of higher weights can be viewed as  derived versions of this, and it can detect nontrivial elements deep in the lower central series.

Hopf invariants are equally effective regardless of which space is used to present a given fundamental group.

\begin{lemma}
If $f\maps X'\to X$ is a map of pointed connected spaces inducing an isomorphism on $\pi_1$, then
\[
H^0_\mE(f) \maps H^0_\mE(X') \to H^0_\mE(X)
\]
is an isomorphism.
\end{lemma}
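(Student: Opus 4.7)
The plan is to reduce the statement to the universal property of $H^0_\mE$ as the Lie dual of the fundamental group, as established in Theorem~\ref{thm-intro:universal property}. The functor $\mE$ is applied to a choice of commutative cochain model, and the pullback $f^* \maps C^*(X) \to C^*(X')$ is a map of augmented commutative differential graded algebras, hence induces a map of Harrison complexes. Taking cohomology in weight zero gives a map of Lie coalgebras
\[
H^0_\mE(f) \maps H^0_\mE(X') \to H^0_\mE(X),
\]
so the only thing to verify is that it is an isomorphism.

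The key input is naturality of the Hopf pairing. For $[\gamma]\in H^0_\mE(X)$ and a loop $g \maps S^1 \to X'$,
\[
\bigl\langle H^0_\mE(f)[\gamma],\, \log[g]\bigr\rangle_{X'} \;=\; \int_{S^1} g^*(f^*\gamma) \;=\; \int_{S^1} (f\circ g)^*\gamma \;=\; \bigl\langle [\gamma],\, \log[f\circ g]\bigr\rangle_{X}.
\]
Together with the assumption that $\pi_1(f)$ is an isomorphism, this says that the Hopf pairing of $H^0_\mE(X')$ with $\pi_1(X',*)$ and the pullback along $H^0_\mE(f)$ of the Hopf pairing of $H^0_\mE(X)$ with $\pi_1(X,*)$ correspond under the identification of fundamental groups.

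By Theorem~\ref{thm-intro:universal property}, both pairings are universal Lie pairings of a Lie coalgebra with $\pi_1(X,*) \cong \pi_1(X',*)$. Applying the universal property of $H^0_\mE(X')$ to the Hopf pairing on $H^0_\mE(X)$ (transported through the isomorphism $\pi_1(f)$) yields a unique Lie coalgebra map $\psi \maps H^0_\mE(X) \to H^0_\mE(X')$ compatible with pairings. Then $\psi \circ H^0_\mE(f)$ and $H^0_\mE(f) \circ \psi$ are Lie coalgebra endomorphisms of $H^0_\mE(X')$ and $H^0_\mE(X)$ respectively, each compatible with the Hopf pairing; by the uniqueness clause in the universal property (the identity being the other such endomorphism), both compositions must equal the identity. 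Hence $H^0_\mE(f)$ is an isomorphism with inverse $\psi$.

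The only potential obstacle is the need to invoke Theorem~\ref{thm-intro:universal property} to identify the Lie-dual universal property, which does the heavy lifting; once that theorem is in hand, the argument reduces to an abstract uniqueness statement for objects characterized by a universal property. No direct manipulation of the Harrison complex model is required beyond noting functoriality of $\mE$ and of the evaluation map $\int_{S^1}$.
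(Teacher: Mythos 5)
There is a genuine gap in your argument: circularity. You deduce the lemma from the universality clause of Theorem~\ref{thm-intro:universal property}, applied to \emph{both} $H^0_\mE(X)$ and $H^0_\mE(X')$. But in the paper that universality is proven (Theorem~\ref{thm:universality restated}) only for wedges of circles and for two-dimensional CW complexes, via the Lifting Criterion; for an arbitrary pointed space the statement is obtained precisely by comparing the space to such a model with the same fundamental group --- that is, by the very lemma you are trying to prove. Indeed the lemma is placed in the Definitions section, immediately after the Hopf pairing is introduced, exactly so that it is available \emph{before} the structural theorems: it is what licenses the phrase ``Hopf invariants are equally effective regardless of which space is used'' and the subsequent freedom to replace $X$ by a handlebody or presentation two-complex. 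So unless you first supply an independent proof of universality for arbitrary spaces, your argument assumes what is to be proven. (A smaller internal slip: you correctly take the pullback $f^*\maps C^*(X)\to C^*(X')$, under which $\mE$ induces a map $H^0_\mE(X)\to H^0_\mE(X')$, yet you display $H^0_\mE(f)$ in the opposite direction and your naturality computation then treats it as going $H^0_\mE(X)\to H^0_\mE(X')$ after all; since the conclusion is an isomorphism this is cosmetic, but it should be made consistent.)

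The paper's proof is instead a direct, elementary cochain-level argument, which is what allows the lemma to be used so early: since $\pi_1(f)$ is an isomorphism, the induced map of commutative cochain models is an isomorphism on $H^0$ and $H^1$ and injective on $H^2$, so its desuspension is $0$-connected; by the K\"unneth theorem and exactness of $\sym_n$-coinvariants over $\Q$, each weight-homogeneous piece of the induced map of Harrison complexes is $0$-connected, and a spectral sequence argument then shows $\mE(f^*)$ is $0$-connected, hence an isomorphism on $H^0_\mE$. If you wish to salvage a universality-style argument, you must either restrict to spaces for which universality has been established independently of this lemma (wedges of circles and two-complexes), or first prove the lemma by a connectivity argument of the above kind --- at which point the appeal to universality becomes unnecessary.
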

\begin{proof}
    Let $f^*\maps C^*(X')\to C^*(X)$ be the induced map of Sullivan minimal models for $X$ and $X'$. Then the hypothesis implies it induces an isomorphism on cohomology in degrees zero and one, and an injection in degree two. It follows that its desuspension $s^{\inv} f^*$ is $0$-connected. By the K\"unneth isomorphism and exactness of $\sym_n$-coinvariants over $\Q$, the induced maps
    \[
    \raisebox{3pt}{$\bigl(\Eil{\Tr_*}(n) \otimes s^{\inv}\bar C^*(X')^{\otimes n}\bigr)$}
    \!\raisebox{-2pt}{$\diagup$}\!
    \raisebox{-4pt}{$\sym_n$} \to 
    \raisebox{3pt}{$\bigl(\Eil{\Tr_*}(n) \otimes s^{\inv}\bar C^*(X)^{\otimes n}\bigr)$}
    \!\raisebox{-2pt}{$\diagup$}\!
    \raisebox{-4pt}{$\sym_n$}
    \]
    are similarly $0$-connected for all $n\geq 1$. From here, a standard spectral sequence argument shows that $\mE(f^*) \maps \mE(C^*(X')) \to \mE(C^*(X))$ is $0$-connected, and in particular induces an isomorphism on $H^0_\mE$.
\end{proof}

We will take full advantage of the freedom to choose $X$ with fundamental group $G$.  In some cases, there are good
manifold models, in others we can directly analyze 
two-complexes, as we do in Section~\ref{examples} using   Appendix~\ref{Thomforms}.
If
we take $X$ as the simplicial model for $BG$ and
use PL-forms then we immediately get functorality with
respect to group homomorphisms.

\begin{remark}\label{HarrisonSS}
The central role of $H^{0}_{ \mE}(X)$ in our theory motivates its calculation.  A standard tool is the spectral sequence of a bicomplex, which leads to the {\bf Harrison spectral
sequence}.  (See, among many sources, \cite{Elias} for an introduction to the spectral sequence of a bicomplex in general and Appendix~\ref{StandardBar} for an introduction to the
corresponding spectral sequence for the Bar complex.)
In degree zero, the $E_1$-page of the Harrison spectral sequence is identified with the cofree Lie coalgebra
on the first cohomology of $X$, namely $\cofreeE (\pi_1(X)^\#)$ cogenerated by the linear dual $\pi_1(X)^\# = \Hom_{Grp}(\pi_1(X),\Q) \cong H^1(X; \Q)$.

Since the cohomology of the augmentation ideal for cochains of a connected space vanishes in degree zero, the Harrison $E_1$-page  vanishes in negative total degree.  Thus no differentials land in total degree zero.
So it is the vanishing 
of differentials out of $\cofreeE (\pi_1(X)^\#)$, which are essentially 
Massey products of $H^1(X)$-classes, that ultimately dictate
the structure of the zeroth Harrison cohomology. 
Relations in a group give rise to products in cohomology,
thus ``more" products mean ``smaller" fundamental group.  Wedges of circles are an extreme case in which all products vanish. 
They are also universal: the fact that wedges of circles can induce surjections onto the fundamental group of any space is dual to the observation that all Hopf invariants inject into those of wedges of circles through the Harrison spectral sequence.

At the other extreme, products of circles and their abelian fundamental group have injective $d_1$-differential, killing all but weight one elements. In this case the only Hopf invariants come directly from cohomology:
\[
    H_\mE^0\biggl(\prod_{s\in S} S^1\biggr) \cong H^1 \biggl(\prod_{s\in S} S^1\biggr)  \cong \Q^S
\]
with zero cobracket.

See Section~\ref{commutatormassey} for an intermediate example.  Explicitly working through the Harrison spectral
sequence in this case with its $d_2$ differential is a useful exercise.
\end{remark}

\section{Central Example: Handlebodies and linking of letters for free groups}\label{FreeCase}

We connect the Hopf invariant framework of the previous section with the purely combinatorial approach to group theory introduced in Section~\ref{combalg} and 
developed in \cite{monroe-sinha}.  
Ultimately, this section establishes
the (co)free case of our
main structural theorems, which will be used in proving them in Section~\ref{structural}.

\subsection{First handlebody example}\label{first}

Recall that if $M$ is a
manifold and $W$ is a codimension-$k$ submanifold, there
is a Thom $k$-form which is 
supported in a neighborhood of 
$W$ and ``counts intersections with $W$'' multiplied
by a scalar we call the {mass}.  
See Appendix~\ref{Thomforms}.  
The simplest
example is the most relevant to
our theory, namely $M = S^1$ where $W$ is a finite collection
of points.  A Thom form for $W$ is a sum of ``bump forms'' $\omega = \Sigma f_i \,  d\theta$, 
with  each  $f_i$ supported in a 
neighborhood of a point. 
If the total integral of such a Thom form is zero, then there is a cobounding
anti-derivative zero-form which we call $d\,^\inv \omega$, uniquely determined by the requirement that it vanish at the basepoint $1\in S^1$ (as shown in Figure~\ref{fig:bumpforms}). 
This anti-derivative is a ``mostly constant'' function
whose values are the partial sums of the masses.

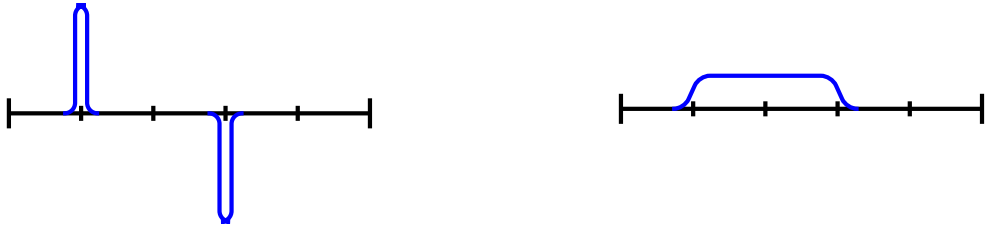
\begin{figure}[h]
	\centering
	\begin{subfigure}[b]{0.40\textwidth}
        \begin{tikzpicture}[scale=0.35]
\draw[ultra thick] (0,0)--(12,0);
\draw[ultra thick](0,-.5)--(0,.5);

\draw[ultra thick] (2.4,-.25)--(2.4,.25);
\draw[ultra thick] (4.8,-.25)--(4.8,.25);
\draw[ultra thick] (7.2,-.25)--(7.2,.25);
\draw[ultra thick] (9.6,-.25)--(9.6,.25);

\draw[ultra thick] (12,-.5)--(12,.5);

\draw[blue, ultra thick,rounded corners] (1.8,0) --(2.2,0)--(2.2,3.6) --(2.6,3.6)--(2.6,0)--(3,0);
\draw[blue, ultra thick ,rounded corners] (6.6,0)--(7,0)--(7,-3.6) --(7.4,-3.6)--(7.4,0)--(7.8,0);

\end{tikzpicture}
        \vspace{-.6cm} 
		\caption{\label{fig:thom of points}A  Thom form $\alpha$ for two points on the circle, with masses $\pm 1$.}
	\end{subfigure}
	\hspace{0.5cm}
	\begin{subfigure}[b]{0.40\textwidth}
    \centering
        \begin{tikzpicture}[scale=0.35]

\draw[ultra thick] (0,0)--(12,0);
\draw[ultra thick] (0,-.5)--(0,.5);

\draw[ultra thick] (2.4,-.25)--(2.4,.25);
\draw[ultra thick](4.8,-.25)--(4.8,.25);
\draw[ultra thick] (7.2,-.25)--(7.2,.25);
\draw[ultra thick] (9.6,-.25)--(9.6,.25);

\draw[ultra thick] (12,-.5)--(12,.5);

\draw [blue,ultra thick,rounded corners] (1.7,0) to(2.1,0) to (2.6,1.1)to (7.0,1.1)to (7.5,0) to (7.9,0);

\end{tikzpicture}
        \vspace{.5cm} 
    \caption{\label{fig:thom of interval}The antiderivative of $\alpha$, denoted $d\,^\inv \alpha$ is a Thom form for the interval.}
	\end{subfigure}
	\caption{Basic Thom forms on the circle.}
	\label{fig:bumpforms}
\end{figure}

Consider for the moment a general smooth manifold $M$ with  $\{ W_i \}$  
a collection of codimension one submanifolds.
If $f: S^1 \to M$ is a smooth loop transverse to $W_i$, which is a generic
condition, then the pullback of a Thom form for $W_i$ is a
Thom form for the intersection points $f^\inv(W_i)\subset S^1$.  
For example, if we take the Borromean rings as presented in Figure~\ref{borromean3} and the Thom form for the red Seifert
surface,  the pullback will look like the Thom form pictured in Figure~\ref{fig:thom of points}.

In Example~\ref{WedgeCircles} we saw that for wedges
of circles the zeroth Harrison cohomology is a 
cofree Lie coalgebra.  We now calculate  the
second simplest associated Hopf invariant, as 
 evaluation on ordinary cohomology is simplest.
Write $B_m$ for the
three-dimensional handlebody obtained
by adding $m$ one-handles to a three-ball, yielding a $3$-manifold with boundary homotopy equivalent to  $\bigvee_m S^1$.
Focusing on $B_2$, label the two handles $a$ and $b$.  
By abuse we use
the same letters for the generators of $\pi_1$ represented by loops
along their core disks.
Let $D_a$ be 
a belt disk of the handle $a$ as in Figure~\ref{handlebody}, and let $A$ be a
Thom form for $D_a$ of mass one.
The form $A$ counts the number of times, with signs, a closed loop winds around the handle $a$.
Similarly define $D_b$ and $B$ for the handle $b$.

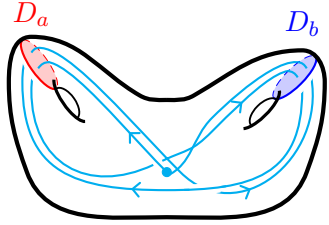
\begin{figure}
  \begin{tikzpicture}[scale=0.60]

\filldraw[cyan] (0,0) circle (1mm);
\node[red] at (-3,3+.5) {$D_a$};
\node[blue] at (3,3+.3) {$D_b$};

\draw[thick,cyan]{(0,-.085) to[out=25,in=225] 
                     (1,1.2) to[out=45,in=100]
                     (3.2,2.1)};         
\begin{knot}[
consider self intersections=true, 
clip width=5,
clip radius=6pt,looseness=1.3]
\strand[thick,cyan] (3.2,2.1)  to[out=-80,in=-45]
                     (.3,0) to[out=135,in=90] 
                     (-3.2,2) to[out=-90,in=180] 
                     (0,-.4) to[out=0,in=-90] 
                     (3,1.8) to[out=90,in=45] 
                     (1,.9) to[out=225,in=25]
                     (-.1,.25);            
\strand[thick,cyan]  (-.3,.16) to[out=205,in=-90] 
                     (-3,1.8) to[out=90,in=135] 
                     (0,0);   
\flipcrossings{1}
\end{knot};

\begin{scope}[thick,decoration={
    markings,
    mark=at position 0.3 with {\arrow[scale=1,cyan]{ang 90}}}
    ] 
    \draw[draw=none,postaction={decorate},yshift=.4mm] (0,0) to[out=135,in=90] (-3,1.8);
    \end{scope}
    
\begin{scope}[thick,decoration={
    markings,
    mark=at position 0.4 with {\arrow[scale=1,cyan]{ang 90}}}
    ] 
    \draw[draw=none,postaction={decorate},yshift=-2.74mm,xshift=3mm] (0,-.4) to[out=0,in=-90] (3,1.8);
\end{scope}

\begin{scope}[thick,decoration={
    markings,
    mark=at position 0.1 with {\arrow[scale=1,cyan]{ang 90}}}
    ] 
    \draw[draw=none,postaction={decorate},yshift=.11mm,xshift=-4mm] (0,-.4) to[out=180,in=-90] (-3.2,2);
\end{scope}

\begin{scope}[thick,decoration={
    markings,
    mark=at position 0.4 with {\arrow[scale=1,cyan]{ang 90}}}
    ] 
    \draw[draw=none,postaction={decorate},yshift=0mm,xshift=-.65mm] (1,.9) to[out=45,in=90] (3,1.8);
\end{scope}

\filldraw[white,rotate around={(125:(-3.2,2.72)}] (-3.2,2.72)  arc[start angle=0, end angle=360, x radius=.65, y radius=.2];
\filldraw[white,xshift=-.95cm,yshift=-.95cm,rotate around={(225:(3.3,2.37)}] (3.3,2.37)  arc[start angle=0, end angle=360, x radius=.65, y radius=.2];

\draw[fill=red!20,draw=none,rotate around={(125:(-3.2,2.92)}] (-3.2,2.92)  arc[start angle=0, end angle=360, x radius=.65, y radius=.2];
\draw[thick, red, rotate around={(125:(-3.2,2.92)}] (-3.2,2.92)  arc[start angle=0, end angle=180, x radius=.65, y radius=.2];
\draw[line width=.1mm,red,densely dashed,rotate around={(125:(-3.2,2.92)}] (-3.2,2.92)  arc[start angle=0, end angle=-180, x radius=.65, y radius=.2];

\draw[very thick,xshift=.66cm,yshift=-.9cm,rotate around={(125:(-3.16,2.95)}] (-3.16,2.95)  arc[start angle=20, end angle=170, x radius=.6, y radius=.2];
\draw[line width=.25mm,xshift=.71cm,yshift=-1.1cm,rotate around={(125:(-3.16,2.95)}] (-3.16,2.95)  arc[start angle=0, end angle=-180, x radius=.4, y radius=.2];

\draw[xshift=-.95cm,yshift=-.95cm,fill=blue!20,draw=none,rotate around={(225:(3.43-.2,2.5)}] (3.3-.2,2.5)  arc[start angle=0, end angle=360, x radius=.65, y radius=.2];
\draw[xshift=-.95cm,yshift=-.95cm,thick,blue, rotate around={(225:(3.43-.2,2.5)}] (3.3-.2,2.5)  arc[start angle=0, end angle=180, x radius=.65, y radius=.2];
\draw[xshift=-.95cm,yshift=-.95cm,line width=.1mm,blue,densely dashed,rotate around={(225:(3.43-.2,2.5)}] (3.3-.2,2.5)  arc[start angle=0, end angle=-180, x radius=.65, y radius=.2];

\draw[very thick,xshift=-1.6cm,yshift=-1.635cm,rotate around={(225:(3.3,2.6)}] (3.3,2.6)  arc[start angle=20, end angle=170, x radius=.6, y radius=.2];
\draw[line width=.25mm,xshift=-1.43cm,yshift=-1.56cm,rotate around={(225:(3.3,2.6)}] (3.3,2.6)  arc[start angle=0, end angle=-180, x radius=.4, y radius=.2];

\draw[cyan,thick] (-2.85,2.3) to[out=55,in=175] (-2.6,2.435);
\draw[cyan,thick] (-3,2.65) to[out=55,in=175] (-2.85,2.75);
\draw[cyan,thick] (2.9,2.1) to[out=120,in=0] (2.67,2.24);
\draw[cyan,thick] (3.1,2.3) to[out=120,in=0] (2.9,2.435);

\draw[ultra thick]  (0,1.6) to[out=0,in=210]
                    (.6,-.3+2) to[out=30,in=180] 
                    (3,.5+2.2) to[out=0,in=90] 
                    (3.5,0+2) to[out=-90,in=70] 
                    (3.1,-2+2) to[out=250,in=00] 
                    (0,-3+2) to[out=180,in=-70]
                    (-3.1,-2+2) to[out=110,in=-90] 
                    (-3.5,0+2) to[out=90,in=180] 
                    (-3,.5+2.5) to[out=0,in=150] 
                    (-.6,-.3+2) to[out=-30,in=180] 
                    (0,1.6);
%

\end{tikzpicture}

  \vspace{-1cm}  
\caption{A commutator in a handlebody}\label{handlebody}
\end{figure}

Consider the loop  $ f\maps S^1\to B_2$ with $[f]  = a b a^\inv b^\inv \in \pi_1(B_2)$ shown in Figure~\ref{handlebody}.
We calculate the Hopf invariant associated to  $ \tpc A B$ on $f$. (Readers who are, in a first reading, substituting the bar complex
for the Harrison complex can subsitute $A | B$
for $\tpc A B$.)
With appropriate choices of parametrization and Thom forms,
$f^* A = \alpha$, where $\alpha$  is shown in Figure~\ref{fig:thom of points}, and we
set $\beta = f^* B $, given by the horizontal shift 
of $\alpha$ by $+\frac{1}{5}$.  Thus
$f^* \tpc A B = \tpc \alpha \beta$.
 Using $d\,^\inv \alpha$ from Figure~\ref{fig:thom of interval},
 the Harrison $(-1)$-cochain 
$\tpc{d\,^\inv \alpha}{\beta}$
will have total differential  
\begin{equation}\label{SimpleWeightReduction}\begin{split}
d_{\mE} \Bigl(\tpc
{d\,^\inv \alpha}{\beta}\Bigr) &= \tpc{d (d\,^\inv\alpha)}{\beta} 
+ \tpc{d\,^\inv\alpha}{d \beta} - \ (d\,^\inv \alpha) \wedge   \beta  \\
&=\qquad  \tpc{\alpha}{\beta} \qquad \qquad \quad - \ (d\,^\inv \alpha) \wedge \beta.  
\end{split}\end{equation}
This implies that $ \tpc \alpha \beta$ is homologous
to $(d\,^\inv \alpha) \wedge \beta$ in the Harrison complex
of the circle.  The product 
$(d\,^\inv \alpha) \wedge \beta$ is a form of mass one supported near $t = \frac{2}{5}$, so we 
conclude that 
\[\bigl\langle \tpc A B,\, f  \bigr\rangle \ = \ 
\int_{S^1} d\,^\inv \alpha \wedge \beta \ = \ 1.\]
This nonzero value is to be contrasted with any cohomological evaluation, which vanishes on commutators.

The calculation can immediately be generalized.  
If $\langle A, f \rangle = 0$ then we can
pair $a$'s with $a^\inv$'s in the word $[f]\in \pi_1(B_2)$. For any such $a$-$a^\inv$ pairing,
$\bigl\langle \tpc A B,\, f \bigr\rangle$ 
is equal to the 
total number of $b$'s which occur between $a$-$a^\inv$ pairs, counted with appropriate signs.
This count can be seen as the linking 
number of the zero-manifolds $f^\inv D_a$ and $f^\inv D_b$ inside $S^1$,
as explained at the very beginning
of \cite{monroe-sinha}.
Such a count also arose for the letter braiding 
invariant $\lp{A}B$ in Example~\ref{ex:first braiding example}, a relationship we will fully expand and establish in this section.

\subsection{Equating topology and combinatorics of letter linking}\label{topcombo}

In Section~\ref{combalg} we used simple combinatorics to define letter braiding functions on free groups.  In Section~\ref{HopfInvar} we defined Hopf
invariants topologically.
We now show that these coincide when considering 
fundamental groups of handlebodies and restricting
to ``good'' symbols.

Let $F_S = \langle s_1, s_2, \cdots, s_m \rangle$ be a free group. We present it as the fundamental group of a handlebody $B_m$, with handles corresponding to the $s_j$, whose first cohomology is ${F_S}^\#= \Hom_{Grp}(F_S,\Q)$.  Then ${F_S}^\#$ is spanned by the indicator homomorphisms $S_j$, respectively sending $s_j$ to $1$ and all other generators to $0$.

The braiding symbols defined in Section \ref{combalg}
 map to the Harrison
homology of $B_m$, essentially by changing $\lp {\;} {}$'s to $\tp {\;} {\!}$'s after
applying multi-linearity, as follows. (Recall that tree symbols $\tp{\,}$ do not allow
nested sums, though braiding symbols $\lp{\;}$ do.)

\begin{definition}
    Let $\br_{S}$ denote the vector space spanned
    by braiding symbols for the free group  $F_S$, defined in Section \ref{combalg}, modulo multi-linearity.

    A formal braiding symbol is {\bf homogeneous} if it is either a single variable or, inductively, it is a formal braiding product of homogeneous braiding symbols (or equivalently, if it does not contain nested sums). A braiding symbol is homogeneous when its formal symbol is.
\end{definition}

Elements of $\br_S$ can be written as sums of homogeneous symbols.
On the other hand, since a handlebody is formal and has trivial cup products, there is a canonical isomorphism $H^0_\mE(B_m) \cong \E (H^1(B_m))$. 
Therefore, the Hopf invariants of $F_S$ are represented by rooted trees with vertices decorated by homomorphisms $h_v\maps F_S\to \Q$, just as homogeneous braiding symbols are.

\begin{definition}
Let $|\!\cdot\!|$ be the map of vector spaces that sends a homogeneous
braiding symbol to the  class it represents in $H^0_\mE(B_m)$, converting $\lp {\;} {}$'s to $\tp {\;} {\!}$'s.
\end{definition}

While letter braiding invariants of words and Hopf invariants of (free) groups are 
both defined for general braiding and tree symbols, 
letter braiding invariants only agree with Hopf invariants
in the following  special setting.  

\begin{definition}\label{def:lp}
A letter braiding product $\lpw f g$ is said to be a {\bf linking product} when $\int f = 0$.

A word $w$ is an {\bf eigenword} for a braiding symbol $\sigma_{\vec{h}}$ if all of the constituent braiding products in $w^*\sigma_{\vec{h}}$
are linking products.  The restriction of a letter braiding
function to the subset of eigenwords is called a {\bf letter linking  function}. 
\end{definition}

The term ``letter linking'' arises as  discussed at the end of Section~\ref{first}. For example, we consider the indicator homomorphism $A$ on a word where 
the total count of $a$'s is zero.  Putting ``strings''
between $a$-$a^\inv$ pairs, the values
of $\lpw A$ record how many such strings croass
a given letter  
of $w$.  If the total count of $a$'s is not zero then we do not have such ``closed strings'' connecting $a$'s and $a^{-1}$'s, so the calculation involves ``open strings'', hence the term letter braiding.

Let $f_w : S^1 \to B_m$ denote a map
which represents $w$ in $\pi_1(B_m)$, a
word of length $n$, by traveling through the core of each 
handle at constant speed and returning to the basepoint at times $\frac{k}{n}$.

\begin{theorem}\label{freecase}
If $w$ is an eigenword for  $\sigma_{\vec{h}}$ then 
\[
\sigma_{\vec{h}}(w) \ = \ \int_{[n]} w^*\sigma_{\vec{h}} \ = \ 
\int_{S^1} {f_w}^*\, |\sigma_{\vec{h}}| .
\]
\end{theorem}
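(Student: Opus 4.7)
The first equality is just Definition~\ref{letterlinkingdef}. The content is the second equality, which identifies the combinatorial partial-sum count with a de\,Rham integral on $S^1$. The plan is to induct on the weight of $\sigma$, using the weight-reduction identity illustrated in Equation~\eqref{SimpleWeightReduction}.

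Begin by fixing geometric data. For each $s_j \in S$, pick a belt disk $D_j$ in the $j$-th handle of $B_m$ and a unit-mass Thom form $\tau_j$ supported in a disjoint tubular neighborhood, so that $f_w^{-1}(D_j)$ consists of the times $k/n$ with $w_k = s_j^{\pm 1}$ and $f_w^*\tau_j$ is a sum of signed bump forms near those times. The crucial choice is this: place each bump slightly before $k/n$ when $\epsilon_k = +1$ and slightly after $k/n$ when $\epsilon_k = -1$. Under this convention the antiderivative $d^{-1}(f_w^*\tau_j)$, normalized to vanish at the basepoint, takes the value $\lpw{w^*S_j}(k)$ at time $k/n$, which is precisely what the asymmetric convention $\lew$ in Definition~\ref{BraidingProduct} was engineered to produce.

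The base case $\sigma_{\vec h} = X_h$ reduces to $\int_{S^1} f_w^* |h| = h(w) = \int_{[n]} w^* h$. For the inductive step, write $\sigma_{\vec h} = \tpc{\alpha}\,\beta$; the eigenword hypothesis at the root edge forces $\int w^*\alpha = 0$, so by induction applied to $\alpha$, the pullback $f_w^*|\alpha|$ is cohomologous in the Harrison complex to a weight-one form $\hat\alpha$ with vanishing integral, hence admitting an antiderivative $d^{-1}\hat\alpha$ vanishing at the basepoint. The weight-reduction identity generalizing Equation~\eqref{SimpleWeightReduction},
\[
d_\mE\bigl(\tpc{d^{-1}\hat\alpha}\,f_w^*|\beta|\bigr) = \tpc{\hat\alpha}\,f_w^*|\beta|\ -\ d^{-1}\hat\alpha \wedge f_w^*|\beta|,
\]
implies that $\int_{S^1} f_w^*|\sigma_{\vec h}|$ equals $\int_{S^1} d^{-1}\hat\alpha \wedge f_w^*|\beta|$. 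Applying the inductive hypothesis to $\alpha$ gives that $d^{-1}\hat\alpha$ takes value $\lpw{w^*\alpha}(k)$ at each marked time $k/n$; integrating its wedge with the bumps of $f_w^*|\beta|$ then yields $\sum_k \lpw{w^*\alpha}(k)\cdot w^*\beta(k) = \int_{[n]} w^*\sigma_{\vec h}$, completing the induction.

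The main obstacle is to manage the placement-of-bumps convention so as to be simultaneously compatible with the recursive structure of $\sigma$, the asymmetric convention $\lew$, and the signs for $\epsilon_k = -1$. Deeper-nested vertices of the tree should correspond to narrower bump supports, so one chooses height-dependent tubular neighborhoods that keep all supports disjoint and correctly ordered throughout the induction. A subtle point worth care is that reducing $|\alpha|$ to a single weight-one form $\hat\alpha$ requires modifying $f_w^*|\alpha|$ by an exact Harrison cochain; one must verify that these adjustments do not affect the integrated value of $d^{-1}\hat\alpha$ at the marked times, so that the identification with the discrete partial sum $\lpw{w^*\alpha}$ survives intact.
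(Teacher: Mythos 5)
Your strategy is essentially the paper's: realize the symbol by Thom forms of belt disks whose supports are placed within each letter-interval in a way that encodes the $\lew$ convention (your ``slightly before/after'' placement is the paper's increasing/decreasing reparametrization of a subinterval onto $(\frac{i-1}{n},\frac{i}{n})$), then induct on weight using the weight-reduction identity of Equation~(\ref{SimpleWeightReduction}) and identify the antiderivative's values at the marked times with the partial sums $\lpw{w^*\alpha}$.

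The one point you defer is, however, a real gap in the induction as you have set it up, and it is exactly where the paper does extra work. Your inductive hypothesis only says that $f_w^*|\alpha|$ is cohomologous to \emph{some} weight-one form $\hat\alpha$ of total integral zero; from that alone you cannot conclude that $d^{-1}\hat\alpha$ takes the value $\lpw{w^*\alpha}(k)$ at the marked times, nor that it is constant on the support of the $\beta$-bumps --- a weight reduction by an exact Harrison cochain can redistribute the mass of $\hat\alpha$ within each letter-interval and even across the marked times. The paper closes this by proving a \emph{stronger} inductive statement: it fixes a tuple of subintervals $\vec J\in\kappa_\ell$, one per vertex, chosen order compatible with the tree (deeper vertices to the right of shallower ones --- the precise form of your ``height-dependent neighborhoods''), and shows the pullback of the realization $\rho(\sigma_{\vec h},\vec J)$ weight reduces to a form that \emph{represents} the d-function $w^*\sigma_{\vec h}$, i.e.\ whose restriction to $(\frac{i-1}{n},\frac{i}{n})$ has prescribed mass \emph{and} support in the root interval $J_\sigma(i)$. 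With mass-and-support control in the hypothesis, $d^{-1}\alpha_0$ is locally constant on the support of $\beta_0$ with value the correct partial sum (including or excluding the $i$-th term according to $\epsilon_i$), and the wedge $d^{-1}\alpha_0\wedge\beta_0$ again represents a d-function, so the induction propagates. So your outline is right, but to make it a proof you must build this support bookkeeping into the statement being inducted on rather than verifying it after the fact.
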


The first equality is the definition of letter braiding functions. We gave an  example of the second equality in Section~\ref{first}.  
The rest of this section is devoted to the proof in general.
We relate the two evaluations on $w$ by carefully choosing cochain
representatives for $H^0_\mE(B_m)$ generalizing the constructions in Section~\ref{first}.  To do so is elementary, but making the equality 
apparent requires some development.

First, we define a family of Thom forms of belt disks. Fix a non-negative one-form $\varphi$ with support on the
interior of $[0,1]$ and total
integral one.  For subintervals $J \subset [0,1]$, 
let $\varphi_J$ be a one-form given by zero outside $J$ and otherwise
the pull-back of $\varphi$ by an order-preserving linear homeomorphism 
 $J\to[0,1]$.
On a handle $D^2 \times [0,1]$, let $\Phi_J$ denote the pullback of $\varphi_J$ along the projection to the interval.

To consider all possibilities for the placement of such forms on a handle, let $\kappa_\ell$ be the set of $\ell$-tuples of subintervals of $[0,1]$. Explicitly, $\kappa_
\ell$ is in natural bijection with $(\Delta^2)^{\times \ell}$. (These sets  have a natural operadic structure, which contains the little one-disks as a sub-operad, but we will not use this fact.)

Braiding symbols are generated under the braiding product by homomorphisms. Consider linear combinations of such symbols and extend the braiding product to be bilinear.
Then letter braiding symbols are spanned by the $\sigma_{\vec{h}}$ with each $h_v$ an indicator homomorphism $S_j$ for some $j$.  
Call such elements {\bf indicator symbols}.

We now construct a cochain-level version of the map 
$|\cdot|$ above which encodes the correspondence between braiding symbols and the Harrison cohomology of handlebodies. 

\begin{definition}
Let $\sigma_{\vec{h}}$ be a weight $\ell$ indicator symbol and 
$\vec{J} = (J_1,\cdots, J_\ell) \in \kappa_\ell$. 
 Define the realization $\rho(\sigma_{\vec{h}}, \vec{J}) \in \mE\bigl(\Omega_{dR}(B_m)\bigr)$ 
 to be the Harrison cochain represented by the same rooted tree (changing $\lp {\;} {}$'s to 
 $\tp {\;} {\!}$'s), whose vertex is decorated by the form
 $\Phi_{J_v}$ supported on the handle indexed by $s_j$ if the vertex $v$ of $\sigma_{\vec{h}}$ 
 has indicator function $h_v=S_j$. 

Extend the realization $\rho$ linearly to all symbols to obtain a map 
$\rho: \cofreeE^{\ell}(F_S^\#) \times \kappa_\ell \to \mE\bigl(\Omega_{dR}(B_m)\bigr)$.

\end{definition}

    In summary, we realize braiding symbols at the cochain level as linear combinations of Thom forms for belt disks which are supported on sets of intervals parametrized by $\kappa_\ell$. 
Below, we utilize particular choices of such realizations.

Following \cite{Dendroidal} we order vertices of rooted trees using the reverse of the 
tree-order. That is, leaves are minimal, the root is the unique maximal element, and $u<v$ if $v$ is on 
the path from $u$ to the root.
Call a pair $(\sigma_{\vec{h}}, \vec{J})$ {\bf order
compatible} if whenever $u < v$ in $\sigma$, the interval corresponding to $u$ is entirely to the \textbf{right} of the one corresponding to $v$.
We claim that an order compatible realization 
$\rho(\sigma_{\vec{h}}, \vec{J}) \in \mE\bigl(\Omega_{dR}(B_m)\bigr)$ pulls back to the circle along the map $f_w\maps S^1\to B_m$ so that this pullback is homologous to a single form that matches the d-function $w^*\sigma_{\vec{h}}$ combinatorially.

To make this precise, we say a form on the interval {\bf represents} a d-function $f\maps [n]\to \Q$ if it is supported on the interiors the sub-intervals $(\frac{i-1}{n}, \frac{i}{n})$ and its mass on the $i$-th sub-interval is $f(i)$. 
Clearly, the integral over $S^1$ of a form which represents a d-function is equal to the discrete integral over $[n]$ of that
d-function.  Thus Theorem~\ref{freecase} is an immediate corollary of the following.

\begin{theorem}  Suppose $w$ is an eigenword for  $\sigma_{\vec{h}}$, and
    let $\vec{J} \in \kappa_\ell$ be order compatible with $\sigma_{\vec{h}}$.  Then the pullback of the realization $f^*_w \, \rho (\sigma_{\vec{h}}, \vec{J})\in \mE\bigl(\Omega_{dR}(S^1)\bigr)$ weight reduces to a form
    which represents $w^* \sigma_{\vec{h}}$.
\end{theorem}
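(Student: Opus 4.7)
The plan is to induct on the number of vertices of $\sigma$. In the base case $\sigma_{\vec{h}}$ is a single vertex decorated by a homomorphism $h$; writing $h = \sum_j h(s_j)\, S_j$ and using multilinearity reduces to indicator homomorphisms, whose realizations are Thom forms of belt disks. Pulling back along $f_w$, which is transverse to each belt disk, produces by Theorem~\ref{Thom} a sum of bump forms concentrated in each letter's time interval with mass $\epsilon_i\, h(s_i)$ at the $i$-th letter, which represents $w^*h$ on the nose. The realization is already of weight one, so no further reduction is needed.

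For the inductive step I write $\sigma = \tpc{\alpha}{\beta}$, with $\beta$ the subtree containing the root. The realization is compatible with this decomposition, so $f_w^*\rho(\sigma,\vec{J}) = \tpc{f_w^*\rho(\alpha,\vec{J}|_\alpha)}{f_w^*\rho(\beta,\vec{J}|_\beta)}$; the order-compatibility of $\vec{J}$ restricts to order-compatibility on each subtree, and similarly the eigenword hypothesis restricts to $\alpha$ and $\beta$ separately. By the inductive hypothesis each factor weight reduces to a single one-form $F_\alpha$ (resp.\ $F_\beta$) representing $w^*\alpha$ (resp.\ $w^*\beta$), and naturality of the cochain homotopies implementing weight reduction gives a reduction of $f_w^*\rho(\sigma,\vec{J})$ to $\tpc{F_\alpha}{F_\beta}$. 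Because $\alpha$ is the left factor of the outermost linking product, the eigenword condition forces $\int_{S^1}F_\alpha = 0$, so the antiderivative $d^{-1}F_\alpha$ vanishing at the basepoint is a well-defined function on $S^1$, and the Harrison identity
\[
d_\mE\bigl(\tpc{d^{-1}F_\alpha}{F_\beta}\bigr) \;=\; \tpc{F_\alpha}{F_\beta} \;-\; (d^{-1}F_\alpha)\wedge F_\beta
\]
from equation~\eqref{SimpleWeightReduction} delivers the final reduction of $\tpc{F_\alpha}{F_\beta}$ to the one-form $(d^{-1}F_\alpha)\wedge F_\beta$.

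The main obstacle is to verify that this one-form represents $w^*(\tpc{\alpha}{\beta}) = \lpw{w^*\alpha}\cdot w^*\beta$ with exactly the $\lew$ convention of Definition~\ref{BraidingProduct}. To make the induction go through, the inductive hypothesis must be strengthened to record where inside each letter interval the bumps of $F_\alpha$ are situated, as inherited from the intervals $J_u$ for $u\in\alpha$. The crux is the interplay between orientation of $f_w$ and order-compatibility of $\vec{J}$: because $\beta$ contains the root of $\sigma$, every interval $J_v$ for $v\in\beta$ lies strictly to the left of every interval $J_u$ for $u\in\alpha$ inside the handle $[0,1]$. When $\epsilon_j = +1$ the handle is traversed in its positive direction, placing the $F_\beta$-bumps in the $j$-th letter interval strictly \emph{earlier} in time than the $F_\alpha$-bumps at the same letter; consequently $d^{-1}F_\alpha$ evaluated at the $F_\beta$-bump equals $\sum_{i<j} w^*\alpha(i)$, matching the strict inequality in $\lew$. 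When $\epsilon_j = -1$ the reversal of $f_w$ swaps this order, so $d^{-1}F_\alpha$ has already accumulated the $j$-th contribution, matching the non-strict inequality $i\le j$. Multiplying by the mass $w^*\beta(j)$ of the $F_\beta$-bump at letter $j$ then yields $\lpw{w^*\alpha}(j)\cdot w^*\beta(j)$ as required, closing the induction.
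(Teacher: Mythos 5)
Your proposal is correct and takes essentially the same route as the paper's proof: strengthen the induction to track where in each letter interval the reduced form is supported, reduce the two subtrees, take the antiderivative of the branch factor (using the eigenword hypothesis) and wedge with the root factor, and then read off the strict/non-strict convention in $\lew$ from how the orientation $\epsilon_j=\pm1$ places the root-vertex interval relative to the branch-root interval. One small overstatement: order compatibility only constrains vertices comparable in the root-ward partial order, so it does not place \emph{all} intervals of $\beta$ to the left of \emph{all} intervals of $\alpha$; but it does place the root interval of $\alpha$ to the right of the root interval of $\beta$, which, together with your strengthened hypothesis on supports, is exactly what your argument uses.
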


\begin{proof}
We will prove the following stronger statement by induction on weight. Given a subinterval $J\subseteq (0,1)$, 
let $J(i)\subset (\frac{i-1}{n}, \frac{i}{n})$ denote the image of $J$ under either an {increasing} linear homeomorphism $(0,1)\to (\frac{i-1}{n}, \frac{i}{n})$ if $w$ has $\epsilon_i=1$, or else a {decreasing} linear homeomorphism if $\epsilon_i=-1$. Then the pullback $f^*_w \,\rho (\sigma_{\vec{h}}, \vec{J})$  can be weight reduced to a form whose restriction to the interval $(\frac{i-1}{n}, \frac{i}{n})$ has total mass $w^*\sigma_{\vec{h}}(i)$ and is supported on the interval $J_\sigma(i)$, where $J_\sigma\in \vec{J}$ is the interval corresponding to the root vertex.



In weight one, no weight reduction is necessary. If $h$ is a homomorphism decorating the unique
vertex in $\sigma$ then the realization $\rho(h,J_1)\in \Omega^1(B_m)$ is the form restricting to each handle $s_j$ as some bump form with mass $h(s_j)$. Thus its pullback along the loop $f_w$ is indeed a sum of bump forms with respective mass $h(w_i)$ supported in the sub-interval $J_1(i)$. Note that if a letter in $w$ is the inverse of a generator $s_j^{-1}$ then the loop $f_w$ travels through the handle $s_j$ backwards, mapping through the order reversing homeomorphism $(\frac{i-1}{n},\frac{i}{n}) \to (0,1)$.

Proceeding by induction, consider a braiding product $\sigma = \lp {\alpha} \beta$, suppressing 
decorating homomorphisms from the notation. Note that the eigenword assumption means in particular that $w$ is an eigenword for both $\alpha$ and $\beta$.

Let $J_{\alpha}\in \vec{J}$ denote the sub-interval corresponding to the root of $\alpha$. Then by the inductive hypothesis the pullback $f^*_w\,\rho(\alpha,\vec{J})$ can be weight reduced to a form $\alpha_0$ on $S^1$ with support in $J_{\alpha}(1)\cup \ldots\cup J_{\alpha}(n)$ and total mass $\int_{[n]} w^*\alpha$. 
The eigenword assumption implies that the total mass is zero. Therefore $\alpha_0$ has an anti-derivative $d\,^\inv \alpha_0$ as a form (as in Figure~\ref{fig:thom of interval}), constant outside of the intervals $J_{\alpha}(i)$, with values given by the partial sums
of the masses of $\alpha_0$. 
Similarly, $f^*_w\,\rho(\beta,\vec{J})$ can be weight reduced to a form $\beta_0$ supported on intervals $J_\beta(i)$ for $1\leq i\leq n$. 

It follows that the entire pullback $f^*_w\,\rho(\lp {\alpha} \beta)$ can be weight reduced to the form $d\,^\inv\alpha_0 \wedge \beta_0$. This product is immediately seen to have support contained in that of $\beta_0$, as claimed. Furthermore, the intervals $J_\alpha(i)$ are disjoint from $J_\beta(i)$ (from the order compatibility assumption), so the form $d\,^\inv\alpha_0$ is locally constant on the support of $\beta_0$.

Finally, we  calculate the mass of the weight reduced form on $(\frac{i-1}{n}, \frac{i}{n})$. Since $\vec{J}$ was chosen to be order compatible with $\lp \alpha \beta$, the interval $J_\alpha$ is entirely to the right of $J_\beta$. 
After applying the homeomorphism $(0,1)\to (\frac{i-1}{n}, \frac{i}{n})$, we see that $J_\alpha(i)$ is still to the right of $J_\beta(i)$. In this case, the (constant) value of $d\,^\inv\alpha_0$ on $J_\beta(i)$ is the partial sum of $w^*(\alpha)$ up to \emph{but not including} the $i$-th value. 
However, if $\epsilon_i=-1$ then orientation is reversed so that $J_\alpha(i)$ is to the left of $J_\beta(i)$, which implies that the value of $d\,^\inv\alpha_0$ on $J_\beta(i)$ now includes the $i$-th value. This is exactly the form of the d-function $\lpw {w^*\alpha}$ multiplying the d-function $w^*\beta$, thus
agreeing with the braiding product
from Definition~\ref{BraidingProduct},
as claimed.
\qedhere

\end{proof}

\subsection{Configuration braiding of trees and words}\label{sec:configuration braiding}
Rooted trees give
an alternate way to view the evaluation of letter braiding functions
$\sigma_{\vec{h}}(w)$
from Section~\ref{combalg}, an
approach related to 
to the  configuration pairing of \cite{Sinha-Walter1}.
Let $w=s_1^{\epsilon_1}\dots s_n^{\epsilon_n}$ be a word in $F_S$
and $T$ be a rooted tree with vertices labeled by homomorphisms 
$(F_S)^\#$. 
As noted in the previous subsection,
braiding symbols $\sigma_{\vec{h}}$ are  equivalent to (sums of) such rooted trees by
formally converting $\lp \,$ to $\tp \,$.

\begin{definition}
A {\bf configuration} of $T$  
is a non-decreasing function $c:\mathrm{Vertices}(T)\to [n]$ with respect to the
(root-maximal) partial ordering on $\mathrm{Vertices}(T)$. 

A configuration is {\bf proper with respect to} $w$ if
$c$ is strictly increasing at $i$ whenever $\epsilon_i=1$. 
That is, $u<v$ implies $c(u)\lew c(v)$.

The {\bf associated pairing} is
$\langle T, w\rangle_c = \prod h_v\bigl(w_{c(v)}\bigr)$ where 
$h_v\in (F_S)^\#$ is the label of vertex $v$ and $w_i=s_i^{\epsilon_i}$. 
\end{definition}


\begin{proposition}\label{prop:configuration braiding}
   Letter braiding functions count proper configurations of $\sigma_{\vec h}$ in $w$.
   $$\displaystyle  
                \sigma_{\vec h}(w)
                = \hspace{-.5cm}\sum_{\substack{ \mathrm{proper}\\ \mathrm{configuration}}}\hspace{-.5cm}
                  \bigl\langle \sigma_{\vec h},\, w\bigr\rangle_c$$
\end{proposition}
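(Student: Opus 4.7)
The plan is to prove the stronger pointwise statement by induction on the weight of $T$: for every rooted tree $T$ with homomorphism labels $\vec{h}$ and every position $i\in[n]$,
\[
w^*T_{\vec h}(i) \;=\; \sum_{\substack{\text{proper configurations }c\\ c(r_T)=i}} \langle T,w\rangle_c,
\]
where $r_T$ denotes the root of $T$. Summing this identity over $i\in[n]$ then yields the proposition, since $\sigma_{\vec h}(w) = \int w^*\sigma_{\vec h} = \sum_{i=1}^n w^*\sigma_{\vec h}(i)$ by Definition~\ref{letterlinkingdef}, and every proper configuration is uniquely determined by its value at the root.

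The base case, where $T$ is a single vertex labeled by $h$, is immediate: $w^*T(i) = h(w_i)$ equals the pairing $\langle T,w\rangle_c$ of the unique configuration with $c(r_T)=i$. For the inductive step I would factor $T$ at its root. If the root $r$ carries homomorphism $h_r$ and has branch subtrees $\gamma_1,\ldots,\gamma_k$, then iterating the braiding-product rule of Definition~\ref{BraidingProduct} (using commutativity at corollas of tree symbols to arrange the branches in any order) gives
\[
w^*T(i)
\;=\; h_r(w_i)\cdot\prod_{j=1}^{k}\lpw{w^*\gamma_j}(i)
\;=\; h_r(w_i)\cdot\prod_{j=1}^{k}\,\sum_{i_j\lewsm i} w^*\gamma_j(i_j).
\]
Applying the inductive hypothesis to each $w^*\gamma_j(i_j)$ and distributing the product over the sums, the right-hand side becomes a sum over tuples $(c_1,\ldots,c_k)$ of proper configurations of the branches subject to $c_j(r_{\gamma_j})\lew i$ for each $j$. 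Such a tuple together with the assignment $c(r)=i$ assembles into a configuration $c$ of $T$, and $c$ is proper precisely when each $c_j$ is proper on its subtree and each new parent-child edge $r_{\gamma_j}\!-\!r$ satisfies the required inequality $c_j(r_{\gamma_j})\lew i$. The pairing weight $\langle T,w\rangle_c = h_r(w_i)\,\prod_j \langle \gamma_j,w\rangle_{c_j}$ factors accordingly across the tuple.

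The one delicate point is the compatibility between the $\lew$ convention in $\lpw{f}(j)=\sum_{i\lewsm j}f(i)$ and the one used to define a proper configuration: both inequalities are dictated by the sign $\epsilon$ of the index \emph{nearer the root}, so the recursion on subtrees yields exactly the edge-by-edge properness condition, with no mismatch. I do not anticipate further obstacles beyond bookkeeping; multilinearity of the braiding product in each vertex label lets us reduce, if desired, to the case of indicator symbols, but this reduction is not logically necessary for the argument.
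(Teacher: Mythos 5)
Your proof is correct and follows essentially the same route as the paper: the paper's argument consists precisely of the observation that the cobounding $\lpw{f}(j)$ is a signed sum over positions $i \lew j$, which, iterated up the tree, is exactly your inductive step. Your pointwise refinement tracking the root's position, together with the induction on weight (and the implicit transitivity of $\lew$ that lets edge-by-edge conditions imply properness on all comparable pairs), simply makes explicit the unwinding the paper states as a direct consequence of that observation.
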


The restriction to proper configurations is necessary to account for the 
use of $\lew$ when computing $\lpw f$ in Definition~\ref{BraidingProduct}. 
The proposition follows directly from the observation that coboundings of indicator d-functions 
are given by signed counts of prior letter occurrences.
\[
 \lpw A (j) \ = \!\!\sum_{\substack{i \,\lew\, j \vphantom{|}\\[2pt] s_i\  =\  a\phantom{x}}} \!\!\epsilon_i 
\]

    It is instructive to compare $\lp A A$ evaluated on $aa$ versus $a^\inv a^\inv$.  On $aa$ there is only one proper configuration $\lp 1 2$, leading to a value of $1$.  On $a^\inv a^\inv$ there are three proper configurations
    $\lp 1 1$, $\lp 1 2$, and $\lp 2 2$, leading to a value of $3$.

\begin{example}
  Consider evaluating $\lp A B$ on the word $aba^\inv b^\inv$.  Only three configurations 
  yield nonzero pairings.  
  \begin{itemize}
      \item $\lp A B \mapsto \lp 1 2$, contributing $A(a)\,B(b) = 1$
      \item $\lp A B \mapsto \lp 1 4$, contributing $A(a)\,B(b^\inv)=-1$
      \item $\lp A B \mapsto \lp 3 4$, contributing $A(a^\inv)\,B(b^\inv)=1$
  \end{itemize}
  The sum of these is $1$, recovering $\lp A B\,(aba^\inv b^\inv)$.
\end{example}

\section{Structural theorems}\label{structural}

We establish the main formal theorems governing Hopf invariants, starting with the fact that the Hopf pairing establishes a Lie coalgebraic duality, then 
the Lifting Criterion which relates Hopf
invariants of two complexes and their one skeleton,
and finally we establish universality using the
Lifting Criterion.

\subsection{Lie pairing}\label{sec:lie pairing}

For this argument we use the well-known approach to Lie coalgebras through coassociative algebras and Hopf algebras.  
We build on invariants of fundamental groups  in the  (co)associative setting, which were developed
by Chen.

We first recall the standard presentation of the
Malcev Lie algebra. For a group $G$ let $\widehat{\Q[G]}$ be the completion of its group ring at the ideal generated by elements of the form $(\gamma-1)$ for $\gamma\in G$.
For example, the power series
\[
\log(\gamma) = \sum_{n=1}^{\infty} \frac{(-1)^{n+1}}{n}(\gamma-1)^n
\]
converges in $\widehat{\Q[G]}$. This ring is naturally a Hopf algebra, with product $(\gamma,\gamma') \mapsto \gamma \gamma'$ and coproduct $\Delta(\gamma) = \gamma\otimes \gamma$ for all group elements $\gamma,\gamma'\in G$. Recall that primitives
in a Hopf algebra form a Lie algebra.

\begin{definition}
    The Malcev Lie algebra $\Mal{G}$ of the group $G$ is the Lie algebra of primitive elements in $ \widehat{\Q[G]}$.

\end{definition}

Quillen showed in \cite{Quillen-rational} that this is a complete filtered Lie algebra, topologically generated by elements of the form $\log(\gamma)$ for $\gamma\in G$.
Moreover, he showed that the logarithm function 
preserves filtrations when $G$ and $\Mal{G}$ are filtered by their respective lower central series, inducing an isomorphism
of associated graded vector spaces after tensoring with $\Q$.  For a  thorough treatment see Appendix A in \cite{Quillen-rational}, and Section~2 of \cite{Massuyeau-TreeLevelLMO}.

 Recall 
the notion of a Lie pairing from Definition~\ref{LiePairing}.  The
following is a restatement of the Lie
pairing portion of Theorem~\ref{thm-intro:universal property}.

\begin{theorem} \label{thm:pairing}

    Hopf invariants extend uniquely to a  Lie pairing, with
        $$\bigl\langle T, \log(w)\bigr\rangle_H = \int_{S^1}w^*(T),$$ for every $T \in H^0_\mE(X)$ and group element $w\in \pi_1(X)$.
\end{theorem}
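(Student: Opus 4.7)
The plan is to use the bar complex as an intermediary, realizing the Hopf invariant pairing as the restriction of a Hopf-algebraic duality between the bar complex and the completed group ring. The bar complex $\mB(A)$ of a commutative DGA is naturally a commutative DG Hopf algebra under shuffle product and deconcatenation coproduct. As noted after Definition~\ref{D:E}, the Harrison complex $\mE(A)$ is the quotient of $\mB(A)$ by its shuffle ideal, so dually $H^0_\mE(X)$ embeds into $H^0\bigl(\mB(A^*(X))\bigr)$ as the annihilator of shuffle products. On the other side, $\widehat{\Q[\pi_1(X)]}$ is a complete Hopf algebra via $\gamma\mapsto \gamma\otimes \gamma$, whose Lie subalgebra of primitive elements is precisely $\Mal{\pi_1(X)}$.

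The central step is to verify a Hopf-duality pairing $H^0\bigl(\mB(A^*(X))\bigr) \otimes \widehat{\Q[\pi_1(X)]} \to \Q$. In the smooth de\,Rham setting this is Chen's iterated integral pairing
\[
\bigl\langle [\omega_1|\cdots|\omega_n],\, \gamma\bigr\rangle := \int_\gamma \omega_1\omega_2\cdots\omega_n,
\]
which by Chen's $\pi_1$ de\,Rham theorem is compatible with the Hopf structures on both sides; a simplicial-cochain analogue handles the PL setting. Restricting to $H^0_\mE(X)$ on the left and to $\Mal{\pi_1(X)}$ on the right yields a pairing between a Lie coalgebra and a Lie algebra which is automatically Lie-compatible, since the cobracket induced on Lie-indecomposables of a commutative Hopf algebra is dual to the commutator bracket on primitives. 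To identify this restricted pairing with the Hopf invariant formula, note that Chen's classical identity $\int_\gamma \omega_1\cdots\omega_n = \int_{\Delta^n}\gamma^*\omega_1\wedge \cdots \wedge \gamma^*\omega_n$ together with the surjection $\mB \twoheadrightarrow \mE$ sending tensors to linear trees shows that $\int_{S^1}\gamma^*(T)$ is precisely the composite $\gamma^*\maps H^0_\mE(X) \to H^0_\mE(S^1) \xrightarrow{\int_{S^1}} \Q$ applied to any linear-tree representative of $T$, matching the pairing above.

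Uniqueness is immediate from the fact that $\{\log(\gamma) : \gamma\in G\}$ topologically generates $\Mal G$: any Lie pairing is continuous with respect to the weight filtration on $H^0_\mE$ (kernels of iterated cobracket, see Proposition~\ref{cofree conilpotent}) versus the lower central series on $\Mal G$, so any two extensions agreeing on generators must agree globally. The main obstacle will be bridging the bar complex pairing cleanly to the Harrison complex: although the vector-space embedding $H^0_\mE \hookrightarrow H^0\mB$ as annihilator of shuffles is formal, matching the Lie coalgebra structure on $H^0_\mE$ from Definition~\ref{cofreeE} with the Lie bracket on primitives requires careful bookkeeping across the two models. An alternative, more intrinsic route would bypass $\mB$ entirely: extend the pairing on $\log(\gamma)$ by the rule $\langle T, [a,b]\rangle := \langle \cobr{T}, a\otimes b\rangle$ and verify BCH-consistency by applying functoriality of $H^0_\mE$ to the pinch map $S^1 \to S^1 \vee S^1$, reducing the check to wedges of circles where Theorem~\ref{freecase} and Example~\ref{WedgeCircles} already supply the needed compatibilities.
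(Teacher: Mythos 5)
Your overall route is the same as the paper's: Chen's iterated integrals give a Hopf pairing of $\H^0\bigl(\B(A^*(X))\bigr)$ with $\widehat{\Q[\pi_1(X)]}$, one restricts to primitives and passes to the Harrison quotient (well-definedness because shuffles vanish on primitives), bracket--cobracket duality comes from Chen's product--coproduct duality, and uniqueness follows since the elements $\log(\gamma)$ topologically generate $\Mal{\pi_1(X)}$. All of that is sound and matches the paper.

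The gap is in the last identification, where you claim that Chen's pullback formula $\int_\gamma \omega_1\cdots\omega_n = \int_{\Delta^n}\gamma^*\omega_1\wedge\cdots\wedge\gamma^*\omega_n$ together with the surjection $\mB\onto\mE$ shows that the restricted Chen pairing equals $\int_{S^1}\gamma^*(T)$. The Hopf invariant is not computed by integrating over simplices: it is computed by pulling back to the Harrison complex of $S^1$, \emph{weight reducing} to a weight-one cocycle via Proposition~\ref{Ground}, and then integrating over $S^1$. Identifying that operation with evaluation of the Chen pairing on $\log(\gamma)=\sum_n\frac{(-1)^{n+1}}{n}(\gamma-1)^n$ --- in particular producing the coefficients $\frac{(-1)^{n+1}}{n}$ --- is exactly the nontrivial step, and your cited identity does not supply it. The paper needs Gugenheim's extension of Chen's theorem to a coalgebra isomorphism $\H^0\bigl(\B(C^*(X))\bigr)\cong\Q[\pi_1]$-side data, plus the explicit computation that the weight-reduction quasi-isomorphism $\Q[e]\to \Q\, d\theta$ on the circle sends $e^n\mapsto \frac{(-1)^{n+1}}{n}\,d\theta$; only then does $\int_{S^1}\gamma^*(\overline{T}) = \bigl\langle T,\log(\gamma)\bigr\rangle_C$ follow. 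That this cannot be waved through termwise is visible in the paper's own examples: the Harrison Hopf invariant of $\tpc A B$ on the word $ab$ is $\tfrac12$ (a genuinely BCH/Bernoulli-type contribution produced by weight reduction), and Chen's $\omega_A|\omega_A$ evaluates to $2$ on $a^2$ while the Harrison class $\tpc A A$ is zero, so the Chen-side and Harrison-side evaluations differ before one pairs against the full $\log$ series. Your alternative sketch via the pinch map $S^1\to S^1\vee S^1$ faces the same issue: BCH-consistency on a wedge of circles is not a formal consequence of Theorem~\ref{freecase} (which only covers eigenwords) and would itself require the weight-reduction-equals-logarithm computation, essentially the $C_\infty$/Lawrence--Sullivan argument alluded to in Remark~\ref{C-infty}.
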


Uniqueness of such a pairing will follow from the stated bracket-cobracket duality,
which forces the pairing to be continuous with respect to the filtration by commutator depth, while 
 $\Mal[\big]{\pi_1(X)}$ is topologically generated by  the $\log(\gamma)$ elements.  We call
the resulting pairing the {\bf Hopf pairing}.

 Recall that the Baker-Campbell-Hausdorff series is a formal power series in a completed Lie algebra satisfying
    \[
    \exp(\ell)\exp(\ell') = \exp\bigl(\BCH(\ell,\ell')\bigr).
    \]

    
\begin{corollary}[Aydin's Formula]\label{ozbek}
    Hopf invariants of products in $\pi_1(X)$ are given by the Baker-Campbell-Hausdorff formula: 
   \begin{align*} 
    \int_{S^1}(ab)^*(T) &= \bigl\langle T,\ \BCH\bigl(\log(a), \log(b)\bigr) \bigr\rangle_H \\
    &= \Bigl\langle T,\ \log(a) + \log(b) + \frac{1}{2}\bigl[\log(a),\log(b)\bigr] + \frac{1}{12}(\cdots) + (\cdots) \Bigr\rangle_H.
   \end{align*} 
\end{corollary}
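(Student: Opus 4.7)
The plan is to recognize Aydin's Formula as a direct consequence of Theorem~\ref{thm:pairing} combined with the defining universal property of the Malcev Lie algebra. By Theorem~\ref{thm:pairing}, the left-hand side $\int_{S^1}(ab)^*(T)$ equals $\langle T, \log(ab)\rangle_H$, since the Hopf pairing satisfies $\langle T, \log(w)\rangle_H = \int_{S^1}w^*(T)$ for any group element $w$. Thus the corollary reduces to identifying $\log(ab)$ inside $\Mal{\pi_1(X)}$ with $\BCH(\log(a),\log(b))$.

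For this identification I would appeal directly to the defining BCH equality of the Malcev Lie algebra: the log map $\log\maps G\to \Mal G$ is required to satisfy $\log(\gamma\cdot\gamma') = \BCH(\log(\gamma),\log(\gamma'))$. Equivalently, one can view $\Mal{\pi_1(X)}$ concretely as the primitives in $\widehat{\Q[\pi_1(X)]}$, where $\log(\gamma) = \sum \frac{(-1)^{n+1}}{n}(\gamma-1)^n$ and the identity $\log(ab) = \BCH(\log(a),\log(b))$ holds as a convergent power series computation in the completed group ring. Substituting into the Hopf pairing expression gives
\[
\int_{S^1}(ab)^*(T) = \bigl\langle T,\, \BCH(\log(a),\log(b))\bigr\rangle_H,
\]
and expanding BCH term-by-term yields the claimed series.

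The only subtle point is that BCH is an infinite sum of nested brackets, so to make sense of pairing $T$ against it one must know the pairing extends continuously to the completion of the Lie algebra. This is already addressed in the discussion following Theorem~\ref{thm:pairing}: the bracket--cobracket compatibility forces continuity with respect to the commutator-depth filtration, so the pairing commutes with the convergent sum defining $\BCH$. Hence each bracketed term $[\log(a),\log(b)],\, [[\log(a),\log(b)],\log(a)], \ldots$ pairs against $T$ via iterated cobrackets of $T$, and only finitely many contribute nontrivially for any given $T$ (since each $T\in H^0_\mE(X)$ has bounded weight). This is the main, but mild, obstacle; once it is noted, the formula follows at once.
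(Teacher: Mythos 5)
Your proof is correct and is essentially the paper's own argument: apply Theorem~\ref{thm:pairing} to rewrite the left side as $\bigl\langle T,\log(ab)\bigr\rangle_H$, then use the defining property of the BCH series (equivalently, $ab=\exp(\log a)\exp(\log b)=\exp\bigl(\BCH(\log a,\log b)\bigr)$ in $\widehat{\Q[\pi_1]}$) to identify $\log(ab)$ with $\BCH(\log a,\log b)$. Your added remark on continuity of the pairing with respect to the commutator-depth filtration matches the paper's discussion following Theorem~\ref{thm:pairing}, so nothing further is needed.
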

\begin{proof}
   Apply the defining property of the BCH series to
    $\bigl\langle T, \log(ab)\bigr\rangle_H = \int_{S^1} (ab)^*(T)
    $
    taking $a = \exp(\log(a))$ and $b = \exp(\log(b))$.
\end{proof}
Note the natural appearance of fractions in the Hopf invariant of a product, and most notably, Bernoulli numbers. This aspect of the theory was rather unexpected and poorly understood when first encountered by the authors, misinformed by the strictly integral weight-reduction procedure in the context of letter linking in \cite{monroe-sinha}.

\begin{remark}\label{C-infty}
    The second author originally found a proof of Corollary~\ref{ozbek} with finite generation hypotheses using the theory of $C_\infty$-algebras.  
    Briefly, a $C_\infty$-algebra is a graded vector space $C$ with a coderivation on $\cofreeE(C)$  which squares to zero, defining a Harrison complex.  Morphisms
    are just maps on Harrison complexes.

    In \cite{LieEncyclopedia} the authors show that the simplicial cochains on a simplicial set have a $C_\infty$ structure extending cup product 
    which is quasi-isomorphic
    to the PL-de\,Rham cochains on that simplicial set.  In the case
    of the one-simplex this is dual to the Lawrence-Sullivan interval. These models prove Corollary~\ref{ozbek}.

    For an alternative approach, the ``universal product'' on fundamental groups is given by precomposition with a collapse map $S^1 \to S^1 \vee S^1$.
    The second author made explicit calculations in the Harrison model to prove that weight reduction in the Harrison complex
    leads to Baker-Campbell-Hausdorff product.  He then used this to achieve our first proof of the Lifting Criterion below, in the finitely generated setting.
\end{remark}

\begin{proof}[Proof of Theorem \ref{thm:pairing}]
    Let $A^*(X)$ be  de\,Rham forms (smooth or piecewise linear)  for $X$ and consider a loop $\gamma\maps S^1\to X$. We consider the interplay between the coassociative and coLie (Harrison) bar constructions
    \[
    \H^0\bigl(\B(A^*(X)\bigr) \onto \H^0\bigl(\mE(A^*(X)\bigr).
    \]
    Chen defined an invariant of loops using iterated integrals associated to the bar construction.  If $\gamma$ is a loop, $\omega_i\in A^1(X)$ are a $1$-forms and $\gamma^*(\omega_i) = f_i(\theta)d\theta$ then
    \[
    \bigl\langle \omega_1|\ldots|\omega_n , \gamma\bigr\rangle_C \ = \  \int_{t_1<t_2<\ldots<t_n} \hspace{-1.5cm} f_1(t_1)\cdots f_n(t_n)\  dt_1\ldots dt_n.
    \]
    Chen's construction gives a linear map $\langle -,\gamma\rangle_{C} \maps \H^0(\B (A^*(X))) \longrightarrow \Q$ 
    which extends to a bilinear pairing with the group ring $\Q[\pi_1]$. Letting $\bar\Delta$ denote the reduced coproduct of $\H^0(\B (A^*(X)))$, Chen proves in \cite[Eq. (1.6.1)]{chen1971pi1} that
    \[
    \bigl\langle \bar\Delta(T) ,\; (\gamma-1)\otimes (\gamma'-1) \bigr\rangle_C = \bigl\langle T,\; (\gamma-1)(\gamma'-1) \bigr\rangle_C
    \]
    from which it follows that weight $p$ bar cocycles vanish on the 
    $(p+1)$-st power of the augmentation ideal in $\Q[\pi_1]$.  Thus the pairing extends to the completion $\widehat{\Q[\pi_1]}$. 
    We define a  pairing on $\H^0(\mE(A^*(X))$ by restriction to primitive elements.  If $\ell\in \widehat{\Q[\pi_1]}$ is primitive, set
    \[
    \langle \overline{T}, \ell \rangle_{\hat{H}} = \langle T, \ell\rangle_C
    \]
    where $T$ is any lift of $\overline{T}\in\H^0\bigl(\mE(A^*(X)\bigr)$ to the bar complex, say the one given by 
    Barr's splitting \cite{Barr}.
    To see that this is independent of the choice of lift $T$, recall that under Chen's pairing the shuffle product on the Bar complex is dual to the group ring coproduct \cite[Eq. (1.5.1)]{chen1971pi1} from which it follows that nontrivial shuffles must vanish on primitive elements. 
    Since the Harrison complex is the quotient of the coassociative bar complex by nontrivial shuffles, the pairing on primitives descends to Harrison homology.

    Bracket-cobracket duality of this pairing follows quickly from Chen's product-coproduct duality,
    \[
    \bigl\langle\, \cobr{\overline{T}} , \ell\otimes \ell' \bigr\rangle_{\hat{H}} = \bigl\langle\, \cobr{T} , \ell\otimes \ell' \bigr\rangle_C = \bigl\langle \Delta(T), \ell\otimes \ell' - \ell'\otimes \ell \bigr\rangle_C = \bigl\langle T, \ell\cdot \ell' - \ell'\cdot \ell\bigr\rangle_C = \bigl\langle \overline{T}, [\ell,\ell'] \bigr\rangle_{\hat{H}}.
    \]
    Vanishing of weight $p$ cocycles on $(p+1)$-fold nested commutators follows immediately from this duality.

    It remains to show that the pairing $\langle\, , \rangle_{\hat{H}}$ agrees with the Hopf pairing using weight reduction on the Harrison complex of the circle. By work of Gugenheim \cite{gugenheim}, Chen's iterated integrals extend to natural isomorphisms between the top two rows of the commutative diagram below.
    \[
    \xymatrix{
    \H^0(\B\bigl(C^*(X))\bigr) \ar[r]^{\gamma^*}
    & \H^0(\B\bigl(C^*(S^1)\bigr) \ar@{=}[r]  & \Q[e] \ar@/^{25pt}/[dd]^{\log} \ar@{->>}[r] &  \Q e \\ 
     \H^0\bigl(\B(A^*(X))\bigr) \ar[r]^{\gamma^*} \ar@{->>}[d] \ar[u]^{\cong} 
     \ar@{.>}@(u,d)[rrru]|(.445){\phantom{XX}}|(.680){\phantom{XX}}|(.755)\hole
    & \H^0(\B\bigl(A^*(S^1)\bigr) \ar@{=}[r]\ar@{->>}[d] \ar[u]^{\cong} & \Q[d\theta] \ar[u]^{\cong} \ar@{->>}[d] & \\
     \H^0\bigl(\mE(A^*(X))\bigr) \ar[r]^{\gamma^*} & \H^0\bigl(\cofreeE(A^*(S^1)\bigr) \ar@{=}[r] & \Q d\theta &
    }
    \]
    
   \vspace{-6.4em}\hspace{1.8cm}
   \(\xymatrix{
     T \vphantom{\H^0\bigl(} \ar@{|->}[d] \\
    \overline{T} 
   }\)
   

   \smallskip

   \noindent
     Chen's invariant is the dashed arrow. Here, the surjection between the bottom two rows is Harrison's quotient
     by shuffles.

    Letting $I_{\gamma}\maps \H^0(\B(A^*(X))) \to \Q $ denote the coefficient of $e$ in the dashed arrow, observe that it is the corestriction onto the cogenerator space of the cofree coalgebra $\Q[e]$.     Since Gugenheim's maps between the top two rows are coalgebra homomorphisms, the image of $T$ in $\Q[e]$ is
    \[
    \sum_{n=1}^{\infty} I_{\gamma}^{\otimes n}\bigl( \overline\Delta^{n-1}(T)\bigr) \cdot e^n = 
    \sum_{n=1}^{\infty} \,\bigl\langle T,\, (\gamma-1)^n\bigr\rangle_C \cdot e^n.
    \]
    
    On the other hand, the map denoted by $\log$ is the corestriction of the inverse to Gugenheim's isomorphism, which is computed in \cite[Proposition 1.7]{model-of-interval} (where both $e$ and $d\theta$ are denoted by $dt$) to be
    \[
    e^n \longmapsto \frac{(-1)^{n+1}}{n} \cdot d\theta.
    \]
    With this, the image of $T$ in the bottom-right corner is seen to be
    \[
    \sum_{n=1}^{\infty} \bigl\langle T, (\gamma-1)^n\bigr\rangle_C \cdot \frac{(-1)^{n+1}}{n} \cdot d\theta \ = \ \bigl\langle T, \log(\gamma) \bigr\rangle_C \cdot d\theta.
    \]
    This shows that
    \[
    \int_{S^1}\gamma^*(\overline{T}) \ = \ \bigl\langle T, \log(\gamma)\bigr\rangle_C \ = \  \bigl\langle \overline{T}, \log(\gamma)\bigr\rangle_{\hat{H}}
    \]
    for all $\overline{T}\in \H^0\bigl(\mE(A^*(X)\bigr)$. 
\end{proof}

\subsection{The Lifting Criterion}\label{sec:lifting}

We relate the Harrison cohomology of a two dimensional CW complex to that of its one skeleton.  This result
is the most important technical step
in the paper, essential to prove universality of Hopf invariants and to  extend the theory of letter linking invariants 
to general groups.

Suppose $X$ is a two dimensional CW complex with one skeleton
$X^{(1)} \simeq \vee_{S} S^1$ and two cells attached via maps
$\{r\maps \partial D^2 \to X^{(1)}\}_{r\in R}$.
The fundamental group of $X$
is thus the free group $F_S$ modulo the images in $\pi_1$
of the maps $r$, which by abuse we also call $r$.
To fix models, 
let $A^*X$ be the commutative differential graded algebra of PL differential forms.

As discussed in Example~\ref{WedgeCircles} the zeroth Harrison cochomology of a wedge of circles 
is just the cofree Lie coalgebra on the linear dual ${F_S}^\#$.  Our goal is to understand the map induced
by inclusion of the one skeleton $i^* : H^0_\mE (X) \to H^0_\mE(\vee_S S^1) \cong \cofreeE({F_S}^\#)$,
which by the following also yields a calculation of the
zeroth Harrison cohomology of $X$.

\begin{proposition}\label{H0injective}
    The map $i^*$ is injective.
\end{proposition}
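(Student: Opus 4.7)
The plan is to use the Harrison spectral sequence of Remark~\ref{HarrisonSS}, which arises from the weight filtration on the bicomplex $\cofreeE(s^{-1}\bar A^*)$ and converges to $H^*_\mE$. The inclusion $i\maps X^{(1)} \into X$ respects this filtration, so it suffices to show injectivity on the associated graded and then upgrade.

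First I would pin down what the $E_1$ page in total degree zero looks like. In the second-quadrant bicomplex, only bidegrees $(-n,n)$ with $n \geq 1$ contribute to total degree $0$, and the contributing classes must have all $n$ tensor factors in $s^{-1}H^1$: higher-degree factors would push the total degree above zero, and factors in $\bar H^0 = 0$ are ruled out by connectedness. Crucially, no differentials can \emph{land} in total degree $0$ on any page, since they would originate in total degree $-1$, which is zero for the same connectedness reason. Thus $E_\infty^{-n,n}$ is an honest subspace of $E_1^{-n,n} = \cofreeE^n(H^1)$, not just a subquotient.

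Next I would run $i^*$ through the $E_1$ comparison. The cohomology long exact sequence of the pair $(X,X^{(1)})$, combined with the fact that $X/X^{(1)}$ is a wedge of $2$-spheres and has vanishing $H^1$, shows that $H^1(X) \into H^1(\vee_S S^1)$ is injective. Because $\cofreeE^n$ is assembled from tensor powers and $S_n$-coinvariants, all exact functors over $\Q$, applying it preserves this injection, so $E_1^{-n,n}(X) \to E_1^{-n,n}(\vee_S S^1)$ is injective for every $n$. Restricting to $E_\infty^{-n,n}(X) \subseteq E_1^{-n,n}(X)$ and using that $E_\infty^{-n,n}(\vee_S S^1) = E_1^{-n,n}(\vee_S S^1) = \cofreeE^n(F_S^\#)$ (since all cup products vanish on $\vee_S S^1$, all spectral sequence differentials die), we get injectivity of $E_\infty^{-n,n}(X) \into E_\infty^{-n,n}(\vee_S S^1)$ in every weight.

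Finally, I would upgrade from associated graded to the full groups by the standard filtered argument. Since $\cofreeE$ is a direct sum over weights, every Harrison cocycle is supported in finitely many weights, so the weight filtration on $H^0_\mE$ is exhaustive. Given a nonzero $x \in H^0_\mE(X)$, pick the minimal $n$ with $x \in F^n$; then $x$ has nonzero image in $F^n/F^{n-1} = E_\infty^{-n,n}(X)$, and injectivity at that bidegree forces $i^*(x) \neq 0$ in $F^n H^0_\mE(\vee_S S^1) / F^{n-1} H^0_\mE(\vee_S S^1)$, hence in $H^0_\mE(\vee_S S^1)$.

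The only technical point needing care is the spectral sequence bookkeeping --- the bidegree of the relevant cells, the vanishing of incoming differentials, and the exhaustiveness of the filtration. Once these are in hand the rest is routine exactness of the cofree Lie coalgebra functor over $\Q$.
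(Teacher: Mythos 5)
Your argument is correct and is essentially the paper's own proof: both use the Harrison spectral sequence, the absence of differentials landing in total degree zero, injectivity of $i^*$ on $H^1$ (hence on $E_1$ in total degree zero by exactness over $\Q$), and collapse of the spectral sequence for the wedge of circles. Your minimal-weight filtered argument and the long exact sequence of the pair are just careful spellings-out of steps the paper states tersely (``zeroth Harrison cohomology includes into the $E_1$-page'' and ``$i^*$ is injective on first cohomology''), so no new route is involved.
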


\begin{proof}

As shown in Remark~\ref{HarrisonSS}, there are
no differentials into total degree zero for the Harrison
spectral sequence of any connected space from the $E_1$ page onwards.  So 
zeroth Harrison cohomology includes into the $E_1$-page. 
Since $i^*$ is injective on first cohomology, it is also
injective in total degree zero on $E_1$ (note that total degree 0 involves only $1$-forms).  
But the spectral sequence collapses for a wedge of circles.  So $i^*$ is a composite of injective maps.
\end{proof}

\begin{definition}\label{def:subcoalg vanishing on relations}
    Let $E_R$ denote the maximal Lie sub-coalgebra of
    $\cofreeE({F_S}^\#) \cong H^0_\mE(\vee_S S^1)$ whose Hopf invariants vanish on the
    relations in $R$.  
\end{definition}


\begin{theorem}[Lifting Criterion]\label{LiftingCriterion}
    The image of $i^*$ is $E_R$.
\end{theorem}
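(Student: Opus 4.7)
My plan is to establish the two inclusions separately, with $E_R \subseteq \mathrm{image}(i^*)$ being substantially harder.

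For the easy direction, $\mathrm{image}(i^*) \subseteq E_R$: The inclusion $i: X^{(1)} \hookrightarrow X$ induces a CDGA map $A^*(X) \to A^*(X^{(1)})$, which functorially yields a Lie coalgebra morphism $i^*: H^0_\mE(X) \to H^0_\mE(X^{(1)})$. Since Lie coalgebra morphisms have sub-coalgebra images, $\mathrm{image}(i^*)$ is a sub-coalgebra of $\cofreeE({F_S}^\#)$. For any $T \in H^0_\mE(X)$ and relator $r \in R$, the loop $i \circ r: S^1 \to X$ is null-homotopic since it extends over the attached 2-cell, so by Theorem~\ref{thm:pairing}:
\[
\langle i^*T, \log(r)\rangle_H = \int_{S^1}(i\circ r)^* T = 0.
\]
Since $\mathrm{image}(i^*)$ is a sub-coalgebra with vanishing Hopf invariants on all $r \in R$, by maximality $\mathrm{image}(i^*) \subseteq E_R$.

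For the hard direction, the strategy leverages the bracket-cobracket duality from Theorem~\ref{thm:pairing}. If $\tau \in E_R$, then since $E_R$ is a sub-coalgebra, all iterated cobrackets of $\tau$ stay in $E_R$, and by bracket-cobracket duality this translates into the statement that $\tau$ pairs to zero with every element of the closed Lie ideal in $\Mal{F_S}$ generated by $\{\log(r) : r \in R\}$. This ideal coincides with the kernel of the surjection $\pi: \Mal{F_S} \twoheadrightarrow \Mal{\pi_1(X)}$ induced by the quotient $F_S \twoheadrightarrow \pi_1(X)$.

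The plan is then to lift $\tau$ to a cocycle in $\mE(A^*(X))$ by induction on weight, using the Harrison spectral sequence of $X$. At weight one, an element of $E_R$ is a homomorphism $F_S \to \Q$ vanishing on all abelianized relators, i.e.\ an element of $H^1(X) \subseteq H^1(X^{(1)})$, which lifts immediately. At higher weight, since $E_R$ is a sub-coalgebra with weight-one component $H^1(X)$, a sufficiently iterated cobracket lands in $H^1(X)^{\otimes n}$, so $\tau$ naturally lives in $\cofreeE(H^1(X))$, the total-degree-zero summand of the $E_1$-page of the Harrison spectral sequence for $X$. The higher differentials $d_k$ emanating from $\cofreeE(H^1(X))$ are governed by Massey products in $H^*(X)$ detecting how the relation classes in $H^2(X)$ are built from $H^1$-classes. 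Dualizing via the Hopf pairing, these differentials are recorded by the brackets of $\log(r)$ with other $\log$ generators inside $\Mal{F_S}$. The membership $\tau \in E_R$ precisely encodes the vanishing of the pairing of $\tau$ with all such iterated brackets, which forces every $d_k\tau = 0$. Thus $\tau$ survives to $E_\infty$ and lifts to an element of $H^0_\mE(X)$; injectivity of $i^*$ from Proposition~\ref{H0injective} ensures this lift maps back to $\tau$.

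The hard part will be making the duality between the higher Harrison differentials $d_k$ and iterated brackets involving $\log(r)$ truly exact rather than merely schematic. This requires cochain-level bookkeeping, likely via the explicit graph-theoretic models for two-complex cochains developed in Section~\ref{CurveForms} and Appendix~\ref{sec:MG}, tracking how the pairing and the spectral sequence interact page by page. Once this pairing is pinned down, the inductive argument above closes cleanly, completing the proof.
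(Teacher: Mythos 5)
Your easy direction is correct and is essentially the paper's observation (naturality of the Hopf pairing plus null-homotopy of the attaching maps, and the image of a coalgebra map being a subcoalgebra). The hard direction, however, has a genuine gap, and it sits exactly where you flag it. First, the intermediate claim that $\tau\in E_R$ ``naturally lives in $\cofreeE(H^1(X))$'' is false: knowing that iterated cobrackets of $\tau$ eventually land in $H^1(X)^{\otimes n}$ does not place $\tau$ in the subcoalgebra cogenerated by $H^1(X)$, because elements of $E_R$ generically carry lower-weight correction terms outside $H^1(X)$. The two-complex of Section~\ref{littletwo} already shows this: the relevant element of $E_R$ is $\tpc{B-C}A$ plus a weight-one multiple of $C$, and $C\notin \Hom_{Grp}(G,\Q)$. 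So identifying $\tau$ with a class on the $E_1$-page of the Harrison spectral sequence of $X$ only works up to lower-weight corrections, and even if $\tau$ (or its top-weight symbol) survives to $E_\infty$, that only controls its image in the associated graded of the weight filtration; producing an honest class in $\HHarr{0}{A^*(X)}$ whose restriction is $\tau$ itself requires a convergence-plus-extension argument for a second-quadrant spectral sequence of unbounded weight (and with possibly infinitely many generators and relations), which you do not address.

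Second, the step you defer --- making ``exact rather than schematic'' the duality between the higher Harrison differentials out of total degree zero and iterated brackets with $\log(r)$ in $\Mal{F_S}$ --- is not bookkeeping; it is the substantive content of the theorem, and without it the argument does not close. The paper supplies precisely this input at the cochain level rather than page-by-page: the Circles and Disks Lemma~\ref{circles-and-disks} shows that any Harrison cocycle on $S^1$ extends over $D^2$ to a cochain whose Harrison coboundary has weight one and integrates over the disk to the Hopf invariant of the boundary circle. With that, the Lifting Lemma~\ref{lem:lifting lemma} builds an explicit Lie coalgebra section $s\maps \mE_R\to Z$ by induction on weight: an arbitrary cofree extension of a form-level section has, on each new cogenerator $v$, a coboundary of weight one (by injectivity of the cobracket in weight $\geq 2$, using that the cobracket factors of $v$ are already handled), the vanishing of Hopf invariants on all $r\in R$ forces $\int_{D^2_r}\dHar\bar{s}_n(v)=0$, and the relative cohomology long exact sequence then provides a relative $1$-form correcting the section without changing its restriction to the one-skeleton. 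This construction simultaneously avoids all convergence issues and produces a section that is a coalgebra homomorphism, which is needed later for the universality argument. Your outline would become a proof only after you replace the asserted differential--bracket duality and the $E_\infty$-to-lift passage by an argument of comparable strength, at which point you would in effect be reconstructing the Circles-and-Disks mechanism.
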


Informally, a Harrison cocycle on the one skeleton lifts to the full space if and only if it and all of its cobrackets vanish on all relations $r\in R$.
That the image of $i^*$ must vanish on relations is immediate, as by naturality the Hopf pairing can be computed on $X$ where the lift of any relation $r$ is null-homotopic. 
We will see examples below for which the closure under cobracket, whose necessity is also immediate, excludes some classes which vanish on all relations.

We analyze $i^*$ at the level of Harrison complexes, 
letting $Z\subseteq \liebar{A^*X}{0}$ denote the collection of $\dHar$-closed elements.  
Noting that everything in 
$\liebar{A^* (\vee S^1)}{0}$ is a cocycle and thus defines Hopf invariants, 
let $ \mE_R$ be its maximal sub-coalgebra whose Hopf invariants
all vanish on the attaching maps.  
Thus restriction to the one skeleton gives a commutative diagram
\begin{equation}\label{LiftingSquare}
\begin{split}
       \xymatrix{  Z\, \ar@{^(->}[r]\ar[d]_(.4){\operatorname{res}} & \liebar{A^*X}{0} \ar@{->>}[d]^(.4){i^*} \\ \mE_R\, \ar@{^(->}[r] & \liebar{A^*(\vee S^1)}{0}}
\end{split} 
\end{equation}
The Lifting Criterion of Theorem~\ref{LiftingCriterion}  follows immediately from the following.
    \begin{lemma}[Lifting Lemma]\label{lem:lifting lemma}
     There exists a section $s\maps \mE_R \to Z$ which is a Lie coalgebra homomorphism. In particular, the restriction map $Z\to \mE_R$ is surjective.
\end{lemma}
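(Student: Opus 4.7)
The plan is to construct $s$ by induction on the weight filtration of $\mE_R$. By cofreeness of the Lie coalgebra $\cofreeE(s^{\inv}\bar A^*(X))$ underlying the Harrison complex, any Lie coalgebra homomorphism $s\maps \mE_R \to \cofreeE(s^{\inv}\bar A^*(X))$ is encoded in a single linear map $p\maps \mE_R \to s^{\inv}\bar A^*(X)$ given by projection onto cogenerators. I will build $p$ weight by weight so that the resulting $s$ is a section of $i^*$ and lands in the cocycles $Z$.

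First I would replace $A^*(X)$ by a cofibrant CDGA model so that the restriction $A^*(X) \onto A^*(\vee S^1)$ is a surjection. For the base case of weight one, an element $\tau \in \mE_R$ is a closed $1$-form on $\vee S^1$ whose integral over every relation $r\in R$ vanishes; by the cohomology long exact sequence of the pair $(X, X^{(1)})$, such a form lifts to a closed $1$-form on $X$, which defines $p(\tau)$ on weight one.

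For the inductive step at weight $n>1$, suppose $p$ has been defined on $\mE_R$ of weight less than $n$, so that the induced $s$ lands in $Z$ in those weights. For $\tau \in \mE_R$ of weight $n$, the cobracket $\cobr{\tau}$ is supported in lower weights, hence $(s \otimes s)(\cobr{\tau}) \in Z \otimes Z$ is already determined. Any choice of $p(\tau) \in s^{\inv}\bar A^*(X)$ extends $s$ to a tentative $s_0(\tau) \in \cofreeE(s^{\inv}\bar A^*(X))$ by cofreeness. Since $s$ lands in cocycles on lower weights and the Harrison differential is a coderivation, $\dHar s_0(\tau)$ has vanishing cobracket, so it lies in weight one in total degree one, representing a closed $2$-form on $X$ and hence a class in $H^2(X)$. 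Restricting to $\vee S^1$ identifies the pairing of this class with any relation $r\in R$, viewed as the fundamental class of a $2$-cell of $X$, with precisely the Hopf invariant of $\tau$ on $r$, which vanishes by $\tau \in \mE_R$. Since $X$ is a two-complex, $H^2(X)$ is detected by pairings with $2$-cells, so the obstruction class is zero in $H^2(X)$, and $\dHar s_0(\tau)$ can be killed by choosing $p(\tau)$ to be an appropriate primitive $1$-form on $X$. This produces $s(\tau) \in Z$ and extends the Lie coalgebra homomorphism to weight $n$.

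The main obstacle I expect is precisely identifying the weight-one obstruction class with the Hopf invariant of $\tau$ on relations. This requires careful tracking of how the two Harrison differentials $d_{\cofreeE}$ and $d_\mu$ interleave through the induction, keeping control of the Koszul signs and using that pairing with a relation $r$ can be computed on $X$ by integrating over a $2$-cell bounding $r$. A secondary difficulty is that each $p(\tau)$ is only determined up to a closed form, so coherent linear choices must be made across all $\tau$ of weight $n$ to ensure $p$ assembles into a well-defined linear map globally, and hence into an honest Lie coalgebra homomorphism rather than merely a vector-space splitting.
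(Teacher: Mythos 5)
Your outline follows the paper's proof of the Lifting Lemma almost step for step: encode the coalgebra map by cofree coextension of a single map to the cogenerators $A^1(X)$, induct on the weight filtration, use injectivity of the cobracket in weight $\geq 2$ (Proposition~\ref{cofree conilpotent}) together with the coderivation property of $\dHar$ to force the obstruction $\dHar s_0(\tau)$ into weight one, and then kill it using vanishing on relations and the cohomology of the pair $(X,X^{(1)})$. Two points of precision, both handled explicitly in the paper: to actually get a \emph{section} you must take the initial tentative choice to be $\sigma\circ\pi$, a fixed linear section $\sigma\maps A^*(\vee_S S^1)\to A^*(X)$ applied to the cogenerator projection $\pi$, and the correcting primitive must be a \emph{relative} $1$-form vanishing on the one-skeleton --- otherwise the correction changes $i^*s$ and sectionhood is lost. (The relative long exact sequence supplies exactly such a primitive once the integrals over all $2$-cells vanish.) Your ``secondary difficulty'' about coherent choices is not really an issue: one fixes a splitting $\mE_R^{\le n}=\mE_R^{\le n-1}\oplus\mE_R^{n}$, chooses corrections on a basis of $\mE_R^{n}$, and extends linearly.

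The genuine gap is the step you yourself flag as the main obstacle: the claim that $\int_{D^2_r}\dHar s_0(\tau)$ equals the Hopf invariant $\int_{S^1}r^*\tau$. This is not a matter of Koszul-sign bookkeeping; it is the content of the paper's separate Circles and Disks Lemma~\ref{circles-and-disks}, and it requires an argument because the Hopf invariant is defined by weight-reducing $r^*\tau$ inside $\mE(A^*(S^1))$, not by integrating any particular cochain representative. Concretely, the restriction of $s_0(\tau)$ over the $2$-cell is one lift of $r^*\tau$ whose Harrison coboundary is weight one, while the weight-reduced representative $\alpha_0=r^*\tau+\dHar\xi$, extended to the disc, gives another closed $2$-form $d\bar\alpha_0$ with $\int_{D^2}d\bar\alpha_0=\int_{S^1}r^*\tau$ by Stokes; one must show the two $2$-forms differ by $d$ of a weight-one \emph{relative} $1$-form. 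The paper proves this by identifying both with the connecting class in $\HHarr{1}{D^2,S^1}$ (using homotopy invariance, $\HHarr{*}{D^2}=0$, so that this relative group is concentrated in weight one), and then iteratively lowering the weight of the discrepancy cochain using the vanishing of the vertical (internal) cohomology of the relative complex in degree zero. Without this lemma, or an equivalent argument, the vanishing $\int_{D^2_r}\dHar s_0(\tau)=0$ --- the engine that drives every stage of your induction --- remains unproved, so your proposal is an accurate skeleton of the paper's proof with its technical heart left open.
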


The proof of this claim is the most technical aspect of our work.
The key to our argument is to relate
Hopf invariants of attaching maps to the Harrison complex of the attached disks. 

Recall that Harrison cochains of weight one in $\liebar{A^*X}{k}$ are simply elements in $A^{k+1}(X)$. In the text below we will abusively write
$\alpha$ for both a weight one element of $\liebar{A^*X}{k}$ as well as the
underlying form in $A^{k+1}X$.


\begin{lemma}[Circles and Disks] \label{circles-and-disks}
 Consider $b:S^1 \into D^2$, the inclusion map of the boundary. 
 Every Harrison cocycle $\alpha\in \mE^0{(A^* S^1)}$ is realized as a pullback from the disc $\alpha = b^*(\bar{\alpha})$ for some
 cochain $\bar{\alpha}\in \liebar{A^* D^2}{0}$ such that:
 \begin{enumerate}
     \item $\dHar\bar{\alpha}$ has weight $1$; \label{one}
     \item The $2$-form $\dHar\bar{\alpha}$ is supported on the interior of $D^2$;
     \item $\int_{D^2} \dHar\bar{\alpha} = \int_{S^1} \alpha$.\label{three}
  \end{enumerate}

\end{lemma}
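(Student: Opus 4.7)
The strategy is to first rewrite $\alpha$ as a weight-one $1$-form on $S^1$ modulo an explicit coboundary, then lift this decomposition to $D^2$. Since $S^1$ is formal, the inclusion of harmonic forms $H^*(S^1) \into A^*(S^1)$ is a quasi-isomorphism of cdgas, which by functoriality of $\mE$ induces a quasi-isomorphism on Harrison complexes. Proposition~\ref{Ground} already identifies $\HHarr{0}{A^*(S^1)}\cong \Q$, concentrated in weight one. Setting $c := \int_{S^1}\alpha$ and $\omega := \tfrac{c}{2\pi}\,d\theta$, I therefore plan to produce an explicit decomposition
\[
\alpha \ = \ \omega \,+\, \dHar\zeta
\]
for some $\zeta \in \liebar{A^*S^1}{-1}$, obtained inductively by choosing antiderivatives of $1$-forms at individual vertices of the tree symbols comprising $\alpha$.

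Next I would lift $\omega$ to the disc by a collar construction. Using polar coordinates $(r,\theta)$ on $D^2$, fix a smooth cutoff $\phi\maps [0,1]\to[0,1]$ equal to $0$ near $r=0$ and to $1$ near $r=1$, and set
\[
\bar\omega \ := \ \tfrac{c}{2\pi}\,\phi(r)\, d\theta.
\]
Then $b^*\bar\omega = \omega$, while $d\bar\omega = \tfrac{c}{2\pi}\phi'(r)\,dr\wedge d\theta$ is a $2$-form supported in the open annulus where $\phi'$ is nonzero, hence in the interior of $D^2$. Stokes' theorem gives $\int_{D^2} d\bar\omega = \int_{S^1}\omega = c$. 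For the exact piece, I would fix any linear section of the surjection $b^*\maps A^*(D^2)\onto A^*(S^1)$ at the level of forms and apply it vertex-wise to the tree symbols of $\zeta$, obtaining an extension $\bar\zeta \in \liebar{A^*D^2}{-1}$. Setting $\bar\alpha := \bar\omega + \dHar \bar\zeta$, naturality of $b^*$ gives $b^*\bar\alpha = \omega + \dHar\zeta = \alpha$, while $\dHar\bar\alpha = d\bar\omega$ is a weight-one form with the required support and integral.

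The main obstacle lies in the first step, producing the cochain-level decomposition $\alpha = \omega + \dHar\zeta$ rather than merely a cohomological one. The key simplification that makes this tractable is that $A^2(S^1) = 0$: the horizontal differential $d_\mu$ vanishes identically on $\liebar{A^*S^1}{*}$, so every element of $\liebar{A^*S^1}{0}$ is automatically a $\dHar$-cocycle, and weight reduction amounts to recursively applying the antiderivative formula familiar from Section~\ref{FreeCase}, namely that $\tpc{\omega_1}{\omega_2}$ is $\dHar$-cohomologous to $\pm\eta\cdot \omega_2$ whenever $\omega_1 = d\eta$. Careful bookkeeping of Koszul signs and root-Arnold relations in $\Eil$ during this recursion is what makes the argument nontrivial, but the vanishing of $2$-forms on $S^1$ ensures no new higher-weight terms are ever introduced along the way.
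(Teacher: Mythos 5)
Your proposal is correct, but it reaches the lemma by a genuinely different route than the paper. The paper's proof introduces the relative Harrison complex $\liebar{D^2,S^1}{*}$, uses homotopy invariance ($\HHarr{*}{D^2}=0$) and the long exact sequence to see that the connecting map $\delta$ is an isomorphism concentrated in weight one, corrects an arbitrary lift $\hat\alpha$ by a relative cochain $\eta$ to get property~(\ref{one}), and then proves property~(\ref{three}) by a second, separate weight-reduction argument inside the relative complex (comparing $\dHar\bar\alpha$ with $d\bar\alpha_0$ up to $d$ of a relative $1$-form, via a K\"unneth/vertical-cohomology argument). You instead perform all weight reduction on the circle first: since $A^2(S^1)=0$, every element of $\liebar{A^*S^1}{0}$ is a $\dHar$-cocycle, and Proposition~\ref{Ground} gives the cochain-level decomposition $\alpha=\omega+\dHar\zeta$ with $\omega$ a genuine $1$-form of total integral $\int_{S^1}\alpha$; you then lift $\omega$ by the explicit cutoff form $\tfrac{c}{2\pi}\phi(r)\,d\theta$ and lift $\zeta$ vertex-wise by any degree-preserving linear section of the surjection $b^*\maps A^*(D^2)\onto A^*(S^1)$ (chosen to preserve augmentation ideals), setting $\bar\alpha=\bar\omega+\dHar\bar\zeta$. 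Then $\dHar\bar\alpha=d\bar\omega$, so all three properties follow by inspection, the relative complex and its long exact sequence are never needed, and the interior-support condition~(2) holds by construction, whereas the paper's proof leaves it implicit. What the paper's formulation buys is an invariant packaging that avoids choosing an explicit collar/cutoff and a section; what yours buys is a shorter, more elementary argument in which property~(\ref{three}) is immediate rather than requiring the iterative reduction of the correction term $\zeta$.

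One small inaccuracy to fix in your write-up: $d_\mu$ does \emph{not} vanish identically on $\liebar{A^*S^1}{*}$ --- in total degree $-1$ it multiplies a function into an adjacent $1$-form, and indeed the antiderivative identity you quote, that $\tpc{\omega_1}{\omega_2}$ is cohomologous to $\pm\,\eta\wedge\omega_2$ when $\omega_1=d\eta$, is produced precisely by this $d_\mu$ term. The statement you actually need, and which is true, is that both the vertical differential and $d_\mu$ vanish on total degree $0$, where every vertex is labeled by a $1$-form; this is why every element of $\liebar{A^*S^1}{0}$ is a cocycle and why the decomposition $\alpha=\omega+\dHar\zeta$ exists. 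This correction does not affect the rest of your argument.
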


 The integral on the left of (\ref{three}) is the ordinary integration of a $2$-form on the disc, while the integral on the right is the evaluation defined in Proposition~\ref{Ground}.  An illustration of (\ref{three}) is discussed in Section~\ref{commutatormassey}.

\begin{proof}

The map $b^* :A^*D^2 \to A^*S^1$ is a surjection, so the 
induced map on Harrison complexes 
$\mathcal{E}A^*b:\liebar{A^*D^2}{*}\to\liebar{A^*S^1}{*}$ 
is as well.
Define $\liebar{D^2,S^1}{*}$
to be the kernel of $\mathcal{E}A^*b$ and write
$\HHarr{*}{D^2,S^1}$ for the cohomology of the resulting complex. 
The long exact sequence of relative Harrison cohomology reads
\[
 \cdots\to\HHarr{*}{D^2}\to\HHarr{*}{S^1}
  \overset{\delta}{\longrightarrow}
  \HHarr{*+1}{D^2,S^1} \to \HHarr{*+1}{D^2}\to\cdots.
\]
Homotopy invariance implies $\HHarr{*}{D^2}=0$,
so the connecting homomorphism $\delta$ is an isomorphism.

For a cocycle $\alpha\in\liebar{S^1}{0}$, 
the connecting homomorphism is defined by lifting and then applying $\dHar$, namely $\delta\alpha = \dHar\hat\alpha \in \liebar{D^2,S^1}{1}$.
This respects filtration by weight, since both lifting and $\dHar$ do. 
  Since $\HHarr{0}{S^1}$ is supported in (spanned by) weight one, so is $\HHarr{1}{D^2,S^1}$.
 We can thus choose an $\eta\in \liebar{D^2,S^1}{0}$ such that $\dHar \hat{\alpha} + \dHar\eta = d(\hat\alpha + \eta)$ has weight one.  Let $\bar\alpha = \hat\alpha + \eta$.  
Since $\eta|_{S^1}=0$ (every
summand of $\eta$ contains at least one form that vanishes on $\partial D^2$), the cochain $\bar\alpha$ itself restricts to $\alpha$. 
Thus $\bar \alpha$ has coboundary in weight one, proving statement~(\ref{one}).

To prove statement~(\ref{three}), we first apply Proposition~\ref{Ground} to obtain an  $\alpha_0 = \alpha + \dHar\xi$ of weight one.
Extending $\alpha_0\in A^1(S^1)$ to the interior of the disc results in a form $\bar{\alpha}_0\in A^1( D^2)$ such that
\[
\int_{ D^2} d\bar{\alpha}_0 \ = \ \int_{S^1} \alpha_0 \ = \ \int_{S^1}\alpha,
\]
where the first equality is by Stokes' Theorem.
So it suffices to show that $\dHar\bar{\alpha} = d\bar{\alpha}_0 + d\zeta_0$ for some relative $1$-form $\zeta_0\in A^1(D^2,S^1)$.

Observe that both $\dHar\bar\alpha$ and $d\bar{\alpha}_0$ represent the element $\delta\alpha\in \liebar{D^2,S^1}{1}$ 
and they therefore differ by some $\dHar \zeta$ for $\zeta\in \liebar{D^2,S^1}{0}$, possibly of  weight larger than one. If we can show that $\zeta = \zeta_0 + \dHar \mu$ where $\mu\in \liebar{D^2,S^1}{-1}$ and $\zeta_0$ has weight one, then
\[
\dHar\bar\alpha = d\bar\alpha_0 + \dHar(\zeta_0+\dHar\mu) = d\bar\alpha_0 + d\zeta_0
\]
as desired. 

We iteratively show that $\zeta$ is cohomologous to something of lower weight.
Suppose $\zeta$ has weight $p>1$ and let $\zeta_p$ be its summand of weight $p$. The vertical component of $\dHar$ must vanish on $\zeta_p$ since this is the weight $p$ term of $\dHar \zeta$ which is only supported in weight one. 
So $\zeta_p$ is a $0$-cocycle in the complex $\liebar{D^2,S^1}{*}$ equipped with only the internal (vertical) differential.  But both $\liebar{D^2}{*}$
and $\liebar{S^1}{ }$ have no vertical cohomology in degree zero in this weight by the K\"unneth 
Theorem, so neither does  $\liebar{D^2,S^1}{ }$.
Thus $\zeta_p$ is the image under the vertical differential of some  $\mu_p\in \liebar{D^2,S^1}{-1}$. It follows that $\zeta' = \zeta - \dHar \mu_p$ has weight $p-1$ and still cobounds the difference of $\dHar \bar\alpha- d\bar\alpha_0$. Performing this weight reduction iteratively lets us subtract coboundaries from
$\zeta$ until the result $\zeta_0$ has weight one, completing the proof.
\end{proof}

With these facts in place we can prove the Lifting Lemma.
\begin{proof}[Proof of Lemma \ref{lem:lifting lemma}]

Extend the diagram from Equation~(\ref{LiftingSquare}) to the right using maps \(\pi\) projecting
onto cogenerators: 
\[
\xymatrix{
 Z \ar@{^(->}[r] \ar@{->}[d]^(.45){\mathrm{res}} 
  & \mE{A^*X} \ar@{->>}[r]^{\pi} \ar@{->>}[d]_(.45){\mE i^*} 
  & A^*X \ar@{->>}[d]_(.45){i^*}
  & X \\
 \mE_R \ar@{^(->}[r]              \ar@{.>}[urr]|(.44)\hole_(.65){\hat s} 
   \ar@{.>}[ur]^(.45){\bar s} \ar@{.>}@/^1pc/[u]^(.55){s}
  & \mE{A^*(\vee  S^1)} \ar@{->>}[r]^{\pi} 
  & A^*(\vee  S^1) \ar@/_1pc/@{-->}[u]_{\sigma}
  & X^{(1)} \simeq \vee S^1\hspace{-1cm} \ar@{^(->}[u]_{i}
}
\]
We define the desired section by climbing the weight filtration $\{\mE_R^{\le n}\}$ inherited 
from $\liebar{A^*(\vee  S^1)}{0}$.
To start, choose splittings $\mE_R^{\le n} =  \mE_R^{\le (n-1)} \oplus \mE_R^n$ so that $\mE_R = \bigoplus \mE_R^n$. Furthermore, pick an arbitrary section $\sigma:A^*(\vee  S^1)\to A^*X$ by extending the domain of
definition of differential forms to the interiors of the $2$-cells in $X$.
By construction, $\sigma$ is a section of the restriction map 
$i^*:A^*X \to A^*(\vee  S^1)$.

We build  $s:\mE_R \to Z$ by constructing a coalgebra homomorphism $\bar s: \mE_R \to \mE{A^*X}$ landing in $\dHar$-closed elements. 
Since $\mE{A^*X}$ is cofreely cogenerated by $A^*X$, such a map must be the cofree coextension of some $\hat s: \mE_R \to A^*X$.  We use the weight filtration on $\mE_R$
to define $\hat s$.
Begin with 
\[ 
 \hat s_{0}:\mE_R \xrightarrow{\ } \cofreeE{A^* (\vee  S^1)} 
         \xrightarrow{\;\pi\;} A^*(\vee  S^1)
         \xrightarrow{\;\sigma\;} A^*X,
\]
and let $\bar s_{0}$ be the cofree extension to a homomorphism into $\mE{A^*X}$. The induced homomorphism $\bar s_{0}$ is indeed a section of the restriction $\cofreeE i^*$, however it does not necessarily send $\mE_R$ to $\dHar$-closed elements.
We perform a modification  at 
each step to get maps $\hat s_{n}$ whose cofree extensions to homomorphisms
$\bar s_{n}$ map 
$\mE_R^{\le n}$ to $\dHar$-closed elements.

Consider this modification in detail from $\hat s_{0}$ to $\hat s_{1}$.
Given $v\in \mE_R^1$, we calculate 
\[
\cobr[\Big]{\dHar \bar{s}_{0}(v) }\; = 
 (\dHar\otimes I + I\otimes \dHar)\circ (\bar{s}_{0}\otimes \bar{s}_{0})\ \cobr{v}\;=0,
\]
since in $\mE_R^1$ we have $\cobr{v}=0$.
Because cobrackets are injective in weight $\geq 2$ by Proposition~\ref{cofree conilpotent} 
 we deduce that $\dHar \bar{s}_{0}(v)$ has weight one. 
By assumption, $v\in \mE_R$ has vanishing Hopf invariants on all $r\in R$ so by the {Circles and Disks} 
Lemma \ref{circles-and-disks},
\[
\int_{D^2_r} \dHar \bar{s}_{0}(v) = \int_{S^1} r^*v = 0,
\]
where $D^2_r$ is the two-disk attached by $r$.  
By the relative cohomology long exact sequence for $X$ and its one-skeleton, any cohomology class which
evaluates to zero on all $D^2_r$ must cobound.
Thus the $2$-form $\dHar \bar{s}_{0}(v) \in A^2(X)$ has a cobounding $\omega_v\in A^1(X,\vee S^1)$ such that 
$d\omega_v = \dHar \bar{s}_{0}(v)$. 

Pick a basis $\{ v_i\}_{i\in I}\subset \mE_R^{1}$ and define $\hat s_{1}\maps \mE_R\to A^1X$ by linearly extending
\[
\hat s_{1}(v_i) = \hat s_{0}(v_i) - \omega_{v_i}  
\]
on $\mE_R^1$ and $\hat s_{1} = \hat s_{0}$ one higher weights.   
Since the forms $\omega_{v_i}$ restrict to zero on $\vee S^1$, we have not changed 
images after restriction
$i^*\, \hat{s}_1 = i^*\, \hat{s}_{0}$,
and $\bar{s}_{1}(v)$ is now $\dHar$-closed by construction.

The general case is almost identical. Given $\hat{s}_{n}$ extending to $\bar{s}_{n}$ with the property that
$\bar{s}_{n}$ sends $\mE_R^{\le n}$ to $\dHar$-closed elements, we first note that every $v\in \mE_R^{n+1}$ must have $\dHar \bar{s}_{\leq n}(v)$ of weight one. Indeed, this follows
again by applying injectivity of cobracket, but now using the fact that 
tensor factors of $\cobr{v}$ have weight $\le n$ but $\dHar \bar{s}_{n}$ of such terms is zero.  That is,
\[
\cobr[\Big]{\dHar \bar{s}_{\leq n}(v) }\; = 
 (\dHar\otimes I + I\otimes \dHar)\circ (\bar{s}_{\leq n}\otimes \bar{s}_{\leq n})\ \cobr{v}\; = 0.
\]
Thus we can apply the {Circles and Disks} Lemma as before to find coboundings of $\dHar \bar{s}_{n}(v)$ 
and use these to define $\hat{s}_{n+1}$ such that $i^*\bar{s}_{n+1} = i^*\bar{s}_{n}$ and $\bar{s}_{n+1}$ of 
$\mE_R^{\le (n+1)}$ is now $\dHar$-closed.
\end{proof}

\newcommand{\varS}{{\scriptstyle \Sigma}}
\newcommand{\varD}{{\scriptstyle \Delta}}

\subsection{Letter braiding algorithms}\label{Algorithms}

There is  a simple, quick algorithm which can be used to evaluate the
letter braiding functions from Section~\ref{combalg} on words.  
Computation is made via a single pass across the word.  
To evaluate we store homogeneous symbols as tree data structures
where each vertex $v$ records a d-function $h_v$, 
along with two counters $\varS_v$ and $\varD_v$ which are 
initially set to 0 and updated as we move across the word.
Evaluation is computed by iterating over the ordered list of letters in the word, and for each letter we iterate depth-first (away from the root)
in the tree symbol.
For each letter $\ell_i\in S^{\pm 1}$ and vertex $v$, the $\varS_v$ and $\varD_v$ 
values are updated as follows.
\begin{enumerate}
\item
First increment $\varS_v$ by
the current $\varD_v$ value, and reset $\varD_v$ to 0.
\item
Next, compute the sum of all children's $\varS$-values multiplied by the value $h_v(\ell_i)$ at the given letter.  \\
If the node $v$ has no children then use the value $h_v(\ell_i)$ itself.
\item
If letter $\ell_i$ is an inverse
then add the number computed in the previous step to $\varS_v$. Otherwise, add it to $\varD_v$.
\end{enumerate}
After evaluating over all vertices at all letters, the value of the braiding symbol on 
the given word is the sum $\varS_r+\varD_r$ at the root vertex of the tree.

To see that this algorithm indeed computes the braiding function observe that during the course of computation, at letter $\ell_i$ of the word and vertex $v$ of 
the braiding symbol,  $\varS_v = \lpw {\Phi_v} (i)$ where $\Phi_v$ is the 
subsymbol supported by vertex $v$.  Thus the sequence of values 
computed in $\varS_v$ 
are the d-function $\lpw {\Phi_v}$.  See \cite{Walter-Github} for an implementation
of this algorithm in Python, using multiplication with EilWord and EilTree objects.

\begin{example}
Recall Example~\ref{BraidingProductExample} 
computing $\lpw {B-2A} C$ evaluated on 
$w=bca b^\inv c^\inv bb$.  The algorithm described above 
would run as follows (computed top-to-bottom then left-to-right).

\begin{center}
\begin{tabular}{cc||c|c|c|c|c|c|c}
   &  & $\ \ b\ \ $ & $\ \ c\ \ $ & $\ \ a\ \ $ & $\ \ b^\inv\ $ & $\ \,c^\inv\ $ & $\ \ b\ \ $ & $\ \ b\ \ $ \\ \hline\hline
    & &&&&&&& \\[-2ex]
\multirow{2}{*}{$B-2A$} &
$\varD_0$ & $1$ & $0$ & $-2$ & $0$ & $0$ & $1$ & $1$ \\ 
 & $\varS_0$   & $0$ & $1$ & $1$ & $-2$ & $-2$ & $-2$ & $-1$ \\ \hline\hline 
   &  &&&&&&& \\[-2ex]
\multirow{2}{*}{$C$} &
$\varD_1$ & $0$ & $1$ & $0$ & $0$ & $0$ & $0$ & $0$ \\ 
 & $\varS_1$   & $0$ & $0$ & $1$ & $1$ & $3$ & $3$ & $3$ \\  
\end{tabular}
\end{center}

The final value $\varS_1+\varD_1=3$ gives the result of the braiding product evaluated
on $w$.
Note that the values of $\varS_0$ track the values of the d-word $\lpw{B-2A}$
calculated in Example~\ref{BraidingProductExample}.  The variable $\varD_v$
is used to temporarily store values which get added to $\varS_v$ ``between letters''
due to evaluating at a generator.  We can also see that
$\bigl(B-2A\bigr)(w) = 0$  
because $\varS_0+\varD_0 = 0$ after the
last step of the algorithm.  
\end{example}

\medskip

It is substantially more involved to generate letter braiding functions which are well-defined on a group
$G =  \langle S\,|\,R\rangle$.  Applying the Lifting Criterion from Section~\ref{sec:lifting} we know these
are invariants on $F_S$ 
  whose iterated cobrackets all vanish on the relations in $R$.
While the vanishing of iterated cobrackets on relations seems like a large set of conditions to check,
it leads to fairly simple recursive algorithms for generating letter
linking invariants up to a fixed weight.  

Let us describe this procedure. Start in weight one, where we recall that the indicator homomorphisms $\{S_i\}_{i=1}^m$ are a basis for 
$\Hom(F_S, \Q)$.  These evaluate on relations $r\in R$
by counting (signed) occurrences of letters.  Thus vanishing on a given 
relation $r$ imposes a linear condition on the coefficients of invariants $\sum \lambda_i S_i$. 
Calculating the kernel of  all such conditions yields the subspace
 $E_R^1 = \Hom(G,\Q) \subset \Hom(F_S, \Q)$ 
of weight one letter linking invariants for $G$.  

In higher weights, the vanishing of cobrackets on relations comes into play.
Suppose we have already calculated $E_R^{\le k} \subset \cofreeE^{\le k} ({F_S}^\#)$ -- the vector space of 
weight $\le k$
letter linking invariants of $F_S$  
which lift to $G$ in the sense of Theorem~\ref{LiftingCriterion}.
We may then compute the 
vector subspace $V_{k+1}\subset \cofreeE^{\le k+1} ({F_S}^\#)$ given by the preimage of 
$E_R^{\le k}\otimes E_R^{\le k}$ under cobracket (implicitly identifying $\cofreeE^{\le k} \subset \cofreeE^{\le k+1}$).
The next set of invariants, $E_R^{\le k+1}$, is the vector subspace of 
$V_{k+1}$
which vanishes on all relations in $R$. To find it, one may pick a basis $\{ T_\alpha \}_{\alpha\in A} \subset V_{k+1}$, evaluate its elements on all relations $r\in R$ using the evaluation algorithm above, and obtain linear constraints on the coefficients of invariants $\sum_\alpha \lambda_\alpha T_\alpha$.
Note that $V_{k+1}$ contains $V_k$ (as well as all of $\cofreeE^0$),
so $E_R^{\le k} \subset E_R^{\le k+1}$.

\smallskip

The second author gave an alternative implementation \cite{Aydin-Github} in SageMath, using BCH expansions and his Formula (Corollary~\ref{ozbek})
to check vanishing on relations. 
Computations are done in the dual of the free Lie algebra $(\mathbb{L} S)^*$ 
using the SageMath implementation of Hall
basis and BCH expansions for $\mathbb{L}S$.  

To elaborate, given a word $r$ in the set of generators $S$, let $BCH(r)$ be the BCH expansion
of $r$, extending the BCH product:
\[ \mathrm{BCH}(a_1a_2\dots a_k)\ =\ 
   \log\bigl(e^{a_1}e^{a_2}\cdots e^{a_k}\bigr) 
   \in \mathbb{L}S.\]
Write $\mathrm{BCH}_k(r)$ for the truncation of $\mathrm{BCH}(r)$ to include only 
weight $\le k$ terms, thus projecting to the free $k$-step nilpotent Lie algebra,
$\mathbb{L}^{\le k}S$.  
If $b$ is a Hall basis element, then pairing the dual element
$\bigl\langle b^*, \mathrm{BCH}(r)\bigr\rangle$ yields the coefficient of $b$ in the BCH 
expansion of $r$.
Written in terms of a Hall basis, $\mathrm{BCH}_k(r)$ is the matrix of 
coefficients of the BCH expansion through weight $k$.

The implementation \cite{Aydin-Github} proceeds as follows.
\begin{enumerate}
\item 
Make the vector space $(\mathbb{L}^{\le 1}S)^* \cong \Q^S$.

\item
Construct a linear transformation 
$\mathrm{BCH}_1:(\mathbb{L}^{\le 1}S)^* \to \mathbb{Q}^n$
evaluating on relations $\{r_1,\dots,r_n\}$.

\item
Compute the kernel of $\mathrm{BCH}_1$. 
This is $E_R^1$, the weight one letter linking invariants.

\item
Loop over the following steps.

\begin{enumerate}
\item
 Expand the previously constructed vector space 
 $(\mathbb{L}^{\le k}S)^*$ to $(\mathbb{L}^{\le k+1}S)^*$
 by including the (dual) Hall basis elements of weight $k+1$.

\item
 Make tensor product vector space $(\mathbb{L}^{\le k}S)^* \otimes (\mathbb{L}^{\le k}S)^*$

\item 
 Realize the cobracket linear transformation 
  $$\cobr{-}:(\mathbb{L}^{\le k+1}S)^* \longrightarrow 
            (\mathbb{L}^{\le k}S)^*\otimes (\mathbb{L}^{\le k}S)^*$$
 as the transpose of the $(k+1)$-step nilpotent bracket transformation
  $$[-,-]:(\mathbb{L}^{\le k}S)\otimes (\mathbb{L}^{\le k}S) 
         \longrightarrow \mathbb{L}^{\le k+1}S$$

\item 
 Having previously caltulated the vector subspace $E_R^{k}\leq (\mathbb{L}^{\le k}S)^*$, calculate the preimage of the vector subspace $E_R^{k}\otimes E_R^{k} \subset
 (\mathbb{L}^{\le k}S)^*\otimes (\mathbb{L}^{\le k}S)^*$ under the cobracket.
 This preimage is the subspace $V_{k+1}$ inside $(\mathbb{L}^{\le k+1}S)^*$.

\item 
 Make a linear transformation 
 $\mathrm{BCH}_{k+1}:(\mathbb{L}^{\le k+1}S)^* \to \mathbb{Q}^n$ by evaluation on $\{r_1,\ldots,r_n\}$.

\item 
 Calculate the kernel of $\mathrm{BCH}_{k+1}$ restricted to $V_{k+1}$.
 This is the subspace $E_R^{k+1}$ we sought.
\end{enumerate}
\end{enumerate}

\medskip

Many optimizations are possible.  
For example, BCH computations could be sped up by using the left-greedy Hall basis
so that Lie coalgebra pairings give rewriting formulas (see \cite{Walter-Shiri}).
Even better would be to remove BCH altogether via the ``one-pass" evaluation procedure described earlier in this subsection. Together with coalgebraic techniques that  remove redundancies, calculations of  letter-linking invariants of a finitely presented group should be relatively efficient, especially when building on
 the quick evaluation procedure which has already been implemented as part of the Lie algebra/coalgebra
toolbox in \cite{Walter-Github}.

\subsection{Proving the Fundamental Theorem}\label{ProofFundamental}

We restate the universality portion of Theorem~\ref{thm-intro:universal property}, which is the only part
we have left to prove.
   
\begin{theorem}\label{thm:universality restated}
     The Hopf  pairing of $\HHarr{0}{A^*(X)}$ with the Malcev Lie algebra of $\pi_1(X)$ is a universal Lie pairing.
    
\end{theorem}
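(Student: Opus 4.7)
My plan is to reduce the general case to the free case via the Lifting Criterion. Because $H^0_\mE$ depends only on $\pi_1$ (as the Lemma in Section~\ref{HopfInvar} shows), we may replace $X$ by a two-complex $Y$ with one-skeleton $\bigvee_S S^1$ realizing some presentation $\pi_1(X) = F_S/(R)$. By the Lifting Criterion (Theorem~\ref{LiftingCriterion}), $H^0_\mE(Y)$ is identified via $i^*$ with $E_R \subseteq \cofreeE(F_S^\#) \cong H^0_\mE(\bigvee_S S^1)$, and $i^*$ is injective by Proposition~\ref{H0injective}. Thus it suffices to handle the free case and then check that the resulting homomorphism lands in $E_R$.

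I will first treat the free case $X = \bigvee_S S^1$. Given any Lie pairing $\langle -,-\rangle_{E'}$ of a Lie coalgebra $E'$ with $\Mal(F_S)$, define a linear map $\ell\maps E' \to F_S^\#$ by $\ell(\tau)(s) = \langle \tau, \log(s)\rangle_{E'}$ for each generator $s\in S$. By the universal property of the cofree conilpotent Lie coalgebra (Proposition~\ref{cofree conilpotent}), $\ell$ extends uniquely to a Lie coalgebra homomorphism $\varphi\maps E' \to \cofreeE(F_S^\#)$ whose corestriction to cogenerators is $\ell$; concretely, the $k$-fold iterated cobracket of $\varphi(\tau)$ is $\ell^{\otimes(k+1)}$ applied to the $k$-fold iterated cobracket of $\tau$. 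Compatibility $\langle \varphi(\tau), \log(\gamma)\rangle_H = \langle \tau, \log(\gamma)\rangle_{E'}$ then follows because both sides, by repeated application of bracket-cobracket duality, reduce to the same expression once $\log(\gamma)$ is expanded as a BCH series in iterated brackets of the $\log(s)$'s: namely $\ell^{\otimes k}$ applied to the iterated cobracket of $\tau$, using that the Hopf pairing on weight one of $\cofreeE(F_S^\#)$ is the Kronecker pairing with $F_S^\#$.

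For general $X$, a Lie pairing of $E'$ with $\Mal(\pi_1 X)$ pulls back along the surjection $\Mal(F_S) \onto \Mal(\pi_1 X)$ to a Lie pairing with $\Mal(F_S)$, producing $\varphi_F\maps E' \to \cofreeE(F_S^\#)$ from the free case. The image $\varphi_F(E')$ is automatically a sub-Lie coalgebra, and its Hopf invariants vanish on every relation $r\in R$ because
\[
\bigl\langle \varphi_F(\tau), \log_{F_S}(r) \bigr\rangle_H = \bigl\langle \tau, \log_{F_S}(r)\bigr\rangle_{E'} = 0,
\]
the last equality holding because $\log_{F_S}(r) \mapsto 0$ in $\Mal(\pi_1 X)$ and the pairing we started with factored through $\Mal(\pi_1 X)$. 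By maximality in Definition~\ref{def:subcoalg vanishing on relations} we conclude $\varphi_F(E') \subseteq E_R$, producing the desired factorization $\varphi\maps E' \to H^0_\mE(X)$. Uniqueness transfers from the free case together with the injectivity of $i^*$.

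The hard part will be making the compatibility argument in the free case fully rigorous when $E'$ is not conilpotent. In that case the iterated cobrackets of $\tau\in E'$ need not eventually vanish, so $\varphi(\tau)$ is described by an a priori infinite tower of data, and one must verify that the abstract Lie pairing axiom alone forces enough vanishing/continuity for the BCH expansion of $\langle \tau, \log(\gamma)\rangle_{E'}$ to be computable term by term and to match $\langle \varphi(\tau), \log(\gamma)\rangle_H$. Since each pairing value is a single rational number and $\log(\gamma)$ is a convergent series of iterated brackets in the lower-central filtration on $\Mal(F_S)$, I expect this to follow from a careful bookkeeping of the weight filtration, but this is where the technical work concentrates.
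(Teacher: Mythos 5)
Your proposal is correct in outline and follows essentially the same route as the paper's proof: handle the free case by extending the map $\tau\mapsto\bigl(s\mapsto\langle\tau,\log(s)\rangle_{E'}\bigr)$ to a Lie coalgebra homomorphism into $\cofreeE(\Q^S)$ by cofreeness, verify compatibility via bracket--cobracket duality, and in the general case pull the pairing back along $\Mal{F_S}\onto\Mal{\pi_1 X}$, note that the image pairs trivially with relations and hence lies in $E_R$, then use the Lifting Criterion together with injectivity of $i^*$ for existence and uniqueness of the factorization.

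The step you flag as the hard part is closed in the paper with no series bookkeeping at all, so let me spell out how. First, the fact you assert without proof --- that the Hopf pairing of any positive-weight homogeneous element of $\cofreeE(\Q^S)$ with $\log(s)$ vanishes --- is a short lemma: the inclusion $i_s\maps S^1\into\bigvee_S S^1$ is modeled by the square-zero algebra map $1\oplus\Q^S\to 1\oplus\Q$, the induced map on Harrison complexes is weight-preserving, and the target is one-dimensional in weight one, so all positive-weight elements pull back to zero. With that base case, one proves by induction on brackets (using that both pairings satisfy the Lie-pairing axiom) that $\langle\varphi(T),\ell\rangle_H=\langle T,\ell\rangle_{E'}$ for all $\ell$ in the Lie subalgebra generated by the $\log(s)$. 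The general $\ell\in\Mal{F_S}$ is then handled by truncation rather than term-by-term expansion: if $T$ has weight $n$, then its $n$-fold iterated cobracket vanishes, so by bracket--cobracket duality both pairings kill $(n+1)$-fold nested brackets, and BCH says every $\ell$ agrees with an element of that subalgebra modulo such brackets. In other words, the only ``continuity'' ever used is that the individual element $T$ has finite weight; the paper in effect works with (locally) conilpotent coalgebra elements throughout, which also justifies the cofree extension step, so the genuinely non-conilpotent case you worry about is sidestepped by the paper rather than attacked head-on, and your proof becomes complete once you make the same finite-weight hypothesis explicit.
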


The Lifting Criterion immediately gives a way to deduce this theorem for general groups from the free group case.
We start with the following refinement of the calculation in Example~\ref{WedgeCircles}.
\begin{lemma}
    Let $X = \vee_S S^1$
    with $i_s\maps S^1\into X$ the inclusion map of circle $s\in S$. Under the canonical isomorphism $\HHarr{0}{X}\cong \cofreeE(\Q^S)$ we have
    \[
    \int_{S^1} i^*_s(T) = \begin{cases}
        f(s) & \text{ if } T=f\in \cofreeE(\Q^S) \text{ has weight $1$} \\
        0 & \text{ otherwise}
    \end{cases}
    \]
      for every homogeneous expression $T\in \cofreeE(\Q^S)$.
\end{lemma}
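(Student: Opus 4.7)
The plan is to exploit formality of $X = \bigvee_{s\in S}S^1$. Since $X$ is one-dimensional, all cup products in $H^*(X)$ vanish identically, so $A^*(X)$ is quasi-isomorphic as a CDGA to $H^*(X)$ equipped with zero differential and trivial multiplication. Quasi-isomorphism invariance of $\mE$ then lets us model $\mE(A^*(X))$ by $\cofreeE(s^{\inv}\bar H^*(X))$ with no differential at all, recovering the identification $\HHarr{0}{X} \cong \cofreeE(\Q^S)$ of Example~\ref{WedgeCircles}. The same argument gives $\HHarr{0}{S^1} \cong \cofreeE(\Q)$. Under these formal identifications, the inclusion $i_s\maps S^1 \into X$ induces $\cofreeE(e_s)\maps \cofreeE(\Q^S) \to \cofreeE(\Q)$, where $e_s\maps \Q^S \to \Q$ is the evaluation at the coordinate $s$. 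Crucially, $\cofreeE$ preserves the weight grading, so $i_s^*$ strictly preserves weight.

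For a weight one element $T = f \in \Q^S$, the pullback $i_s^*(f) = f(s)$ lives in $H^1(S^1) \cong \Q$, and the canonical isomorphism $\int_{S^1}$ of Proposition~\ref{Ground} restricts in weight one to ordinary integration over the fundamental class of $S^1$. Therefore $\int_{S^1}i_s^*(f) = f(s)$, matching the first case.

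For homogeneous $T$ of weight $n \geq 2$ it suffices to show that $\cofreeE(\Q)$ is concentrated in weight one, as this then forces $i_s^*(T) = 0$ in $\HHarr{0}{S^1}$ and hence $\int_{S^1} i_s^*(T) = 0$. This weight-concentration is immediate from Proposition~\ref{Ground} combined with the formal identification $\HHarr{0}{S^1} \cong \cofreeE(\Q)$; alternatively one can see it directly, since over $\Q$ the cofree Lie coalgebra on a one-dimensional space is dual to the free Lie algebra on $\Q$, which is simply $\Q$ itself because $[x,x]=0$ kills all higher-weight brackets.

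The main obstacle is purely bookkeeping: verifying that the formal identifications $\HHarr{0}{X} \cong \cofreeE(\Q^S)$ and $\HHarr{0}{S^1} \cong \cofreeE(\Q)$ are natural with respect to $i_s$, so that the induced map really is $\cofreeE(e_s)$. This naturality follows either from coherent functorial choices of minimal models, or can be verified directly on the Harrison spectral sequence of Remark~\ref{HarrisonSS}, whose $E_1$-page in total degree zero is exactly $\cofreeE$ of first cohomology and on which $i_s$ manifestly induces $\cofreeE(e_s)$.
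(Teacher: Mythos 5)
Your argument is correct and is essentially the paper's own proof: the paper likewise replaces $A^*(X)$ and $A^*(S^1)$ by the square-zero algebras $1\oplus\Q^S$ and $1\oplus\Q$, models $i_s$ by the evaluation homomorphism $f\mapsto f(s)$, and concludes from weight preservation plus the fact that $\cofreeE(\Q)$ (on a degree-zero generator) is one-dimensional and concentrated in weight one. The naturality point you flag as "bookkeeping" is exactly what the paper handles by asserting the cochain-level model of $i_s$ directly, so there is no substantive difference between the two arguments.
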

\begin{proof}
    A commutative cochain model for a wedge of circles is given by the square-zero algebra $1\oplus \Q^S$, whose Harrison complex is $\cofreeE(\Q^S)$ concentrated in degree $0$.
The circle inclusion $i_s$ is modeled at the cochain level by the algebra homomorphism
    \[
    A^*(X) \simeq 1\oplus \Q^S \longrightarrow 1\oplus \Q \simeq A^*(S^1)
    \]
    sending a function $f\in \Q^S$ to its evaluation $f(s)$.
    This pullback induces a weight preserving map on Harrison complexes $i_s^*\maps \cofreeE(\Q^S) \to \cofreeE(\Q)$. But since the target is one dimensional and concentrated in weight one, it follows that all positive weight homogeneous elements are sent to $0$.
\end{proof}

\begin{proof}[Proof of Universality (Theorem \ref{thm:universality restated})]
First consider the case $X = \vee_S S^1$.  Then $\pi_1(X)\cong F_S$, the free group on basis $S$. Also its Harrison cohomology is isomorphic to $\cofreeE(\Q^S)$. 

To see that the Hopf pairing is universal, let $\langle -,-\rangle_C \maps C\times \Mal{F_S}\to \Q$ be any Lie pairing. Then the map $\varphi_0\maps C\to \Q^S$ given by
\[
\varphi_0(T)\maps s\longmapsto \bigl\langle T,\;\log(s)\bigr\rangle_C
\]
extends uniquely to a Lie coalgebra homomorphism $\varphi\maps C\to \cofreeE(\Q^S)$ by cofreeness of the target. We show
\[
\bigl\langle \varphi(T),\;\ell \bigr\rangle_H = \bigl\langle T,\;\ell\bigr\rangle_C
\]
for all $T\in C$ and $\ell\in \Mal{F_S}$. First, consider $\ell = \log(s)$ for a generator $s\in S$.  The previous lemma shows that the Hopf pairing vanishes on homogeneous elements of weight $\geq 2$ in $\cofreeE(\Q^S)$, so we have
\begin{align*}
\bigl\langle \varphi(T), \log(s)\bigr\rangle_H 
& = \bigl\langle \varphi_0(T) + (\text{higher weight terms}), \; \log(s)\bigr\rangle_H \\
& = \bigl\langle \varphi_0(T),\ \log(s)\bigr\rangle_H \\
& = \bigl\langle T,\ \log(s)\bigr\rangle_C
\end{align*}
by definition of $\varphi_0$.

Now, suppose by induction that the equality holds for elements $\ell_1, \ell_2\in \Mal{F_S}$.  Then 
\begin{align*}
\bigl\langle \varphi(T),\ [\ell_1,\,\ell_2] \bigr\rangle_H 
& = \bigl\langle \cobr{\varphi(T)}\,,\ \ell_1\otimes \ell_2 \bigr\rangle_H  \\
& = \bigl\langle (\varphi\otimes\varphi)\, \cobr{T}\,,\ \ell_1\otimes \ell_2 \bigr\rangle_H  \\
& = \bigl\langle \cobr{T}\,,\ \ell_1\otimes \ell_2\bigr\rangle_C \\
& = \bigl\langle T,\ [\ell_1,\ell_2] \bigr\rangle_C
\end{align*}
since both pairings are Lie pairings. This shows that equality holds on the Lie algebra generated by $\log(s)$ for $s\in S$. Lastly, suppose $T$ has weight $n$. The Baker--Campbell--Haudorff relation shows that every $\ell\in \Mal{F_S}$ coincides with an element $\ell'$ from the latter Lie subalgebra up to $(n+1)$-fold nested brackets, so the two elements have equal pairings with $T$. This completes the proof in the case that $X = \bigvee_S S^1$.

\smallskip

Now, suppose $X$ is a two-complex with $\pi_1(X)=G$ and one-skeleton 
$X^{(1)} = \bigvee_S S^1$. The inclusion $i\maps X^{(1)}\into X$ induces a presentation $i_*\maps F_S\onto G$ with kernel normally generated by the attaching maps of $2$-cells $r\in R$.

If $\langle -,-\rangle_C\maps C\times \Mal{G} \to \Q$ is a Lie pairing, then it induces a Lie pairing with $F_S$ by precomposition with $\Mal{i_*}\maps \Mal{F_S}\to \Mal{G}$. Thus, by the already proved case of a wedge of circles there exists a unique Lie coalgebra homomorphism
$
\varphi\maps C \to \cofreeE(\Q^S)
$
pulling back the Hopf pairing to $\langle-,-\rangle_C$.

But, since every relation $r\in R$ is killed by the projection $i_*$, the image of $C$ pairs trivially with all relations. That is, the image $\varphi(C)$ is contained in the subcoalgebra $E_R$ from Definition \ref{def:subcoalg vanishing on relations}. 
Since the Lifting Criterion in Theorem \ref{LiftingCriterion} shows that the pullback $i^*\maps \HHarr{0}{A^*(X)} \to \cofreeE(\Q^S)$ is an isomorphism onto $E_R$, and since  $i^*$ is a coalgebra homomorphism that preserves the Hopf pairing, we get a map
\[
C \xrightarrow{\ \ \varphi\ \ } E_R \xrightarrow{(i^*)^{-1}} \HHarr{0}{A^*(X)}
\]
pulling back the Hopf pairing to $\langle-,-\rangle_C$. Uniqueness of this map follows from the fact that after composition with $i^*$ we must get back the uniquely determined map $\varphi$, but $i^*$ is injective.
\end{proof}

In the course of our proof we obtain an independent calculation that the Malcev Lie algebra of a free
group is a free Lie algebra.  Moreover
as a corollary of this, Theorem~\ref{freecase}, and
Quillen's theorem comparing lower central series and 
Malcev Lie algebras, 
we get a new topological proof of the main theorem
of \cite{monroe-sinha}, that letter-linking invariants
obstruct the lower central series of free groups.

We can now establish our main  application in combinatorial group theory, that letter linking invariants
obstruct the rational lower central
series in general.

\begin{proof}[Proof of Theorem \ref{thm:into-combinatorial-alg}]
    The theorem is phrased in terms of letter linking functions, so we first translate the statement into one regarding Hopf invariants. 
    In Section \ref{topcombo} we proved that the combinatorially defined braiding functions agree with Hopf invariants of corresponding Harrison cocycles. Moreover, braiding functions  span all Hopf invariants for the free group, since the Harrison spectral sequence for a handlebody collapses to $\cofreeE({F_S}^\#)$. Finally, by the Lifting Criterion in Theorem \ref{LiftingCriterion}, a Hopf invariant of $F_S$ descends to $G$ if and only if it belongs to the Lie dual of $G$. Thus it suffices to prove that the image of $w\in F_S$ in $G$ has a power that is a product of $k$-fold nested commutators if and only if all weight $\leq k-1$ elements in the Lie dual of $G$ vanish on $w$.

    Let $\gamma_k G$ denote the lower central series of $G$ (setting $\gamma_1 G=G$ and $\gamma_{k+1}=[\gamma_k G,G]$).  
    Quillen proved \cite[Equation (1.5)]{Quillen-graded-grp-rings} that the natural map $\log\maps G\to \Mal{G}$ induces an isomorphism
    \[
    \bigoplus_{k\geq 1} \sfrac{\gamma_k G}{\gamma_{k+1}G} \underset{\mathbb{Z}}{\,\otimes\,} \mathbb{Q} \longrightarrow \bigoplus_{k\geq 1} \sfrac{\gamma_k \Mal{G}}{\gamma_{k+1}\Mal{G}}.
    \]
    It follows that some power of $x\in G$ lies in $\gamma_{k+1}G$ if and only if its logarithm lies in $\gamma_{k+1}\Mal{G}$.

    Thus it is enough to show for $E$, a Lie dual of $G$, that $\gamma_{k+1}\Mal{G}$ is exactly the kernel of the evaluation on the weight $\leq k$ elements of $E$. Such evaluations are trivial because if $T\in E$ has weight $\leq k$ then its $(k+1)$-fold iterated cobracket is zero, which by bracket-cobracket duality implies vanishing on $(k+1)$-fold nested commutators.
    
    For the converse, suppose $\log(w)\notin\gamma_{k+1}\Mal{G}$.  
    Then $\log(w)$ represents a nontrivial element in the finite dimensional Lie algebra $L=\Mal{G}/\gamma_{k+1}\Mal{G}$. 
    The linear dual $L^{\#}$ is a $(k+1)$ co-nilpotent Lie coalgebra 
    which pairs with $\Mal{G}$. Furthermore because it is the linear dual of $L$, it contains a functional $t$ which does not vanish on $\log(w)$. By the universal property of the Lie dual, there exists a coalgebra map $\varphi\maps L^{\#}\to E$ that pulls back the Lie pairing. But then
    \[
    0 \ \neq \ \bigl\langle t,\; \log(w)\bigr\rangle_{L^{\#}}
      \ =\ \bigl\langle \varphi(t),\; \log(w)\bigr\rangle_E
    \]
    exhibiting $T := \varphi(t)$ as an element of weight $\leq k$ on which $\log(w)$ does not vanish.
\end{proof}

The first theoretical algorithm for understanding the rationalized lower central series appeared 
over half a century ago \cite{Chen-Fox-Lyndon}.
A refined inductive approach was implemented
relatively recently \cite{Nickel-Algorithm}.  
One benefit of our approach can  be seen when trying to calculate
a ``multiplication table'' for the commutator operation.  In previous approaches, one would
generate bases for all commutators up to a certain weight, and then would need to use the reduction algorithms repeatedly to calculate commutator brackets.  In our
approach one would just generate Hopf invariants, whose cobracket structure is obtained by
an immediate calculation which can be carried out in the cofree Lie coalgebra.  Bracket-cobracket duality then determines the commutator bracket immediately.

\color{black}     


\section{ Examples, including graphical models}\label{examples}

\subsection{Orientable surfaces}\label{surfaces}

Let us consider the case of a torus, providing a counterpoint to the free group setting.
The standard cohomology model for the torus is a free
differential graded commutative algebra 
 on generators $A$ and $B$ in degree
one, thus with a non-trivial product $AB$ in
degree two and no other classes. 
By Theorem~\ref{HarrisonDerivedIndec},  Harrison cohomology  is
generated by $A$ and $B$ themselves
in weight one.  As Hopf invariants,
these span the dual of the fundamental
group, which is isomorphic to its Malcev Lie algebra via the $\log$ map.

Higher genus surfaces provide an opportunity to see both geometry and algebra.
We use Thom classes of curves, as developed in Appendix~\ref{Thomforms}.
The case of a surface of genus two is drawn
in Figure~\ref{fig:genus 2}, with
``dual'' curves $A$, $B$, $C$ and $D$ 
yielding corresponding
Thom forms which by abuse we give the same name 
generating 
first cohomology.
As pictured, we can choose these Thom forms 
to have  
$A B - C  D = dS$ at the cochain level.

\begin{figure}
\begin{tikzpicture}[scale=.30]

\begin{scope}[ultra thick,decoration={
    markings,
    mark=at position 0.6 with {\arrow[scale=1.0]{ang 90}}}
    ] 
    \draw[postaction={decorate}] (4,0)--(0,4);
    \draw[postaction={decorate}] (0,8)--(0,4);
    \draw[postaction={decorate}] (8,-0)--(4,0);
    \draw[postaction={decorate}] (4,12)--(0,8);
    \draw[postaction={decorate}] (4,12)--(8,12);
    \draw[postaction={decorate}] (8,12)--(12,8);
    \draw[postaction={decorate}] (12,4)--(12,8);
    \draw[postaction={decorate}] (8,0)--(12,4);
\end{scope}

\coordinate (B) at (8.515,6.935);

\draw[name path=node1,ultra thick][yellow] (2.6,1.4) to[out=70,in=-70] (2.6,10.6);
\draw[name path=node2,ultra thick][green] (5.5,0) to[out=90,in=0] (0,6.5);
\draw[name path=node3,ultra thick][blue] (9.4,1.4) to[out=110,in=-110] (9.4,10.6);
\draw[name path=node4,ultra thick][red] (12,5.5) to[out=180,in=-90] (6.5,12);

\node[fill,circle, scale=.64][violet] at (B){};
\draw[fill,name intersections={of= node1 and node2}][violet] (intersection-1) circle[radius=.32];
\draw[out=0, in =170,ultra thick][violet] (intersection-1)  .. controls (4.6,5.2) and (5.1,8.2).. (B);

\node[scale=1.0] (bi) at (6.3,13.2){$b^{-1}$};
\node[scale=1.0] (b) at (13,6){$b$};
\node[scale=1.0] (di) at (6.3,-1){$d^{-1}$};
\node[scale=1.0] (d) at (-1.2,6){$d$};
\node[scale=1.0] (c) at (1,10.41){$c$};
\node[scale=1.0] (ai) at (11.4,10.41){$a^{-1}$};
\node[scale=1.0] (ci) at (0.8,1.41){$c^{-1}$};
\node[scale=1.0] (ai) at (10.8,1.41){$a$};

\node[scale=1.1][green] (D) at (5.7,3.2){$D$};
\node[scale=1.1][blue] (A) at (9.4,4.3){$A$};
\node[scale=1.1][yellow] (C) at (4.0,9){$C$};
\node[scale=1.1][red] (B) at (7.5,10.5){$B$};
\node[scale=1.1][violet] (S) at (5.7,5.9){$S$};
\end{tikzpicture}
\vspace{-1.0cm} 
\caption{Genus two surface with Thom forms}\label{fig:genus 2}
\end{figure}
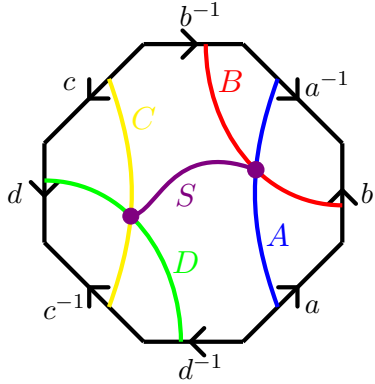

In weight one, the Harrison cocycles are spanned by $A,B, C$ and $D$.  These count intersections with the corresponding curves, 
which in the generators-and-relations presentation of the fundamental group means counting occurrences
of the corresponding letter.
In weight two, $\linep{A}{C}$ and similar expressions involving 
pairs of cochains with zero product (no intersection) at the cochain
level  are all cocycles.
Following the analysis 
of Section~\ref{first}, the  Hopf invariant associated to 
$\linep{A}{C}$ would count (signed) occurrences of a loop
 crossing curve $C$ 
between fixed pairs of crossings with $A$ with opposite orientation, in the eigen setting 
when the total number of such crossings with $A$ vanish.  When the total number of crossings with $A$ does not vanish,
the corresponding invariant can be calculated
either by Aydin's formula using Baker-Campbell-Hausdorff multiplication, as stated in Corollary~\ref{ozbek}, or directly from the definitions as we will do in Section~\ref{beyond}.

Differences of pairs of intersecting curves also yield Harrison cocycles, in particular 
$\omega = \linep{A}{B} - \linep{C}{D} - S$.  In the eigen setting this corresponds to the 
counting invariant given by
the signed number of 
crossings of a loop with $B$ between its crossings with $A$, minus 
($D$-crossings between $C$-crossings), minus (crossings with $S$).  In Figure \ref{fig:genus 2}, a homotopy moving the loop ``from left to right'' across the octagon would result in this crossing count having contribution from the three summands: either intersecting the green and yellow curves, or the purple curve, or the blue and red curves, so that their total sum remains invariant.  

If we consider only loops on the 1-skeleton and simple homotopies 
across the 2-cell, we obtain the standard generators and
relations presentation of this group.
Because $[a,b] [c,d] = 1$ the number of $b$'s between $a-a^\inv$ pairs and that of $d$'s between $c-c^\inv$ 
pairs may change, but their difference will be 
invariant.  This difference, represented by the braiding
symbol $\lp A B - \lp C D$, is the letter-braiding representative 
of the Hopf invariant associated to $\omega$.  
As we effectively are pulling back to the one-skeleton, there is no evidence of
the ``correction curve'' $S$, which topologically is
disjoint from the one-skeleton.  

As an alternative approach to calculation we may compute the Lie dual using a different cochain model.
The genus two surface is formal and has 
cohomology algebra $\mathcal{A}$ equal to the commutative differential graded algebra 
on $A, B, C, D$ in degree one with all products zero except
$AB = CD$.  Thus $\tpc A B - \tpc C D$ is a cocycle in the Harrison complex.  
This example generalizes to present Harrison cocycles of higher weight as symbols
that include $\tpc A B - \tpc C D$ as a subsymbol,
such as 
$$ \tpc[\big] {\tpc D{\color{red} \tpc A B}} B\ {\color{red}-}\ \tpc[\big] {\tpc D{\color{red}\tpc C D}} B$$ 
and iterated versions such as 
$$\begin{alignedat}{4}
 && 
  \tpc[\big] {\tpc D{\color{red}\tpc A B}} \tpc[\big] {{\color{blue}\tpc A B}} B
 \ &{\color{red}-}&\
  \tpc[\big] {\tpc D{\color{red}\tpc C D}} \tpc[\big] {{\color{blue}\tpc A B}} B
 \\ 
 \ &{\color{blue}-}&\  
  \tpc[\big] {\tpc D{\color{red}\tpc A B}} \tpc[\big] {{\color{blue}\tpc C D}} B
 \ &{\color{purple}+}& \
  \tpc[\big] {\tpc D{\color{red}\tpc C D}} \tpc[\big] {{\color{blue}\tpc C D}} B.
 \end{alignedat}$$
 
Generally, let $\tau_k$ be any symbol with $k$ copies of ``isolated" $A$-$B$ or $C$-$D$ subsymbols, meaning
that for the corresponding trees these edges are not contained in weight three subsymbols containing another such edge, such as $\tpc[\big] {\tpc A B} A$ or $\tpc A \tpc A B$, nor ones that exhibit such an edge after a subsymbol exchange such as
$\tpc[\big] {\tpc A B} C$ or $\tpc A \tpc C B$. Then
the alternating sum of all $2^k$ possible subsymbol exchanges, as above, will
be a cocycle. 
Indeed, the Harrison differential is the sum over all single-edge contractions, and every $A$-$B$ edge contraction has a matching $C$-$D$ edge contraction producing the same symbol with opposite sign and vice versa. All other edge contractions vanish due to the triviality of the product.

In fact the cocycles thus presented generate the entire $0$-th Harrison cohomology, 
because the disallowed subsymbols can be rewritten away via the nested Arnold relation
(\ref{eqn:nested Arnold}). 
For example,
\[ 
\tpc[\Big] {\tpc[\big]{ \tpc A B} A}\tau = \tpc[\Big] {\tpc A \tpc B A}\tau -
  \tpc[\Big] {\tpc[\big]{A \tpc B} A}\tau  
\]
and
\[
\tpc[\big] {\tpc A \tpc A B}\tau = \tpc[\Big] {\tpc[\big] {\tpc A A}B} \tau + 
  \tpc[\Big] {\tpc[\big] {A \tpc A }B} \tau.
\]
Thus we have representatives for the Lie dual of the genus two surface group.

Higher genus surfaces behave similarly.
This calculation speaks to the strength of the Eil symbol
approach to Lie coalgebras, since we critically used the Arnold identity
in the rewriting above.
We believe it would be quite difficult to 
find a suitable basis using Harrison's original presentation of bar elements modulo shuffles.

\subsection{Graphical models for cochains on thickened 2-complexes}\label{CurveForms}

For use in many further examples below and future 
work we develop a graphical method make calculations in Harrison cochains
on a manifold $M_G$, constructed in Appendix~\ref{sec:MG},  with given fundamental group 
presentation $G=\langle S\;|\;R\rangle$.
To briefly review, start with a four-ball, 
and for each generator $s\in S$ blow-up
at a chosen pair of points $\partial^{\pm}_s$ on the boundary and
attach $1$-handles $(D^3\times D^1)_s$
to obtain a handlebody $M_S$.  
Then for each relation $r\in R$ we blow-up along the image of an embedding $r:S^1\to \partial M_S$ which represents $r$ in the fundamental group 
of $M_S$ and  attach
$2$-handles $(D^2\times D^2)_r$
to define $M_G$, which is unique (only) up to homotopy. 

Recall Definition \ref{def:Thom form} that a Thom form associated to a submanifold $W^d$ of the four-manifold $M_G$ is a differential form of degree $4-d$, supported near $W$, whose integral over some $d$-dimensional submaniold counts intersections with $N$, 
when those are transverse. Below we use Thom forms associated to three types of submanifolds of $M_G$, namely 
{\bf belt disks} in the 1-handles and
{\bf thickened points} and {\bf curves} in the 2-handles,
as we now make precise. 

\begin{definition}

Write 
$D^3_{s,t} = D^3\times\{ t \} \subset (D^3\times D^1)_s\subset M_S$
for the {\bf belt disk} at time $t$ in the $1$-handle corresponding to $s\in S$. 
In the construction of $M_G$ by blowing up along relations $r:S^1\to \partial M_S$, each such belt disk is blown up at a finite collection of points
on its boundary, with one point for each occurrence of 
$s^{\pm 1}$ in each relation $r\in R$.
Call the resulting blowup of the disc
$(D^3_{s,t})_{\widehat R}$.
For every relation $r$ write 
$D^2_{s\in r,t}$
for the union of $2$-disks on the
boundary of $(D^3_{s,t})_{\widehat R}$ resulting from blowing up 
along its intersection with $r$.
Here we recall that blowing up at a point in $\partial D^3$ results in a $2$-disc whose boundary is an $S^1$-shaped corner.

The $2$-handle attachments resulting in ${M}_G$ identify each $2$-disk 
in $D^2_{s\in r,t}$  with some $\{y\}\times D^2$ in a handle 
$(D^2\times D^2)_r$ corresponding to a relation $r$.
Here $y \in \partial D^2$. The subspace
\[
\{ (\delta \cdot y , p) \mid \delta\in [1-\epsilon, 1] \text{ and } p\in D^2 \} \subseteq (D^2\times D^2)_r
\]
forms a three-dimensional \textbf{tab} that extends slightly into the interior of $(D^2\times D^2)_r$.

The union of $(D^3_{s,t})_{\widehat R}$ with all of its tabs
is a submanifold
with corners of ${M}_G$, which we call the {\bf indicator submanifold} associated to 
$s\in S$ and will denote by
${\mathcal{I}}_{s,t}$.
\end{definition}

The variable $t$ lets us choose multiple parallel representatives, as will
be needed for our models.  
 By abuse, if  $t$ is not provided it can be assumed to be $\frac{1}{2}$.
 Indicator submanifolds are codimension one, and their 
 Thom forms restrict to the Thom forms of belt disks on $M_S$ used in Section~\ref{topcombo}.  

 The boundary of ${\mathcal{I}}_{s,t}$ is internal to $M_G$ and consists of a union of $2$-discs $\{(1-\epsilon)\cdot y\}\times D^2 \subseteq (D^2\times D^2)_r$ where $(1-\epsilon)\cdot y$ is at distance $\varepsilon$ to $\partial D^2 \subset \partial M_G$ and there is one such disc for each occurrence of $s^{\pm 1}$ 
in the relation $r$.

\begin{definition}
    For each generator $s\in S$,  a Thom form for some $\mathcal{I}_{s,t}$ is called an {\bf indicator form} for $s$.  A choice of such will be denoted
     by $\tau_{s,t}$. If no $t$ is specified, it may be taken to be $t=\frac{1}{2}$.
\end{definition}

Indicator forms are supported on 1-handles as well as within
$\varepsilon$ of the boundary of two-handles. We usually fix a collection of such forms with disjoint support by varying the parameter $t$ slightly.  

We next define
forms with complementary supports. These are the focus of the figures given in the examples  later in this section.

\begin{definition}
    Assume a collection of belt disc indicator submanifolds has been fixed for each generator. Consider a relation $r\in R$.  We define
    the {\bf relation disk} to be the
    concentric disk of radius $1 - \varepsilon$ inside of $D^2 \times \{0\} \subset (D^2\times D^2)_r$.
    
    The boundary of a relation disk
    contains a collection of points $y$ such that
    $\{y\} \times D^2$ is a boundary component of some
    indicator submanifold, 
    corresponding to
    some generator $s\in S$ appearing in $r$.  Call these {\bf 
    indicator points}.  
    We say that a curve in the relation disk is {\bf admissible} if each of its endpoints is either an interior
    point or an indicator point.
\end{definition}

\begin{definition}
    Let $S$ be a submanifold -- that is, a union of points or curves -- in a relation disk.  The {\bf thickening}
    of $S$ is just $S \times D^2 \subset (D^2 \times D^2)_r$.  
    
    We use $\tau_S$ to denote a Thom form for the thickening of $S$, which we call a {\bf point form} or {\bf curve form} when $S$ is a point
    or curve respectively.  

    Point forms, curve forms and indicator forms above are collectively called {\bf graphical cochains.}
\end{definition}


We now introduce a combinatorial tool for calculation in the de Rham cochain algebra of $M_G$.

\begin{definition}
     A {\bf graphical calculation} in $\Omega_{dR}^*(M_G)$ is a set of graphical cochains in the set of relation disks such that:
    \begin{itemize}
        \item The coboundary
    of a curve form is the difference of the point forms at its endpoints.
    \item  The wedge product of two curve forms, for curves which intersect transversally in their interiors, is the signed sum of point forms for their intersection.
        \item  The coboundary of an indicator
        form associated to $s\in S$ is the sum of point forms for the indicator points corresponding to the generator $s$
        in all relation disks.
        The mass at an indicator point is $1$ for an occurrence of $s$ and $-1$ for an occurrence of $s^\inv$.
    \end{itemize}
\end{definition}

One can use graphical calculations to perform calculations in the Harrison complex of $M_G$.  At their simplest form, such calculations compute  products.  For example, for the free abelian group $\langle a,b\;|\; aba^{-1}b^{-1}\rangle$, the manifold $M_G$ is a thickened torus, and the product in its cohomology is represented by a graphical calculation whose underlying graph consists of two perpendicular segments
intersecting at a point.

\begin{convention}\label{curveconvention}
We use the capitalized name of a generator in $ S$ to label a choice of curve which cobounds the collection of indicator
points labeled by that generator. If $a \in S$, the curve form $\tau_A$ added to   indicator
form $\tau_a$ then restricts to the corresponding indicator form on $M_S$ with the same
name, as employed in Section~\ref{FreeCase}. 
Moreover, by abuse, in the examples below we abbreviate the  form $\tau_a + \tau_A$
by just $A$ when we make calculations in the
Harrison complex.
\end{convention}

When 
a linear combination of  curve and indicator forms so named is a cocycle,  for example $B-C$ in Figure~\ref{fig:favoritelittle}, 
this convention realizes the inclusion of $H^1(M_G)$ in  $H^1(M_S)$ at the cochain level.
Such naming thus records the map on zeroth Harrison cohomology, 
which is injective by Proposition~\ref{H0injective}.
This convention is also consistent with the surface
case, giving the same algebraic cochain calculations as 
illustrated in Figure~\ref{fig:genus 2}. (The
forms involved would differ at a point-set level since
$M_G$ is not simply the surface crossed with a two-disk.)

Constructing graphical calculations  requires some care. For example, a curve $C$ with 
two endpoints $x$ and $y$ in a
relation disk would be part of the data of a graphical calculation only if 
$d \tau_C = \tau_x - \tau_y$. This can be arranged easily if one first chooses $\tau_C$
and then defines $\tau_x$ and $\tau_y$ to 
be summands of $d \tau_C$.  
This is not as immediate if the Thom forms at $x$ and $y$ had already been fixed, say when $x$ and $y$ are intersection points of other curves and so their Thom forms are specified by the wedge product of the two curve forms. In that case we use Theorem~\ref{Thomboundary} to know an appropriate
$\tau_C$ exists.   

We leave for future work the formalization of which graphs occur as data for graphical calculations along with 
a proof that forms can always 
be associated to such
graphs in order to define graphical calculations.
Instead we give the following example relevant to Section~\ref{commutatormassey}, omitting similar reasoning for other cases below.

\begin{example}
    We show that there are forms which realize the graph presented in Figure~\ref{MasseyFig} as data for a graphical calculation of a Massey product.  First, we choose an $\varepsilon$
    so that  tubular neighborhoods around these curves of that
    width only intersect in neighborhoods of intersection points.
    
    We start with an indicator form
    for $a$ supported within $\varepsilon$ 
    of the indicator submanifold. 
    Its coboundary is given by point forms, also with
    support within $\varepsilon$, at four places on the boundary.  These four have been cobounded in Figure~\ref{MasseyFig} by a (disconnected) curve, which following Convention~\ref{curveconvention} is labelled $A$. This curve is oriented from the $a^\inv$ end to the to $a$ end on each component.  We use Theorem~\ref{Thomboundary} to set the curve form $\tau_A$ to cobound the four point forms, noting that we can continue to have
    support within $\varepsilon$.  The sum of $\tau_a$ and $\tau_A$ is thus a cocycle,
    representing the cohomology
    class corresponding to the indicator homomorphism 
    sending $a \mapsto 1$, and $b, c \mapsto 0$.
    We choose $\tau_b$ to then construct
    $\tau_B$ and $\tau_c$ to construct $\tau_C$ 
    similarly.  

    As $A$ and $B$ intersect transverally,
    the wedge product of $\tau_A$ and $\tau_B$ is a sum of point forms at the two indicated points,
    with opposite signs.
    The curve $S$ has been provided to cobound these
    two points, and we again use Theorem~\ref{Thomboundary} to choose a $\tau_S$ whose coboundary is these two point forms.  

    Finally, the wedge product of $\tau_C$ and $\tau_S$ is a Thom form for the intersection point, which represents
    the non-zero Massey product $\langle A, B, C \rangle$, graphically calculated. 
\end{example}

 In future work we plan
to develop  graphical calculations  fully, 
showing they can be used for any calculation in the Harrison complex of $M_G$ and how, at least in some cases, they give 
partially defined algebras which provide models for the cochain algebras 
of two-complexes. 
These models are interesting in their own right, leading to questions such
as how they reflect group theoretic properties and for which groups such models are finite.
We also wonder whether
geometry can be used to 
determine all products, either by refined
choices of Thom forms or by ambient geometry provided in cases such as that of hyperbolic groups.

\subsection{An interesting little 2-complex}\label{littletwo}

Consider the group $G$ presented as
$\langle a, b, c\; |\; a b a^\inv c = 1 \rangle$,  the fundamental group of a two-complex $X$ homeomorphic
to a cylinder with a pair of points on its boundary
identified.  It is homotopy equivalent to a wedge of two circles,
so $G$ is isomorphic to a free group on two generators, say
$a$ and $b$.  Let $A$, $B$ and $C$ be indicator 
Thom cochains associated to curves
as pictured in Figure~\ref{fig:favoritelittle}.

\begin{figure}
    \begin{tikzpicture}[scale=.4]

\coordinate (1) at (0,0);
\coordinate (2) at (10,0);
\coordinate (3) at (10,6);
\coordinate (4) at (0,6);

\draw (1)--(2)--(3)--(4)--(1);

\draw node[fill, circle, scale=.7] at (1) {};
\draw node[fill, circle, scale=.7] at (2) {};
\draw node[fill, circle, scale=.7] at (3) {};
\draw node[fill, circle, scale=.7] at (4) {};

\begin{scope}[ultra thick,decoration={
    markings,
    mark=at position 0.55 with {\arrow[black,scale=1.0]{ang 90}}}
    ] 
    \draw[postaction={decorate}] (2)--(1);
    \draw[postaction={decorate}] (2)--(3);
    \draw[postaction={decorate}] (3)--(4);
    \draw[postaction={decorate}] (4)--(1);
\end{scope}


\draw node[black, scale=.8] at (5,-1.2) {$a^{-1}$};
\draw node[black, scale=.8] at (5,7.2) {$a$};
\draw node[black, scale=.8] at (-1.2,3) {$b$};
\draw node[black, scale=.8] at (11.2,3) {$c$};


\coordinate (p) at (3.5,3.5);
\coordinate (q) at (5.5,2.8);

\begin{scope}[ultra thick,decoration={markings,
    mark=at position 0.8 with {\arrow[blue,scale=1.0]{ang 90}}}
    ] 
    \draw[postaction={decorate}][blue] 
       (4.2,5.93) to[in=90,out=270] (p) 
                  to[out=270,in=90] (4.2,0.07);
\end{scope}
\draw node[blue, scale=1.1] at (2.8,1.6) {$A$};
\draw node[blue, fill, circle, scale=.3] at (4.2,6) {};
\draw node[blue, fill, circle, scale=.3] at (4.2,0) {};

\begin{scope}[ultra thick,decoration={markings,
    mark=at position 0.5 with {\arrow[red,scale=1.0]{ang 90}}}
    ] 
    \draw[postaction={decorate}][red] 
       (0.07,2.2) to[out=0,in=180] (p) 
                  to[out=0,in=180] (q);
\end{scope}
\draw node[red, scale=1.1] at (2.0,4.4) {$B$};
\draw node[red, fill, circle, scale=.3] at (0,2.2) {};

\begin{scope}[ultra thick,decoration={markings,
    mark=at position 0.70 with {\arrow[ForestGreen,scale=1.0]{ang 90}}}
    ] 
    \draw[postaction={decorate}][ForestGreen] 
       (9.93,3.8) to[out=180,in=0] (7.5,2.8) 
                  to[out=180, in=0] (q);
\end{scope}
\draw node[ForestGreen, scale=1.1] at (7.2,1.6) {$C$};
\draw node[ForestGreen, fill, circle, scale=.3] at (10,3.8) {};

\draw node[violet, fill, circle, scale=.6] at (p) {};

\begin{scope}[ultra thick,decoration={markings,
    mark=at position 0.70 with {\arrow[violet,scale=1.0]{ang 90}}}
    ] 
    \draw[postaction={decorate}][violet] 
       (9.93,4.2) to[out=190,in=0] (6.5,4.5) 
                  to[out=180, in=45] (p);
\end{scope}
\draw node[violet, scale=1.1] at (6.6,5.2) {$S$};
\draw node[violet, fill, circle, scale=.3] at (10,4.2) {};

\draw node[black, fill, circle, scale=.3] at (q) {};

\end{tikzpicture}
    \vspace{-1cm}   
    \caption{}\label{fig:favoritelittle}
\end{figure}

In weight one $A$ and $B-C$ generate the cohomology of $X$.
In weight two 
$\tpc{B-C} A - C$
is a Harrison cocycle.
It pulls back to a cocycle of the same name on the one-skeleton, which presents the free group $\langle a,b,c\rangle$
to which we associate the letter linking invariant 
$\lp{B-C} A - C$.
In the free group, the count of $a$'s between collections of cancelling $b$'s and $c$'s changes under introduction of the relation word $aba^{-1}c$,
increasing by one, but as it does so does the number of $c$'s. Their difference thus descends to an invariant on $G$.
One could also choose $\lp{B-C}{A} - B$ as a representative, differing
by $B-C$ which is itself a cocyle of lower weight.  

In higher weights the story is conceptually simple but the
combinatorics can be complicated.  Consider the cofree
Lie coalgebra generated by $A, (B-C)$.  The Hopf invariants
for $G$ must be isomorphic to this coalgebra as $G$ is free on $a$ and $b$.
Accordingly, we claim that any
element of the cofree Lie
coalgebra on $A$ and $B-C$
can be completed to a cocycle. This is because
the Harrison differential
involves products, realized by Thom forms of points in the square in Figure \ref{fig:favoritelittle},
but these can be cobounded by a cochain in which one replaces every such point by a parallel copy of the curve $B$. In some cases, this replacement itself has Harrison differential involving further products, which lead to 
further accompanying correction terms
in lower weights. Thus, the correspondence between Hopf invariants
for $G$ and those for the free group on two generators
is not tautological.

\subsection{Milnor invariants for Borromean rings}\label{milnor}

Application of this theory to Milnor invariants has been a prime motivation, as first discussed in Section~\ref{geomtop}.

Consider the Borromean rings, with the two presentations given in Figure~\ref{borromean}.
\begin{figure}
\begin{tikzpicture}
\draw[draw=none,ultra thick] (0,0) circle (1.05);
\draw[draw=none,ultra thick] (3,0) circle (1.05);

\draw[ name path=curveRed,red, line width=2mm] (.3,0) to[out=-80,in=230] (2,-.4) to[out=50,in=100] (2.7,-.4) to[out=-80,in=30] (2.27,-.81) to[out=210,in=-80] (-.1,-.15);
\draw[ name path=curveWhite,white, line width=1.6mm] (.3,0) to[out=-80,in=230] (2,-.4) to[out=50,in=100] (2.7,-.4) to[out=-80,in=30] (2.27,-.81) to[out=210,in=-80] (-.1,-.15) to[out=100,in=225] (.375*1.05,.927*1.05) to[out=35,in=180] (1.5,1.3) to[out=0,in=90] (2.7,.6) to[out=-90,in=-90,looseness=1.4] (2.20,.5) to[out=90,in=100] (.3,0);

\draw[ultra thick, name path=curveMain] (.3,0) to[out=-80,in=230] (2,-.4) to[out=50,in=100] (2.7,-.4) to[out=-80,in=30] (2.27,-.81) to[out=210,in=-80] (-.1,-.15) to[out=100,in=225] (.375*1.05,.927*1.05) to[out=35,in=180] (1.5,1.3) to[out=0,in=90] (2.7,.6) to[out=-90,in=-70,looseness=1.4] (2.4,.6) to[out=110,in=100] (.3,0);

\node[fill,white,rotate=60,scale=1] at (.4, .97)   (a) {};
\node[fill,white,rotate=30,scale=1] at (.81, .68)   (b) {};
\node[fill,white,rotate=119,scale=1] at (2.57, .96)   (c) {};
\node[fill,white,rotate=30,scale=.5] at (2.32, -.79)   (c) {};
\node[fill,white,rotate=30,scale=.5] at (2.20, -.83)   (c) {};
\node[fill,white,rotate=30,scale=.5] at (2.19, -.71)   (c) {};
\node[fill,white,rotate=30,scale=.5] at (2.36, -.9)   (c) {};

\draw[ultra thick] (.324,-.999) to[out=200,in=0] (0,-1.05) to[out=180,in=-90] (-1.05,0) to[out=90,in=180] (0,1.05) to[out=0,in=90] (1.05,0) to[out=-90,in=65] (.952,-.444);
\draw[ultra thick] (.675,-.804) to[out=35,in=-130] (.755,-.729);
\draw[ultra thick] (2.091,-.525) to[out=-70,in=180] (3,-1.05) to[out=0,in=-90] (4.05,0) to[out=90,in=0] (3,1.05) to[out=180,in=40] (2.398,.860);
\draw[ultra thick] (2.196,.675) to[out=225,in=90] (1.95,0) to[out=-90,in=101] (1.966,-.182);

\draw[red] (-.129,-.11)  arc[start angle=110, end angle=438, x radius=.093, y radius=.05];
\draw[red] (.27,.04)  arc[start angle=110, end angle=438, x radius=.093, y radius=.05];
\draw[red,dot diameter=.2pt,dot spacing=.6pt,dots] (1.7,-.575) arc[start angle=30, end angle=348, x radius=.035, y radius=.093];
\draw[red,dot diameter=.2pt,dot spacing=.6pt,dots] (1.4,-1.02) arc[start angle=30, end angle=348, x radius=.035, y radius=.090];
\draw[violet] (2.67,-.35)  arc[start angle=110, end angle=478, x radius=.087, y radius=.05];

\node[color=red] (L0) at (1.5,1.6){$L_0$};
\node[color=red] (L1) at (0,-1.35){$L_1$};
\node[color=red] (L2) at (3,-1.35){$L_2$};

\begin{knot}[
clip width=5,
clip radius=6pt,looseness=1.3]
\strand[ultra thick] (6.3,0+.5) arc[start angle=180, end angle=-180, x radius=1, y radius=1]node[red,above left]{$L_1$};
\strand[ultra thick] (9.4,0+.5) arc[start angle=0, end angle=360, x radius=.9, y radius=.8] node[red,above right]{$L_2$};
\strand[ultra thick] (7.8,-1.6+.5) arc[start angle=-90, end angle=270, x radius=1.2, y radius=.8] node[below,red]{$L_0$};
 \flipcrossings{3,4}
\end{knot}

\end{tikzpicture}
\vspace{-1cm}  
\caption{Two presentations of the Borromean rings}\label{borromean}
\end{figure}
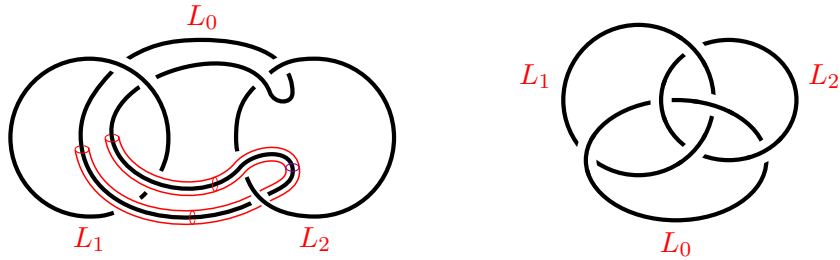
For brevity of discussion we ignore basepoints and orientations, and
we consider the component $L_0$ as defining an element
in the fundamental group of the complement of the other
components, $X = \R^3 - \{L_1 \cup L_2\}$.  
We again use Thom forms. 

In the presentation on the left of Figure \ref{borromean} we assume $L_1$ and $L_2$ are 
planar circles, and let $S_1$ be the planar disk which cobounds $L_1$ and
similarly for $S_2$.  Because these are disjoint,
the associated Thom forms $\tau_{1}$ and $\tau_{2}$, which form
a basis for first cohomology of $X$, define a Harrison 
cocycle $\linep{\tau_{1}}{\tau_{2}}$. With appropriate parametrizations,
its pullback 
to $L_0$ coincids with the Harrison cocycle on the circle
$\linep {\alpha}{\beta}$ from Section~\ref{first},
pulled back from a commutator on a handlebody.  
Thus the Hopf invariant will be given by traversing $L_0$ and counting 
intersections with $S_2$ between cancelling pairs of those with $S_1$.
This will evaluate to $\pm 1$.

Following Cochran \cite{cochran} 
one may make an alternate choice $S_1'$ for the
first Seifert surface that does not meet $L_0$, obtained by removing small disks at the intersection
points of $L_0$ with $S_1$ and replacing them by 
a cylinder $S^1 \times [0,1]$ which is the unit normal bundle to $L_0$
on one side, as pictured in red on the left in Figure~\ref{borromean}.  
Two things change when
using $S_1'$ in place of $S_1$:
first, $L_0$ does not intersect $S_1'$; and second,  
$\longlinep{\tau_1'}{\tau_2}{4}$
is not a Harrison cocycle on $X$, since $S_1' \cap S_2$ is nonempty. The latter intersection is a small circle
around an one of the intersection point $p$ of $L_0$ with $S_2$.  

To get a cocycle from $\longlinep{\tau_1'}{\tau_2}{4}$ we add a correction term. 
Choose any surface $S_{12}$ which cobounds the circle $S_1'\cap S_2$ -- 
for example a small disk around $p$, or its complement in $S_2$, or 
a cobordism between this circle and $L_1$. Choosing a Thom form $\tau_{12}$ appropriately will complete
a Harrison cocycle $\longlinep{\tau_1'}{ \tau_2}{4} - \tau_{12}$. 
Note that any 
choice for $S_{12}$ and $\tau_{12}$ will yield the same value of $\pm 1$ when
evaluated on $L_0$, as this will always be the linking number of $L_0$
with the circle $S_1' \cap S_2$.  
Indeed, Cochran's perspective
is that high Milnor invariants of links are realized as lower weight Milnor invariants of a ``derivative''
link: this is the intersection circle. 

We can then modify $S_2$ to eliminate its intersections with $L_0$. However,
``nesting'' the cylinder added to $S_2$ for that purpose with the existing 
one, the intersection of $S_1'$ and $S_2'$ will be essentially
the same as that of $S_1'$ and $S_2$, amenable to identical analysis.

\medskip

Instead of the approaches given above, one can work directly with a standard diagrammatic 
presentation, given on the right of Figure~\ref{borromean}.
Following Seifert's original cobounding algorithm, suppose each link component
to be planar except on some arbitrarily small intervals near
crossings, with the $L_0$ plane lower than that of $L_1$, which
is in turn lower than that of $L_2$. Thus where $L_2$ under-crosses
$L_1$ it must ``push down into the page,'' which can be assumed to happen in an
arbitrarily small neighborhood of the crossing.  

We choose ``mostly 
flat'' Seifert surfaces $S_1$ and $S_2$ laying their respective planes,
disjoint
except near the $L_1$-$L_2$ crossings, where they intersect due to 
$L_2$ dipping below the $L_1$ plane for the undercrossing.  
The intersections
are curves with one endpoint on $L_1$ and the other on
$L_2$, which in the projection appear arbitrarily close
to the crossing points.  Thus the associated Thom forms $\tau_1$
and $\tau_2$ have wedge product which is a Thom form for
the union of these two curves.  

To complete $\longlinep{\tau_1}{\tau_2}{4}$ to a Harrison cocycle
we find a cobounding surface for this intersection.  Once again
using the projection as faithfully as possible, we can take
the closed loop $L'$ consisting of the $S_1\cap S_2$ intersection
curves along with
a portion $L_1$ between these crossings, and a portion 
of $L_2$ between them, say the shorter portions for definiteness.
As shown in Figure~\ref{fig:s12 in borromean}, 
we cobound this closed loop with a disk $S_{12}$
which agrees with $S_2$
except near the crossings and near the $L_1$ part of the boundary
where it must ``fall downward''.  With these new definitions, our Harrison cocycle has
the same form $\tp {\tau_1} \tau_2 - \tau_{12}$ as above.

\begin{figure}
\begin{tikzpicture}
 \begin{knot}
[clip width=5,
 clip radius=6pt,looseness=1.3]

\strand[ultra thick] (6.3,0+.5) arc[start angle=180, end angle=-180, x radius=1, y radius=1]node[red,above left]{$L_1$};
\strand[ultra thick] (9.4,0+.5) arc[start angle=0, end angle=360, x radius=.9, y radius=.8] node[red,above right]{$L_2$};
\strand[ultra thick] (7.8,-1.6+.5) arc[start angle=-90, end angle=270, x radius=1.2, y radius=.8] node[below,red]{$L_0$};
 \flipcrossings{3,4}
\end{knot}
\draw[draw=blue,fill=blue!20,ultra thick] 
  (7.66,.8) 
    to[out=180+65,in=180-58] (7.72,.1) 
    to[out=0  +30,in=180-15] (8.25,.20) 
    to[out=0  +70,in=-78]    (8.29,.65)
    to[out=180+15,in=-30]    (7.65,.8)
    node[blue,left]{$S_{12}$};
\draw[draw=blue!20,line width=1.7mm]
  (8.335,.41) to[out=180-7,in=-5] (8,.48);
\draw[ultra thick]
  (8.35,.41) to[out=180-15,in=-5] (8,.48);
\draw[blue,fill=blue] (8,.48) circle (1pt);
\end{tikzpicture} 
\caption{}\label{fig:s12 in borromean}
\end{figure}

By construction, since $L_0$ is generally below all the chosen surfaces,
the intersection of $L_0$ with these surfaces would occur
only at crossings of $L_0$ over $L_1$ or $L_2$.  But since $L_0$ only
crosses under $L_2$, the pullback of 
$\linep{\tau_1}{ \tau_2}$ to
the circle parameterizing $L_0$ will be zero, as the pullback
of $\tau_2$ is zero.  The $\tau_{12}$ term evaluates as an intersection 
count of $L_0$ with $S_{12}$, or equivalently a linking number
with the ``derived'' link component $L' = \partial S_{12}$.  
There is one such intersection point,
namely near the $L_0$-over-$L'$ crossing.

Thus, in the setting of links, our formalism gives distinct approaches to realizing the same set of Milnor invariants: one can
calculate directly from a projection, or look to minimize some types
of intersections, or make use of some other geometric 
structure. As discussed in Section~\ref{Introwrap-up}, part~\ref{milnorfuture}, we plan to develop this further in a sequel.

\subsection{Beyond letter linking}\label{beyond}

Letter
linking is by definition a type of \emph{obstruction
theory} for the lower central series of free groups, as each invariant requires vanishing of earlier invariants to be defined.  But Proposition~\ref{Ground} shows that all
Harrison cocycles on the circle can be reduced to weight one.  
Attempts to make this reduction explicit produce surprisingly rich
combinatorics, and also motivated the first author
to understand weight reduction on the classical
bar complex in \cite{LetterBraiding}.

Using the notation of Section~\ref{first},
we calculate
the Hopf pairing of the cocycle $\linep{A}{B}$ 
with the word $ab$ in the free group realized as the fundamental
group of the handlebody $B_2$. 
For definiteness, parameterize so that the curve defining 
$ab$ passes through the  $D_a$ at $t = \frac{1}{3}$
and $D_b$ at $t = \frac{2}{3}$. Then $\linep{A}{B}$ pulls back
to $\longlinep{\omega_{\frac{1}{3}}}{\omega_{\frac{2}{3}}}{6}$, where
$\omega_t$ is a Thom form at point $t \in S^1$.

Neither entry of this Harrison cochain cobounds, so classical letter linking is undefined.
But we
can still weight reduce by taking advantage of the fact that, by definition, any weight two
cochain with repeated entries is zero by
the anti-symmetry relation in the Harrison
complex.  Let 
$$I(t) = \int_0^t \omega_{\frac{1}{3}} - \omega_{\frac{2}{3}},$$
so that $dI = \omega_{\frac{1}{3}} - \omega_{\frac{2}{3}}$.  Then 
\begin{equation*}
d_\mE \left( \longlinep{I\,}{\omega_{\frac{2}{3}}}{5} \right) = \longlinep{\omega_{\frac{1}{3}}}{\omega_{\frac{2}{3}}}{6} - \longlinep{\omega_{\frac{2}{3}}}{\omega_{\frac{2}{3}}}{6} - I \cdot {\omega_{\frac{2}{3}}}.
\end{equation*}
As the  term $\longlinep{\omega_{\frac{2}{3}}}{\omega_{\frac{2}{3}}}{6}$
is zero, this exhibits a weight reduction of our original
pullback to the weight one cochain $I \cdot {\omega_{\frac{2}{3}}}$.  This then can be evaluated to $\frac{1}{2}$, as predicted by  Aydin's Formula, given in Corollary~\ref{ozbek}.
Similar arguments in higher weight naturally lead to Bernoulli polynomials in $I$ and invariants involving Bernoulli numbers, as dictated by the BCH expansion.

\subsection{Commutators and Massey products}\label{commutatormassey}

Our theory extends the time-honored connection between vanishing of  Massey products 
and homomorphisms out from the fundamental group.  
Consider $G = \bigl\langle a, b, c\ \bigl| \; \bigl[[a,b],c\bigr] = 1 \bigr\rangle$.  We again
realize $G$ as the fundamental group of a two-complex with one
cell, and employ Thom cochains for dual curves as pictured in Figure~\ref{MasseyFig}.

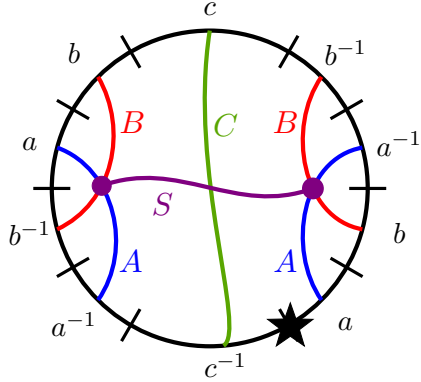
\begin{figure}
    \begin{tikzpicture}[scale=.35]

\draw[fill=none, ultra thick](0,0) circle (5);

\node[rotate=30,rectangle, very thick, black, inner sep=0pt, minimum height=0pt, minimum width=14pt,draw] at (4.33,2.5){};
\node[rotate=60,rectangle, very thick, black, inner sep=0pt, minimum height=0pt, minimum width=14pt,draw] at (2.5,4.33){};
\node[rotate=150,rectangle, very thick, black, inner sep=0pt, minimum height=0pt, minimum width=14pt,draw] at (-4.33,2.5){};
\node[rotate=120,rectangle, very thick, black, inner sep=0pt, minimum height=0pt, minimum width=14pt,draw] at (-2.5,4.33){};
\node[rotate=150,rectangle, very thick, black, inner sep=0pt, minimum height=0pt, minimum width=14pt,draw] at (4.33,-2.5){};
\node[rotate=210,rectangle, very thick, black, inner sep=0pt, minimum height=0pt, minimum width=14pt,draw] at (-4.33,-2.5){};
\node[rotate=240,rectangle, very thick, black, inner sep=0pt, minimum height=0pt, minimum width=14pt,draw] at (-2.5,-4.33){};
\node[rotate=300,rectangle, very thick, black, inner sep=0pt, minimum height=0pt, minimum width=14pt,draw] at ( 2.5,-4.33){};
\node[rotate=0,rectangle, very thick, black, inner sep=0pt, minimum height=0pt, minimum width=14pt,draw] at (5,0){};
\node[rotate=0,rectangle, very thick, black, inner sep=0pt, minimum height=0pt, minimum width=14pt,draw] at (-5,0){};

\draw[blue, ultra thick] (4.830,1.294) to[out=190,in=125] (3.536,-3.536){};
\draw[name path=path1,blue, ultra thick] (-4.830,1.294) to[out=-20,in=55] (-3.536,-3.536){};
\draw[red, ultra thick] (3.536,3.536) to[out=-125,in=170]  (4.830,-1.294){};
\draw[green!65!red!100, ultra thick] (.436,-4.980) ..controls (1.2, -4.5) and (-.7,0.5) ..(0,5);
\draw[name path=path2,red, ultra thick] (-3.536,3.536) to[out=-60,in=25] (-4.830,-1.294){};

\coordinate (B) at (3.265,0);

\draw[violet,fill,name intersections={of= path1 and path2}] (intersection-1) circle[radius=.30];
\draw[ultra thick,violet,out=20, in =200] (intersection-1)  to (B);
\node[violet,fill,circle, scale=.80] at (B){};

\node[scale=1.0,black] (bi) at (4.3,4.3){$b^{-1}$};
\node[scale=1.0,black] (b) at (-4.3,4.3){$b$};
\node[scale=1.0,black] (c) at (0,5.7){$c$};
\node[scale=1.0,black] (ci) at (.5,-5.6){$c^{-1}$};
\node[scale=1.0,black] (a) at (4.3,-4.3){$a$};
\node[scale=1.0,black] (ai) at (-4.3,-4.3){$a^{-1}$};

\node[scale=1.0,black] (a) at (6,1.41){$a^{-1}$};
\node[scale=1.0,black] (ai) at (-5.7,1.41){$a$};
\node[scale=1.0,black] (b) at (6,-1.41){$b$};
\node[scale=1.0,black] (bi) at (-5.7,-1.41){$b^{-1}$};

\node[scale=1.1,red] (B) at (-2.45,2.1){$B$};
\node[scale=1.1,red] (B) at (2.40,2.1){$B$};
\node[scale=1.1,blue] (A) at (-2.5,-2.3){$A$};
\node[scale=1.1,blue] (A) at (2.4,-2.3){$A$};
\node[scale=1.1,violet] (S) at (-1.5,-.5){$S$};
\node[scale=1.1,green!65!red!100] (C) at (.5,2){$C$};

\node [fill=violet,black,star,star points=5,star point height =2.7mm,scale=.70,draw] at (2.55,-4.38){};

\end{tikzpicture}
    \vspace{-1cm}  
    \caption{Graphical cochains for a group defined as a quotient of a two-fold commutator}\label{MasseyFig}
\end{figure}

Building on our work in  Section~\ref{CurveForms}, the Thom cochains of curves
$A$, $B$ and $C$ along with corresponding
indicator forms define cocycles.  These
are Harrison cocycles of $M_G$ 
extending the corresponding cocycles on the 
handlebody $M_{\{a,b,c\}}$, which are given the same
name in Section~\ref{FreeCase}.
Moreover, in the Harrison complex 
$\linep{A}{B}$ extends to the cocycle $\linep{A}{B} - S$, 
$\linep{A}{C}$ extends to $\linep{A}{C}$ with no correction 
term needed, and similarly for $\linep{B}{C}$.  The Harrison
cochain $\graphpp{A}{B}{C}$ however cannot be completed to a 
cocycle.  An attempt to complete it to a cocycle begins with the correction term
$\graphpp{A}{B}{C} - \linep{S}{C}$, whose total coboundary
has weight one, but this coboundary
is a Thom cochain of the intersection
point of $S$ and $C$, which does not cobound any 
cochain in weight one of the Harrison complex.

The integral of the last total coboundary on the fundamental class
of the two-disc is $1$, as there is one intersection point. This value coincides with the letter linking invariant $\lp[\big]{\lp A B} C$ 
associated to $\graphpp{A}{B}{C}$
computed for the defining relation $r = \bigl[[a,b],c\bigr]$.  Such an equality holds
in general, as proven in Lemma~\ref{circles-and-disks}.  We can 
see in this example how the process of picking cobounding curves
and intersecting them in the disc parallels
the cobounding and intersecting of letters in words as in letter linking.
For example, in Figure \ref{MasseyFig} the $A$ curves each are ``parallel to'' 
intervals of letters between $a - a^\inv$ pairs on the boundary, and
their intersections with $B$ curves can be traced back to occurrences of $b$'s between those pairs. In other
words, if we restrict the extensions of these curves
from the boundary to the two-disk to lie in an epsilon neighborhood of the boundary, their combinatorics matches that of letter linking invariants. We originally pursued
proofs of the lifting Theorem~\ref{LiftingCriterion} which codified
this.

This calculation coincides with that of 
a Massey product, namely $\langle A, B, C \rangle = 
S \cdot C$, in
the cohomology of the given two-complex.  Indeed, 
as mentioned in Remark~\ref{HarrisonSS} and Appendix~\ref{StandardBar}, Massey products define the
differentials into the weight one column in the Harrison spectral sequence.
As such, Massey products tautologically
obstruct the possibility to extend cocycles from the Harrison complex of the one-skeleton
of a CW-complex to the whole complex. 

\subsection{Pure braid groups}\label{purebraid}

We apply machinery of the Harrison complex to study the braid group, realized as the fundamental group of the space ${\rm Conf}_k(\C)$ of configurations of $k$ distinct labelled points in the plane. Interestingly, this space is intimately tied to our presentation of the Harrison complex as its $(k-1)$st cohomology is isomorphic to ${\Eil(n)}$ (up to ``signs'').

The Harrison complex of the configuration space thus consists of Eil-graphs decorated by Eil-graphs, though for clarity we use the more
traditional presentation of cohomology as generated by classes
$ a_{ij} \in H^1{\rm Conf}_k(\C)$ modulo 
the Arnold identity.

At the cochain level let $\alpha_{ij}$ be the Thom form associated to the locus having point $i$ directly above point $j$, i.e.  $x_i - x_j$ is a positive scalar multiple of the imaginary unit $\sqrt{-1}$.  
We (co)orient this submanifold so that the ``positive'' normal direction is when point $i$ is slightly to the left.  Then for example the wedge product $\alpha_{ij}\wedge \alpha_{jk}$ will be a Thom form for the submanifold where \{point $i$ is above $j$
which is itself above $k$\}.

Consider the codimension one submanifold with boundary $Ar_{ijk}$ defined as the locus of tuples in which \{point $i$ lies above point $j$,
and point $k$ is to the right of the vertical line through $i$ and $j$\}. In other words, the real part of point $k$ is at no smaller than the shared real part of points $i$ and $j$.
The boundary $\partial Ar_{ijk}$ is the locus where the real part of $k$ is equal to that of the other two distinguished points. It naturally consists of three components, depending on whether point $k$ is above $i$, in between, or below $j$.  Thus $Ar_{ijk}$ cobounds the Arnold identity:
$$d Ar_{ijk} = \alpha_{ki} \wedge \alpha_{ij} + \alpha_{ik} \wedge \alpha_{kj} + \alpha_{ij}\wedge \alpha_{jk}.$$
With this, we can immediately construct a Harrison cocycle
$$\linep{\alpha_{ki}}{\alpha_{ij}} + \linep{\alpha_{ik}}{\alpha_{kj}} + \linep{\alpha_{ij}}{\alpha_{jk}} - Ar_{ijk}.$$
By bracket-cobracket compatibility, this cocycle pairs non-trivially on the commutator
$[b_{ki}, b_{ij}] = [b_{ik}, b_{kj} ] = [b_{ij}, b_{jk}]$ where $b_{ij}$ is the (logarithm of) the standard pure braid generators braiding the $i$ and $j$ strands.\footnote{The equality of these commutators is the defining relation of a Drinfeld-Kohno Lie algebra -- the Malcev Lie algebra of the pure braid group.}

The corresponding Hopf invariant consists of four counts, which we interpret in terms of standard braid diagrams in a planar projection.   
The $Ar_{ijk}$ term counts $i$-over-$j$ crossings, but only those for which strand $k$ lies {to the right of the crossing}.
The count associated with $\linep{\alpha_{ki}}{\alpha_{ij}}$ can be interpreted in several ways, e.g. using weight reduction it yields a count $k$-over-$i$ crossings ``between canceling pairs of $i$-over-$j$ crossings'', all counted with signs which account for the orientation of the crossing relative to the plane. Geometrically, such a configuration has the strands $i$ and $j$ not linked, but strand $k$ is 
``coming between them'' and so possibly linking.

\subsection{An infinite example}

Consider the group generated
by $y$ and $x_i, i \in \mathbb{N}$  with $[x_{2i-1}, x_{2i}] = y$
for all $i$.  Such a group is the fundamental group of a 
countable union of genus one surfaces with one boundary component,
 with all such boundary components identified with each other.
 We consider closed 
 dual curves as in Section~\ref{surfaces} (with
 the genus two surface being the $n=2$ cousin of this infinite
 complex), so that $X_{2i-1}$ intersects $X_{2i}$ in a point
 $p_i$ and otherwise these curves are disjoint.  

 Possibly infinite unions of such curves define proper
 submanifolds of the thickening of this complex and
 thus Thom cochains.  These realize the first cohomology group,
 which is an infinite product rather than sum.  

The Harrison cochains $\tp{X_i}{X_j}$ are coclosed whenever $i,j \neq 2k, 2k-1$ for any $k$, and they form a dual basis to the corresponding commutators 
 $[x_i, x_j]$.  The generator $y$ is detected by 
 $\tp{\bigcup X_{2i-1}}{\bigcup X_{2i}} + Y$, which also pairs 
 with any $[x_{2i-1}, x_{2i}]$.  This Harrison cocycle is not expressible using compactly supported cochains. 
 Key here is that the first cohomology is already an infinite direct product, so
 infinite sums can be ``embedded'' in the standard
 Harrison complex. 

\subsection{Beyond residual nilpotence using the product totalization}\label{beyondnilpotence}

First recall from Definition~\ref{D:E} 
that the Harrison complex is defined as the standard total complex
of a bicomplex $\mE(A,\ \mu_A,\ d_A)$.  There is an alternative
totalization of a complex ${\rm Tot}_\Pi$
defined using direct product in each total degree,
\[
{\rm Tot}_{\Pi} \maps ( E^{\bullet,\bullet}) \longmapsto \prod_{p+q = \bullet} E^{p,q}
\]
from which we get a variant of our theory.

\begin{definition}
    
Define the {\bf completed Harrison
  complex} as the direct product totalization 
 $$\hat{\mE}(A,\ \mu_A,\ d_A) =
    {\rm Tot}_{\Pi} \bigl(\vscofreeE(s^\inv\bar A),\ 
  d_{\vscofreeE s^\inv\!\bar A},\ d_\mu\bigr).$$
  \end{definition}

We will not fully develop a theory based on the completed
Harrison complex here, but do give an example to show
that it detects phenomena beyond the residually
nilpotent setting, which has been a standard limitation for 
group theory through computational algebraic topology.

Consider the   group generated by $x_i$, $i \in {\mathbb N}$
with relations $x_{2i-1} = [x_{2i}, x_{2i + 1}]$.  Thus any
$x_{2i - 1}$ is in the intersection of all commutator
subgroups.  We realize this as the fundamental group of the 
quotient of the union $\bigsqcup_{\mathbb N} \Sigma_i$, where each $\Sigma_i$ 
is  a genus one surface with one boundary component, by identifying the boundary of $\Sigma_{i+1}$ with the longitude of 
$\Sigma_i$, as in the theory of gropes. Explicitly, we build this two-complex
as a union of pentagons as pictured on the left of Figure~\ref{fig:pentagons}.  
By abuse we use $x_i$ to decorate edges, which under identifications are loops generating the fundamental group.

\begin{figure}
  \begin{subfigure}[b]{0.40\textwidth}
    \begin{tikzpicture}[scale=.7]

\coordinate (1) at (2.8,0);
\coordinate (2) at (.309*2.8,-.9511*2.8);
\coordinate (3) at (-.8090*2.8,-.5878*2.8);
\coordinate (4) at (-.8090*2.8,.5878*2.8);
\coordinate (5) at (.309*2.8,.9511*2.8);

\begin{scope}[very thick,decoration={
    markings,
    mark=at position 0.55 with {\arrow[scale=1.4]{ang 90}}}
    ] 
    \draw[postaction={decorate}] (2)--(1);
    \draw[postaction={decorate}] (3)--(2);
    \draw[postaction={decorate}] (3)--(4);
    \draw[postaction={decorate}] (4)--(5);
    \draw[postaction={decorate}] (5)--(1);
\end{scope}

\draw node at(-3.0,-0.5){$x_{2i-1}$};
\draw node at(-1,2.8){$x_{2i+1}$};
\draw node at(-1,-2.8){$x_{2i}$};
\draw node at(2.6,1.7){$x_{2i}$};
\draw node at(2.6,-1.7){$x_{2i+1}$};

\end{tikzpicture}
  \end{subfigure}
  \begin{subfigure}[b]{0.40\textwidth}
    \begin{tikzpicture}[scale=.7]

\coordinate (0) at (0,0); 
\coordinate (1) at (2.8,0); 
\coordinate (2) at (.309*2.8,-.9511*2.8);; 
\coordinate (3) at (-.8090*2.8,-.5878*2.8); 
\coordinate (4) at (-.8090*2.8,.5878*2.8); 
\coordinate (5) at (.309*2.8,.9511*2.8); 

\draw[very thick]plot coordinates{(1)(2)(3)(4)(5)(1)};

\draw[very thick] (1) -- coordinate (1x) (2)--coordinate (2x) (3)--coordinate (3x) (4)--coordinate (4x) (5) -- coordinate (5x) (1);
\draw node[circle,fill,scale=.5] at (1x) {};
\draw node[circle,fill,scale=.5] at (2x) {};
\draw node[circle,fill,scale=.5] at (3x) {};
\draw node[circle,fill,scale=.5] at (4x) {};
\draw node[circle,fill,scale=.5] at (5x) {};

\draw[thick] (1x)--(4x);
\draw[thick] (2x)--(5x);
\coordinate (i) at (intersection of  1x--4x  and 2x--5x);
\draw[thick] (3x)--(i);

\draw  [thick] (1)--(3.35,-.05);
\draw  [thick] (2)--(.96,-3.213);
\draw  [thick] (3)--(-2.7,-2.1);
\draw  [thick] (4)--(-2.9,1.9);

\begin{scope}[thick,decoration={
    markings,
    mark=at position 0.5 with {\arrow[scale=1]{ang 90}}}
    ] 
\draw[postaction={decorate}] (-2.9,-.2)--(3x);
\end{scope}

\draw  [thick,-ang 90] (1x)--(2.4,-1.75);

\draw node at(-1.5,2.4) {$x_{2i+1}$};
\draw node at(2.8,.9){$x_{2i}$};
\draw node at(.5,1.5){$Y_{2i+1}$};
\draw node at(.4,-1.5){$Y_{2i}$};
\draw node at(2.95,-1.7){$Z_{2i+1}$};
\draw node at(-1.0,-.4){$Z_{2i-1}$};

\end{tikzpicture}
  \end{subfigure}
  \vspace{-1cm}  
  \caption{}\label{fig:pentagons}
\end{figure}

We use apply graphical cochains.
Though we normally prefer to use capitalized versions of
generator names for dual cochains, for clarity
let $Y_{2i+1}$ be the dual curve to $x_{2i+1}$ in $\Sigma_i$,
 a segment between midpoints of the $x_{2i+1}$ edges in the pentagon
 preimage of $\Sigma_i$, and similarly for $Y_{2i}$,
 as pictured on the right in Figure~\ref{fig:pentagons}.
 Let $Z_{2i-1}$ be the segment between the midpoint of
 $x_{2i-1}$ and the intersection point of $Y_{2i}$
 and $Y_{2i+1}$.  It is crucial to observe that while $Y_{2i}$ is closed, $Y_{2i+1}$ is not and its boundary represents the midpoint of $x_{2i+1}$ in the next pentagon $\Sigma_{i+1}$. (Technically, $Y_{2i+1}$ includes a small tab extending from that midpoint into the interior of the pentagon -- see Subsection \ref{CurveForms}.) This boundary is the left endpoint of $Z_{2i+1}$, so the sum $Y_{2i+1}+Z_{2i+1}$ only has boundary coming from the right endpoint of $Z_{2i+1}$.

 With appropriate choices of Thom
 forms, the infinite sum
 $$
 (Y_1+Z_1) + \tp{Y_3+Z_3}Y_2 + \tp{\tp{Y_5+Z_5} Y_4 }Y_2 + \tp{\tp{ \tp{Y_7+Z_7} Y_6} Y_4 }Y_2 + \cdots
 $$
 will be a cocycle in the product Harrison complex which will
 evaluate non-trivially on $x_1$ through its $Y_1$ summand.  Algebraically, the
 corresponding letter linking count is ``sensitive to infinitely 
 many possible subwords,'' but is well defined since only
 finitely many of them could occur in any given word.

 This infinitary construction echoes other recent developments in non-simply connected homotopy theory.
 As discussed in Section~\ref{Introwrap-up},  Rivera--Zeinalian \cite{Rivera-chains_and_pi1} and Rivera--Medina \cite{Rivera-Medina_cobar} show that the cobar construction on singular chains of a space faithfully encodes the fundamental group, but in a way that does not lend itself
 to calculation because of lack of quasi-isomorphism invariance and divergent spectral sequences. 
 The completed Harrison
 complex might serve as a bridge to theories such as
 these, with some ability for
 calculations by hand while being able to see beyond 
 traditional nilpotence restrictions.

\appendix


\section{Bar and Harrison complexes}\label{BarHarrison}

\subsection{Basics of the bar construction connection with Sullivan models}\label{basicbar}

Bar constructions are standard in homological algebra
of modules, but are
less familiar in the context of algebras, for functors such
as indecomposibles.  We share this perspective, which is key to understanding the Harrison
complex and its relationship to the well-known Sullivan
models of rational homotopy theory (from which we have
borrowed our title).

\begin{definition}\label{StandardBar}
    
Let $(A,\ \mu_A,\ d_A)$ be an associative differential graded algebra
with differential of degree one and with augmentation ideal ${\bar A}$. 

The {\bf classical Bar complex} denoted $\B(A)$ is the second-quadrant bicomplex for which 
the $-n^\mathrm{th}$ column is ${\bar A}^{\otimes n}$, with 
basic tensors denoted $a_1 | a_2 | \cdots | a_n$.  
The vertical differential, on ${\bar A}^{\otimes n}$, is $d_A$,
extended by the Leibniz rule (with Koszul signs reflecting a desuspension of every tensor factor of $\bar A$).  
The horizontal differential 
is the signed sum of $\mu_A$ on consecutive factors, 
sending  
$$a_1 | a_2 | \cdots | a_n \;\; \longmapsto  \;\; \pm(a_1 a_2 | a_3 | \cdots | a_n) \; \pm \; (a_1 | a_2 a_3 | a_4 | \cdots | a_n) \; 
+   \cdots \pm  \; (a_1 | a_2 | \cdots | a_{n-1} a_n).$$
\end{definition}
The total complex is computing the derived indecomposables of $A$, which are (the derived version of) the quotient $A/\bar{A}^2$. Equivalently, this is the derived tensor product
$\Q \overset{\mathbb{L}}\otimes_{A} \Q$.

One can apply the spectral sequence of a bicomplex \cite{Elias} to get what is known as
the {\bf bar spectral sequence}, 
converging to the homology of the total
complex, 
a standard organizational tool.  
Taking homology with
respect to the vertical differential first, the spectral 
sequence has $E_1$-page given by $\B(H^*(A))$.  The $d_1$ differential reflects product structure on cohomology,
for example computing the indecomposibles of $H^*(A)$
in the zero column at the $E_2$-page. Higher
differentials reflect Massey products, and indeed this
can be taken as a definition of Massey products.  For
example, if $ab = d(x)$ and $bc = d(y)$ then 
$$\alpha = a | b | c \;  \mp \; a | y \; \pm \; x | c $$ will have total 
bar differential $d(\alpha) = \mp \; ay \; \pm \; xc$, which represents
the standard Massey product $\langle a, b, c\rangle$.  This 
will give rise to 
a non-zero $d_2$-differential in the bar spectral 
sequence if and only if the Massey product is non-zero
modulo standard indeterminacy.

The classical bar complex arises in topology in a few ways.  If $A$ is a group ring, its bar complex computes group homology with trivial
coefficients.  If $A$ is the cochain algebra of a simply connected space, the bar complex 
models cochains on the loop space.  
There are many perspectives for this latter fact, with two relevant for the present theory.  One is that if
$a_i$ are Thom forms as in Appendix~\ref{Thomforms} or geometric
cochains, then $\tau_{W_1} | \cdots | \tau_{W_n}$ essentially
counts intersections of a loop (within a family of loops defining a smooth chain on the loopspace) with the $W_i$, 
in time-order.  If $W_i$ has boundary, this count can change 
through a homotopy of a loop that crosses $\partial W_i$, or more generally a homology of a cycle of such loops.
The count can also change if $W_i$ and $W_{i+1}$ intersect, as we saw in
discussion of the Whitehead link in Section~\ref{geomtop}. 
These two possible ways this count of ordered intersections
can change correspond to the
vertical and horizontal differentials in the bar complex, respectively.

Another perspective, pertinent to our work, is that of defining invariants on homotopy
groups using cohomology.  
We let cohomology evaluate on
$f: S^n \to X$ by pulling back and integrating  cohomology
classes.  If a cohomology class is a non-trivial product,
its pull-back will be zero in the cohomology of the sphere.
Thus evaluation of cohomology on homotopy factors through the
indecomposibles of cohomology.  Hopf invariants can be viewed
as a ``derived version'' of this, in which case a key fact is
that the bar complex computes derived indecomposibles.  
Recall that by the Leibniz rule, the differential on an
associative differential graded algebra passes
to its indecomposibles.

\begin{theorem}\label{BarDerivedIndec}
      Let $F$ be an augmented associative differential graded algebra which is quasi-isomorphic to $A$ and whose underlying algebra is a free (tensor) algebra.  Then the complex $F/\bar{F}^2$ of indecomposibles of $F$ is quasi-isomorphic to $\B (A)$.
\end{theorem}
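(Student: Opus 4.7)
The plan is to show both sides compute the same derived functor of indecomposables. The argument has two main steps: first, that $\B$ is a homotopy functor so $\B(F) \simeq \B(A)$, and second, that for $F$ with free underlying algebra the bar complex $\B(F)$ is quasi-isomorphic to the ordinary indecomposables $F/\bar{F}^2$.

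For the first step, I would invoke the classical fact that a quasi-isomorphism of augmented dg algebras induces a quasi-isomorphism on bar complexes, proved via the bar spectral sequence (filter $\B$ by bar length so that the $E_1$-page becomes $\B$ applied to cohomology, which is manifestly preserved by quasi-iso). The same argument used in the Harrison case \cite[Proposition 2.10]{LieEncyclopedia} applies here. Under this reduction, the theorem becomes the statement that $\B(F) \simeq F/\bar{F}^2$ for $F$ with free underlying graded algebra.

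For the second step, write $F = T(V)$ as a graded tensor algebra, equipped with an arbitrary differential $d_F$ (not necessarily preserving word length in $V$). Define a chain map $p\maps \B(F) \to s^{\inv} V$ by composing the projection of the cofree coalgebra $T^c(s^{\inv}\bar{F})$ onto its cogenerators $s^{\inv}\bar{F}$ with the quotient $s^{\inv}\bar{F} \twoheadrightarrow s^{\inv}(\bar{F}/\bar{F}^2) = s^{\inv}V$. This is a chain map because $d_\mu$ sends its image into $s^{\inv}(\bar{F}\cdot \bar{F})$, which is killed by the second projection, and the Leibniz rule for $d_F$ guarantees that its linear part descends to a well-defined differential on $V$. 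To check that $p$ is a quasi-isomorphism, filter $F$ by word length in $V$ (well-defined since $F$ is a tensor algebra). The associated graded $\mathrm{gr}\, F$ is the tensor algebra $T(V)$ equipped with the purely linear differential extending $d_V$, and the induced filtration on $\B(F)$ has associated graded $\B(T(V),d_V)$. For this pure case, an explicit contracting homotopy (or equivalently the Koszul duality between the tensor algebra and its cofree tensor coalgebra) shows that $\B(T(V),d_V)$ has cohomology concentrated on the bar-length-one cogenerators, canonically $s^{\inv}V$ with differential induced by $d_V$. Since the map $p$ is compatible with the filtration and induces this isomorphism on associated graded, a standard spectral sequence comparison finishes the argument.

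The hard part is the convergence of the spectral sequence comparison in the final step: the filtration by word length is decreasing into negative bar degrees, and one must ensure the spectral sequence converges to the cohomology of the total complex. For the cochain algebras of connected spaces considered in this paper, this is guaranteed by the non-negative grading and the vanishing of $\bar{A}$ in negative degrees, which bound the bar-length filtration on each total degree so that the spectral sequence reduces to a bounded one in each degree. The homotopy invariance of $\B$ in step one relies on the same boundedness, and together these hypotheses are the standard ones under which the classical equivalence between the bar construction and derived indecomposables for associative algebras holds.
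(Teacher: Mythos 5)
Your overall architecture is the same as the paper's: reduce to $\B(F)\simeq F/\bar F^2$ by quasi-isomorphism invariance of $\B$, then use that the bar construction of a tensor algebra is concentrated on its cogenerators (your $E_1$ computation is exactly the ``pull out the first letter'' mechanism the paper uses). The genuine gap is precisely the point you flag, and your proposed fix is wrong in the setting this paper needs. When $V$ (equivalently $\bar A$) has elements of degree one --- i.e.\ the non-simply-connected case, which is the whole point of the paper --- a fixed total degree contains expressions of unbounded length: already in total degree zero one has $v_1|\cdots|v_w$ for degree-one generators and arbitrary $w$. So neither bar length nor word length is bounded in a given total degree, and the classical bounded-convergence theorem you invoke does not apply. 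Moreover the word-length filtration of step two is a \emph{decreasing} filtration raised by the differential, and $\B(F)$ is a direct sum rather than a product, hence not complete for it; the resulting spectral sequence converges only conditionally, toward the cohomology of the word-length-completed (product-totalized) complex. The paper's Section~\ref{beyondnilpotence} shows that such completions genuinely change the answer in general, so the collapse of your $E_1$ page does not by itself compute $H^*(\B(F))$. (Your parallel worry about step one is misplaced: the bar-length filtration is an increasing, exhaustive, bounded-below filtration by subcomplexes, since neither differential raises bar length, so homotopy invariance of $\B$ needs no connectivity or boundedness hypothesis.)

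By contrast, the paper avoids the convergence issue by arguing on the complex itself rather than on an associated graded: using the monomial basis of the free algebra it defines the explicit operator $P$ that pulls the first generator out of the leading monomial, and shows $P$ contracts $\B(F)$ onto the zeroth column modulo products; the analogous commutative statement (Theorem~\ref{HarrisonDerivedIndec}) is handled by an inductive filtration argument as in \cite[Lemma~2.15]{sinha-walter2}. To repair your write-up you must either restrict to the simply connected case, where generators sit in degrees $\geq 2$, word length in each total degree is bounded, and your comparison is then complete, or replace the naive spectral-sequence comparison by an argument with explicit control in each word-length stage (an induction with contracting homotopies in the style of the paper), since in the presence of degree-one generators there is no formal convergence statement to appeal to.
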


\begin{proof}
We first apply the basic fact that the bar complex is quasi-isomorphism invariant to replace $\B(A)$ by $\B(F)$.
Then we show that $\B(F)$ is quasi-isomorphic to the 
indecomposibles of $F$.  

To do so, identify $F$ as the free associative algebra on $x_1, x_2, \cdots$, in which case $F$ has a monomial basis which determines one on $\B (F)$.
Define a chain homotopy $P$ on $\B(F)$ by 
sending a monomial basis element $m_1 | m_2 | \cdots | m_n$ to
either $0$ if $m_1$ is a generator $x_i$ for some $i$ or otherwise,
when $m_1 = \prod x_{i_k}$, to $x_{i_1} | m_1'  | m_2 | \cdots | m_n$, where 
$m_1' = \prod_{k \geq 2} x_{i_k}$.  
Calculation then shows that
$P$ is a homotopy between the identity and the zero map except
on the indecomposibles of $F$, through the inclusion of $F$ as the zeroth
column, modulo the first differential (the product).
\end{proof}

For example, if $x \cdot y = 0$ in some associative algebra (with zero differential) then $x | y$ will be a
cycle in $\B(A)$.  In some quasi-isomorphic $F \overset{\simeq}{\to} A$ sending $\hat{x} \mapsto x$ and $\hat{y} \mapsto y$ we could have
$\hat{x} \cdot \hat{y} = dz$ for a new generator $z$.  Then $z$ would represent the indecomposible
of $F$ corresponding to the original cycle $x | y$.
Indeed in $\B(F)$ we have the lifted cycle $\hat{x} | \hat{y} \;\mp\; z$, with the original 
trivial product and its corresponding derived indecomposible both accounted for.

\subsection{The Harrison complex, and explicit treatment of signs}\label{signs}

If $A$ is commutative, then there are some canonical
cocycles in $\B (A)$, starting with $a | b \;  \mp \; b | a$.
Reflective of this, this bar complex does not represent
derived indecomposibles when restricted to the commutative setting (replacing the free associative algebra $F$ in the previous theorem 
by a symmetric algebra).   Harrison's original construction \cite{Harrison} 
is to quotient $\B(A)$
by shuffle relations to obtain a complex which  computes 
the indecomposibles of a free commutative
algebra weakly equivalent to $A$.

In \cite[Definition 2.14]{Sinha-Walter1}, we use
 directed, acyclic, connected graphs with formal span denoted $\Gr(n)$ to build  $\Eil(n)$, 
a model for the arity-$n$ entry 
of the Lie cooperad given by the 
quotient of $\Gr(n)$  by local relations similar to those of Definition~\ref{Eil}.
\begin{align*}
\text{(arrow reversal)}\qquad & 
0 \ = \!
\begin{xy}                           
  (0,-2)*+UR{\color{blue}\scriptstyle \bm{a}}="a",    
  (3,3)*+UR{\color{red}\scriptstyle \bm{b}}="b",     
  "a";"b"**\dir{-}?>*\dir{>},      
    "a"!<.3pt,0pt>;"b"!<.3pt,0pt> **\dir{-},  "a"!<-.3pt,0pt>;"b"!<-.3pt,0pt> **\dir{-},
  (1.5,-5),{\ar@{. }@(l,l)(1.5,6)},
  ?!{"a";"a"+/va(210)/}="a1",
  ?!{"a";"a"+/va(240)/}="a2",
  ?!{"a";"a"+/va(270)/}="a3",
  "a";"a1"**[blue]\dir{-},  "a";"a2"**[blue]\dir{-},  "a";"a3"**[blue]\dir{-},
  (1.5,6),{\ar@{. }@(r,r)(1.5,-5)},
  ?!{"b";"b"+/va(90)/}="b1",
  ?!{"b";"b"+/va(30)/}="b2",
  ?!{"b";"b"+/va(60)/}="b3",
  "b";"b1"**[red]\dir{-},  "b";"b2"**[red]\dir{-},  "b";"b3"**[red]\dir{-},
\end{xy}\!+ \! 
\begin{xy}                           
  (0,-2)*+UR{\color{blue}\scriptstyle \bm{a}}="a",    
  (3,3)*+UR{\color{red}\scriptstyle \bm{b}}="b",     
  "a";"b"**\dir{-}?<*\dir{<},         
    "a"!<.3pt,0pt>;"b"!<.3pt,0pt> **\dir{-},  "a"!<-.3pt,0pt>;"b"!<-.3pt,0pt> **\dir{-},
  (1.5,-5),{\ar@{. }@(l,l)(1.5,6)},
  ?!{"a";"a"+/va(210)/}="a1",
  ?!{"a";"a"+/va(240)/}="a2",
  ?!{"a";"a"+/va(270)/}="a3",
  "a";"a1"**[blue]\dir{-},  "a";"a2"**[blue]\dir{-},  "a";"a3"**[blue]\dir{-},
  (1.5,6),{\ar@{. }@(r,r)(1.5,-5)},
  ?!{"b";"b"+/va(90)/}="b1",
  ?!{"b";"b"+/va(30)/}="b2",
  ?!{"b";"b"+/va(60)/}="b3",
  "b";"b1"**[red]\dir{-},  "b";"b2"**[red]\dir{-},  "b";"b3"**[red]\dir{-},
\end{xy} \\
\text{(Arnold)}\qquad & 
0 \ = \!
\begin{xy}                           
  (0,-2)*+UR{\color{blue}\scriptstyle \bm{a}}="a",    
  (3,3)*+UR{\color{red}\scriptstyle \bm{b}}="b",   
  (6,-2)*+UR{\color{OliveGreen}\scriptstyle \bm{c}}="c",   
  "a";"b"**\dir{-}?>*\dir{>},         
    "a"!<.3pt,0pt>;"b"!<.3pt,0pt> **\dir{-},  "a"!<-.3pt,0pt>;"b"!<-.3pt,0pt> **\dir{-},
  "b";"c"**\dir{-}?>*\dir{>},         
    "b"!<.3pt,0pt>;"c"!<.3pt,0pt> **\dir{-},  "b"!<-.3pt,0pt>;"c"!<-.3pt,0pt> **\dir{-},
  (3,-5),{\ar@{. }@(l,l)(3,6)},
  ?!{"a";"a"+/va(210)/}="a1",
  ?!{"a";"a"+/va(240)/}="a2",
  ?!{"a";"a"+/va(270)/}="a3",
  ?!{"b";"b"+/va(120)/}="b1",
  "a";"a1"**[blue]\dir{-},  "a";"a2"**[blue]\dir{-},  "a";"a3"**[blue]\dir{-},
  "b";"b1"**[red]\dir{-}, "b";(3,6)**[red]\dir{-},
  (3,-5),{\ar@{. }@(r,r)(3,6)},
  ?!{"c";"c"+/va(-90)/}="c1",
  ?!{"c";"c"+/va(-60)/}="c2",
  ?!{"c";"c"+/va(-30)/}="c3",
  ?!{"b";"b"+/va(60)/}="b3",
  "c";"c1"**[OliveGreen]\dir{-},  "c";"c2"**[OliveGreen]\dir{-},  "c";"c3"**[OliveGreen]\dir{-},
  "b";"b3"**[red]\dir{-}, 
\end{xy}\!+ \!                            
\begin{xy}                           
  (0,-2)*+UR{\color{blue}\scriptstyle \bm{a}}="a",    
  (3,3)*+UR{\color{red}\scriptstyle \bm{b}}="b",   
  (6,-2)*+UR{\color{OliveGreen}\scriptstyle \bm{c}}="c",    
  "b";"c"**\dir{-}?>*\dir{>},         
    "b"!<.3pt,0pt>;"c"!<.3pt,0pt> **\dir{-},  "b"!<-.3pt,0pt>;"c"!<-.3pt,0pt> **\dir{-},
  "c";"a"**\dir{-}?>*\dir{>},          
    "c"!<0pt,.2pt>;"a"!<0pt,.2pt> **\dir{-},  "c"!<0pt,-.2pt>;"a"!<0pt,-.2pt> **\dir{-},
  (3,-5),{\ar@{. }@(l,l)(3,6)},
  ?!{"a";"a"+/va(210)/}="a1",
  ?!{"a";"a"+/va(240)/}="a2",
  ?!{"a";"a"+/va(270)/}="a3",
  ?!{"b";"b"+/va(120)/}="b1",
  "a";"a1"**[blue]\dir{-},  "a";"a2"**[blue]\dir{-},  "a";"a3"**[blue]\dir{-},
  "b";"b1"**[red]\dir{-}, "b";(3,6)**[red]\dir{-},
  (3,-5),{\ar@{. }@(r,r)(3,6)},
  ?!{"c";"c"+/va(-90)/}="c1",
  ?!{"c";"c"+/va(-60)/}="c2",
  ?!{"c";"c"+/va(-30)/}="c3",
  ?!{"b";"b"+/va(60)/}="b3",
  "c";"c1"**[OliveGreen]\dir{-},  "c";"c2"**[OliveGreen]\dir{-},  "c";"c3"**[OliveGreen]\dir{-},
  "b";"b3"**[red]\dir{-}, 
\end{xy}\!+ \!                             
\begin{xy}                           
  (0,-2)*+UR{\color{blue}\scriptstyle \bm{a}}="a",    
  (3,3)*+UR{\color{red}\scriptstyle \bm{b}}="b",   
  (6,-2)*+UR{\color{OliveGreen}\scriptstyle \bm{c}}="c",    
  "a";"b"**\dir{-}?>*\dir{>},         
    "a"!<.3pt,0pt>;"b"!<.3pt,0pt> **\dir{-},  "a"!<-.3pt,0pt>;"b"!<-.3pt,0pt> **\dir{-},
  "c";"a"**\dir{-}?>*\dir{>},          
    "c"!<0pt,.2pt>;"a"!<0pt,.2pt> **\dir{-},  "c"!<0pt,-.2pt>;"a"!<0pt,-.2pt> **\dir{-},
  (3,-5),{\ar@{. }@(l,l)(3,6)},
  ?!{"a";"a"+/va(210)/}="a1",
  ?!{"a";"a"+/va(240)/}="a2",
  ?!{"a";"a"+/va(270)/}="a3",
  ?!{"b";"b"+/va(120)/}="b1",
  "a";"a1"**[blue]\dir{-},  "a";"a2"**[blue]\dir{-},  "a";"a3"**[blue]\dir{-},
  "b";"b1"**[red]\dir{-}, "b";(3,6)**[red]\dir{-},
  (3,-5),{\ar@{. }@(r,r)(3,6)},
  ?!{"c";"c"+/va(-90)/}="c1",
  ?!{"c";"c"+/va(-60)/}="c2",
  ?!{"c";"c"+/va(-30)/}="c3",
  ?!{"b";"b"+/va(60)/}="b3",
  "c";"c1"**[OliveGreen]\dir{-},  "c";"c2"**[OliveGreen]\dir{-},  "c";"c3"**[OliveGreen]\dir{-},
  "b";"b3"**[red]\dir{-}, 
\end{xy}                             
\end{align*}
  
\begin{proposition}
  Rooted trees modulo root change and Arnold is isomorphic to 
  directed, acyclic, connected graphs modulo arrow reversal and Arnold -- that is, 
  $\Eil\Tr_* \cong \Eil$ -- through the map $\Tr_*\to\Gr$
converting a rooted tree to a directed graph by orienting all edges
towards the root.    
\end{proposition}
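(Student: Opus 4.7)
The plan is to establish the isomorphism by checking separately that the stated map descends to the quotients, that it is surjective modulo the target relations, and that it is injective.

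First I would verify that the map $\Tr_*\to\Gr$ sending a rooted tree to the directed graph obtained by orienting every edge toward the root descends to $\Eil\Tr_*\to\Eil$. This reduces to comparing the two pairs of local relations. For root change: if the root of a tree is moved from a vertex $a$ to an adjacent vertex $b$, then exactly the edge between $a$ and $b$ reverses direction and every other edge keeps its orientation, which is precisely an instance of arrow reversal on that edge. For root Arnold: if three vertices $a,b,c$ share a corolla at the root, the three terms in the root-Arnold relation (cycling which of $a,b,c$ is the root) are sent to the three terms of the graph-Arnold relation at the common vertex. So both generating relations in $\Tr_*$ map into instances of the relations defining $\Eil$.

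Next I would prove surjectivity on quotients, by showing every DAG is equivalent modulo arrow reversal and Arnold to a sum of (images of) rooted trees. The argument is by induction on the excess $e(G) = \#E(G)-\#V(G)+1$, the number of independent undirected cycles. When $e(G)=0$ the underlying graph is a tree; using arrow reversal one may then arrange all edges to point toward a chosen vertex, producing a rooted tree. When $e(G)>0$ there is an undirected cycle, hence a vertex $v$ with two edges on the cycle; after using arrow reversal to set the two edges to the configuration appearing in an Arnold relation, Arnold rewrites the graph as a sum of two DAGs each with fewer edges, hence strictly smaller $e$. Iterating yields the claim. (This is essentially the content of the reduction to ``long graphs'' used in \cite{Sinha-Walter1}, which I would cite.)

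Finally, for injectivity, I would show that any relation in $\Gr$ between two directed trees (with edges pointing toward designated roots) is already a consequence of root change and root Arnold in $\Tr_*$. The key observation is that arrow reversal applied to an edge incident to the current root is literally a root change, while reversal of an internal edge $\{u,v\}$ can be realized by first performing a chain of root changes to move the designated root to $u$, applying root change across $\{u,v\}$, and then moving the root back; each intermediate stage is a rooted tree. Similarly, an Arnold relation in $\Gr$ applied at any vertex $w$ of a tree can be transported to the root via root changes, matched against root Arnold, and then transported back. Hence the kernel of $\Tr_*\to\Gr/(\text{arrow reversal, Arnold})$ is generated by root change and root Arnold, proving injectivity of $\Eil\Tr_*\to\Eil$.

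The main obstacle will be keeping track of signs and of the ``whisker'' subtrees not affected by a given local move while transporting the root across multiple edges, since the relations in $\Tr_*$ are stated only at the root whereas the relations in $\Gr$ are genuinely local and can be applied anywhere. Making the transport argument in the previous paragraph fully rigorous — showing that an application of Arnold at an interior vertex $w$ is precisely a root-changed conjugate of root Arnold — amounts to checking a sign-consistent rewriting lemma for rooted trees. Once that lemma is in place, the three steps combine to give the isomorphism.
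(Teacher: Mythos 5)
Your three-step plan (check the relations descend, prove surjectivity, prove injectivity by transporting the root) is the same skeleton as the paper's proof, and your injectivity argument is exactly the paper's: arrow reversal at the root is root change, reversal or Arnold at an interior vertex is a conjugate of root change or root Arnold by a chain of root changes, a maneuver the paper already records (moving the root, applying root Arnold, and moving it back produces Arnold relations deeper in the tree). One small imprecision in your descent step: the image of a root Arnold term has both edges pointing into the root, not a directed path, so each term matches the corresponding graph Arnold term only after reversing one arrow per term (with the attendant sign, which is consistent across the three terms since it is exactly one reversal each); this is the paper's phrase ``after reversing one arrow in each term,'' and since you work modulo the target relations it does not affect correctness.

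The genuine problem is your surjectivity induction on the number of independent undirected cycles. Its key step — that the Arnold relation rewrites a graph containing an undirected cycle as a sum of graphs with strictly fewer edges — is false: all three terms of the graph Arnold relation have the same number of edges, so neither Arnold nor arrow reversal can lower the excess $\#E-\#V+1$. Fortunately the case never arises: ``acyclic'' for the graphs spanning $\Gr$ (following \cite[Definition 2.14]{Sinha-Walter1}) means acyclic in the undirected sense, so the underlying graphs are trees with directed edges, not DAGs with undirected cycles. Surjectivity is therefore just your base case, which is the paper's argument verbatim: choose any vertex of a graph representative as root and use arrow reversal to orient every edge toward it, exhibiting it as the image of a rooted tree. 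Delete the cycle-reduction step, keep the sign bookkeeping in the transport lemma explicit, and your proof coincides with the paper's.
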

\begin{proof}
The map converting 
a rooted tree to a directed acyclic graph with all edges oriented 
towards the root is  well-defined modulo the tree relations, since root
change and root Arnold for trees are
sent to graph arrow reversal and graph Arnold 
(after reversing one arrow in each term) relations 
respectively.
This map is a surjection because 
given a graph
representative we can pick an arbitrary vertex to be the ``root'' 
and, modulo arrow reversal, orient all edges appropriately to
obtain a directed graph in the image of the map from rooted trees.  
Injectivity holds true because an
arbitrary Arnold relation for graphs is the 
image of a root
Arnold relation for rooted trees after orienting edges appropriately
by repeated arrow reversals. 
\end{proof}


The most popular  approach to rational homotopy theory, by far, is given by Sullivan models which are free commutative differential graded algebras.  Our present work
advertises the usefulness of bar constructions.  
These are related by a version of Theorem~\ref{BarDerivedIndec}.

\begin{theorem}\label{HarrisonDerivedIndec}
    Let $A$ and $P$ be quasi-isomorphic commutative differential graded algebras, where the undelying algebra of $P$ is free as a graded-commutative algebra.  Then the subcomplex of indecomposibles of $P$ is quasi-isomorphic to $\mE(A)$
\end{theorem}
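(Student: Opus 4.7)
The plan is to mirror the two-step proof of Theorem~\ref{BarDerivedIndec} but adapted to the commutative/Lie-cooperad setting. First I would reduce to the case $A = P$ using the fact that $\mE$ is a quasi-isomorphism invariant functor on (augmented) commutative dg algebras. This is precisely Proposition~2.10 of \cite{LieEncyclopedia}, cited right after Definition~\ref{D:E}, which furnishes a quasi-isomorphism $\mE(A) \xrightarrow{\simeq} \mE(P)$ induced by any weak equivalence $A \xrightarrow{\simeq} P$. So the whole content of the theorem reduces to the statement that, for a (quasi-)free commutative dg algebra $P = \mathrm{Sym}(V)$ (possibly with a non-trivial Sullivan-type differential), the canonical inclusion of the indecomposables $V \hookrightarrow \mE(P)$ as weight-one single-vertex Eil trees is a quasi-isomorphism.

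For this second step I would first verify that the inclusion is indeed a chain map: the horizontal differential $d_\mu$ automatically vanishes on weight one, and the vertical differential on single-vertex trees is (by definition, up to the desuspension sign) the internal differential of $P$, which descends through the Leibniz rule to the indecomposables. With this in hand, the task becomes producing an explicit chain contraction of $\mE(P)$ onto the image. In exact analogy with the operator $P$ of Theorem~\ref{BarDerivedIndec}, I would fix a homogeneous basis $\{x_\alpha\}$ of $V$, a total order on monomials, and declare the contracting homotopy to act on a tree with a designated (say, root) vertex labeled by a non-generator monomial $x_\alpha \cdot m$ by expanding that vertex into an edge $\tp{x_\alpha}m$, i.e.\ performing an inverse of the edge-contraction operation underlying $d_\mu$. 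Iterating breaks all monomial labels into generators, so the only surviving weight in homology is weight one with generator labels, i.e.\ $V$ itself.

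The hard part is checking that this prescription is well-defined modulo the Eil relations (root change and root Arnold) and that it correctly splits the differential $d_\mE = d_{\cofreeE(s^{-1}\bar P)} + d_\mu$, taking into account the Koszul signs accrued from the desuspension of each tensor factor of $\bar P$. A pure inverse-edge-contraction prescription will not commute with the symmetric group actions on $\Eil\Tr_*(n) \otimes (s^{-1}\bar P)^{\otimes n}$ on the nose unless one carefully symmetrizes over the positions at which the new edge is grafted, and the Arnold identity may force non-trivial rewriting in this symmetrization. I would handle this by lifting the argument one level up: perform the analogous contraction on the classical bar complex $\B(P)$ (where Theorem~\ref{BarDerivedIndec}, applied to $P$ viewed as a \emph{free associative} algebra on $V$ \emph{after} forgetting the symmetric algebra relations, already gives a contraction onto $V$), and then descend to $\mE(P)$ using Harrison's original presentation as the quotient of $\B(P)$ by shuffle products. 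The key point is that shuffles already vanish on the weight-one subcomplex $V$, so the contraction descends.

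An alternative, cleaner route is to invoke operadic Koszul duality directly: $\mE$ is the derived functor of indecomposables for commutative dg algebras (the bar construction of the Koszul pair $(\mathrm{Com},\mathrm{Lie})$), so on a cofibrant object like a (quasi-)free commutative algebra it agrees with the underived functor of indecomposables. I would mention this as a conceptual framing but prefer to carry out the explicit contraction above to keep the paper self-contained and consistent with the elementary proof given for the associative case.
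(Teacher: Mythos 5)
Your overall strategy matches the paper's: reduce to the free algebra $P$ via quasi-isomorphism invariance of $\mE$ (Proposition~2.10 of \cite{LieEncyclopedia}), then contract $\mE(P)$ onto its indecomposables by a ``pull out a generator'' homotopy in analogy with Theorem~\ref{BarDerivedIndec}. The paper does exactly this, but it does not construct the contraction naively: it delegates the second step to the chain homotopy of \cite[Lemma~2.15]{sinha-walter2}, stressing that the argument is genuinely more involved than the associative case and requires an inductive filtration, precisely because the inverse-edge-contraction prescription must be made compatible with the Eil relations and Koszul signs you flag. So your first step is fine, and your direct attempt correctly identifies the difficulty, but you do not resolve it.

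The genuine gap is in your proposed repair. You cannot lift the contraction to the classical bar complex by ``viewing $P$ as a free associative algebra on $V$ after forgetting the symmetric algebra relations'': forgetting relations changes the algebra, $P=\mathrm{Sym}(V)$ is not free associative, so Theorem~\ref{BarDerivedIndec} does not apply to $\B(P)$, and in fact no contraction of $\B(P)$ onto $V$ exists to descend. Concretely, if $V$ is spanned by a single odd-degree cocycle $x$, then $x\cdot x=0$ in $P$, so $x|x$ (and every $x|x|\cdots|x$) is a bar cycle and is not a boundary; $H\bigl(\B(P)\bigr)\cong \Tor_P(\Q,\Q)$ is a polynomial (divided power) algebra, far larger than $V$, and the homotopy of Theorem~\ref{BarDerivedIndec} simply sends $x|x$ to zero without contracting it. The collapse onto the indecomposables happens only after passing to Harrison's quotient by shuffles (here $x|x$ is proportional to a shuffle), so there is nothing at the bar level to descend --- the descent runs in the wrong direction. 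Your closing alternative, that $\mE$ is the derived functor of indecomposables and hence agrees with the underived functor on $P$, is essentially the statement being proved (and freeness of the underlying graded algebra alone does not make $P$ cofibrant), so it cannot fill the gap either. What is needed is the filtered, inductive version of the ``pull out a generator'' homotopy carried out in \cite[Lemma~2.15]{sinha-walter2}, which is exactly where the symmetrization and sign issues you raise are handled.
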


For Sullivan models,  minimality essentially says
that the indecomposibes of $P$
have no induced differential.  Thus one can obtain a 
proof through Sullivan models that
the Harrison homology of $X$ is linearly dual to its rational
homotopy groups in the simply connected setting; though
the use of Harrison homology is closer to Quillen's
original approach.

The proof of Theorem~\ref{HarrisonDerivedIndec} 
parallels that of Theorem~\ref{BarDerivedIndec}.
The quasi-isomorphism invariance of the Harrison complex
was proven in Proposition~2.10 of \cite{LieEncyclopedia}, without any
connectivity hypotheses.  A quasi-isomorphism between $\mE(P)$ and 
its indecomposibles is then given via the 
chain homotopy argument in \cite[Lemma~2.15]{sinha-walter2}.
This argument is more involved than that for 
Theorem~\ref{BarDerivedIndec}.  While the basic idea of 
``pulling out a generator'' is the same, it requires
an inductive filtration approach.
To our knowledge, this gives the first  explicit
proof of this standard fact.               

\medskip

We now address signs.  Recall the coproduct defined on $\Tr_*$ by excising subsymbols.  The induced  
coproduct in $\cofreeE(W)$ can be written similarly, applying our convention of 
placing elements of $W$ as labels of respective vertices in tree symbols for 
$\cofreeE(W)$.
For example if $a,b,c,d,e\in W$ we have 
\begin{align*} \Delta \tpc[\big] c \tp[\big] {\tpc a \tp b d} e =& \
 c\otimes \tp[\big] {\tpc a \tp b d} e \;+\; 
 \tpc a \tp b d \otimes \tp c e \\ &+\; 
 a \otimes \tpc[\big] c \tp[\big] {\tp b d} e  \;+\; 
 b \otimes \tpc[\big] c \tp[\big] {\tp a d} e. 
\end{align*}
(Recall Example~\ref{ex:trees} for a picture of this tree.)
Writing elements as labels for $\cofreeE(W)$ automatically encodes the 
label-shifting of the coproduct in $\Tr_*(n)$, greatly simplifying 
logistics. 
This is the rooted tree analog of the convention in \cite[Definition 3.3]{Sinha-Walter1} of $W$-labeled directed graphs.

For calculations in the graded setting, it is necessary to incorporate 
Koszul signs.  However, our convention that tree symbols labeled by
$W$ elements correspond to left-to-right ordering in the corresponding 
tensor product $W^{\otimes n}$ 
(as remarked after Proposition~\ref{cofree conilpotent}), means that 
Koszul signs may be computed following the same rules as for tensor algebras.
This stands in great contrast to the Koszul sign challenges of \cite{Sinha-Walter1}.

In particular if
$W$ is graded, then the corolla commutativity relations  
should carry Koszul signs.  For example $\tp a b = (-1)^{|a|\,|b|}\,b\,\tp a$
and also $\tpc a \tp b c = (-1)^{|a|\,|b|}\,\tpc b\tp a c$.
Note that we carefully chose the form of root arrow reversing and root Arnold
written via symbols
in Section~\ref{Eil definition} so as to not require Koszul signs.  This is the
benefit of allowing the root vertex to commute inside corolla products in
our notation.

The Koszul signs for graded corolla commutativity induce similar signs on 
graded coproduct and cobracket maps, also computed identically as 
for tensor algebras.
For example, the graded version of the coproduct shown above has Koszul signs 
as follows.
\begin{alignat*}{3}
\Delta \tpc[\big] c \tp[\big] {\tpc a \tp b d} e &=&&&
 c &\otimes \tp[\big] {\tpc a \tp b d} e \\ 
 &+& (-1)^{|c|\bigl(|a|+|b|+|d|\bigr)} && \ 
      \tpc a \tp b d &\otimes \tp c e\\
 &+&(-1)^{|c||a|}&& \  
      a &\otimes \tpc[\big] c \tp[\big] {\tp b d} e\\
 &+&(-1)^{\bigl(|a|+|c|\bigr)|b|}&& \ 
      b &\otimes \tpc[\big] c \tp[\big] {\tp a d} e. 
\end{alignat*}
The signs above are computed similar to those of the graded unshuffle coproduct in the 
graded tensor Hopf algebra. 

The Harrison complex of a graded algebra also gains Koszul signs.  
In particular the ``contract and multiply'' differential $d_\mu$ involves  
a sign due to a degree $1$ operator, $d_\mu$, moving across a tensor (the same as
in the bar complex), plus additional Koszul signs due to required rearrangements
of a tree symbol.  For example the tree symbol shape used above, appearing in the
Harrison complex, would have $d_\mu$ as follows.
\begin{align*}
d_\mu\Bigl( \tpc[\big] {s^{\inv}c} \tp[\big] {\tpc {s^{\inv}a} \tp {s^{\inv}b} s^{\inv}d} s^{\inv}e\Bigr)
 = & \pm (-1)^{|c|}\, \tp[\big] {\tpc {s^{\inv}a} \tp {s^{\inv}b} s^{\inv}d} s^{\inv}(ce) \\ 
   & \pm (-1)^{|a|}\, \tpc[\big] {s^{\inv}c} \tp[\big] {\tp {s^{\inv}b} s^{\inv}(ad)} s^{\inv}e \\
   &  \pm (-1)^{|b|}\, \tpc[\big] {s^{\inv}c} \tp[\big] {\tp {s^{\inv}a} s^{\inv}(bd)} s^{\inv}e \\
   & \pm (-1)^{|d|}\, \tpc {s^{\inv}c} \tp {s^{\inv}a} \tp {s^{\inv}b}  s^{\inv}(de) 
\end{align*}
where the $\pm$ are Koszul signs from applying graded commutativity of corollas,
reordering the symbol so that the ``contract and multiply''
terms are adjacent; as well as signs from $d_\mu$ moving across 
terms in the symbol to reach the location where it is applied.
The Koszul sign for the first term above is
$(-1)^{\bigl(|c|-1\bigr)\bigl((|a|-1)+(|b|-1)+(|d|-1)\bigr) + 
 \bigl((|a|-1)+(|b|-1)+(|d|-1)\bigr)}$ 
from applying corolla commutativity to move $s^{\inv}c$ next to $s^{\inv}e$ 
and also from
$d_\mu$ crossing terms. The Koszul sign of the last term is
$(-1)^{(|c|-1)+(|a|-1)+(|b|-1)}$ from $d_\mu$ crossing the
$s^{\inv}c$, $s^{\inv}a$, and $s^{\inv}b$ terms.
To get all Koszul signs correct and complete calculations with greatest
confidence, we have found it generally worthwhile to 
explicitly include the desuspensions $s^\inv$ in the Harrison complex.


\section{Thom forms and thickened two-complexes}\label{GraphModels}

We provide necessary background
for Section~\ref{CurveForms}, giving
a graphical approach to calculations
of products and Massey products 
for two-complexes.  This approach
is used throughout Section~\ref{examples}
and was also helpful in formulating most
of our main results. 
We plan to give a general theory of graphical models for
groups in future work.

In Section~\ref{Thomforms} below
we review elementary constructions of de\,Rham cochains associated to submanifolds.  
These allow for geometric access to cochains.  We summarize basic definitions and
properties of Thom forms from \cite{BottTu} and prove that one can find a
Thom form for a manifold with boundary which is compatible with a chosen form on the boundary.

These submanifold tools are applied in Section~\ref{CurveForms} to (non-smooth) 
two-complexes by constructing
four-manifold thickenings $M_G$ which deformation retract onto their sub-two-complexes $X_G$ in Section~\ref{sec:MG}.
This construction is a standard, immediate application of
handlebody theory; though we prefer an approach through real blow-ups which is new.

\subsection{Thom forms}\label{Thomforms}

\begin{definition} \label{DefThom}
Let $W \subseteq M$ be a submanifold of codimension $d$ 
whose normal bundle is oriented and  whose boundary is contained in that of $M$.  A {\bf Thom form} $\tau_W$ is a closed $d$-form on $M$ which is supported on a  tubular neighborhood
of $W$, so that the integral of $\tau_W$ over each transverse slice of the tubular neighborhood is a fixed number called the {\bf mass} of the
Thom form.
\end{definition}


The following is a summary of results from Chapter 6 of \cite{BottTu}.

\begin{theorem}\label{Thom} Let $W\subseteq M$ be any submanifold of codimension $d$ whose normal bundle 
is oriented and whose boundary is contained in that of $M$.
\begin{enumerate}
    \item 
Thom forms for $W$ exist and they are all cohomologous via forms supported on a tubular neighborhood of $W$, so represent a unique de\,Rham cohomology class $[W]\in H^d_{dR}(M\setminus \partial M)$.\label{ThomClass}

\item The value of a Thom form on a smooth $d$-cycle that meets $W$ transversally, or on the fundamental class of some closed $d$-manifold 
mapping to $M$ that is transverse to $W$, is the signed count of the preimages of $W$ multiplied by the mass of the form. \label{eval}

\item The pullback of a Thom form by a smooth map which is transverse to $W$ is a Thom form for the preimage of $W$.

\item The wedge product of Thom forms for submanifolds which intersect transversally is a Thom form of their intersection,
and its mass is the product of their masses. \label{product}
\end{enumerate}
\end{theorem}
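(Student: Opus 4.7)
The plan is to reduce each statement to a local model near the submanifold $W$, using a tubular neighborhood $\nu(W) \subset M$ identified with (an open disk bundle in) the normal bundle $N\to W$. Since $N$ is oriented of rank $d$, the Thom construction for oriented vector bundles (Bott–Tu, Chapter 6) gives a compactly-supported-in-fibers closed $d$-form $\Phi$ on $N$ whose fiberwise integral is constantly $1$; multiplying by any chosen scalar produces a form of prescribed mass, and extending by zero outside $\nu(W)$ yields a global closed $d$-form on $M$. For submanifolds with boundary in $\partial M$, one chooses a collar of $\partial M$ and works with a half-tubular neighborhood so that $\Phi$ still extends by zero. This handles existence in (1).

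For the uniqueness portion of (1), the key input is that the Thom isomorphism $H^0_{dR}(W) \xrightarrow{\cong} H^d_{cv}(\nu(W))$ (compactly supported vertically) is an isomorphism, so any two Thom forms of the same mass differ by $d\eta$ for some $\eta$ compactly supported in $\nu(W)$; this is exactly the cohomological statement, and it naturally lives in $H^d_{dR}(M \setminus \partial M)$ after extension by zero. For (2), once one trivializes $\nu(W)$ locally over a chart $U \subset W$ as $U \times D^d$, the Thom form factors (up to exact) as $\pi^*\alpha \wedge \Phi$ with $\int_{\text{fiber}}\Phi = \text{mass}$; for a cycle $\sigma$ meeting $W$ transversally, a collar-neighborhood argument shows $\int_\sigma \tau_W$ decomposes as a finite sum, one per preimage point, each contributing $\pm \text{mass}$ according to the comparison of the orientation of $\sigma$ with the orientation of $N$ — this is the signed intersection count.

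Statement (3) follows from naturality of the tubular neighborhood: if $f\colon M' \to M$ is transverse to $W$, then $f^{-1}(\nu(W))$ contains a tubular neighborhood of $f^{-1}(W)$ in $M'$, and the normal bundle of $f^{-1}(W)$ is naturally identified with $f^*N$, with induced orientation. The pullback $f^*\tau_W$ is then supported in this tubular neighborhood, closed, and has the correct fiber integral because pullback commutes with fiber integration under the transverse hypothesis; hence it is a Thom form of $f^{-1}(W)$ with the same mass. Statement (4) is the local model computation: near a transverse intersection point $p \in W_1 \cap W_2$, the normal bundle of $W_1 \cap W_2$ splits as $N_1 \oplus N_2$, and in a compatible tubular neighborhood one can arrange $\tau_{W_1} = \pi_1^*\Phi_1$ and $\tau_{W_2} = \pi_2^*\Phi_2$ in the relevant directions; the wedge $\Phi_1 \wedge \Phi_2$ is then the canonical Thom form on the direct sum bundle, whose fiber integral is the product of masses.

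The main obstacle is the bookkeeping in (2) and (4): one must be careful that the local models glue globally and that signs (from orientations of normal bundles, induced orientations on $W_1 \cap W_2$, and the ordering in the wedge product) come out consistently. This is standard and treated carefully in Bott–Tu, so the plan is to cite their construction of the Thom class and Thom isomorphism, then spell out only the local computations for (2), (3), (4), emphasizing that everything is determined up to fiberwise-exact forms supported in arbitrarily small tubular neighborhoods — which is also what one needs for the later application to extending Thom forms on $\partial M$ in Theorem~\ref{Thomboundary}.
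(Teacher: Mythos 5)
Your proposal is correct and matches the paper's approach: the paper offers no proof of its own here, stating only that the theorem is a summary of results from Chapter~6 of \cite{BottTu}, and your sketch is exactly the standard fleshing-out of that citation (Thom class via fiberwise-compactly-supported forms on the oriented normal bundle, Thom isomorphism for uniqueness, naturality of tubular neighborhoods and fiber integration for pullback and products). Nothing further is needed beyond the care with orientations and supports you already flag.
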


For example, let $P(V)$ be the projective space of lines
in a vector space $V$, so $P(\C^{n+1}) \cong
\C P^n$.  The Thom form of any linear $V \subset \C^{n+1}$ represents a generator of $H^{2i}_{dR}(\C P^n)$, where $i$ is the codimension of $V$.  
If $V$ and $W$ intersect transversally we see that $\tau_{P(V)} \wedge \tau_{P(W)} = \tau_{P(V\cap W)}$.
Thus the cohomology ring structure of projective spaces is reflecting basic intersection properties of linear subspaces.

For many applications it is necessary to extend the notion of Thom forms to submanifolds with corners whose boundary is
in the interior of $M$.  (See for example \cite{GeometricCochains} for basic definitions.)

\begin{definition}\label{def:Thom form}
    Let $W \subset M$ be a proper submanifold  of a manifold, possibly with corners modeled by $[0,\infty)^n$ with $\partial^2 W \subset \partial M$.
    Let $\partial^\circ W$ denote the subset  of $\partial W$ in the interior of $M$.

    A {\bf Thom form} for $W$ is a form $\tau_W$ supported in a tubular neighborhood $\nu_W$ of $W$, whose exterior derivative is a Thom form for 
    $\partial^\circ W$,  supported in $\nu_{\partial^\circ W}$.  The 
    integral of $\tau_W$ is the mass over any transverse slice of the tubular neighborhood of $W$
    disjoint from  the support of $d\tau_W$.
\end{definition}

\begin{theorem}\label{Thomboundary}
Let $W$ be a submanifold with corners whose collar neighborhood extends from some 
    $[0,\varepsilon) \times \partial^\circ W \hookrightarrow W$ to $(-\varepsilon, \varepsilon) \times \partial^\circ W \hookrightarrow M$.
Let  $\tau_{\partial^\circ  W}$ be a Thom form for $\partial^\circ W$.  There exists a Thom form $\tau_W$ for $W$ so that $d \tau_W = \tau_{\partial^\circ  W}$.
\end{theorem}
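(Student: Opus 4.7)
The plan is to start with any Thom form $\tau_W^0$ for $W$ (of the appropriate mass) and then correct it by adding a form $\eta$ supported near $\partial^\circ W$ so that $d(\tau_W^0 + \eta)$ equals the prescribed $\tau_{\partial^\circ W}$ on the nose. The construction of $\eta$ reduces to a Poincaré-type argument inside a tubular neighborhood of $\partial^\circ W$.

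First I would produce an initial Thom form $\tau_W^0$ satisfying Definition~\ref{def:Thom form}. Using the hypothesized collar $(-\varepsilon,\varepsilon)\times\partial^\circ W \hookrightarrow M$ extending the collar of $\partial^\circ W$ in $W$, construct a Thom form on $W\setminus\partial^\circ W$ (which is a submanifold without interior boundary, so Theorem~\ref{Thom}(\ref{ThomClass}) applies), and then extend smoothly across $\partial^\circ W$ by multiplying with a cutoff $\psi(t)$ in the collar coordinate $t$, where $\psi=0$ for $t\le -\varepsilon/2$ and $\psi=1$ for $t\ge 0$. The exterior derivative picks up the factor $\psi'(t)\,dt$, producing a Thom form for $\partial^\circ W$ whose mass equals the mass of $\tau_W^0$. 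By rescaling $\tau_W^0$ we may assume this mass agrees with that of $\tau_{\partial^\circ W}$.

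The key step is to produce $\eta$ with $d\eta = \tau_{\partial^\circ W} - d\tau_W^0$, supported in a tubular neighborhood $\nu$ of $\partial^\circ W$. Both $\tau_{\partial^\circ W}$ and $d\tau_W^0$ are closed $(d{+}1)$-forms supported in $\nu$, and by Theorem~\ref{Thom}(\ref{ThomClass}) applied to $\partial^\circ W$ (which is a submanifold without interior boundary, since its corners lie in $\partial M$) they represent the same Thom class in the cohomology of $\nu$ with supports near $\partial^\circ W$, because they share the same mass. Their difference is therefore exact via a form supported in $\nu$, which gives the desired $\eta$. Setting $\tau_W := \tau_W^0 + \eta$, we have $d\tau_W = \tau_{\partial^\circ W}$ by construction, and shrinking $\nu$ into the tubular neighborhood of $W$ (possible since $\partial^\circ W \subset W$) keeps $\tau_W$ supported in a tubular neighborhood of $W$; the mass condition is preserved since $\eta$ vanishes outside $\nu$, hence outside the support of $d\tau_W$.

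The main obstacle is the cohomological step for $\partial^\circ W$: one must verify that two Thom forms of equal mass for a submanifold with corners (whose corners lie in $\partial M$) differ by the exterior derivative of a form with the same type of support. This is the analogue of Theorem~\ref{Thom}(\ref{ThomClass}) for $\partial^\circ W$, and follows by restricting to $M\setminus\partial M$, where $\partial^\circ W$ becomes a submanifold without corners, and applying the standard Thom isomorphism argument fiberwise in $\nu$.
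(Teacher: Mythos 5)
Your proposal is correct and takes essentially the same route as the paper's proof: build an initial Thom form whose coboundary is \emph{some} Thom form for $\partial^\circ W$ by extending $W$ across its interior boundary via the hypothesized collar and multiplying a Thom form of the extended submanifold by a cutoff in the collar coordinate, then fix the discrepancy using Theorem~\ref{Thom}(\ref{ThomClass}) applied to $\partial^\circ W$, which lets two equal-mass Thom forms differ by $d$ of a form supported near $\partial^\circ W$. The only (cosmetic) slip is that the initial form should be described as a Thom form of the collar-extended submanifold $W\cup\bigl((-\varepsilon,0]\times \partial^\circ W\bigr)$ rather than of $W\setminus\partial^\circ W$, since the cutoff $\psi$ must multiply a form already defined for $t<0$.
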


\begin{proof}
We assume a mass of one and assume first that there exists a Thom form 
    $\hat{\tau}_W$ with $d \hat{\tau}_W$ some Thom form for $\partial^\circ W$, likely different from the given $\tau_{\partial^\circ  W}$.  
    Then because $\partial^\circ W$ is a submanifold whose  boundary is in $\partial M$, 
    $d \hat{\tau}_W$ and $\tau_{\partial^\circ  W}$ 
    differ by some $d \nu$ supported near $W$ by Theorem~\ref{Thom}~(\ref{ThomClass}).  Set $\tau_W = \hat{\tau}_W - \nu$, which is again a Thom form for $W$ and has the desired exterior derivative.

   To find such a $\hat{\tau}_W$ make use of the  extension of the collar neighborhood of $\partial^\circ W$. 
     We construct a Thom form $\tilde{\tau}_W$ for a tubular neighborhood $\nu_e$ of the extended submanifold -- $W$ together with the collar $(-\varepsilon,\varepsilon)\times \partial^\circ W$ -- as in Chapter 6 of \cite{BottTu}.  
     Let $t$ be the collar coordinate and $\sigma(t)\maps (-\varepsilon,\varepsilon)\to [0,1]$ be a smooth monotone function with $\sigma\equiv 0$ near $-\varepsilon$ and $\sigma \equiv 1$ near $\varepsilon$.  Let $f : \nu_e \to \R$ denote the function with value  $1$  outside the tubular neighborhood of the extended collar and which on that neighborhood is the projection onto the $(-\varepsilon, \varepsilon)$ coordinate followed by $\sigma$. 
     We  let $\hat{\tau}_W$ equal 
     $f \wedge \tilde{\tau}_W$.  By construction,
     its exterior
     derivative  $d(f \wedge \tilde{\tau}_W) = df \wedge \tilde{\tau}_W$
     is a Thom form for $\partial^\circ W$ and which integrated to $1$ on tranvserve slices outside an $\varepsilon$-neighborhood of $\partial^\circ W$, as desired.
\end{proof}

Thus we can use submanifolds to define de\,Rham cochains, and use these to get concrete models for de\,Rham cochain algebras.   We claim that analogues of Theorem~\ref{Thom}
all hold in the case with boundary, where intersections 
considered in items \ref{eval} and \ref{product} occur
away from the support of the Thom forms of the boundary.
We can then use submanifolds whose boundaries match up to define cocycles, choosing their boundaries to have
 cancelling Thom forms by Theorem~\ref{Thomboundary}.

One can in fact use the above construction to build  models (quasi-isomorphic subalgebras) for the de Rham algebra of a manifold.  The general idea is to first construct cohomology ring generators. Then one can use part (4) of Theorem~\ref{Thom} to calculate products as intersections.  For linear combinations of products which
are trivial, one finds unions submanifolds which cobound them, applying Theorem~\ref{Thomboundary} again.  Further
intersections then give rise to simple Massey products.
The process of cobounding and then accounting for new
intersections can go on indefinitely since we allow
for, and in the present work focus on, codimension one submanifolds.

An important technical point is that since non-trivial submanifolds can never intersect themselves transversally, part (4) of Theorem~\ref{Thom} cannot be applied immediately to understand squaring (in even degrees).  The same can be said for products
involving a manifold and its boundary or two manifolds which
meet at their boundaries.
For calculations, the submanifolds in question need to be ``moved off,''  but this process in turn will produce more intersections to analyze.

Such calculations can be made with integer coefficients using the recently developed
machinery of geometric cochains \cite{GeometricCochains}.

\subsection{Thickening a two-complex into a four-manifold}\label{sec:MG}

The methods in this paper require a cochain model for a space with a given fundamental group, which we informally call a ``cochain model for the group.''  Flavors of models include manifolds of interest, the simplicial model for the classifying space of a group, and two-dimensional CW-complexes.  
We have found the last to be a rich source for examples, intuition and  
technical use, in particular with our Lifting Criterion
Theorem~\ref{LiftingCriterion}
and the universal property of Theorem~\ref{thm-intro:universal property}.  We explicitly construct
smooth manifold models for two-complexes here 
so that we can study their de Rham theory
in 
Section~\ref{CurveForms}. 

\begin{definition}
  Let $G = \langle S\;|\;R\rangle$  be a presentation of a group
  via generators $S$ and relations $R$.
  
  Define $X_G$ 
  to be the $2$-complex whose $1$-skeleton is a wedge
  of circles, one for each generator in $S$, with $2$-disks attached 
  according to each relation in $R$, uniformly parametrized.
\end{definition}

 We will
mimic this construction in surgery theory using handle attachments to 
build $M_G$, a smooth four-manifold with 
boundary, such that $M_G \simeq X_G$.  This is standard, though we use a version of
surgery theory defined via blowup and attaching handles onto 
resulting manifolds with corners since that 
makes it simpler to identify submanifolds.

\begin{proposition}
Let $W$ be an embedded submanifold of $\partial M$ with codimension $d$ in the boundary. Then there is a manifold with corners $M_{\widehat{W}}$, which we call the 
{\bf blow-up of $M$ at $W$}, equipped with a smooth map $\beta: M_{\widehat{W}} \to M$, with the following properties.  \begin{itemize}
    \item $M_{\widehat{W}}$  is homeomorphic to $M$, with interior diffeomorphic to that of $M$.
    \item There  is a closed stratum $\widehat{W} \subset \partial M_{\widehat{W}}$ diffeomorphic to the unit disk bundle of the normal bundle of $W$ in $\partial M$.  Under this identification, the restriction of $\beta$ to $\widehat{W}$ coincides
    with the bundle map projection to $W$.     Outside of $\widehat{W}$, $\beta$ is a diffeomorphism.
\end{itemize}
\end{proposition}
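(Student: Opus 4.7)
The plan is to construct $M_{\widehat{W}}$ by performing a real (``spherical'') blow-up of $M$ along $W$ in a tubular--collar neighborhood, then gluing with the complement. First I would equip the normal bundle $N = N_{W/\partial M}$ with an inner product and choose a collar $\partial M \times [0,\varepsilon) \hookrightarrow M$. Combining the tubular neighborhood theorem for $W \subset \partial M$ with the collar yields an open neighborhood $U \supseteq W$ in $M$ together with a diffeomorphism $U \cong DN \times [0,\varepsilon)$ under which $W$ corresponds to the zero section at collar parameter zero. All remaining work will happen inside $U$.

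The local model in a trivializing chart is $\mathbb{R}^{n-d-1} \times \mathbb{R}^d \times [0,\varepsilon)$ with coordinates $(x,v,t)$, in which $W = \{v=0,\, t=0\}$. I would define the blow-up by the spherical substitution $(v,t) = r\cdot(v',s)$ with $r \ge 0$, $|v'|^2 + s^2 = 1$, and $s \ge 0$, that is
\[
  \widehat{U}_{\mathrm{loc}} = \{(x, r, v', s) : r \ge 0,\ |v'|^2 + s^2 = 1,\ s \ge 0\},
\]
with blow-down $\beta(x,r,v',s) = (x, rv', rs)$. This is a manifold with corners whose boundary has two codimension-one strata: $\{r=0\}$, the exceptional set, and $\{s=0\}$, the proper transform of $\partial M \cap U$, meeting along the corner $\{r=s=0\}$. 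The key observation is that the closed upper hemisphere $\{|v'|^2 + s^2 = 1,\ s \ge 0\}$ projects diffeomorphically onto the closed unit disk $\{|v'| \le 1\}$ via $(v',s) \mapsto v'$, so the exceptional stratum is canonically identified with the unit disk in the fiber, and $\beta$ collapses it to the origin.

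To globalize, I would perform this construction fiberwise over $W$ using the bundle structure on $DN \times [0,\varepsilon)$: the upper-hemisphere bundle associated to $N \oplus \underline{\mathbb{R}}$ restricted to $s \ge 0$ is canonically isomorphic to the unit disk bundle $DN$ via the same hemisphere-to-disk diffeomorphism, which is equivariant enough (orthogonal-invariant) for the local trivializations to patch consistently. This exhibits $\widehat{W} := \{r=0\}$ globally as the unit disk bundle of $N$, with $\beta|_{\widehat{W}}$ agreeing with the bundle projection to $W$. One then glues $\widehat{U}$ to $M \setminus W$ along the diffeomorphism $\beta \colon \widehat{U} \setminus \widehat{W} \to U \setminus W$ to obtain $M_{\widehat{W}}$, and extends $\beta$ by the identity off $U$.

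Verifying the stated properties is mostly bookkeeping: away from $\widehat{W}$ the map $\beta$ is the identity, so the ``diffeomorphism outside $\widehat{W}$'' and ``interior diffeomorphic to interior of $M$'' claims are immediate, and the formula for $\beta|_{\widehat{W}}$ is built into the construction. The one subtle point is the \emph{homeomorphism} $M_{\widehat{W}} \cong M$, since $\beta$ itself is not injective: fiberwise the half-ball $\{|v|^2+t^2 \le \delta^2,\ t \ge 0\}$ and its blow-up $\{(r,v',s):r \in [0,\delta],\ |v'|^2+s^2=1,\ s \ge 0\}$ are both homeomorphic to $D^d \times [0,\delta]$, and I would produce an explicit fiberwise homeomorphism equal to the identity near the outer boundary $r=\delta$, then extend by the identity on $M \setminus U$. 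The hard part of the argument is doing this coherently across $W$ when $N$ is nontrivial; I expect a partition-of-unity argument together with the isotopy extension theorem to handle this, with independence from the auxiliary choices of metric and collar following from uniqueness of tubular neighborhoods up to isotopy.
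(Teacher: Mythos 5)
The paper actually states this proposition without proof, treating the boundary blow-up as a standard construction, and the tubular-neighborhood-plus-collar polar model you describe is exactly the construction the authors intend (it reproduces the strata and corners they describe in the handle-attachment examples), so in outline your argument is the right one and supplies a proof the paper omits. There is, however, one concrete slip: your ``key observation'' that the closed upper hemisphere $\{|v'|^2+s^2=1,\ s\ge 0\}$ projects \emph{diffeomorphically} onto the closed unit disk via $(v',s)\mapsto v'$ is false. That projection is a smooth bijection and a diffeomorphism of interiors, but its inverse $v'\mapsto (v',\sqrt{1-|v'|^2})$ is not smooth along the equator (equivalently, the differential of the projection kills the $s$-direction at $s=0$), so as written it does not identify the exceptional stratum with the disk bundle in the smooth category. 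The repair is easy and compatible with your globalization: replace the orthogonal projection by an $O(d)$-equivariant diffeomorphism $S^d_+\to D^d$, e.g.\ $(\sin\theta\,u,\cos\theta)\mapsto \tfrac{2\theta}{\pi}u$ for $u\in S^{d-1}$, $\theta\in[0,\pi/2]$, which is smooth with smooth inverse (the radial profiles are even functions of the norm). Equivariance makes the fiberwise identification patch over orthogonal trivializations of $N$, exhibiting $\widehat W$ as the unit disk bundle; and since $\beta$ collapses each exceptional fiber to its base point, $\beta|_{\widehat W}$ is the bundle projection under any fiber-preserving identification, so the proposition's second bullet survives the correction.

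On the homeomorphism $M_{\widehat W}\cong M$, your instinct that this is the one point needing an argument is correct, but the partition-of-unity/isotopy-extension machinery you invoke for nontrivial $N$ is heavier than needed: choose the model homeomorphism $[0,\delta]\times S^d_+\to\{|v|^2+t^2\le\delta^2,\ t\ge0\}$ to be $O(d)$-equivariant (a radial ``cone-stretching'' is) and to agree with the blow-down map near $r=\delta$; equivariance lets it be transported fiberwise over $W$ through the orthogonal structure of $N$ with no patching issues, after which you glue with the identity on $M\setminus U$. Independence of the auxiliary choices of metric, collar and tubular neighborhood is not required by the proposition as stated, though uniqueness of collars and tubular neighborhoods up to isotopy gives it if desired.
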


Now consider the blowup when $W\cong S^k$ is a framed, embedded sphere in $\partial M$. The stratum $\widehat{W}$ lies on the boundary $\partial M_{\widehat{W}}$ and as such has a collar neighborhood diffeomorphic to $\widehat{W} \times (-\varepsilon,0]$ extending into the interior of $M_{\widehat{W}}$. In this case the framing gives an identification $\widehat{W} \cong S^k \times D^d$. Observe as well that the boundary of $D^{k+1}\times D^d$ as
 a manifold with corners has a component $S^k \times D^d$ which has a collar neighborhood diffeomorphic to $(S^k \times [0,\varepsilon)) \times D^d \cong  (S^k\times D^d)\times [0,\varepsilon) = \widehat{S^k} \times [0,\varepsilon)$.  
 
 \begin{definition}[Handle attachment]\label{def:handle attachement}
 If $W$ is a framed, embedded sphere in $\partial M$, the {\bf handle attachment along $W$} is the identification of $M_{\widehat{W}}$ and $D^{k+1}\times D^d$ along $S^k \times D^d \cong \widehat{W}$, with smooth structure defined through 
 $$\left(\widehat{W} \times (-\varepsilon,0]\right) \ \cup\ \left(S^k \times D^d \times [0,\varepsilon)\right) /_\sim\ \cong\ \widehat{W} \times (-\varepsilon,\varepsilon).$$
 \end{definition} 


 Note that there is a similar construction for handle attachment to the interior, where the blow-up is a manifold with boundary and the identifications have tautological smooth structure once collar neighborhoods are chosen.

 We apply these ideas to construct a manifold $M_G$ realizing a  group presentation $G = \langle S \; | \; R \rangle$.  

\begin{definition}
Let $M_0$ be a four dimensional ball $M_0 \cong D^4$, i.e. a $0$-handle.  

For each generator $s\in S$, choose an embedding
of a $0$-sphere $S^0 = \{+,-\}$ in the boundary of $M_0$.  Blow-up at each 
of these points to obtain 
$({M_0})_{\widehat{S}}$, a manifold with corners, homeomorphic to $D^4$.
\end{definition}

The interior of $({M_0})_{\widehat{S}}$ is a
standard open four-ball. Its boundary as a
manifold with corners now has
a ``large''  component we call $\partial_0$ diffeomorphic to the complement in $S^3$ of a union
of open disks, two for each $s\in S$.
There are also ``small'' boundary components contributed by each blowup.
For each generator $s \in S$, write $\partial^{\pm}_s$ for the 
boundary components of $({M_0})_{\widehat{S}}$ produced by the blowup of
the corresponding $S^0$ points embedded in $M_0$.
Note that the $\partial^{\pm}_s$ are diffeomorphic to 3-disks, each
meeting the ``large'' boundary component $\partial_0$ at a copy of $S^2$. 
These 2-spheres constitute the codimension-two corners of $({M_0})_{\widehat{S}}$.




Each $\partial^{\pm}_s$ has a neighborhood in $({M_0})_{\widehat{S}}$ which
is diffeomorphic, as manifolds with corners, to 
$D^3\times(-\varepsilon,0]$.  Thus after choosing an explicit identification 
with $D^3$, we may perform the following.

\begin{figure}
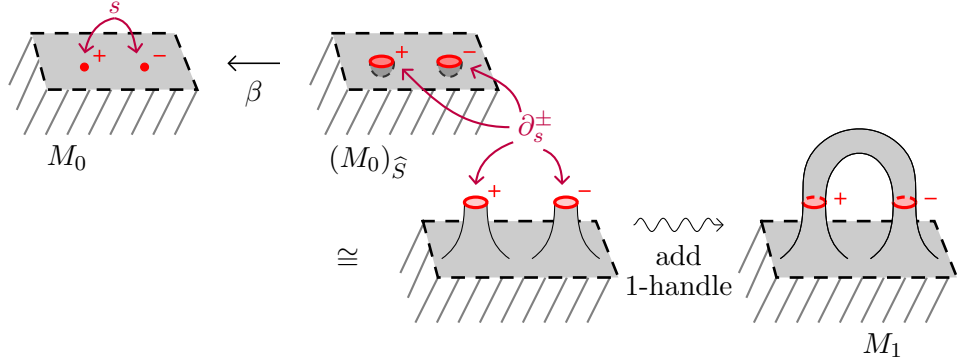

    \include{TikzFigures/surgery1}
    \vspace{-1.2cm}
    \caption{Attaching a 1-handle}
\end{figure}
 
\begin{definition}
    Let $M_S$ be  the quotient of
$({M_0})_{\widehat{S}} \;\coprod\;(\bigcup_S D^3\times D^1)$ smoothly attaching 
1-handles at each $\partial^{\pm}_s$.
\end{definition}

The result is a manifold with boundary but no corners, as the corners became
``internal'' once we glued the $D^3\times (-\varepsilon,0]$ sitting inside $(M_0)_{\hat S}$ to the subspace $D^3\times [0,\varepsilon)$ of the new handle, 
resulting in $D^3\times (-\varepsilon,\varepsilon)$ in the quotient.  We call the slice $0 \times D^1 \subseteq D^3\times D^1$ the {\bf core}
of the $1$-handle, and the images of its endpoints $0 \times 0$ and $0 \times 1$ in $(M_0)_{\hat S}$
are the outgoing and incoming {\bf handle attachment points} respectively.
The manifold with boundary $M_S$ retracts onto the wedge product $\bigvee_S S^1$ through the 
homotopy equivalence sending $M_0$ to a point and each 1-handle to its core.

Next, for each relation $r\in R$, we claim there exists a framed embedding of $S^1$ into the boundary 
$\partial M_S $ such that its image 
in $\pi_1(M_S) \cong \langle S\rangle$ is the word $r$, and these can be made disjoint.
Disjoint embeddings are generic because 
$\partial M_S$ is three-dimensional.  To have canonical framings we choose such an embedding so that its image in any of the 
one handles is a union of segments of the form $x \times D^1$ for some $x \in \partial D^3$, so that a choice of frame at either end determines a ``product framing.''   
Then we can use the fact that $\partial M_0 \cong S^3$ has a trivial tangent bundle to transport framings when the image is in the ``large'' boundary component $\partial_0$.  

\begin{definition}
Fix a collection of disjoint framed embeddings of $S^1$ into $M_S$, corresponding to relations $r\in R$ as in the last paragraph. The blowup of $M_S$ along these embedded circles will be called $(M_S)_{\widehat R}$. 
\end{definition}

$(M_S)_{\widehat R}$ is a manifold with corners homeomorphic to $M_S$. Its boundary consists of a ``large'' boundary component 
$\partial_1$ coming
from $\partial M_S$ without the blown-up circles, as well as ``small'' boundary components $\{\partial_r\}$,
diffeomorphic to $S^1\times D^2$ (shown in red in Figure \ref{fig:2-handle}), contributed by each individual blowup.
The $\partial_r$ and $\partial_1$ components 
connect at codimension 2 corners in $(M_1)_{\widehat R}$ (each diffeomorphic to the 
boundary of a 2-torus).
Each $\partial_r$ has a ``collar'' neighborhood in $(M_1)_{\widehat R}$ 
which is diffeomorphic,
as manifolds with corners, to $S^1\times D^2 \times (-\varepsilon,0]$.

Choosing such a diffeomorphism, we can smoothly ``fill in the relation $r$'', attaching a $2$-handle $D^2\times D^2$ by gluing along the identification 
$(\partial D^2)\times D^2 = S^1 \times D^2$.

\begin{figure}
    \begin{tikzpicture}

%
%

\draw[black!50,thick] (0/7,.12) -- (-.3+0/7,-.48); 
\draw[black!50,thick] (2/7,.12) -- (-.3+2/7,-.48);
\draw[black!50,thick] (4/7,.12) -- (-.3+4/7,-.48)node[below,black]{$M_1$};
\draw[black!50,thick] (6/7,.12) -- (-.3+6/7,-.48);
\draw[black!50,thick] (8/7,.12) -- (-.3+8/7,-.48);
\draw[black!50,thick] (10/7,.12) -- (-.3+10/7,-.48);
\draw[black!50,thick] (12/7,.12) -- (-.3+12/7,-.48);
\draw[black!50,thick] (14/7,.12) -- (-.3+14/7,-.48);

\draw[black!50,thick] (-.1,.52) -- (-.3-.1,-.48+.33);
\draw[black!50,thick] (-.2,.82) -- (-.3-.2,-.48+.66);

\draw[fill=black!20,dot diameter=1pt,dot spacing=5pt,dots] 
  (-.2,.87) -- (1.7,.87) 
           -- (2.0,.12) 
           -- (0,.12) 
           -- cycle;

\draw[red,thick] (1.30,.49)  arc[start angle=0, end angle=360, x radius=.4, y radius=.15];
\node[color=purple,scale=.8] at (1.45,.67) {$r$};

\draw[-angle 90,semithick] (3.1,.47) -- (2.4,.47);
\node at (2.75,.07) {$\beta$};

%
%

\draw[black!50,thick] (4.1+0/7,.12) -- (3.8+0/7,-.48); 
\draw[black!50,thick] (4.1+2/7,.12) -- (3.8+2/7,-.48);
\draw[black!50,thick] (4.1+4/7,.12) -- (3.8+4/7,-.48)node[below,black]{$(M_1)_{\widehat R}$};
\draw[black!50,thick] (4.1+6/7,.12) -- (3.8+6/7,-.48);
\draw[black!50,thick] (4.1+8/7,.12) -- (3.8+8/7,-.48);
\draw[black!50,thick] (4.1+10/7,.12) -- (3.8+10/7,-.48);
\draw[black!50,thick] (4.1+12/7,.12) -- (3.8+12/7,-.48);
\draw[black!50,thick] (4.1+14/7,.12) -- (3.8+14/7,-.48);

\draw[black!50,thick] (4.0,.52) -- (3.8-.1,-.48+.33);
\draw[black!50,thick] (3.9,.82) -- (3.8-.2,-.48+.66);

\draw[fill=black!20,dot diameter=1pt,dot spacing=5pt,dots] 
  (3.9,.87) -- (5.8,.87) 
           -- (6.1,.12) 
           -- (4.1,.12) 
           -- cycle;

\tikzset{decoration={
markings,
mark=at position 0.1 with {\coordinate[scale=.1] (A) [fill=blue] {};},
mark=at position 0.2 with {\coordinate[scale=.1] (B) [fill=blue] {};},
mark=at position 0.3 with {\coordinate[scale=.1] (C) [fill=red] {};},
mark=at position 0.4 with {\coordinate[scale=.1] (D) [fill=red] {};},
mark=at position 0.5 with {\coordinate[scale=.1] (E) [fill=red] {};},
mark=at position 0.6 with {\coordinate[scale=.1] (F) [fill=red] {};},
mark=at position 0.7 with {\coordinate[scale=.1] (G) [fill=red] {};},
mark=at position 0.8 with {\coordinate[scale=.1] (H) [fill=red] {};},
mark=at position 0.9 with {\coordinate[scale=.1] (I) [fill=red] {};},
mark=at position 1 with {\coordinate[scale=.1] (J) [fill=red] {};},
}}

\draw[red,thick,xshift=4.9,postaction={decorate},fill=red!20] (5.45,.5)  arc[start angle=0, end angle=360, x radius=.56, y radius=.245];
\draw[red,thick] (5.45,.5)  arc[start angle=0, end angle=360, x radius=.4, y radius=.15];

\draw[red,out=-90,in=90] (A) to (5.05,.5);
\draw[red,out=-90,in=45] (B) to (4.85,.5);
\draw[red,out=-90,in=135] (C) to (5.25,.5);
\draw[red,out=-90,in=90] (D) to (5.05,.5);
\draw[red,out=-45,in=200] (E) to (4.7,.5);
\draw[red,out=0,in=45] (F) to (4.8,.5);
\draw[red,out=45,in=135] (G) to (5.05,.5);
\draw[red,out=135,in=-45] (H) to (5.05,.5);
\draw[red,out=180,in=135] (I) to (5.3,.5);
\draw[red,out=225,in=-20] (J) to (5.4,.5);

\draw[fill=black!20,draw=none] (5.05,.5)  circle[start angle=0, end angle=360, x radius=.385, y radius=.135];

%
%

\draw[black!50,thick] (4.8+0/7,-2.38) -- (4.5+0/7,-2.98); 
\draw[black!50,thick] (4.8+2/7,-2.38) -- (4.5+2/7,-2.98);
\draw[black!50,thick] (4.8+4/7,-2.38) -- (4.5+4/7,-2.98);
\draw[black!50,thick] (4.8+6/7,-2.38) -- (4.5+6/7,-2.98);
\draw[black!50,thick] (4.8+8/7,-2.38) -- (4.5+8/7,-2.98);
\draw[black!50,thick] (4.8+10/7,-2.38) -- (4.5+10/7,-2.98);
\draw[black!50,thick] (4.8+12/7,-2.38) -- (4.5+12/7,-2.98);
\draw[black!50,thick] (4.8+14/7,-2.38) -- (4.5+14/7,-2.98);

\draw[black!50,thick] (4.7,-2.38+.40) -- (4.4,-2.98+.33);
\draw[black!50,thick] (4.6,-2.38+.70) -- (4.3,-2.98+.66);

\draw[fill=black!20,dot diameter=1pt,dot spacing=5pt,dots] 
  (4.6,.-1.63) -- (6.6,-1.63) 
           -- (7.0,-2.38) 
           -- (4.8,-2.38) 
           -- cycle;

\node at(3.75,-2.10) {$\cong$};

\draw[draw=none,fill=black!20] 
  (4.85,-2.00) to[out=35, in=-90] (5.15,-1.35) 
               to[out=25, in=155] (6.27,-1.35)
               to[out=-90, in=145] (6.57,-2.00);
               
\draw (4.85,-2.00) to[out=35, in=-90] (5.152,-1.35);
\draw (6.27,-1.35) to[out=-90,in=145] (6.57,-2.00);
\draw[out=50,in=-100](5.2,-2.10) to (5.4,-1.45);
\draw[out=65,in=-90](5.6,-2.20) to (5.7,-1.35);
\draw[out=145,in=-87](6.3,-2.20) to (6.0,-1.55);

\draw[red,thick,fill=red!20,xshift=4.9,postaction={decorate}] (6.1,-1.35)  arc[start angle=0, end angle=360, x radius=.56, y radius=.245];
\draw[red,thick] (6.1,-1.35)  arc[start angle=0, end angle=360, x radius=.4, y radius=.15];

\draw[red,out=-90,in=90] (A) to (5.05+.65,-1.35);
\draw[red,out=-90,in=45] (B) to (4.85+.65,-1.35);
\draw[red,out=-90,in=135] (C) to (5.25+.65,-1.35);
\draw[red,out=-90,in=90] (D) to (5.05+.65,-1.35);
\draw[red,out=-45,in=200] (E) to (4.7+.65,-1.35);
\draw[red,out=0,in=45] (F) to (4.8+.65,-1.35);
\draw[red,out=45,in=135] (G) to (5.05+.65,-1.35);
\draw[red,out=135,in=-45] (H) to (5.05+.65,-1.35);
\draw[red,out=180,in=135] (I) to (5.3+.65,-1.35);
\draw[red,out=225,in=-20] (J) to (5.4+.65,-1.35);

\draw[draw=none,fill=black!40] (5.7,-1.35)  circle[start angle=0, end angle=360, x radius=.385, y radius=.135];

\draw[black!80,line width=.08mm] (5.4,-1.27) to (5.45,-1.45);
\draw[black!80,line width=.08mm] (5.6,-1.21) to (5.62,-1.49);
\draw[black!80,line width=.08mm] (5.8,-1.21) to (5.78,-1.49);
\draw[black!80,line width=.08mm] (6.0,-1.27) to (5.95,-1.45);


\node[purple,] at(6.5,-.4) {$\partial_r$};
\draw[-ang 90,thick,purple] (6.35,-.2) to[out=120,in=-20] (5.55,.35);
\draw[-ang 90,thick,purple](6.4,-.6) to[out=-90,in=25] (6.1,-1.1);

\tikzset{snake it/.style={decorate, decoration=snake,segment length=3mm}}
\draw[snake it,-angle 90] (7.1,-1.9) -- (8.3,-1.9);
\node at (7.7,-2.3) {add};
\node at(7.7,-2.7) {2-handle};


%
%

\draw[black!50,thick] (8.95+0/7,-2.38) -- (8.65+0/7,-2.98); 
\draw[black!50,thick] (8.95+2/7,-2.38) -- (8.65+2/7,-2.98);
\draw[black!50,thick] (8.95+4/7,-2.38) -- (8.65+4/7,-2.98);
\draw[black!50,thick] (8.95+6/7,-2.38) -- (8.65+6/7,-2.98);
\draw[black!50,thick] (8.95+8/7,-2.38) -- (8.65+8/7,-2.98);
\draw[black!50,thick] (8.95+10/7,-2.38) -- (8.65+10/7,-2.98);
\draw[black!50,thick] (8.95+12/7,-2.38) -- (8.65+12/7,-2.98)node[below,black]{$M_G$};
\draw[black!50,thick] (8.95+14/7,-2.38) -- (8.65+14/7,-2.98);
\draw[black!50,thick] (8.95+16/7,-2.38) -- (8.65+16/7,-2.98);

\draw[black!50,thick] (8.85,-2.38+.40) -- (8.55,-2.98+.33);
\draw[black!50,thick] (8.75,-2.38+.70) -- (8.45,-2.98+.66);

\draw[fill=black!20,dot diameter=1pt,dot spacing=5pt,dots] 
  (8.75,.-1.63) -- (11.05,-1.63) 
           -- (11.45,-2.38) 
           -- (8.95,-2.38) 
           -- cycle;

\draw[fill=black!20] (9.0,-2.00) to[out=20,in=180] (10.05,-.65) to[out=0,in=160] (11.1,-2.00);

\draw[draw=none,fill=red!30]
    (10.63,-1.35) to[out=115,in=0]   (10.05,-1.10)
                  to[out=180,in=55]  (9.47,-1.35);
                  
\draw[densely dashed,thick] (10.05,-1.35) ellipse (.36 and .48);

\draw[draw=none,fill=red!20] 
     (9.47,-1.35) to[out=-55,in=180] (10.05,-1.60) 
                  to[out=0,in=235]   (10.63,-1.35);
                  
\draw[densely dashed,red,fill=black!20] (9.68,-1.35) arc (180:360:.37 and 0.1);
\draw[densely dashed,red,fill=black!20] (10.42,-1.35) arc (0:180:.37 and 0.1);

\draw[red,thick] (9.47,-1.35) to[out=-55,in=180] (10.05,-1.60) to[out=0,in=235] (10.63,-1.35);
\draw[red,thick,densely dashed] (10.63,-1.35) to[out=115,in=0] (10.05,-1.10) to[out=180,in=55] (9.47,-1.35);
  
\end{tikzpicture}

    \vspace{-1.0cm}
    \caption{Attaching a 2-handle}\label{fig:2-handle}
\end{figure}
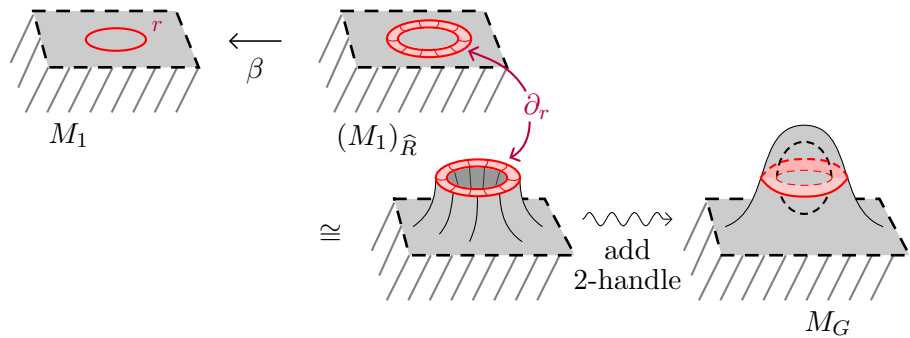

\begin{definition}\label{MG}
Define ${M}_G$ to be the manifold resulting from $(M_S)_{\widehat R}$ 
by attaching a $2$-handles associated to each relation $r\in R$, as described in Definition \ref{def:handle attachement}.  
\end{definition}

Note that $M_G$ has boundary but no
corners. Furthermore, $M_G$ deformation retracts onto the presentation 2-complex $X_G$ by having 
each handle retract onto its core cell.
With our construction through blow-ups, it is simple
to define smooth submanifolds, as needed
for our de\,Rham models, as unions
of cores within handles.  In particular, the extended indicator
submanifolds of Section~\ref{CurveForms}
with their ``tabs'' would be difficult
to specify in standard approaches to smooth
surgery.

\bibliographystyle{alpha}
\bibliography{bibliography}

\end{document}